\newcommand{\PP}{\mathbb{P}}
\newcommand{\ZZ}{\mathbb{Z}}
\newcommand{\AAA}{\mathcal{A}}
\newcommand{\BBB}{\mathcal{B}}
\newcommand{\QQQ}{\mathcal{Q}}
\newcommand{\RRR}{\mathcal{R}}
\newcommand{\pp}{\mathbf{p}}
\newcommand{\qq}{\mathbf{q}}
\newcommand{\rr}{\mathbf{r}}
\newcommand{\sS}{\mathbf{s}}
\newcommand{\tT}{\mathbf{t}}
\newcommand{\eps}{\varepsilon}
\newcommand{\Ncycle}{\#_{\mathrm{cycle}}}
\newcommand{\dcycle}{d_{\mathrm{cycle}}}
\newcommand{\perm}[1]{\mathrm{PAIR}(#1)}
\newcommand{\irr}[1]{\mathrm{PAIR}_{\mathrm{irr}}(#1)}
\newcommand{\std}[1]{\mathrm{PAIR}_{\mathrm{std}}(#1)}
\newcommand{\ClassLab}[1]{\RRR(#1)}
\newcommand{\ClassNon}[1]{\RRR^{\mathrm{non}}(#1)}
\newcommand{\ExtClassLab}[1]{\RRR_{\mathrm{ex}}(#1)}
\newcommand{\ExtClassNon}[1]{\RRR^{\mathrm{non}}_{\mathrm{ex}}(#1)}
\newcommand{\STAR}[1]{\Xi(#1)}
\newcommand{\ARFof}[1]{\mathrm{ONES}(#1)}
\newcommand{\ARFofE}[1]{\mathrm{ONES}_{eq}(#1)}
\newcommand{\ARFofNE}[1]{\mathrm{ONES}_{neq}(#1)}
\newcommand{\Sof}[1]{\sigma_{#1}}
\newcommand{\Mof}[1]{\mathcal{M}_{#1}}
\newcommand{\Yof}[1]{\tilde{\sigma}_{#1}}
\newcommand{\Pof}[1]{\mathcal{P}_{#1}}
\newcommand{\QF}[1]{\mathcal{Q}_{#1}}
\newcommand{\term}[1]{\emph{#1}}
\newcommand{\ol}[1]{\overline{#1}}
\newcommand{\RED}[1]{|_{#1}}
\newcommand{\mtrx}[1]{\left(\begin{matrix} #1 \end{matrix}\right)}
\newcommand{\bmtrx}[1]{\left[\begin{matrix} #1 \end{matrix}\right]}
\newcommand{\LL}[2]{\begin{matrix} #1 \\ #2 \end{matrix}}
\newcommand{\RHScase}[1]{\left\{ \begin{array}{ll} #1 \end{array}\right.}
\newtheorem{thm}{Theorem}[section]
\newtheorem{defn}[thm]{Definition}
\newtheorem{prop}[thm]{Proposition}
\newtheorem{lem}[thm]{Lemma}
\newtheorem{remark}[thm]{Remark}
\newtheorem{cor}[thm]{Corollary}
\newtheorem{move}[thm]{Move}
\newtheorem*{thmPWOR}{Theorem \ref{ThmPWOrderReversing}}
\title[Combinatorial Proof of KZB]{A Combinatorial Proof of the Kontsevich-Zorich-Boissy Classification of Rauzy Classes}
\author[J. Fickenscher]{Jon Fickenscher}
\begin{document}

\begin{abstract}Rauzy Classes and Extended Rauzy Classes are equivalence classes of permutations that arise when studying Interval Exchange Transformations. In 2003, Kontsevich and Zorich classified Extended Rauzy Classes by using data from Translation Surfaces, which are associated to IET's thanks to the Zippered Rectangle Construction of Veech from 1982. In 2009, Boissy finalized the classification of Rauzy Classes also using information from Translation Surfaces. We present in this paper specialized moves in (Extended) Rauzy Classes that allows us to prove the sufficiency and necessity in the previous classification theorems. These results provide a complete, and purely combinatorial, proof of these known results. We end with some general statements about our constructed move.\end{abstract}

\maketitle

\tableofcontents

\newpage

\section{Introduction}\label{SecIntro}

  Interval Exchange Transformations (or IET's) have been and continue to be a rich and interesting class of dynamical systems. In \cite{cVe1982}, W.\ A.\ Veech formalized the zippered rectangle construction, a method that permanently related IET's to Translation Surfaces, surfaces with atlases whose transition functions are given by translation away from a finite number of points.

  In \cite{cRau1979}, Rauzy introduced an induction on IETs. Later known as Rauzy Induction, this transforms an IET by taking its first return map onto a well chosen sub-interval. Rauzy Induction gave rise to Rauzy Classes and later Extended Rauzy Classes, equivalence classes of permutations. On zippered rectangles, Rauzy Induction extends to Rauzy-Veech Induction, which was realized to be a discretization of the Teichm\"uller Geodesic Flow on the Moduli Space of Abelian Differentials, a space associated with Translation Surfaces.

  It therefore became clear that connected components of the Moduli Space are in one to one correspondence with Extended Rauzy Classes. Using this, Extended Rauzy Classes were fully classified in \cite{cKonZor2003} using the following data from connected components:
      \begin{enumerate}
       \item the number and degrees of singularities of the surfaces, and
	\item one more description of the surfaces, namely:
	    \begin{enumerate}
	     \item whether or not the surfaces admit a hyperelliptic involution, and/or
	     \item the parity of the spin structure.
	    \end{enumerate}
      \end{enumerate}
  The two final descriptions only apply to special components. Therefore, many components are not hyperelliptic and do not have a parity value. For convenience, we shall call this final value a component's type (many will have type \textit{none}). So it follows that two permutations belong to the same Extended Rauzy Class if and only if their surfaces have the same type and degrees of singularities.

  In \cite{cBoi2012}, Boissy furthered these results to add one more piece of data, the degree of a \textit{marked singularity}. He concluded that two permutations belong to the same Rauzy Class if and only if these surfaces have the same type, degrees of singularities and degree of their marked singularities.

  This work is dedicated to proving the results mentioned above from \cite{cKonZor2003} and \cite{cBoi2012} while avoiding Translation Surfaces. To do this, we will develop combinatorial tools to analyze (Extended) Rauzy Classes. Specific combinations of Rauzy Induction, called \textit{switch moves}, will provide a relatively convenient way to transform permutations within (Extended) Rauzy Classes. Outside of Section \ref{SecIntro}, the author will make little mention of notions associated to surfaces (i.e. singularities) and instead rely on equivalent or alternate constructions.

\subsection{Outline of Paper}

In Section \ref{SecBackground}, we introduce our notation and terms for permutations, which we will call \emph{pairs}, and Rauzy Classes. We will define Rauzy Induction and Rauzy Paths. Included in this section, as Proposition \ref{PropStandardInClass}, is the classical result: every Rauzy Class contains a standard permutation. The function $\Sof{\pp}$ is a variant of a known map \cite[$\sigma$ in Section 2]{cVe1982}. A proof of its invariance within each Rauzy Class is provided. The map $\Sof{\pp}$ will yield derived objects $\Yof{\pp}$, $\Mof{\pp}$ and $\Pof{\pp}$. The degrees of singularities mentioned in the introduction will be replaced by $\Pof{\pp}$, while the degree of the marked singularity will be given by $\Mof{\pp}$. The value $\ARFof{\pp}$ will be used instead of the parity of spin structure mentioned in the introduction.

The local surgery called  ``bubbling a handle" from \cite{cKonZor2003} and its inverse operation are nicely defined combinatorially by adding or removing letters to permutations. We will formalize these ideas by defining prefix extensions and restrictions. This will give us the advantage to reduce longer permutations without losing or adversely affect desired properties.

The tools introduced in this paper are inner and outer \emph{switch moves}, simple paths of Rauzy Induction between standard permutations within Rauzy Class and Extended Rauzy Classes respectively. The proofs in Sections \ref{SecKZB} and \ref{SecOther} will deeply rely on these moves.

We will define piece-wise order reversing permutations. We will also further prepare for the proofs that follow by formalizing a decomposition of such permutations. In particular, Remark \ref{RemIgnoringChains} will be used freely in Section \ref{SecKZB}.

Section \ref{SecPWOR} is dedicated to proving the following theorem:
\begin{thmPWOR}
	Every Rauzy Class contains a piece-wise order reversing pair (i.e. permutation).
\end{thmPWOR}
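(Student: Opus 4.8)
The plan is to produce the desired representative by starting from a standard pair and then applying a finite sequence of inner switch moves. By Proposition \ref{PropStandardInClass} the Rauzy Class contains a standard pair $\pp$, and since an inner switch move is by definition a path of Rauzy Induction joining two standard pairs, every pair produced along the way stays in the same Rauzy Class and remains standard; so it suffices to show that, starting from an arbitrary standard pair, one can reach a piece-wise order reversing one. I would set this up as an induction on a complexity statistic $c(\pp) \in \ZZ_{\ge 0}$ read off from the decomposition of $\pp$ formalized just before this section: roughly, $c(\pp)$ counts the blocks of the decomposition on which $\pp$ fails to act by order reversal, weighted by how far each such block is from being order-reversing. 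By construction $c(\pp) = 0$ exactly when $\pp$ is piece-wise order reversing, so the theorem follows once we exhibit, for every standard $\pp$ with $c(\pp) > 0$, an inner switch move to a standard $\pp'$ with $c(\pp') < c(\pp)$.

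For the inductive step I would isolate the first block $B$ of the decomposition of $\pp$ (in the natural left-to-right order) on which order reversal fails, and read off from the values of $\pp$ inside $B$ which letters are out of place. Because $\pp$ is standard, its two extreme letters are already paired as in an order-reversing pair, so the failure inside $B$ is an internal one; I would then apply the inner switch move adapted to this local configuration, using the explicit description of switch moves from the earlier section to check (i) that the move is legal here, i.e.\ the intermediate Rauzy path exists, (ii) that it returns a standard pair, and (iii) that it only rearranges letters belonging to $B$ (and the immediately adjacent endpoints), leaving the blocks to the left of $B$ untouched and therefore still order-reversing. Establishing (iii) is where the decomposition lemmas and a small amount of bookkeeping on how Rauzy Induction permutes the letters do the real work. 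Granting (i)--(iii), the new block structure agrees with the old one outside $B$, the block $B$ is strictly closer to order-reversing (or has split into order-reversing sub-blocks), and hence $c$ strictly decreases.

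The main obstacle is exactly the local-effect claim (iii) together with the monovariance: a naively chosen switch move can repair $B$ at the cost of disturbing a block further to the right, in which case $c$ need not drop. Handling this requires choosing the switch move and the sub-interval at which Rauzy Induction is performed according to a case analysis on whether the misplaced letter of $B$ lies in the top or bottom row and on its position relative to the block boundaries; I expect two or three genuinely different cases, each reducing to tracking a short explicit Rauzy path. A secondary and more routine point is the base of the induction: for very short alphabets, or once $c(\pp) = 0$, there is nothing to prove, and one should also note that the irreducibility built into the notion of a pair is preserved throughout, since Rauzy Induction, and hence any switch move, preserves it.

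It is worth remarking that the same conclusion can be approached instead by induction on the number of letters, passing to a shorter Rauzy Class by a prefix restriction, applying the inductive hypothesis there, and then re-extending by the combinatorial form of ``bubbling a handle''; I would not run the argument this way because a prefix extension of a piece-wise order reversing pair need not be piece-wise order reversing, so one would in any case have to clean up with switch moves and thereby re-derive the step above. Running the switch-move induction directly on $c$ is the cleaner route, and it has the added benefit that the same moves and bookkeeping are reused in Sections \ref{SecKZB} and \ref{SecOther}.
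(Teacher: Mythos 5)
Your overall strategy --- reduce to a standard pair via Proposition \ref{PropStandardInClass} and then reach a piece-wise order reversing pair by finitely many inner switch moves, with termination controlled by a numerical measure --- is exactly the paper's. But the two ingredients you defer as bookkeeping are where the proof actually lives, and as stated they do not work. First, your statistic $c(\pp)$ is not well-defined: the block and chain decomposition recalled before Section \ref{SecPWOR} is only defined for pairs that are \emph{already} piece-wise order reversing (Definition \ref{DefPWOR}), and a general standard pair need not admit any nontrivial partition into intervals carrying the same letter set in both rows. So there are no ``blocks of the decomposition on which $\pp$ fails to act by order reversal'' to count or to weight; you would first have to invent and justify a decomposition for arbitrary standard pairs, which the paper deliberately avoids.

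Second, and more seriously, your locality claim (iii) is false for inner switch moves as the paper defines them: a $\{b,c\}$-switch replaces
$$\mtrx{\LL{a}{z}~\LL{\ol{w}_1}{\ol{w}_4}~\LL{b}{b}~\LL{\ol{w}_2}{\ol{w}_5}~\LL{c}{c}~\LL{\ol{w}_3}{\ol{w}_6}~\LL{z}{a}}$$
by a pair in which the whole prefix words $\ol{w}_1,\ol{w}_4$ (everything preceding $b$ in each row) are relocated between the two marked letters. Hence if you pick $b,c$ inside the leftmost bad region $B$, every block to the left of $B$ is displaced rather than ``left untouched,'' and your monovariant need not drop. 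This is precisely why the paper's proof attacks the \emph{rightmost} failure: its chosen switches fix the already order-reversing suffix (the words to the right of position $n$ in Figure \ref{FigProofOfPWOR}), accept that the prefix gets scrambled, and measure progress by the triple $(n,n_0,n_1)$ --- indeed, in Cases 1 and 2 of that proof a single move may leave the bad region the same size and only increase $n_0$ or $n_1$, so even a corrected version of your ``weighted count of bad blocks'' would in effect have to be replaced by this lexicographic-type measure. Without a well-defined statistic and without switch moves that genuinely act locally, your inductive step does not close, so the argument as proposed has a real gap rather than being a routine case analysis.
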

\noindent This provides a purely combinatorial proof of \cite[Theorem 2.1]{cFick2014}, as all such permutations are equivalent to their own inverses.

Theorem \ref{ThmPWOrderReversing} will lead to further results in Section \ref{SecKZB}. Theorem \ref{ThmTypes} attributes the every Rauzy Class a Type, which aligns with the notion of type described in the introduction. Theorem \ref{ThmClassTypeSufficient} proves that Type and data from the invariant $\Sof{\pp}$ suffice to place two permutations in the same Rauzy Class. Theorem \ref{ThmExclassTypeSufficient} similarly shows that two permutations with the same Type and other data from $\Sof{\pp}$ belong to the same Extended Rauzy Class. Theorem \ref{ThmExclassUniqueType} completes the classification by showing that Type is a unique value to each Extended (and therefore regular) Rauzy Class. These results prove the classification theorems from \cite{cKonZor2003} and \cite{cBoi2012}.

Section \ref{SecSwitches} will prove two very important results concerning switch moves. The first result is that (inner) switch moves connect all standard permutations in a Rauzy Class. Likewise, the standard permutations in an Extended Rauzy Class are connected by (inner and outer) switch moves. These results are used in the proof of Theorem \ref{ThmExclassUniqueType}.

Section \ref{SecOther} provide other uses of the switch move. We revisit known results and provide simplified proofs by using switch moves.

The Appendices list ``utility" sequences of switch moves. These moves are provided in detail outside of the main proofs to aid in readability. They are divided into two portions, those that do not deal with $3$-blocks and those that do.

\subsection{Acknowledgments} The author would like to thank M. Kontsevich, A.\ Zorich and C.\ Boissy for their beautifully written results that form the focus of this work. He thanks D.\ Damanik for motivating Theorem \ref{ThmPWOrderReversing}, J.\ Chaika for his enduring encouragement on this and many other works and A.\ Sportiello for helpful comments on an earlier draft. The author would be unable to function without Amy and Charlie.

\section{Background and Definitions}\label{SecBackground}

  Throughout this paper, $\AAA$ will represent a finite alphabet and $\#\AAA$ its cardinality. We call a (possibly empty) ordered collection $\ol{w}$ of distinct letters in $\AAA$ a \term{word}. Given a word $\ol{w}$, let $\#\ol{w}$ denote its length.
\subsection{Rauzy Classes and Extended Rauzy Classes}

  Given finite alphabet $\AAA$, let $\perm{\AAA}$ denote the set of all pairs on $\AAA$ in the following way: $\pp\in\perm{\AAA}$ if $\pp = (p_0,p_1)$ where $p_\eps:\AAA\to\{1,\dots,\#\AAA\}$ are bijections, $\eps\in\{0,1\}$. We will often represent $\pp$ as
      $$ \pp = \mtrx{\LL{p_0^{-1}(1)}{p_1^{-1}(1)}~\LL{p_0^{-1}(2)}{p_1^{-1}(2)}~\LL{\dots}{\dots}~\LL{p_0^{-1}(\#\AAA)}{p_1^{-1}(\#\AAA)}}.$$
  As indicated by this representation, we will sometimes call $p_0$ or $p_1$ a \term{row}.

  \begin{remark}
      A pair $\pp=(p_0,p_1)$ is frequently called a ``permutation'' in literature about IET's that adopt this notation. As we will discuss later in this section, $\pp$ is related to a permutation and so this terminology is not without justification. However, we make the distinction here to be consistent with \cite{cFick2014b}.
  \end{remark}

  \begin{defn}\label{DefIrreducible}
      The set of \term{irreducible} pairs on $\AAA$ are
	$$ \irr{\AAA} := \left\{\pp\in\perm{\AAA}:~p_0^{-1}\{1,\dots,k\}=p_1^{-1}\{1,\dots,k\} \iff k=\#\AAA\right\}.$$
      In other words, $\pp\in\perm{\AAA}$ is irreducible if and only if it does not fix its first $k$ letters as a set for $k<\#\AAA$.  $\perm{\AAA}\setminus\irr{\AAA}$ is the set of \term{reducible} pairs.
  \end{defn}

  On $\irr{\AAA}$, we may define two maps. The \term{Rauzy Move} of Type $\eps$, $\eps\in\{0,1\}$, on $\pp=(p_0,p_1)\in\irr{\AAA}$ will be denoted by $\eps\pp$. If $\pp' = \eps\pp$ and $\pp' = (p'_0,p'_1)$ then $p_\eps' = p_\eps$ and
      $$ p'_{1-\eps}(b) = \RHScase{p_{1-\eps}(b), & p_{1-\eps}(b) \leq p_{1-\eps}(z),\\
				     p_{1-\eps}(b)+1, & p_{1-\eps}(z) < p_{1-\eps}(b) < \#\AAA,\\
				     p_{1-\eps}(z)+1 & p_{1-\eps}(b) = \#\AAA.}$$
  for each $b\in \AAA$, where $z = p_\eps^{-1}(\#\AAA)$. We note that for every $\pp\in\irr{\AAA}$ and $\eps\in\{0,1\}$, $\eps\pp\in\irr{\AAA}$ and there exists $k=k(\pp,\eps)>0$ such that $\eps^k\pp=\pp$ where $\eps^k\pp$ denotes acting on $\pp$ by a type $\eps$ Rauzy Move $k$ times. The following is therefore correctly named and defined.

  \begin{defn}\label{DefRauzyClass}
      Given $\pp\in\irr{\AAA}$, the \term{(labeled) Rauzy Class} of $\pp$, $\ClassLab{\pp}$, is the minimal equivalence class of pairs closed under both types of Rauzy Moves that contains $\pp$.
  \end{defn}

  We will call a sequence of Rauzy Moves a \term{Rauzy Path}. So an equivalent definition of a Rauzy Class could be given by the equivalence relation $\pp\sim\qq$ if and only if there exists a Rauzy Path connecting $\pp$ to $\qq$. When the staring pair is understood, we will express a Rauzy Path $\gamma$ by the types of Rauzy Moves in order. For example, $\gamma = 0^4 1^2 0^6$ would represent four moves of type $0$ followed by two moves of type $1$ and ending with six moves of type $0$. We call a path of the form $\eps^k$, $\eps\in\{0,1\}$ and $k>0$, a \term{cycle}. Given a path $\gamma = \eps_1^{k_1}\eps_2^{k_2}\cdots\eps_n^{k_n}$ such that $\eps_i\in\{0,1\}$ for $1\leq i \leq n$ and $\eps_{i+1} = 1-\eps_i$ for all $i<n$, let $\pp^{(0)},\pp^{(1)},\dots,\pp^{(n)}$ be defined as follows:
      \begin{equation}\label{EqCycleVertices}
	  \pp^{(0)} = \pp \mbox{ and } \pp^{(i)} = \eps_i^{k_i}\pp^{(i-1)}\mbox{ for } 1\leq i \leq n
      \end{equation}
  where $\gamma$ begins at $\pp$. We will call the pairs $\pp^{(i)}$ the \term{cycle vertices} of $\gamma$.

  A pair $\pp=(p_0,p_1)\in\irr{\AAA}$ is \term{standard} if the first letter of $p_0$ (resp. $p_1$) is the last letter of $p_1$ (resp. $p_0$), or equivalently $p_0^{-1}(1) = p_1^{-1}(\#\AAA)$ and $p_1^{-1}(1) = p_0^{-1}(\#\AAA)$. We denote by $\std{\AAA}$ the standard pairs on $\AAA$. The following result is very well known (see \cite{cRau1979}). However, we will use this proof later in Lemma \ref{LemDistanceBoundFromStandard}.

  \begin{prop}\label{PropStandardInClass}
    Every Rauzy Class contains a standard pair.
  \end{prop}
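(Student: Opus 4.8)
The plan is to start with an arbitrary $\pp \in \irr{\AAA}$ and produce a standard pair in $\ClassLab{\pp}$ by a carefully chosen Rauzy Path. The natural quantity to track is the letter $p_0^{-1}(\#\AAA)$ (the last letter of the top row) and the letter $p_1^{-1}(\#\AAA)$ (the last letter of the bottom row); a pair is standard precisely when $p_0^{-1}(1) = p_1^{-1}(\#\AAA)$ and $p_1^{-1}(1) = p_0^{-1}(\#\AAA)$. I would first observe what a Rauzy Move of type $\eps$ does to these: a type-$\eps$ move fixes the row $p_\eps$ entirely (so in particular it fixes $p_\eps^{-1}(\#\AAA)$, the ``winner''), and it reinserts the loser $p_{1-\eps}^{-1}(\#\AAA)$ just after the winner's position in the other row. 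So along a cycle $\eps^k$, the last letter of row $\eps$ never changes, while the last letter of row $1-\eps$ cycles through a sequence of distinct letters.

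The key step is to show that one can apply a cycle $\eps^k$ that moves a \emph{prescribed} letter into the last position of row $1-\eps$ — specifically, the letter that currently sits first in row $\eps$. Concretely: fix $\eps$, let $a = p_\eps^{-1}(1)$ be the first letter of row $\eps$, and note $a \neq p_\eps^{-1}(\#\AAA)$ since $\#\AAA \geq 2$. Irreducibility guarantees that $a$ is \emph{not} the last letter of row $1-\eps$ before $\#\AAA - 1$ successive type-$\eps$ moves have occurred unless it already is — more carefully, I would argue that some power $\eps^k$ with $k \geq 0$ (taken in $\{0,1,\dots,k(\pp,\eps)-1\}$, the cycle being finite and returning to $\pp$) brings $a$ to the end of row $1-\eps$; this is because the losers along the cycle run over all letters of $\AAA$ except $p_\eps^{-1}(\#\AAA)$ exactly once before repeating, and $a$ is among them. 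After this cycle, the new pair $\qq$ has $q_\eps^{-1}(1) = a = q_{1-\eps}^{-1}(\#\AAA)$, i.e. half of the standardness condition holds for the pair on the $(\eps, 1-\eps)$ sides.

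Then I would do the symmetric cycle with the roles of $0$ and $1$ swapped: apply $(1-\eps)^\ell$ to $\qq$ to bring $q_{1-\eps}^{-1}(1)$ to the end of row $\eps$. The thing that must be checked — and this is where I expect the main obstacle to lie — is that this second cycle does \emph{not destroy} the condition $q_\eps^{-1}(1) = q_{1-\eps}^{-1}(\#\AAA)$ achieved in the first step. But a type-$(1-\eps)$ move fixes row $1-\eps$ entirely, hence fixes $q_{1-\eps}^{-1}(\#\AAA) = a$; and it fixes the \emph{winner} of row $\eps$, which is precisely the first letter $q_\eps^{-1}(1) = a$ (the winner of a type-$(1-\eps)$ move is read off row $\eps$, and after a full cycle of type-$\eps$ moves the first entry of row $\eps$ is $a$) — so $a$ stays first in row $\eps$ throughout. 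Wait: I need the winner of the type-$(1-\eps)$ move to be $a$, i.e. $q_\eps^{-1}(\#\AAA) = a$, not $q_\eps^{-1}(1) = a$; this is exactly the subtlety. I would resolve it by instead choosing, in the first cycle, to bring $a = p_\eps^{-1}(1)$ to the \emph{end} of row $1-\eps$ while simultaneously noting that $a$ has become the winner for subsequent $(1-\eps)$-moves only after the $\eps$-cycle is complete — so the correct bookkeeping is that along the second cycle the winner is always $a$, it gets reinserted right after itself at the end of row $\eps$, leaving $a$ both at position $1$ of row $\eps$ is \emph{not} what we want; rather, one checks directly that after $(1-\eps)^\ell$ the last letter of row $\eps$ equals the old first letter of row $1-\eps$, and the last letter of row $1-\eps$ is untouched (it remains $a$), so we must also verify $a$ ends up at position $1$ of row $\eps$ — which follows because the loser insertion in a type-$(1-\eps)$ move never moves the letter currently in position $1$ of the fixed row...

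\medskip
\noindent\emph{(Cleaner statement of the plan.)} Two cycles suffice: the first, of type $\eps$, arranges that the last letter of row $1-\eps$ equals the first letter of row $\eps$; the second, of type $1-\eps$, arranges that the last letter of row $\eps$ equals the first letter of row $1-\eps$. The crux is the \emph{non-interference} lemma: a full type-$(1-\eps)$ cycle applied after the first step fixes both the first letter of row $\eps$ and the last letter of row $1-\eps$, so the standardness condition obtained in step one survives step two. Existence of the required cycle lengths in each step follows from the finiteness of Rauzy cycles ($\eps^{k(\pp,\eps)}\pp = \pp$) together with the combinatorial description of how losers cycle through the alphabet, and irreducibility is what ensures the targeted letter is genuinely reachable as a loser. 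The main obstacle, and the part requiring the most care, is precisely this non-interference argument and the exact identification of which letter becomes the winner/loser at each stage.
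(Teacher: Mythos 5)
Your plan hinges on the claim that, along a type-$\eps$ cycle, the losers ``run over all letters of $\AAA$ except $p_\eps^{-1}(\#\AAA)$''. That claim is false, and it is exactly where the proposal breaks. A type-$\eps$ move reinserts the loser immediately after the winner $z=p_\eps^{-1}(\#\AAA)$ in row $1-\eps$, so the only letters that ever become losers during a type-$\eps$ cycle are those lying \emph{strictly after} $z$ in row $1-\eps$; they recur with period $\#\AAA-p_{1-\eps}(z)$, and any letter before $z$ (in particular, possibly $p_\eps^{-1}(1)$) is never brought to the last position. Hence your first step can fail for both choices of $\eps$. Concretely, take
$$\pp=\mtrx{\LL{a~b~c~d~e}{c~a~e~b~d}},$$
which is irreducible. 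For $\eps=0$ the winner $e$ sits at position $3$ of the bottom row while $a=p_0^{-1}(1)$ sits at position $2$, so only $b$ and $d$ ever lose; for $\eps=1$ the winner $d$ sits at position $4$ of the top row while $c=p_1^{-1}(1)$ sits at position $3$, so only $e$ ever loses. No single cycle of either type produces the half-standard configuration you need, and in fact this $\pp$ lies in $\STAR{\AAA}$ (Definition \ref{DefSpecialForm}), so by Lemma \ref{LemFarthestFromStandard} its cycle distance to $\std{\AAA}$ is $\#\AAA-2=3$: no two-cycle path whatsoever reaches a standard pair, so the shape of your argument cannot be repaired by cleverer bookkeeping alone.

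The paper closes this gap with an iterative descent rather than two fixed cycles: writing $z_\eps=p_\eps^{-1}(\#\AAA)$ and $n_\eps=p_{1-\eps}(z_\eps)$, if neither $n_0$ nor $n_1$ equals $1$, irreducibility produces a letter $b$ with $p_{1-\eps}(b)>n_\eps>p_\eps(b)$, and one well-chosen cycle strictly decreases the smaller of the two $n$'s; after at most $\#\AAA-3$ such cycles one reaches $n_\eps=1$, i.e.\ the winner sits at position $1$ of the opposite row. From that configuration your final cycle is exactly the paper's last step, and your ``non-interference'' worry evaporates there for the reason you half-identified: all insertions happen immediately after position $1$, so neither the position-$1$ letter of the fixed row nor the untouched row moves. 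So the second half of your argument is sound; what is missing is the mechanism (the descent on $n_\eps$, using irreducibility) that gets you to a configuration from which a single cycle can target the prescribed letter.
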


  \begin{proof}
      We shall start with any pair $\pp=(p_0,p_1)$ and connect it to a standard pair by Rauzy Moves. Note that if $\#\AAA=2$, the only irreducible pairs are standard. We shall therefore prove the proposition for $\#\AAA>2$. Let $z_\eps = p^{-1}_\eps(\#\AAA)$ and $n_\eps = p_{(1-\eps)}(z_\eps)$ for $\eps\in\{0,1\}$.

      If $\pp$ is not standard but $n_\eps = 1$ for some $\eps$, then we may apply $\#\AAA - p_{(1-\eps)}(a_\eps)$ Rauzy Moves of type $\eps$ to arrive at a standard pair, where $a_\eps = p^{-1}_\eps(1)$.

      Choose $\eps$ such that $1<n_\eps \leq n_{(1-\eps)} \leq \#\AAA$. Note that in fact $n_\eps \leq \#\AAA-2$ because $\pp$ is irreducible. Also, there must exist a letter $b$ such that
	  $$p_{(1-\eps)}(b) > n_\eps> p_\eps(b).$$
      By performing $\#\AAA - p_{(1-\eps)}(b)$ Rauzy Moves of type $\eps$ to $\pp$, we arrive at pair $\pp'$. This pair has the following new values:
	  $$ z_\eps' = z_\eps,~ n_\eps' = n_\eps,~ z_{(1-\eps)}' = b, n_{(1-\eps)}' = p_\eps(b) < n_\eps.$$

      We continue to replace $\pp$ with $\pp'$ by performing Rauzy Moves until $n_\eps = 1$ for some $\eps$. We then may arrive at a standard pair as mentioned above.
  \end{proof}

  We will call \term{Left Rauzy Moves} maps on $\pp\in\irr{\AAA}$ that cycle letters on the left of each row rather than the right. For $\eps\in\{0,1\}$, the Left Rauzy Move of type $\eps$ on $\pp=(p_0,p_1)$ will be denoted by $\tilde{\eps}\pp$. If $\pp' = \tilde{\eps}\pp$ and $\pp' = (p_0',p_1')$, then $p_\eps'=p_\eps$ while
      $$ p'_{1-\eps}(b) = \RHScase{p_{1-\eps}(a)-1 & p_{1-\eps}(b) = 1,\\
				     p_{1-\eps}(b)-1, & 1 < p_{1-\eps}(b) < p_{1-\eps}(a),\\
				     p_{1-\eps}(b), & p_{1-\eps}(b) \geq p_{1-\eps}(a).}$$
  for each $b\in \AAA$, where $a = p_\eps^{-1}(1)$. Just as with regular Rauzy Moves, these moves are each bijections on $\irr{\AAA}$. We are then lead to the following.

  \begin{defn}\label{DefExRauzyClass}
      The \term{(labeled) Extended Rauzy Class} of $\pp\in\irr{\AAA}$, denoted by $\ExtClassLab{\pp}$ is the minimal equivalence class of $\irr{\AAA}$ that contains $\pp$ and is closed under Rauzy Moves and Left Rauzy Moves of each type.
  \end{defn}

  We will now address (actual) permutations. A permutation on $\{1,\dots,n\}$ is simply a bijection from this set to itself, and the set of such permutations is denoted by $\mathfrak{S}_n$. There exists a map $\perm{\AAA}\to\mathfrak{S}_n$, $n=\#\AAA$, given by
      \begin{equation}\label{EqMonodromy}
      		(p_0,p_1) \mapsto p_1\circ p_0^{-1}.
      \end{equation}
  Irreducible permutations, Rauzy Moves and Left Rauzy Moves have definitions in $\mathfrak{S}_n$ that can be expressed through this map from $\perm{\AAA}$. The following is therefore acceptable for the purposes of this paper.

  \begin{defn}\label{DefOneRowClasses}
      Let $\ClassLab{\pp}$ (resp. $\ExtClassLab{\pp}$) be the Rauzy Class (resp. Extended Rauzy Class) of $\pp = (p_0,p_1)\in\irr{\AAA}$. The \term{non-labeled Rauzy Class} $\ClassNon{\pp}$ (resp. \term{non-labeled Extended Rauzy Class} $\ExtClassNon{\pp}$) associated to $\pp$ is the image of $\ClassLab{\pp}$ (resp. $\ExtClassLab{\pp}$) under the map from Equation \eqref{EqMonodromy}.
  \end{defn}

  The main results in Section \ref{SecKZB} refer to equivalent unlabeled classes and actual permutations. Two pairs $\pp=(p_0,p_1)\in\perm{\AAA}$ and $\pp'=(p'_0,p'_1)\in\perm{\AAA'}$ map to the same permutation if and only if they are the same pair under \term{renaming}, meaning $p_\eps' = p_\eps\circ\tau$, $\eps\in\{0,1\}$, for some bijection $\tau :\AAA' \to \AAA$. Note that if $\pp=(p_0,p_1)\in\perm{\AAA}$ maps to $\pi\in\mathfrak{S}_n$, then $(p_1,p_0)$ maps to $\pi^{-1}$. This notion of inverse will be used for $\pp\in\perm{\AAA}$. In particular if $\pp = (p_0,p_1)\in\irr{\AAA}$, then $\pp^{-1} = (p_1,p_0)\in\irr{\AAA}$ and
	  \begin{equation}\label{EqInverseMoves}
	      \eps\pp = \left[(1-\eps)\pp^{-1}\right]^{-1}
	  \end{equation}
  for $\eps\in\{0,1\}$. We will use this fact quite often, as it allows a proof one type of Rauzy Move to automatically apply to the other.

\subsection{Invariants For Rauzy Classes}

  This section will establish some class invariant values associated to pairs. These all are extensions of the known invariant $\sigma$ stated in Section 2 from \cite{cVe1982}. The invariants in this section are given for the convention of pairs in this work. Because of necessary differences in what follows from the classical map $\sigma$, all results will be proven.

  The map $\Sof{\pp}$ and the value $\Mof{\pp}$ given below are both invariants of a Rauzy Class, as shown in Proposition \ref{PropSofPiIsInvariant} and Corollary \ref{CorPofPiAndMofPiClassInvariant}. The map $\Yof{\pp}$, the list $\Pof{\pp}$ and the value $\ARFof{\pp}$ are invariants of an Extended Rauzy Class, as shown in Proposition \ref{PropPofPiExclassInvariant} and Corollary \ref{CorArfIsInvariant}. These will be used in Section \ref{SecKZB} as part of the classification of non-labeled Rauzy Classes and Extended Rauzy Classes.

  \begin{defn}\label{DefSofPiAndMofPi}
      Let $\pp=(p_0,p_1) \in \irr{\AAA}$. We define $\Sof{\pp} : \AAA \to \AAA$ by
	  $$ \Sof{\pp}(\alpha) = \RHScase{p_0^{-1}(1), & p_1(\alpha) = 1 \\
		p_0^{-1}(p_0(p_1^{-1}(\#\AAA))+1), & p_1(\alpha) = p_1(p_0^{-1}(\#\AAA))+1\\
		p_0^{-1}(p_0(p_1^{-1}(p_1(\alpha) -1))+1), & \mathrm{otherwise.}}$$
      Let $\Mof{\pp}$ denote the length of the cycle of $\Sof{\pp}$ that contains $p_0^{-1}(1)$.
  \end{defn}

  The following alternative definition of $\Sof{\pp}$ will be convenient for the proofs that follow.

  \begin{remark}\label{RemSofPiIsRL}
      For $\AAA$, $n:=\#\AAA$, let $\AAA^*:=\AAA\sqcup\{\lhd,\rhd\}$. Given $\pp = (p_0,p_1)$, define for $\eps\in\{0,1\}$ maps $\mathbf{L}_{p_\eps}, \mathbf{R}_{p_\eps}:\AAA^*\to\AAA^*$ by
	  $$ \mathbf{L}_{p_\eps}(b) = \RHScase{\rhd, & b=\lhd,\\
		      \lhd, & b\in\AAA,~ p_\eps(b)=1,\\
		      p_\eps^{-1}(p_\eps(b)-1), &b\in\AAA,~ p_\eps(b) >1,\\
		      p_\eps^{-1}(n), & b=\rhd.}$$
      for all $b\in\AAA^*$ and let $\mathbf{R}_{p_\eps} = \mathbf{L}_{p_\eps}^{-1}$. We may verify then that $\Sof{\pp}$ is the first return of $\mathbf{R}_{p_0}\circ\mathbf{L}_{p_1}$ to $\AAA$.
  \end{remark}

  Using the relationship between inverses and Rauzy Moves established in Equation \eqref{EqInverseMoves}, the following lemma will prove useful to the proof of Proposition \ref{PropSofPiIsInvariant}

  \begin{lem}\label{LemSofPiInverse}
   Let $\pp = (p_0,p_1)\in\irr{\AAA}$ and $\qq= (p_1,p_0)$ be its inverse. Then $\Sof{\qq} = \Sof{\pp}^{-1}$.
  \end{lem}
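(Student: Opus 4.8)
The plan is to use the ``return map'' description of $\Sof{\pp}$ from Remark \ref{RemSofPiIsRL}. Recall that $\Sof{\pp}$ is the first return to $\AAA$ of the map $\mathbf{R}_{p_0}\circ\mathbf{L}_{p_1}$ on $\AAA^* = \AAA\sqcup\{\lhd,\rhd\}$, where $\mathbf{R}_{p_\eps} = \mathbf{L}_{p_\eps}^{-1}$. Since $\qq = (p_1,p_0)$ simply swaps the two rows, the corresponding generating map for $\qq$ is $\mathbf{R}_{p_1}\circ\mathbf{L}_{p_0}$. The key algebraic observation is that
$$
\left(\mathbf{R}_{p_0}\circ\mathbf{L}_{p_1}\right)^{-1} = \mathbf{L}_{p_1}^{-1}\circ\mathbf{R}_{p_0}^{-1} = \mathbf{R}_{p_1}\circ\mathbf{L}_{p_0},
$$
so the generating map attached to $\qq$ is literally the inverse of the generating map attached to $\pp$ on all of $\AAA^*$.

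From here I would invoke the general principle that the first-return map of a bijection $T$ to a subset $B$, and the first-return map of $T^{-1}$ to the same subset $B$, are inverses of each other. Concretely: if $\alpha\in\AAA$ and $\Sof{\pp}(\alpha) = \beta$, there is a minimal $k\geq 1$ with $(\mathbf{R}_{p_0}\circ\mathbf{L}_{p_1})^k(\alpha) = \beta$ and the intermediate iterates $(\mathbf{R}_{p_0}\circ\mathbf{L}_{p_1})^j(\alpha)$ for $0<j<k$ all lie in $\{\lhd,\rhd\}$; applying $(\mathbf{R}_{p_0}\circ\mathbf{L}_{p_1})^{-k} = (\mathbf{R}_{p_1}\circ\mathbf{L}_{p_0})^k$ to $\beta$ walks the same orbit backwards through the same intermediate points in $\{\lhd,\rhd\}$, landing at $\alpha$, and $k$ is still minimal for this return. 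Hence $\Sof{\qq}(\beta) = \alpha$, i.e. $\Sof{\qq}\circ\Sof{\pp} = \mathrm{id}$, and by the symmetric argument $\Sof{\pp}\circ\Sof{\qq} = \mathrm{id}$, giving $\Sof{\qq} = \Sof{\pp}^{-1}$.

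I expect the main obstacle to be purely bookkeeping: one must check that the orbit of any $\alpha\in\AAA$ under $\mathbf{R}_{p_0}\circ\mathbf{L}_{p_1}$ does return to $\AAA$ (so that the first-return map is well defined and total, which is already implicit in Remark \ref{RemSofPiIsRL} since $\Sof{\pp}$ is asserted to be a self-map of $\AAA$), and that the finite set $\{\lhd,\rhd\}$ of ``outside'' states is visited identically in forward and backward passes. As an alternative, should one prefer to avoid the return-map formalism, the identity can be verified directly from Definition \ref{DefSofPiAndMofPi}: the three cases defining $\Sof{\pp}$ are set up so that $\Sof{\pp}$ sends $\alpha\mapsto\gamma$ exactly when a short explicit relation holds between $p_0(\alpha)$, $p_1(\cdot)$ and $p_0(\gamma)$ (with the first two cases handling the ``wrap-around'' at $\lhd$ and $\rhd$); swapping the roles of $p_0$ and $p_1$ transposes this relation, which is precisely what $\Sof{\qq} = \Sof{\pp}^{-1}$ asserts. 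The cleaner write-up is the Remark \ref{RemSofPiIsRL} route, so that is the one I would carry out in full.
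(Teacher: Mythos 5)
Your proposal is correct and follows essentially the same route as the paper's proof: both invoke Remark \ref{RemSofPiIsRL}, note that $\mathbf{R}_{p_1}\circ\mathbf{L}_{p_0} = \big[\mathbf{R}_{p_0}\circ\mathbf{L}_{p_1}\big]^{-1}$, and conclude that first returns of inverse maps to the same subset are inverses. Your write-up merely spells out the orbit-reversal bookkeeping that the paper leaves implicit.
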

  \begin{proof}
      By Remark \ref{RemSofPiIsRL}, we see that $\Sof{\pp}$ is the first return of $\mathbf{R}_{p_0}\circ\mathbf{L}_{p_1}$ on $\AAA$, while $\Sof{\qq}$ is the first return map of $\mathbf{R}_{p_1}\circ\mathbf{L}_{p_0} = \big[\mathbf{R}_{p_0}\circ\mathbf{L}_{p_1}\big]^{-1}$ to $\AAA$. Because they are first returns of inverse functions, they are also inverses.
  \end{proof}

  \begin{prop}\label{PropSofPiIsInvariant}
      Let $\pp\in\irr{\AAA}$ and $\qq\in\ClassLab{\pp}$. Then $\Sof{\qq} = \Sof{\pp}$.
  \end{prop}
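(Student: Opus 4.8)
The plan is to combine the fact that $\qq\in\ClassLab{\pp}$ if and only if there is a Rauzy Path from $\pp$ to $\qq$ with the ``first return'' description of $\Sof{\pp}$ furnished by Remark \ref{RemSofPiIsRL}. By induction on the length of a Rauzy Path it suffices to treat a single Rauzy Move, i.e.\ to prove $\Sof{\eps\pp}=\Sof{\pp}$ for every $\pp\in\irr{\AAA}$ and every $\eps\in\{0,1\}$. This in turn reduces to the case $\eps=0$: writing $\rr=0\pp^{-1}$, Equation \eqref{EqInverseMoves} gives $1\pp=\rr^{-1}$, so Lemma \ref{LemSofPiInverse} yields $\Sof{1\pp}=\Sof{\rr}^{-1}$, and once the $\eps=0$ case is known $\Sof{\rr}=\Sof{0\pp^{-1}}=\Sof{\pp^{-1}}=\Sof{\pp}^{-1}$, whence $\Sof{1\pp}=\Sof{\pp}$.

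So fix $\pp=(p_0,p_1)\in\irr{\AAA}$ with $n:=\#\AAA\geq 2$ (the case $n=1$ being trivial, since then $\ClassLab{\pp}=\{\pp\}$), and let $\pp'=0\pp=(p_0,p'_1)$; the row $p_0$ is unchanged. By Remark \ref{RemSofPiIsRL}, $\Sof{\pp}$ and $\Sof{\pp'}$ are the first returns to $\AAA$ of $S:=\mathbf{R}_{p_0}\circ\mathbf{L}_{p_1}$ and $S':=\mathbf{R}_{p_0}\circ\mathbf{L}_{p'_1}$. Put $z=p_0^{-1}(n)$ and $w=p_1^{-1}(n)$, the last letters of the two rows; irreducibility forces $z\neq w$, so $p_1(z)\leq n-1$. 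If $p_1(z)=n-1$ then the type-$0$ move does nothing, $\pp'=\pp$, and there is nothing to prove; assume henceforth $p_1(z)<n-1$ and set $u:=p_1^{-1}(p_1(z)+1)\neq w$, the letter immediately after $z$ in the row $p_1$.

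The type-$0$ move deletes $w$ from the end of the row $p_1$ and reinserts it right after $z$; reading the maps of Remark \ref{RemSofPiIsRL} off the rows, one checks directly that $\mathbf{L}_{p'_1}=\mathbf{L}_{p_1}\circ\rho$, where $\rho$ is the $3$-cycle on $\AAA^*$ with $w\mapsto u\mapsto\rhd\mapsto w$ and fixing all other symbols; hence $S'=S\circ\rho$. The orbit data of $S$ we need is: $S$ fixes $\lhd$, so $\lhd$ lies on no $S$-orbit meeting $\AAA$; $S(u)=\mathbf{R}_{p_0}(z)=\rhd$; $S(\rhd)=\mathbf{R}_{p_0}(w)\in\AAA$ (using $w\neq z$); and $S(w)\in\AAA$ (using $p_1(z)<n-1$). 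Hence for $\alpha\in\AAA\setminus\{u\}$ we have $S(\alpha)\in\AAA$ and $\Sof{\pp}(\alpha)=S(\alpha)$, while $\Sof{\pp}(u)=S^2(u)=S(\rhd)$. The same reasoning applied to $S'=S\circ\rho$ gives $S'(w)=S(u)=\rhd$, $S'(u)=S(\rhd)$, $S'(\rhd)=S(w)$, $S'$ fixes $\lhd$, and $S'=S$ off $\{w,u,\rhd\}$; thus for $\alpha\in\AAA\setminus\{w\}$ we have $\Sof{\pp'}(\alpha)=S'(\alpha)$, while $\Sof{\pp'}(w)=(S')^2(w)=S'(\rhd)=S(w)$. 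Comparing on $\AAA$: for $\alpha\notin\{w,u\}$, $\rho(\alpha)=\alpha$ gives $\Sof{\pp}(\alpha)=S(\alpha)=S(\rho(\alpha))=S'(\alpha)=\Sof{\pp'}(\alpha)$; for $\alpha=u$ both sides equal $S(\rhd)$; for $\alpha=w$ both sides equal $S(w)$. Therefore $\Sof{\pp'}=\Sof{\pp}$, which completes the induction and the proof.

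The one step carrying real content is the identity $\mathbf{L}_{p'_1}=\mathbf{L}_{p_1}\circ\rho$ together with the observation that, in passing from $\pp$ to $\pp'$, the only effect on the first-return construction is that the ``detour'' through $\rhd$ migrates from the letter $u$ to the letter $w$, leaving the induced permutation of $\AAA$ unchanged. The reductions to a single move and to $\eps=0$, as well as the degenerate cases $n=1$ and $p_1(z)=n-1$, are routine.
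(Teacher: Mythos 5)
Your proposal is correct and takes essentially the same route as the paper: both reduce to a single Rauzy move of one type using Equation \eqref{EqInverseMoves} and Lemma \ref{LemSofPiInverse}, and then compare first returns of $\mathbf{R}_{p_0}\circ\mathbf{L}_{p_1}$ to $\AAA$ via Remark \ref{RemSofPiIsRL}, checking that only the letters adjacent to the moved letter have their detour through $\rhd$ relocated. Your identity $\mathbf{L}_{p'_1}=\mathbf{L}_{p_1}\circ\rho$ with $\rho$ a $3$-cycle is just a slightly tidier bookkeeping of the same case analysis the paper carries out directly on the two affected letters.
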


  \begin{proof}
      In light of Equation \eqref{EqInverseMoves} and Lemma \ref{LemSofPiInverse}, it suffices to prove the claim for $\qq = 1 \pp$. To ease in notation, let $a_i = p_0^{-1}(i)$ and $b_i = p_1^{-1}(i)$ for $1\leq i\leq n$ where $n=\#\AAA$. Also, let $j,k$ satisfy $a_j=b_n$ and $b_k = a_n$. We may assume that $\qq\neq \pp$ and so $a_{n-1} \neq b_n$. If $\qq = (q_0,q_1)$ and $\pp = (p_0,p_1)$, then $\mathbf{L}_{p_1} = \mathbf{L}_{q_1}$ and
	  $$ \mathbf{R}_{q_0}(c) = \RHScase{\mathbf{R}_{p_0}(c), & c\neq a_j,a_{n-1},a_n,\\
					  a_n, & c = a_j, \\
					  \rhd, & c = a_{n-1}\\
					  a_{j+1}, & c = a_n,}$$
    for each $c\in\AAA^*$ (see Remark \ref{RemSofPiIsRL}). We may then verify the claim by cases. Then
	  $$ \begin{array}{rclcl}
		  \Sof{\pp}(b_{k+1}) &=& \mathbf{R}_{p_0}\circ\mathbf{L}_{p_1}\circ\mathbf{R}_{p_0}\circ\mathbf{L}_{p_1}(b_{k+1})
			    & = & \mathbf{R}_{p_0}\circ\mathbf{L}_{p_1}\circ\mathbf{R}_{p_0}(a_n)\\
			    & = & \mathbf{R}_{p_0}\circ\mathbf{L}_{p_1}(\rhd)
			    ~ = ~ \mathbf{R}_{p_0}(a_j) &=& a_{j+1},
	     \end{array}$$
	  and
	  $$ \begin{array}{rclll}
		  \Sof{\qq}(b_{k+1}) &=& \mathbf{R}_{q_0}\circ\mathbf{L}_{q_1}(b_{k+1})& &\\
			    & = & \mathbf{R}_{q_0}(a_n) &=& a_{j+1}.
	     \end{array}$$
	Let $\ell$ satisfy $b_\ell = a_{n-1}$. Then
	   $$ \begin{array}{rclll}
		  \Sof{\pp}(b_{\ell+1}) &=& \mathbf{R}_{p_0}\circ\mathbf{L}_{p_1}(b_{\ell+1})& &\\
			    & = & \mathbf{R}_{p_0}(a_{n-1}) &=& a_n,
	     \end{array}$$
	and
	  $$ \begin{array}{rclcl}
		  \Sof{\qq}(b_{\ell+1}) &=& \mathbf{R}_{q_0}\circ\mathbf{L}_{q_1}\circ\mathbf{R}_{q_0}\circ\mathbf{L}_{q_1}(b_{\ell+1})\\
			    & = & \mathbf{R}_{q_0}\circ\mathbf{L}_{q_1}\circ\mathbf{R}_{q_0}(a_{n-1})\\
			    & = & \mathbf{R}_{q_0}\circ\mathbf{L}_{q_1}(\rhd)
			    ~ = ~ \mathbf{R}_{q_0}(a_j) &=& a_n.
	     \end{array}$$
	For all other letters, the claim is immediate.
  \end{proof}

  While Rauzy Moves fix the letters $p_0^{-1}(1)$ and $p_1^{-1}(1)$ for any (and therefore every) $\pp=(p_0,p_1)$ in a Rauzy Class, Left Rauzy Moves generally will not. It therefore becomes necessary to define a new object that will yield an invariant for Extended Rauzy Classes.

  \begin{defn}\label{DefTildeSofPiAndPofPi}
      For $\pp=(p_0,p_1)\in\irr{\AAA}$ and $a = p_0^{-1}(1)$, let $\Yof{\pp}$ be the first return of $\Sof{\pp}$ on the sub-alphabet $\AAA\setminus\{a\}$, or
	  $$ \Yof{\pp}(b) = \RHScase{\Sof{\pp}^2(b), & p_1(b)=1, \\ \Sof{\pp}(b), & \mathrm{otherwise}.}$$
      Let $\Pof{\pp}$ be the (unordered) list of lengths of cycles of $\Yof{\pp}$, with multiplicity.
  \end{defn}

  \begin{cor}\label{CorPofPiAndMofPiClassInvariant}
    Let $\ClassLab{\pp}\subset \irr{\AAA}$ be the Rauzy Class of $\pp\in\irr{\AAA}$. The value $\Mof{\qq}$ and the list $\Pof{\qq}$ are the same for every $\qq\in\ClassLab{\pp}$.
  \end{cor}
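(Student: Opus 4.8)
The plan is to deduce this corollary directly from Proposition \ref{PropSofPiIsInvariant}, which already tells us that the map $\Sof{\qq}$ is literally the same function for every $\qq\in\ClassLab{\pp}$. Both $\Mof{\qq}$ and $\Pof{\qq}$ are defined purely in terms of $\Sof{\qq}$ together with one distinguished point, namely $a_\qq := q_0^{-1}(1)$ where $\qq = (q_0,q_1)$, so the only thing to check is that this distinguished point is also constant along the Rauzy Class. For $\Mof{\qq}$ this is immediate: it is the length of the cycle of $\Sof{\qq}$ containing $a_\qq$, and both ingredients are invariant. So the first step is to observe that the Rauzy Move of either type fixes the letter $q_0^{-1}(1)$ (and $q_1^{-1}(1)$) — this is essentially built into the definition of the Rauzy Move, since $p'_\eps = p_\eps$ on the fixed row and the modified row's first entry is unchanged by the cycling-on-the-right operation. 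Hence $a_\qq$ is a well-defined invariant of $\ClassLab{\pp}$, call it $a$.

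For $\Pof{\qq}$ I would then note that $\Yof{\qq}$ is, by Definition \ref{DefTildeSofPiAndPofPi}, the first-return map of $\Sof{\qq}$ to the subalphabet $\AAA\setminus\{a_\qq\}$. Since $\Sof{\qq}$ and $a_\qq$ are both independent of the choice of $\qq\in\ClassLab{\pp}$, the map $\Yof{\qq}$ is itself the same function for all such $\qq$, and therefore so is its unordered list of cycle lengths $\Pof{\qq}$. One small point worth spelling out is that the first-return map of a bijection of a finite set to a nonempty subset is again a bijection of that subset (every orbit of $\Sof{\qq}$ meets $\AAA\setminus\{a\}$ unless it is the singleton $\{a\}$, which cannot happen here since $\Sof{\qq}(a) = \Sof{\qq}(q_0^{-1}(1))$ is computed from the $p_1(\alpha)=1$ branch and equals $q_0^{-1}(1)=a$ only in degenerate cases — actually one should just say the first return is well-defined because $\Sof{\qq}$ is a permutation of the finite set $\AAA$), so $\Pof{\qq}$ genuinely is a list of positive integers summing to $\#\AAA - 1$.

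I expect no serious obstacle here; the proof is a two-line unwinding of definitions once Proposition \ref{PropSofPiIsInvariant} is in hand. The only place to be slightly careful is the claim that Rauzy Moves fix $p_0^{-1}(1)$ and $p_1^{-1}(1)$: one should cite or re-derive it from the explicit formula for the Rauzy Move of Type $\eps$, checking that neither $p_\eps^{-1}(1)$ nor $p_{1-\eps}^{-1}(1)$ moves (the type-$\eps$ move leaves row $p_\eps$ untouched, and in row $p_{1-\eps}$ it only reshuffles the entries above position $p_{1-\eps}(z)\geq 1$, so position $1$ is fixed whenever $p_{1-\eps}(z)\geq 1$, which always holds). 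Given that, the corollary follows, and by the usual reduction via Equation \eqref{EqInverseMoves} and Lemma \ref{LemSofPiInverse} — or simply because Proposition \ref{PropSofPiIsInvariant} has already done that reduction — there is nothing further to do.

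Here is the proof.

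\begin{proof}
By Proposition \ref{PropSofPiIsInvariant}, the map $\Sof{\qq}$ is the same for every $\qq\in\ClassLab{\pp}$; denote it by $\sigma$. Moreover, the formulas defining the Rauzy Moves of each type on $\irr{\AAA}$ show that if $\qq=(q_0,q_1)$ and $\qq'=\eps\qq=(q'_0,q'_1)$, then $q'_\eps = q_\eps$ and $q'_{1-\eps}{}^{-1}(1)=q_{1-\eps}^{-1}(1)$, since the type-$\eps$ move only permutes the entries of $q_{1-\eps}$ lying strictly above position $q_{1-\eps}(q_\eps^{-1}(\#\AAA))\geq 1$. Hence the letter $a := q_0^{-1}(1)$ is independent of the choice of $\qq\in\ClassLab{\pp}$.

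Now $\Mof{\qq}$ is by definition the length of the cycle of $\Sof{\qq}=\sigma$ that contains $q_0^{-1}(1)=a$; since both $\sigma$ and $a$ are the same for all $\qq\in\ClassLab{\pp}$, so is $\Mof{\qq}$. Similarly, by Definition \ref{DefTildeSofPiAndPofPi}, $\Yof{\qq}$ is the first return of $\Sof{\qq}=\sigma$ to the subalphabet $\AAA\setminus\{q_0^{-1}(1)\}=\AAA\setminus\{a\}$, which is a bijection of $\AAA\setminus\{a\}$ because $\sigma$ is a permutation of the finite set $\AAA$. As $\sigma$ and $a$ do not depend on $\qq$, the map $\Yof{\qq}$ and hence the unordered list $\Pof{\qq}$ of its cycle lengths are the same for every $\qq\in\ClassLab{\pp}$.
\end{proof}
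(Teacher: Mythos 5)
Your proof is correct and follows essentially the same route as the paper: invoke Proposition \ref{PropSofPiIsInvariant} and note that $\Mof{\qq}$ and $\Yof{\qq}$ (hence $\Pof{\qq}$) are determined by $\Sof{\qq}$ together with the letter $q_0^{-1}(1)$. Your explicit check that Rauzy Moves fix $q_0^{-1}(1)$ is a point the paper only states in passing (in the text preceding Definition \ref{DefTildeSofPiAndPofPi}), so spelling it out is a harmless and welcome addition.
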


  \begin{proof}
   By Proposition \ref{PropSofPiIsInvariant}, $\Sof{\pp} = \Sof{\qq}$ for all $\qq\in\ClassLab{\pp}$. Because $\Yof{\pp}$, $\Pof{\pp}$ and $\Mof{\pp}$ are defined by $\Sof{\pp}$, the corollary follows.
  \end{proof}

  Just as Lemma \ref{LemSofPiInverse} aided in the proof of Proposition \ref{PropSofPiIsInvariant}, the following lemma will dramatically simplify the proof of Proposition \ref{PropPofPiExclassInvariant}, as Left Rauzy Moves and Rauzy Moves are related by the map $H$ defined below.

  \begin{lem}\label{LemHOfPi}
      Let $H:\perm{\AAA} \to \perm{\AAA}$ be the map defined by $H(\pp) = (p_0',p_1')$ where for each $b\in\AAA$ and $\eps\in\{0,1\}$,
	$$ p_\eps' (b) = \#\AAA+1-p_\eps(b).$$
      Then $\mathbf{L}_{p_0}\circ\Yof{\pp}\circ \mathbf{R}_{p_0} = \Yof{H(\pp)}^{-1}$ on $\AAA\setminus{p_0^{-1}(\#\AAA)}$ for $\pp=(p_0,p_1)\in\irr{\AAA}$.
  \end{lem}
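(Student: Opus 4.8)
The plan is to work entirely with the reformulation of $\Sof{\pp}$ from Remark~\ref{RemSofPiIsRL} and to reduce the identity to a short chain of elementary manipulations of first-return maps. Throughout, write $n=\#\AAA$, $a=p_0^{-1}(1)$, and let $\tau$ be the involution of $\AAA^*$ interchanging $\lhd$ and $\rhd$ and fixing $\AAA$ pointwise. I will use three routine facts about first-return maps of permutations of the finite set $\AAA^*$ (recurrence being automatic since every orbit is periodic): (i) if $S_2\subseteq S_1$, the first return of $\big(\text{first return of }g\text{ to }S_1\big)$ to $S_2$ equals the first return of $g$ to $S_2$; (ii) for a bijection $\phi$, the first return of $\phi g\phi^{-1}$ to $\phi(S)$ equals $\phi\circ\big(\text{first return of }g\text{ to }S\big)\circ\phi^{-1}$; and (iii) the first return of $g^{-1}$ to $S$ is the inverse of the first return of $g$ to $S$. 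By Remark~\ref{RemSofPiIsRL}, $\Sof{\pp}$ is the first return of $F:=\mathbf{R}_{p_0}\circ\mathbf{L}_{p_1}$ to $\AAA$, and since $\Yof{\pp}$ is by definition the first return of $\Sof{\pp}$ to $\AAA\setminus\{a\}$, fact (i) makes $\Yof{\pp}$ the first return of $F$ to $\AAA\setminus\{a\}$.

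First I would record how $H$ acts on the maps $\mathbf{L}$ and $\mathbf{R}$. Writing $q=H(\pp)=(q_0,q_1)$, so $q_\eps^{-1}(k)=p_\eps^{-1}(n+1-k)$, a direct check against the three branches of the definition of $\mathbf{L}$ together with the two boundary cases shows that $\mathbf{L}_{q_\eps}$ is the cyclic permutation $(\lhd\ \rhd\ p_\eps^{-1}(1)\ \cdots\ p_\eps^{-1}(n))$, while $\mathbf{R}_{p_\eps}=\mathbf{L}_{p_\eps}^{-1}$ is the cyclic permutation $(\lhd\ p_\eps^{-1}(1)\ \cdots\ p_\eps^{-1}(n)\ \rhd)$; conjugating the latter by $\tau$ yields exactly the former. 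Hence $\mathbf{L}_{q_\eps}=\tau\circ\mathbf{R}_{p_\eps}\circ\tau$ and $\mathbf{R}_{q_\eps}=\tau\circ\mathbf{L}_{p_\eps}\circ\tau$ for $\eps\in\{0,1\}$, so the generator associated with $q$ is $\mathbf{R}_{q_0}\circ\mathbf{L}_{q_1}=\tau\circ(\mathbf{L}_{p_0}\circ\mathbf{R}_{p_1})\circ\tau$. Since $\tau$ fixes $\AAA$ pointwise, the first return of this map to any subset of $\AAA$ agrees with that of $\mathbf{L}_{p_0}\circ\mathbf{R}_{p_1}$; and because $q_0^{-1}(1)=p_0^{-1}(n)$, fact (i) gives that $\Yof{q}$ is the first return of $\mathbf{L}_{p_0}\circ\mathbf{R}_{p_1}$ to $\AAA\setminus\{p_0^{-1}(n)\}$. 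As $\mathbf{L}_{p_0}\circ\mathbf{R}_{p_1}=(\mathbf{L}_{p_1}\circ\mathbf{R}_{p_0})^{-1}$, fact (iii) then shows $\Yof{q}^{-1}$ is the first return of $\mathbf{L}_{p_1}\circ\mathbf{R}_{p_0}$ to $\AAA\setminus\{p_0^{-1}(n)\}$.

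It remains to identify the left side of the claimed identity. Since $\mathbf{R}_{p_0}=\mathbf{L}_{p_0}^{-1}$ and $\mathbf{R}_{p_0}$ carries $\AAA\setminus\{p_0^{-1}(n)\}$ bijectively onto $\AAA\setminus\{a\}$, fact (ii) applied with $\phi=\mathbf{L}_{p_0}$, $g=F$ and $S=\AAA\setminus\{a\}$ gives that $\mathbf{L}_{p_0}\circ\Yof{\pp}\circ\mathbf{R}_{p_0}$ is the first return of $\mathbf{L}_{p_0}\circ F\circ\mathbf{R}_{p_0}=\mathbf{L}_{p_0}\circ\mathbf{R}_{p_0}\circ\mathbf{L}_{p_1}\circ\mathbf{R}_{p_0}=\mathbf{L}_{p_1}\circ\mathbf{R}_{p_0}$ to $\mathbf{L}_{p_0}(\AAA\setminus\{a\})=\AAA\setminus\{p_0^{-1}(n)\}$. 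Comparing with the previous paragraph, both $\mathbf{L}_{p_0}\circ\Yof{\pp}\circ\mathbf{R}_{p_0}$ and $\Yof{H(\pp)}^{-1}$ equal the first return of $\mathbf{L}_{p_1}\circ\mathbf{R}_{p_0}$ to $\AAA\setminus\{p_0^{-1}(n)\}$, which proves the lemma.

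The only place real care is needed is the bookkeeping of which sub-alphabet each first-return map lives on: the identity deliberately conjugates across the bijection $\mathbf{R}_{p_0}\colon\AAA\setminus\{p_0^{-1}(n)\}\to\AAA\setminus\{a\}$, so one must consistently track $a=p_0^{-1}(1)$ against $p_0^{-1}(n)=q_0^{-1}(1)$. The verification of the conjugation relation $\mathbf{L}_{q_\eps}=\tau\mathbf{R}_{p_\eps}\tau$ is a short but slightly fiddly computation with the piecewise formula plus the two boundary symbols, and although facts (i)--(iii) are standard they should be stated precisely, since finiteness of $\AAA^*$ is exactly what guarantees the relevant first returns are everywhere defined.
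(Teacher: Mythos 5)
Your argument is correct, and it takes a genuinely different route from the paper's proof. The paper proves the identity pointwise: it writes out $\mathbf{L}_{p'_\eps}$ and $\mathbf{R}_{p'_\eps}$ for $H(\pp)=(p'_0,p'_1)$ branch by branch, observes that for ``typical'' letters the composition $\mathbf{R}_{p_0}\circ\Yof{H(\pp)}\circ\mathbf{L}_{p_0}\circ\Yof{\pp}$ telescopes to the identity, and then disposes of the letters whose orbits pass through $\lhd$ and $\rhd$ by two explicit case computations (according to whether $p_1^{-1}(p_1(a_1)-1)$ is or is not the last letter of the top row). You instead upgrade Remark \ref{RemSofPiIsRL} into a structural statement: $\Yof{\pp}$ is the first return of $\mathbf{R}_{p_0}\circ\mathbf{L}_{p_1}$ to $\AAA\setminus\{p_0^{-1}(1)\}$, the exact conjugation $\mathbf{L}_{p'_\eps}=\tau\circ\mathbf{R}_{p_\eps}\circ\tau$ (with $\tau$ the swap of $\lhd$ and $\rhd$) identifies $\Yof{H(\pp)}^{-1}$ as the first return of $\mathbf{L}_{p_1}\circ\mathbf{R}_{p_0}$ to $\AAA\setminus\{p_0^{-1}(\#\AAA)\}$, and the conjugation-of-return-maps fact identifies $\mathbf{L}_{p_0}\circ\Yof{\pp}\circ\mathbf{R}_{p_0}$ with the same induced map. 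I checked the cycle computation $\mathbf{L}_{p'_\eps}=(\lhd\ \rhd\ p_\eps^{-1}(1)\cdots p_\eps^{-1}(n))=\tau\mathbf{R}_{p_\eps}\tau$, the bookkeeping $p'^{-1}_0(1)=p_0^{-1}(\#\AAA)$ and $\mathbf{L}_{p_0}(\AAA\setminus\{p_0^{-1}(1)\})=\AAA\setminus\{p_0^{-1}(\#\AAA)\}$, and the three first-return facts (transitivity, conjugation, inversion), which are all valid here since every orbit on the finite set $\AAA^*$ is periodic; nothing is missing. What your approach buys is the complete elimination of the boundary case analysis and a transparent explanation of why $H$ produces an inverse (it conjugates the generating map to the inverse of the original, up to the harmless flip $\tau$); what the paper's approach buys is that it stays entirely at the level of the defining formulas and needs no general lemmas about induced maps beyond the Remark itself. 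If you write this up, do state facts (i)--(iii) precisely, since the whole proof leans on them, and keep the domain bookkeeping explicit as you already do.
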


  \begin{proof}
      Fix $\pp$ and let $H(\pp) = (p'_0,p_1')$. To aid in calculation, let $a_i = p_0^{-1}(i)$ and $b_i = p_1^{-1}(i)$ for $1\leq i \leq n$, where $n=\#\AAA$.
      We will verify that
		\begin{equation}\label{EqRSLS}
			\mathbf{R}_{p_0}\circ\Yof{H(\pp)}\circ\mathbf{L}_{p_0}\circ \Yof{\pp}(c) = c
		\end{equation}
		for each $c\in\AAA\setminus\{a_1\}$. Note that
		$$ \mathbf{L}_{p_\eps'}(c) = \RHScase{\mathbf{R}_{p_\eps}(c), & c\neq \lhd,\rhd,p_\eps^{-1}(n),\\
					\lhd, & c = p_\eps^{-1}(n),\\
					\rhd, & c = \lhd,\\
					p_\eps^{-1}(1), & c=\rhd,}$$
		and
		$$ \mathbf{R}_{p_\eps'}(c) = \RHScase{\mathbf{L}_{p_\eps}(c), & c\neq \lhd,\rhd,p_\eps^{-1}(1),\\
					\rhd, & c = p_\eps^{-1}(1),\\
					p_\eps^{-1}(n), & c = \lhd,\\
					\lhd, & c=\rhd,}$$
		for each $c\in\AAA^*$, referring to Remark \ref{RemSofPiIsRL}. We will verify the special cases of Equation \eqref{EqRSLS} momentarily, but note for ``typical" $c$,
			$$ \mathbf{R}_{p_0}\circ\Yof{H(\pp)}\circ\mathbf{L}_{\pi_0}\circ \Yof{\pp}(c) =
				\mathbf{R}_{p_0}\circ\mathbf{L}_{p_0}\circ\mathbf{R}_{p_1}\circ\mathbf{L}_{p_0}\circ \mathbf{R}_{p_0}\circ\mathbf{L}_{p_1}(c) = c.$$
		To deal with our special cases, let $j,k,\ell,r$ satisfy
			$$ a_j = b_1, ~ a_\ell = b_n, ~b_k=a_1,~\mbox{and}~b_r=a_n.$$
			
		\textit{Case 1:} $k=r+1$. We see that
				$$ \begin{array}{rclcl}
						\Yof{\pp}(a_j) & = & \mathbf{R}_{p_0}\circ\mathbf{L}_{p_1} \circ\mathbf{R}_{p_0}\circ\mathbf{L}_{p_1} \circ\mathbf{R}_{p_0}\circ\mathbf{L}_{p_1} (b_1)\\										&=& \mathbf{R}_{p_0}\circ\mathbf{L}_{p_1} \circ\mathbf{R}_{p_0}\circ\mathbf{L}_{p_1} \circ\mathbf{R}_{p_0}(\lhd)\\
										&=& \mathbf{R}_{p_0}\circ\mathbf{L}_{p_1} \circ\mathbf{R}_{p_0}\circ\mathbf{L}_{p_1}(b_k)\\
										&=& \mathbf{R}_{p_0}\circ\mathbf{L}_{p_1} \circ\mathbf{R}_{p_0}(a_n)
										~=~ \mathbf{R}_{p_0}\circ\mathbf{L}_{p_1}(\rhd)
										~=~ \mathbf{R}_{p_0}(a_{\ell}) &=& a_{\ell+1}.
						\end{array}$$
						and
				$$ \begin{array}{rclcl}
						\Yof{H(\pp)}(a_\ell) & = & \mathbf{R}_{p_0'}\circ\mathbf{L}_{p_1'} \circ\mathbf{R}_{p_0'}\circ\mathbf{L}_{p_1'} \circ\mathbf{R}_{p_0'}\circ\mathbf{L}_{p_1'} (b_n)\\
										&=& \mathbf{R}_{p_0'}\circ\mathbf{L}_{p_1'} \circ\mathbf{R}_{p_0'}\circ\mathbf{L}_{p_1'} \circ\mathbf{R}_{p_0'}(\lhd)\\
										&=& \mathbf{R}_{p_0'}\circ\mathbf{L}_{p_1'} \circ\mathbf{R}_{p_0'}\circ\mathbf{L}_{p_1'}(b_r)\\
										&=& \mathbf{R}_{p_0'}\circ\mathbf{L}_{p_1'} \circ\mathbf{R}_{p_0'}(a_1)
										~=~ \mathbf{R}_{p_0'}\circ\mathbf{L}_{p_1'}(\rhd)
										~=~ \mathbf{R}_{p_0'}(a_{j}) &=& a_{j-1}.
					\end{array}$$
		Therefore
			$$ \begin{array}{rclcl}
				\mathbf{R}_{p_0} \circ\Yof{H(\pp)}\circ\mathbf{L}_{p_0}\circ\Yof{\pp}(a_j) 
					&=& \mathbf{R}_{p_0} \circ\Yof{H(\pp)}\circ\mathbf{L}_{p_0}(a_{\ell+1}) \\
					&=& \mathbf{R}_{p_0} \circ\Yof{H(\pp)}(a_\ell)
					~=~ \mathbf{R}_{p_0}(a_{j-1})
					&=& a_j.\end{array}$$
		All other letters are ``typical" for this case.
			
		\textit{Case 2:} $k\neq r+1$. Let $s,t$ satisfy $$a_s = b_{k-1}~\mbox{and}~a_t = b_{r+1}.$$
		We find that
			$$ \begin{array}{rclcl}
						\Yof{\pp}(a_j) & = & \mathbf{R}_{p_0}\circ\mathbf{L}_{p_1} \circ\mathbf{R}_{p_0}\circ\mathbf{L}_{p_1}(b_1)
										&=& \mathbf{R}_{p_0}\circ\mathbf{L}_{p_1} \circ\mathbf{R}_{p_0}(\lhd)\\
										&=& \mathbf{R}_{p_0}\circ\mathbf{L}_{p_1}(b_k)
										~=~ \mathbf{R}_{p_0}(a_s) &=& a_{s+1},
						\end{array}$$
		and
			$$ \begin{array}{rclcl}
						\Yof{H(\pi)}(a_s) & = & \mathbf{R}_{\pi_0'}\circ\mathbf{L}_{p_1'} \circ\mathbf{R}_{p_0'}\circ\mathbf{L}_{p_1'}(b_{k-1})
										&=& \mathbf{R}_{p_0'}\circ\mathbf{L}_{p_1'} \circ\mathbf{R}_{p_0'}(a_1)\\
										&=& \mathbf{R}_{p_0'}\circ\mathbf{L}_{p_1'}(\rhd)
										~=~ \mathbf{R}_{p_0'}(a_j) &=& a_{j-1}.
						\end{array}$$
		Therefore
			$$ \begin{array}{rclcl}
				\mathbf{R}_{p_0} \circ\Yof{H(\pp)}\circ\mathbf{L}_{p_0}\circ\Yof{\pp}(a_j)
					&=& \mathbf{R}_{p_0} \circ\Yof{H(\pp)}\circ\mathbf{L}_{p_0}(a_{s+1})\\
					&=& \mathbf{R}_{p_0} \circ\Yof{H(\pp)}(a_s)
					~=~ \mathbf{R}_{p_0}(a_{j-1})
					&=& a_j.\end{array}$$
		We also find that
			$$ \begin{array}{rclcl}
						\Yof{\pp}(a_t) & = & \mathbf{R}_{p_0}\circ\mathbf{L}_{p_1} \circ\mathbf{R}_{p_0}\circ\mathbf{L}_{p_1}(b_{r+1})
										&=& \mathbf{R}_{p_0}\circ\mathbf{L}_{p_1} \circ\mathbf{R}_{p_0}(a_n)\\
										&=& \mathbf{R}_{p_0}\circ\mathbf{L}_{p_1}(\rhd)
										~=~ \mathbf{R}_{p_0}(a_\ell) &=& a_{\ell+1},
						\end{array}$$
		and
			$$ \begin{array}{rclcl}
						\Yof{H(\pp)}(a_\ell) & = & \mathbf{R}_{p_0'}\circ\mathbf{L}_{p_1'} \circ\mathbf{R}_{p_0'}\circ\mathbf{L}_{p_1'}(b_{n})
										&=& \mathbf{R}_{p_0'}\circ\mathbf{L}_{p_1'} \circ\mathbf{R}_{p_0'}(\lhd)\\
										&=& \mathbf{R}_{p_0'}\circ\mathbf{L}_{p_1'}(b_r)
										~=~ \mathbf{R}_{p_0'}(a_t) &=& a_{t-1}.
						\end{array}$$
		Therefore
			$$ \begin{array}{rclcl}
				\mathbf{R}_{p_0} \circ\Yof{H(\pp)}\circ\mathbf{L}_{p_0}\circ\Yof{\pp}(a_t) 
					&=& \mathbf{R}_{p_0} \circ\Yof{H(\pp)}\circ\mathbf{L}_{p_0}(a_{\ell+1})\\
					&=& \mathbf{R}_{p_0} \circ\Yof{H(\pp)}(a_\ell)
					~=~ \mathbf{R}_{p_0}(a_{t-1})
					&=& a_t.\end{array}$$
		All other letters are ``typical" in this case.
  \end{proof}

  \begin{prop}\label{PropPofPiExclassInvariant}
    Let $\ExtClassLab{\pp}\subset \irr{\AAA}$ be the Extended Rauzy Class for $\pp\in\irr{\AAA}$. The lists $\Pof{\pp}$ and $\Pof{\qq}$ for every $\qq\in\ExtClassLab{\pp}$.
  \end{prop}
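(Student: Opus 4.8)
The plan is to reduce the statement to two ingredients that are already available: Corollary \ref{CorPofPiAndMofPiClassInvariant}, which says $\Pof{}$ is unchanged by the two kinds of (ordinary) Rauzy Moves, and Lemma \ref{LemHOfPi}, which links $\Yof{\pp}$ to $\Yof{H(\pp)}$. Since every element of $\ExtClassLab{\pp}$ is obtained from $\pp$ by a finite sequence of Rauzy Moves and Left Rauzy Moves, and $\Pof{}$ is already constant along Rauzy Moves, it is enough to show that one Left Rauzy Move does not change $\Pof{}$, i.e. $\Pof{\tilde\eps\rr}=\Pof{\rr}$ for all $\rr\in\irr{\AAA}$ and $\eps\in\{0,1\}$. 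I will obtain this by factoring a Left Rauzy Move through $H$ and an ordinary Rauzy Move.

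The first step is to record that $H$ intertwines the two families of moves: $H\circ\eps=\tilde\eps\circ H$ on $\irr{\AAA}$ for $\eps\in\{0,1\}$. Reading the two rows of a pair as words, $H$ reverses both words; this interchanges ``common initial segment'' with ``common final segment'', so $H$ preserves irreducibility, and visibly $H^2=\mathrm{id}$. A type-$\eps$ Rauzy Move removes the last letter of row $1-\eps$ and reinserts it immediately after the last letter of row $\eps$; reversing both words turns this into ``remove the first letter of row $1-\eps$, reinsert it immediately before the first letter of row $\eps$'', which is precisely a type-$\eps$ Left Rauzy Move on $H$ of the pair. I would confirm this by matching the displayed piecewise formulas for $\eps\pp$, $\tilde\eps\pp$ and $H$, including the degenerate cases where the move fixes the pair. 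Applying $H$ and using $H^2=\mathrm{id}$ then gives $\tilde\eps\rr=H\big(\eps(H(\rr))\big)$.

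The second step is $\Pof{H(\rr)}=\Pof{\rr}$. Write $\rr=(r_0,r_1)$. Lemma \ref{LemHOfPi} gives $\mathbf{L}_{r_0}\circ\Yof{\rr}\circ\mathbf{R}_{r_0}=\Yof{H(\rr)}^{-1}$ on $\AAA\setminus\{r_0^{-1}(\#\AAA)\}$. Using the cyclic description of $\mathbf{L}_{r_0}$ from Remark \ref{RemSofPiIsRL}, one checks that $\mathbf{R}_{r_0}$ restricts to a bijection from $\AAA\setminus\{r_0^{-1}(\#\AAA)\}$ onto $\AAA\setminus\{r_0^{-1}(1)\}$ with inverse the restriction of $\mathbf{L}_{r_0}$; the first of these sets is exactly the domain of $\Yof{H(\rr)}$ and the second is the domain of $\Yof{\rr}$. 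Hence the displayed identity exhibits $\Yof{H(\rr)}^{-1}$ as a conjugate of $\Yof{\rr}$ by a bijection, so the two have the same multiset of cycle lengths; since a permutation and its inverse also have the same cycle lengths, $\Pof{H(\rr)}=\Pof{\rr}$.

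Putting the steps together, for every $\rr\in\irr{\AAA}$,
\[
\Pof{\tilde\eps\rr}=\Pof{H(\eps\,H(\rr))}=\Pof{\eps\,H(\rr)}=\Pof{H(\rr)}=\Pof{\rr},
\]
using the factorization, then $\Pof{}\circ H=\Pof{}$, then Corollary \ref{CorPofPiAndMofPiClassInvariant} inside $\ClassLab{H(\rr)}$, then $\Pof{}\circ H=\Pof{}$ again. Since no move of the four types changes $\Pof{}$, it is constant on $\ExtClassLab{\pp}$, which is the claim. I expect the only real work to be bookkeeping: pinning down the intertwining $H\circ\eps=\tilde\eps\circ H$ without an off-by-one error in the insertion positions and handling the trivial-move cases, and, in the conjugacy argument, keeping straight that $\Yof{\rr}$ and $\Yof{H(\rr)}$ are permutations of \emph{different} sub-alphabets so that $\mathbf{R}_{r_0}$ is genuinely the conjugating bijection between them. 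Everything beyond that is routine against the definitions of Section \ref{SecBackground} and Remark \ref{RemSofPiIsRL}; the mathematical content sits in Lemma \ref{LemHOfPi}.
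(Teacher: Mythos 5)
Your proposal is correct and follows essentially the same route as the paper: reduce via Corollary \ref{CorPofPiAndMofPiClassInvariant} to a single Left Rauzy Move, use the factorization $\tilde{\eps}\pp = H\big(\eps H(\pp)\big)$, and invoke Lemma \ref{LemHOfPi} together with Proposition \ref{PropSofPiIsInvariant}. Your explicit conjugacy bookkeeping (that $\mathbf{R}_{r_0}$ carries the domain of $\Yof{H(\rr)}$ to that of $\Yof{\rr}$, so cycle lengths agree) is a slightly more careful rendering of the step the paper compresses into the assertion $\Yof{\pp}=\Yof{\tilde{\eps}\pp}$.
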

  
	\begin{proof}
		By Corollary \ref{CorPofPiAndMofPiClassInvariant}, we must only show that $\Pof{\pp} = \Pof{\tilde{\eps}\pp}$ for $\eps\in\{0,1\}$.
		We may verify that $\tilde\eps \pp = H\big(\eps H(\pp)\big)$. By Proposition \ref{PropSofPiIsInvariant} and Lemma \ref{LemHOfPi}, it follows that $\Yof{\pp} = \Yof{\tilde{\eps}\pp}$, where $H$ is defined in Lemma \ref{LemHOfPi}. This concludes the proof, as $\Pof{\pp}$ and $\Pof{\tilde{\eps}\pp}$ are defined by $\Yof{\pp}$ and $\Yof{\tilde{\eps}\pp}$.
	\end{proof}

    For alphabet $\AAA$, let $\PP(\AAA)$ denote the set of unordered pairs of distinct letters in $\AAA$.

   \begin{defn}\label{DefCanonQuad}
      Given $\pp=(p_0,p_1)\in\irr{\AAA}$, the \term{canonical quadratic form} associated to $\pp$ will be $\QF{\pp}:\ZZ_2^\AAA \to \ZZ_2$ given by
	  $$ \QF{\pp}(v) = \sum_{a\in\AAA} v_a^2 + \sum_{\{a,b\}\in\PP(\AAA)} L_\pp(a,b) v_a v_b\mod 2$$
      where
	  $$ L_\pp(a,b) = \RHScase{1, &\big(p_0(a)-p_0(b)\big)\big(p_1(a)-p_1(b)\big) < 0,\\ 0, & \mathrm{otherwise}.}$$
  \end{defn}

    By definition, $L_\pp(a,b) = L_\pp(b,a)$ for all $a,b\in \AAA$. Therefore the definition of $\QF{\pp}$ as a sum of pairs $\PP(\AAA)$ is well-defined.

    \begin{defn}
	Two quadratic forms $\QQQ:\ZZ_2^\AAA\to \ZZ_2$ and $\QQQ':\ZZ_2^{\AAA'}\to \ZZ_2$, where $\AAA$ and $\AAA'$ are alphabets, are \term{equivalent} if there exists an invertible linear map $A:\ZZ_2^\AAA \to \ZZ_2^{\AAA'}$ so that $ \QQQ'\circ A = \QQQ$.
    \end{defn}

    \begin{prop}\label{PropQuadFormInvariance}
	Suppose $\qq\in \ExtClassLab{\pp}$ for $\pp\in\irr{\AAA$}, then $\QF{\pp}$ and $\QF{\qq}$ are equivalent. Furthermore, if $\qq\in \ExtClassNon{\pp}$, then $\QF{\pp}$ is equivalent to $\QF{\qq}$.
    \end{prop}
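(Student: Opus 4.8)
The plan is to exploit several symmetries of the assignment $\pp\mapsto\QF{\pp}$ to reduce the problem to a single elementary move, and then to write down an explicit change of variables for that move. Three symmetries handle the bookkeeping. First, since $L_\pp(a,b)$ depends only on the sign of $(p_0(a)-p_0(b))(p_1(a)-p_1(b))$, which is symmetric in the two rows, $\QF{\pp^{-1}}=\QF{\pp}$ (literally the same form on $\ZZ_2^\AAA$). Second, replacing each $p_\eps$ by $b\mapsto\#\AAA+1-p_\eps(b)$ negates both factors and so leaves every $L_\pp(a,b)$ unchanged, hence $\QF{H(\pp)}=\QF{\pp}$ with $H$ the involution of Lemma~\ref{LemHOfPi}. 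Third, by Equation~\eqref{EqInverseMoves} a type-$1$ Rauzy Move on $\pp$ is a type-$0$ Rauzy Move on $\pp^{-1}$ up to taking inverses, and (as recorded in the proof of Proposition~\ref{PropPofPiExclassInvariant}) a Left Rauzy Move $\tilde\eps\pp$ equals $H\bigl(\eps H(\pp)\bigr)$. Combining these, it suffices to prove the single statement: for every $\pp\in\irr{\AAA}$, $\QF{0\pp}$ is equivalent to $\QF{\pp}$ — a type-$1$ move then reduces to a type-$0$ move on $\pp^{-1}$, and a Left Rauzy Move reduces to a Rauzy Move on $H(\pp)$.

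Next I would unwind the type-$0$ move concretely. Write $z=p_0^{-1}(\#\AAA)$ and $w=p_1^{-1}(\#\AAA)$; these are distinct because $\pp$ is irreducible. The move $0\pp$ fixes the row $p_0$ and, in the row $p_1$, simply deletes $w$ from the final position and reinserts it immediately after $z$. Hence the relative order of any two letters other than $w$ is unchanged in each row, so $L_{0\pp}(a,b)=L_\pp(a,b)$ whenever $w\notin\{a,b\}$; and for $a\ne w$ the relative order of $a$ and $w$ flips in the row $p_1$ precisely when $p_1(a)>p_1(z)$, and never flips in the row $p_0$. Setting $S=\{a\in\AAA\setminus\{w\}:p_1(a)>p_1(z)\}$ and $\sigma(v)=\sum_{a\in S}v_a$, this yields the single identity
\[ \QF{0\pp}(v)=\QF{\pp}(v)+v_w\,\sigma(v)\qquad(v\in\ZZ_2^\AAA), \]
with no $v_w^2$ contribution, since the sum defining $\QF{}$ ranges over pairs of \emph{distinct} letters.

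The remaining, and most computational, step is to produce the change of variables. Let $A\colon\ZZ_2^\AAA\to\ZZ_2^\AAA$ fix every coordinate except $v_z$, which it sends to $v_z+\sigma(v)$. Since $z\notin S$ we have $\sigma(e_z)=0$, so $A$ is invertible (indeed $A^2=\mathrm{id}$). Expanding $\QF{\pp}(Av)$, the diagonal term $v_z^2$ becomes $(v_z+\sigma(v))^2=v_z^2+\sigma(v)^2$ and the cross terms through $z$ become $\sum_{b\ne z}L_\pp(z,b)(v_z+\sigma(v))v_b$, so
\[ \QF{\pp}(Av)=\QF{\pp}(v)+\sigma(v)^2+\sigma(v)\lambda(v),\qquad \lambda(v):=\sum_{b\ne z}L_\pp(z,b)v_b. \]
Because $z$ is last in the row $p_0$, one checks $L_\pp(z,b)=1\iff p_1(b)>p_1(z)$, so $\lambda(v)=\sigma(v)+v_w$; then $\sigma(v)^2+\sigma(v)\lambda(v)=\sigma(v)^2+\sigma(v)^2+v_w\,\sigma(v)=v_w\,\sigma(v)$ over $\ZZ_2$. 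Hence $\QF{\pp}\circ A=\QF{0\pp}$, establishing the Extended Rauzy Class case. For the ``furthermore'' part: if $\qq\in\ExtClassNon{\pp}$, i.e.\ the monodromy permutation of $\qq$ lies in that non-labeled class, then $\qq$ is a renaming of some $\qq'\in\ExtClassLab{\pp}$, and a renaming induces a coordinate-permutation isomorphism $P$ with $\QF{\qq}=\QF{\qq'}\circ P$; combining with the case just proved gives $\QF{\qq}$ equivalent to $\QF{\pp}$.

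I expect the main obstacle to be isolating the correct linear map. The naive idea of killing the term $v_w\,\sigma(v)$ by substituting inside $v_w$ fails, because any such substitution also alters the linear part $\sum_a v_a$ of $\QF{\pp}$: over $\ZZ_2$ the identity $v_a^2=v_a$ turns the quadratic corrections involving $v_w^2$ into new linear terms. The substitution that works acts instead on $v_z$, and its verification rests on the two $\ZZ_2$ facts $\sigma(v)^2=\sigma(v)$ and $\lambda(v)=\sigma(v)+v_w$ — the latter being exactly the statement that the ``inversions of $z$'' in $\pp$ form the set $S\cup\{w\}$. Once these are pinned down the computation is short; finding them is the real content of the proof.
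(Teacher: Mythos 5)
Your proposal is correct and follows essentially the same route as the paper: it reduces everything to a single type-$0$ Rauzy Move using the symmetries $\QF{\pp}=\QF{\pp^{-1}}=\QF{H(\pp)}$ together with $1\pp=\big(0(\pp^{-1})\big)^{-1}$ and $\tilde{\eps}\pp=H\big(\eps H(\pp)\big)$, and then exhibits an explicit invertible change of variables supported on the coordinate of $z=p_0^{-1}(\#\AAA)$, exactly as the paper does. The only (harmless) difference is the choice of that map: the paper uses the transvection sending $v_{z}\mapsto v_{z}+v_{w}$ (in your notation, with $w=p_1^{-1}(\#\AAA)$), while you use $v_z\mapsto v_z+\sigma(v)$; both satisfy $\QF{\pp}\circ A=\QF{0\pp}$, so your verification is a valid variant of the paper's computation.
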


    \begin{proof}
	It suffices to show the first claim for only one inductive move, of type $0$ or $1$ on either the left or the right. If we recall the meaning of $\pp^{-1}$ as $(p_0,p_1)$, where $\pp = (p_0,p_1)$, and $H(\pp)$ as defined in Lemma \ref{LemHOfPi}, then $L_\pp = L_{\pp^{-1}} = L_{H(\pp)}$ as given in Definition \ref{DefCanonQuad}. Therefore by the relationships $1 \pp = \big( 0(\pp^{-1})\big)^{-1}$ and $\tilde{\eps}\pp = H(\eps H(\pp))$ for $\eps\in\{0,1\}$, we must only verify the case for $\qq = 0\pp$.

	Let $z_\eps$ satisfy $p_\eps(z_\eps)=\#\AAA$ for $\eps\in\{0,1\}$.
	We may verify directly that $$ L_{\qq}(c,z_1) = L_\pp(c,z_1) + L_\pp(z_0,z_1)\mbox{ for } c\in \AAA\setminus\{z_1\}$$ and $L_{\qq}= L_{\pp}$ otherwise. Let $A:\ZZ_2^\AAA\to \ZZ_2^\AAA$ be the invertible linear transformation given by
		$$ (Av)_b = \RHScase{ v_{z_0} + v_{z_1}, & b=z_0,\\ v_b, & \mathrm{otherwise}.}$$
	Then we may see that
		$$ \begin{array}{rcl}
			\QF{\pp}(Av) & = & v_{z_0}^2 + v_{z_1}^2 + \sum_{b\in\AAA\setminus\{z_0\}} v_b^2\\
			  & & + \sum_{\{b,z_0\}\in\PP_2(\AAA)} L_\pp(b,z_0)\cdot v_b(v_{z_0}+v_{z_1})\\
			  & & + \sum_{\{b,z_1\} \in \PP_2(\AAA\setminus\{z_0\})} L_\pp(b,z_1)\cdot v_b v_{z_1}\\
			  & & + \sum_{\{b,c\}\in\PP_2(\AAA\setminus\{z_0,z_1\})} L_\pp(b,c)\cdot v_b v_c\vspace{.05in}\\
			  & = & \sum_{b\in\AAA} v_b^2 + v_{z_1}^2 + L_\pp(z_0,z_1)v_{z_1}^2 \\
			  & & + \sum_{\{b,z_0\}\in\PP_2(\AAA)} L_\pp(b,z_0) v_b v_{z_0}\\
			  & & + \sum_{\{b,z_1\}\in\PP_2(\AAA\setminus\{z_0\})} (L_\pp(b,z_1) + L_\pp(b,z_0)) v_b v_{z_1}\\
			  & & + \sum_{\{b,c\}\in\PP_2(\AAA\setminus\{z_0,z_1\})} L_\pp(b,c) v_b v_c\vspace{.05in}\\
			  & = & \sum_{b\in\AAA} v_b^2 + \sum_{\{b,c\}\in\PP_2(\AAA)} L_{\qq}(\{b,c\}) v_b v_c\\
			  & = & \QF{\qq} (v).
		   \end{array}$$

	The second claim follows from the fact that $\pp=(p_0,p_1)\in \irr{\AAA}$ and $\qq\in \ExtClassNon{\pp}$ then there exists a bijection $\tau:\AAA \to \AAA'$ and $\pp'=(p'_0,p'_1) \in \ExtClassLab{\pp}$ so that $q_\eps = p'_\eps\circ\tau$, $\eps\in\{0,1\}$. Quadratic forms $\QF{\pp}$ and $\QF{\pp'}$ are equivalent by the first claim, and $\QF{\pp'}$ is equivalent to $\QF{\qq}$ by the natural linear isomorphism
		$ e_b \mapsto e_{\tau(b)}$
	generated by $\tau$.
    \end{proof}

    \begin{defn}
	If $\pp\in \irr{\AAA}$ with associated quadratic form $\QF{\pp}$, then
	    $$ \ARFof{\pp} := \#\{v\in \ZZ_2^\AAA: \QF{\pp}(v)=1 \mod 2\}.$$
    \end{defn}

    \begin{remark}
	The value $\ARFof{\pp}$ is related to the parity of the spin structure, a $\ZZ_2$ value associated to some pairs. See \cite{cDele2013} for an excellent exposition on how to define the parity using quadratic forms for appropriate $\pp$.
    \end{remark}

    \begin{cor}\label{CorArfIsInvariant}
	If $\pp\in\irr{\AAA}$ and $\qq \in \ExtClassNon{\pp}$, then $\ARFof{\qq}=\ARFof{\pp}$.
    \end{cor}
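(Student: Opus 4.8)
The plan is to observe that $\ARFof{\pp}$ depends only on the equivalence class of the quadratic form $\QF{\pp}$, and then to invoke Proposition \ref{PropQuadFormInvariance}. Concretely, I would first fix a representative: for $\qq\in\ExtClassNon{\pp}$, write $\qq=(q_0,q_1)\in\perm{\AAA'}$ for the appropriate alphabet $\AAA'$, and apply the second assertion of Proposition \ref{PropQuadFormInvariance} to obtain an invertible linear map $A:\ZZ_2^\AAA\to\ZZ_2^{\AAA'}$ with $\QF{\qq}\circ A=\QF{\pp}$.

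The main step is then a trivial counting argument. Since $A$ is invertible it is, in particular, a bijection of underlying sets $\ZZ_2^\AAA\to\ZZ_2^{\AAA'}$, and for every $v\in\ZZ_2^\AAA$ we have $\QF{\qq}(Av)=\QF{\pp}(v)$. Hence $v\mapsto Av$ restricts to a bijection
$$ \{v\in\ZZ_2^\AAA:\QF{\pp}(v)=1 \bmod 2\} \longrightarrow \{w\in\ZZ_2^{\AAA'}:\QF{\qq}(w)=1 \bmod 2\},$$
and comparing the cardinalities of the two sides yields $\ARFof{\pp}=\ARFof{\qq}$ directly from the definition of $\ARFof{\cdot}$.

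There is essentially no obstacle here: the only point requiring care is that a non-labeled class consists of permutations, so one argues with an arbitrary pair $\qq$ whose monodromy image lies in $\ExtClassNon{\pp}$ rather than with the permutation itself — but this is precisely the setting in which Proposition \ref{PropQuadFormInvariance} is already phrased, and the invertibility of $A$ automatically absorbs any change of alphabet, both $\ZZ_2^\AAA$ and $\ZZ_2^{\AAA'}$ having $2^{\#\AAA}$ elements. Thus the corollary follows at once from the proposition together with the observation that equivalent quadratic forms attain the value $1$ the same number of times.
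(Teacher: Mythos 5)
Your argument is correct and is exactly the reasoning the paper leaves implicit: the corollary follows at once from the second claim of Proposition \ref{PropQuadFormInvariance}, since the invertible linear map realizing the equivalence of $\QF{\pp}$ and $\QF{\qq}$ restricts to a bijection between the sets where each form takes the value $1$, so their cardinalities agree. Nothing is missing, and your handling of the non-labeled class via a representative pair matches the paper's own conventions.
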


\subsection{Adding and Removing Letters}

  The following concepts have interpretations concerning Translation Surfaces (see \cite{cKonZor2003} and their ``bubbling a handle'' operation as a comparison). However, we will be using these concepts for their combinatoric properties alone. In the remaining sections, these results will allow us to ignore all but specific letters when altering a pair.

  \begin{defn}\label{DefExtensions}
    Let $\AAA'\subsetneq \AAA$ be finite alphabets. A choice of words $$\Omega = \{\ol{\omega}_{\eps,b}\}_{\eps\in\{0,1\},b\in\AAA'}$$ such that
      \begin{itemize}
       \item For each $\eps\in\{0,1\}$ and $b\in\AAA'$, $\ol{\omega}_{\eps,b} = \ol{u} b$ where $\ol{u}$ is a (possibly empty) word in $\AAA\setminus\AAA'$.
	\item For each $\eps$ and $c\in \AAA$, there exists $b\in \AAA'$ so that $c$ appears in $\ol{\omega}_{\eps,b}$.
	\item For each $\eps$ and $b\neq c$, $\ol{\omega}_{\eps,b}$ and $\ol{\omega}_{\eps,c}$ do not share a common letter, i.e. there does not exist $d$ that appears in both words.
      \end{itemize}
    will be called a \term{(prefix) extension} from $\AAA'$ to $\AAA$. If $\pp=(p_0,p_1)\in\perm{\AAA'}$, the \term{(prefix) extension} of $\pp$, $\Omega(\pp) = (q_0,q_1)\in \perm{\AAA}$, is given by
	$$ q_\eps(b) = \RHScase{\sum_{c\in\AAA': p_\eps(c) \leq p_\eps(b)} \#\ol{\omega}_{\eps,c}, & b \in \AAA'\\
			  k+\sum_{c\in\AAA': p_\eps(c) < p_\eps(d)} \#\ol{\omega}_{\eps,c},& b\notin\AAA',~ b\mbox{ is in position }k\mbox{ in }\ol{\omega}_{\eps,d}}$$
    for $\eps\in\{0,1\},b\in\AAA$.
  \end{defn}

  Suppose $\AAA'\subsetneq\AAA$ with extension $\Omega$ and $\pp=(p_0,p_1) \in \irr{\AAA'}$. Let $z_\eps = p_\eps^{-1}(\#\AAA')$ for $\eps\in\{0,1\}$. By fixing $\eps$, let $\pp'=\eps \pp$. If $\qq = \Omega(\pp)$ and $\qq' = \Omega(\pp')$, then it follows by definition that 
    \begin{equation}\label{EqExtendedMove}
    	\qq' = \eps^k \qq, \mbox{ where }k=\#\ol{\omega}_{1-\eps,z_{1-\eps}}
    \end{equation}
  By considering either a Rauzy Path between $\pp$ and $\pp'$, we may find a path from $\Omega(\pp)$ to $\Omega(\pp')$ using Equation \eqref{EqExtendedMove} on each move.

  \begin{defn}\label{DefPathExtensions}
    We call a path $\gamma$ from $\Omega(\pp)$ to $\Omega(\pp')$ in $\AAA$ mentioned in the previous paragraph an \term{extension} of a path $\gamma'$ from $\pp$ to $\pp'$ in $\AAA'$.
  \end{defn}

  The prefix extension maps from pairs on one alphabet $\AAA'$ to a larger alphabet $\AAA$. At least in one direction, this operation has a natural inverse.

  \begin{defn}\label{DefRestrictions}
      Let $\pp=(p_0,p_1)\in\perm{\AAA}$ and $\AAA'\subset \AAA$ such that $p_\eps^{-1}(\#\AAA)\in\AAA'$. The \term{(prefix) restriction} of $\pp$ to $\AAA'$ is denoted by $\pp\RED{\AAA'}$. If $\pp\RED{\AAA'}= (\pp'_0,\pp'_1)$, then
	$$ p'_\eps(b) = \#\big\{c\in\AAA': p_\eps(c)\leq p_\eps(b)\big\}$$
      for each $b\in\AAA'$ and $\eps\in\{0,1\}$.
  \end{defn}

  Extensions and restrictions are in essence inverse operations in the following way: If $\Omega:\AAA'\to\AAA$ is an extension, then $\Omega(\pp)\RED{\AAA'}=\pp$ for all $\pp\in\perm{\AAA'}$. If instead $\pp\in\perm{\AAA}$ and $\AAA'$ is an appropriate sub-alphabet, then there exists an extension $\Omega:\AAA'\to\AAA$ (depending on $\pi$) such that $\Omega\left(\pp\RED{\AAA'}\right)=\pp$. We will often define extensions implicitly by fixing a restriction.

  We may not simply consider prefix extensions and reductions as maps between $\irr{\AAA}$ and $\irr{\AAA'}$ as there are several examples of irreducible pairs whose extensions or restrictions are reducible. We first will give a necessary and sufficient condition for an extension to preserve irreducibility.

  \begin{lem}\label{LemIrreducibleExtensions}
      Let $\pp =(p_0,p_1)\in \irr{\AAA'}$ and $a_\eps = p^{-1}_\eps(1)$ for $\eps\in\{0,1\}$. Consider any prefix extension $\Omega:\perm{\AAA'} \to \perm{\AAA}$.
      
      Then $\Omega(\pp)\in\irr{\AAA}$ if and only if $\ol{\omega}_{0,a_0}$ and $\ol{\omega}_{1,a_1}$ do not share the same first $k$ letters, for any $k>0$.
  \end{lem}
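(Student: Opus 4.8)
The plan is to characterize irreducibility of $\Omega(\pp)$ directly via Definition \ref{DefIrreducible}, i.e.\ by asking for which $k < \#\AAA$ the set $q_0^{-1}\{1,\dots,k\}$ equals $q_1^{-1}\{1,\dots,k\}$, where $\Omega(\pp) = (q_0,q_1)$. The key observation is that the extension replaces each letter $b \in \AAA'$ by the block $\ol{\omega}_{\eps,b}$ (in row $\eps$), and these blocks are concatenated in the order dictated by $p_\eps$. So an initial segment of row $\eps$ of $\Omega(\pp)$ of length $k$ consists of: a union of \emph{complete} blocks $\ol{\omega}_{\eps,c}$ for the letters $c$ with $p_\eps(c)$ small, followed possibly by a \emph{proper prefix} of one further block $\ol{\omega}_{\eps,d}$. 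I would first record this as a structural lemma about initial segments.

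Then I would split into two cases. Case (i): the length $k$ is such that both rows' initial segments of length $k$ are unions of complete blocks. Since $\pp \in \irr{\AAA'}$, the only way the two unions of complete blocks can agree as sets is if they correspond to $k' = \#\AAA'$, i.e.\ all of $\AAA'$, which forces $k = \#\AAA$. So no "bad" $k$ arises here — this uses irreducibility of $\pp$ together with the fact that distinct $\ol{\omega}_{\eps,b}$ are disjoint and every letter of $\AAA$ lies in exactly one block. Case (ii): at least one of the two initial segments ends in the middle of a block. Here I would argue that a shared set of the first $k$ letters forces the two rows to be "in the middle of" blocks $\ol{\omega}_{0,d}$ and $\ol{\omega}_{1,e}$ with $d = e$ (because the block a given letter belongs to is determined, and the sets must agree), and in fact to share the same proper prefix of that common block; since the block has the form $\ol{u}\,d$ with $\ol{u}$ over $\AAA\setminus\AAA'$, and a common proper prefix must consist of letters not in $\AAA'$, tracing back the complete blocks that precede it in each row and using irreducibility of $\pp$ again shows the completed-block parts must also agree, so $d$ has the same $p_0$- and $p_1$-predecessor-set; the only obstruction to irreducibility is then exactly a shared initial segment of $\ol{\omega}_{0,d}$ and $\ol{\omega}_{1,d}$. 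Taking $d = a_0$ on one side: a common first $k$ letters of $\ol{\omega}_{0,a_0}$ and $\ol{\omega}_{1,a_1}$ (note $a_0$ need not equal $a_1$, but the prefixes before the terminal letter are over $\AAA\setminus\AAA'$, so if they agree for $k>0$ they agree before either terminal letter is reached, forcing the relevant leading letters of $\AAA'$ to coincide) produces a reducing $k$ for $\Omega(\pp)$, and conversely any reducing $k$ for $\Omega(\pp)$ must arise this way.

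Concretely, for the "only if" direction I would take a minimal $k>0$ with $q_0^{-1}\{1,\dots,k\} = q_1^{-1}\{1,\dots,k\} =: S$ and $k < \#\AAA$, and show $S$ must be a common proper prefix of $\ol{\omega}_{0,a_0}$ and $\ol{\omega}_{1,a_1}$: the first letter of row $0$ is the first letter of $\ol{\omega}_{0,a_0}$ and likewise for row $1$, so the first letter of $S$ lies in both blocks; since the blocks are disjoint unless equal, the relevant blocks coincide as soon as the shared letter is in $\AAA\setminus\AAA'$, and one pushes this forward letter by letter. If the shared prefix reached the terminal letter $a_0$ of $\ol{\omega}_{0,a_0}$, then (as $a_0 \in \AAA'$ sits only in its own block in row $1$) we'd need $\ol{\omega}_{1,a_1}$ to also contain $a_0$, forcing $a_1 = a_0$ and the blocks to be genuinely equal up to that point — then one continues into the next complete blocks and invokes irreducibility of $\pp$ to get a contradiction with $k<\#\AAA$. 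Hence the shared prefix is proper, giving the stated condition. For the "if" direction, a common nonempty prefix of $\ol{\omega}_{0,a_0}$ and $\ol{\omega}_{1,a_1}$ of length $k$ is literally a set fixed by both $q_0$ and $q_1$ as an initial segment, and $k < \#\AAA$ since each block is a proper part of the row, so $\Omega(\pp)$ is reducible.

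The main obstacle I anticipate is the bookkeeping in Case (ii): carefully handling the asymmetry that $\ol{\omega}_{0,a_0}$ and $\ol{\omega}_{1,a_1}$ need not be blocks over the same terminal letter, and making precise the claim that a shared initial segment of the two rows of $\Omega(\pp)$ is forced to live entirely within a single pair of blocks rather than spilling across block boundaries in an asymmetric way — this is where one must repeatedly use both the disjointness/covering properties of $\Omega$ and the irreducibility of $\pp$, and where an off-by-one in "complete block vs.\ proper prefix" could slip in. I would isolate the initial-segment structural lemma first precisely to contain this.
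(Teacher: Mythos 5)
Your overall skeleton---decomposing an initial segment of each row of $\Omega(\pp)$ into complete blocks followed by a proper prefix of one further block, and then combining the covering/disjointness properties of $\Omega$ with irreducibility of $\pp$---is the same as the paper's, and your Case (i) and the ``if'' direction are fine. The genuine gap is in Case (ii)/the ``only if'' direction, and it comes from identifying the row-$0$ and row-$1$ block systems with each other. The disjointness clause in Definition \ref{DefExtensions} holds only within a fixed row $\eps$: a letter of $\AAA\setminus\AAA'$ lies in exactly one block of each row, and these two blocks need not coincide and in general have different terminal letters. So the claims that the two rows are mid-block in a \emph{common} block ($d=e$), that ``the relevant blocks coincide as soon as the shared letter is in $\AAA\setminus\AAA'$,'' and that reaching the terminal letter would force $a_1=a_0$, are unjustified; in fact $a_0\neq a_1$ whenever $\#\AAA'\geq 2$, and the two partial blocks are $\ol{\omega}_{0,a_0}$ and $\ol{\omega}_{1,a_1}$, indexed by different letters. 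Concretely, take $\AAA'=\{a,b\}$, $\pp=\mtrx{\LL{a~b}{b~a}}$, $\ol{\omega}_{0,a}=xya$, $\ol{\omega}_{0,b}=b$, $\ol{\omega}_{1,b}=yxb$, $\ol{\omega}_{1,a}=a$. Then $\Omega(\pp)=\mtrx{\LL{x~y~a~b}{y~x~b~a}}$ is reducible with $S=\{x,y\}$ at $k=2$, the partial blocks are $\ol{\omega}_{0,a}$ and $\ol{\omega}_{1,b}$, and the blocks $\ol{\omega}_{0,a_0}=xya$ and $\ol{\omega}_{1,a_1}=yxb$ have no common ordered prefix (their first letters already differ). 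This example also shows that ``share the same first $k$ letters'' must be read as equality of the \emph{sets} of first $k$ letters, as the paper's proof does; read as ordered prefixes, the ``only if'' statement you set out to prove (``$S$ is a common proper prefix'') is false, so no letter-by-letter identification of blocks can establish it.

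The step that actually needs proof---and that your sketch only gestures at---is that a reducing set $S=q_0^{-1}\{1,\dots,k\}=q_1^{-1}\{1,\dots,k\}$ with $0<k<\#\AAA$ satisfies $S\cap\AAA'=\emptyset$; your inference ``if the shared prefix reached $a_0$ then $\ol{\omega}_{1,a_1}$ must contain $a_0$'' presupposes that $S$ already lies inside the first block of row $1$, which is exactly what is being proved. The clean argument is the paper's: if $c\in S\cap\AAA'$ and $d\in\AAA'$ has $p_\eps(d)\leq p_\eps(c)$, then $q_\eps(d)\leq q_\eps(c)\leq k$, so $d\in S$; hence $S\cap\AAA'$ is an initial set of both rows of $\pp$, and irreducibility of $\pp$ forces it to be empty or all of $\AAA'$. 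The latter is impossible, since by the covering property each $c\in\AAA\setminus\AAA'$ appears in some $\ol{\omega}_{0,b}$ with $b\in\AAA'\subseteq S$, giving $q_0(c)<q_0(b)\leq k$ and thus $S=\AAA$, contradicting $k<\#\AAA$. Once $S\cap\AAA'=\emptyset$, the terminal letters $a_0,a_1\in\AAA'$ cannot lie in $S$, so $S$ is precisely the set of the first $k$ letters of $\ol{\omega}_{0,a_0}$ and also of $\ol{\omega}_{1,a_1}$, which is the desired conclusion; the converse direction is immediate, as you say. If you reorganize Case (ii) around this $S\cap\AAA'=\emptyset$ claim and drop the cross-row block identifications, your proof goes through.
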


  \begin{proof}
      Let $\Omega(\pp) = (q_0,q_1)$. Suppose first that $\ol{\omega}_{0,a_0} = \ol{u}_0\ol{v}_0$ and $\ol{\omega}_{1,a_1} = \ol{u}_1\ol{v}_1$ where $\ol{u}_0$ and $\ol{u}_1$ are both words of length $k>0$ that are composed the letters in $\BBB\subseteq\AAA\setminus\AAA'$, $\#\BBB=k$. Then $q_0(\BBB) = q_1(\BBB) = \{1,\dots,k\}$ or $\Omega(\pp)$ is reducible.

      Now suppose that $\Omega(\pp)=(q_0,q_1)$ is reducible. Then there is a nontrivial partition $\AAA = \AAA_-\sqcup \AAA_+$ such that
	  $$ q_0(\AAA_-) = q_1(\AAA_-) = \{1,\dots,\#\AAA_-\}.$$
      We claim that $\AAA_-\cap \AAA' = \emptyset$. This would then imply that $\AAA_-$ is the set of letters that begin both $\ol{\omega}_{0,a_0}$ and $\ol{\omega}_{1,a_1}$, concluding our proof. To show the claim, note that $\AAA_-\cap\AAA'$ would be the first letters in each row of $\pp$. Because $\pp$ is irreducible, this must be empty.
  \end{proof}

  We shall now give a necessary and sufficient condition to ensure that a Rauzy Path in $\AAA$ is an extension of a Rauzy Path $\AAA'$, given that our initial pair in $\AAA$ and its restriction to $\AAA'$ are irreducible.

  \begin{lem}\label{LemReductionPaths}
      Let $\pp=(p_0,p_1)\in\irr{\AAA}$ and let $\AAA'\subset \AAA$ be such that $\pp\RED{\AAA'} \in \irr{\AAA'}$ and $p^{-1}_\eps\in\AAA'$ for $\eps\in\{0,1\}$. For any path $\gamma$ from $\pp$ to $\qq$, the following are equivalent:
	  \begin{itemize}
	   \item[(i)] $\gamma$ is an extension of a path $\gamma'$ from $\pp\RED{\AAA'}$ to $\qq\RED{\AAA'}$.
	    \item[(ii)]  If $\pp = \pp^{(0)}, \pp^{(1)},\dots,\pp^{(k)}=\qq$ are the cycle vertices of $\gamma$ and
		  $$ \BBB(\gamma) := \{b\in\AAA: p^{(i)}_\eps(b) = \#\AAA\mbox{ for some }\eps\in\{0,1\},0\leq i \leq k\}$$
		denote the last letters in each cycle vertex, then $\BBB(\gamma) \subseteq \AAA'$.
	  \end{itemize}
  \end{lem}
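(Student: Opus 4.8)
The proof will establish the equivalence (i) $\iff$ (ii) by tracking how the "last letters" of cycle vertices in $\AAA$ relate to the structure of an extension. The forward direction (i) $\implies$ (ii) should follow almost immediately from the construction: if $\gamma$ is the extension of a path $\gamma'$ via Definition \ref{DefPathExtensions}, then by Equation \eqref{EqExtendedMove} each cycle $\eps^k$ in $\gamma$ corresponds to a single Rauzy Move of type $\eps$ in $\gamma'$, with $k = \#\ol{\omega}_{1-\eps, z_{1-\eps}}$, and the cycle vertices of $\gamma$ are exactly the extensions $\Omega(\pp'^{(i)})$ of the cycle vertices of $\gamma'$. The last letter of $\Omega(\pp'^{(i)})$ in row $\eps$ is the last letter of $\ol{\omega}_{\eps, b}$ where $b$ is the last letter of $\pp'^{(i)}$ in row $\eps$ — but by the defining property of extension words, $\ol{\omega}_{\eps, b} = \ol{u}\,b$ ends in $b \in \AAA'$. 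Hence every last letter lies in $\AAA'$, i.e. $\BBB(\gamma) \subseteq \AAA'$.

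**The reverse direction.** For (ii) $\implies$ (i), I would argue by induction on the number of cycles (maximal constant-type runs) in $\gamma$, or equivalently on its length. The key claim to prove is: if $\pp \in \irr{\AAA}$ restricts to an irreducible pair on $\AAA'$ containing both last letters, and the first segment of $\gamma$ is $\eps^k$ with all intermediate last letters in $\AAA'$, then $k$ is forced to equal $\#\ol{\omega}_{1-\eps, z_{1-\eps}}$ for the (implicitly defined) extension realizing $\pp = \Omega(\pp\RED{\AAA'})$, and the resulting pair $\eps^k\pp$ is again the extension of $\eps(\pp\RED{\AAA'})$. The point is that as we apply type-$\eps$ Rauzy Moves, the last letter of row $1-\eps$ cycles through the "suffix block" of letters sitting above $z_{1-\eps}$; it stays in $\AAA\setminus\AAA'$ until exactly $\#\ol{\omega}_{1-\eps,z_{1-\eps}}$ moves have been applied, at which point it first lands on a letter of $\AAA'$ (namely $z_{1-\eps}$ itself or its image). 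Condition (ii) forbids us from stopping the run mid-block or overshooting in a way that produces a non-$\AAA'$ last letter, so the run length is pinned down. Then $\eps^k\pp = \Omega(\eps(\pp\RED{\AAA'}))$ by Equation \eqref{EqExtendedMove}, and we recurse on the shorter path from $\eps^k\pp$ with $\qq' \RED{\AAA'}$ still irreducible (using Lemma \ref{LemIrreducibleExtensions} or the remark that Rauzy Moves preserve irreducibility in $\AAA'$).

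**The main obstacle.** The delicate part is the bookkeeping in the reverse direction: showing that condition (ii), which only constrains the last letters at the \emph{cycle vertices} (the endpoints of maximal runs), actually forces the run lengths to be the "correct" multiples. One has to verify that within a run of type-$\eps$ moves, the last letter of row $1-\eps$ traverses precisely the letters of $\ol{\omega}_{1-\eps, z_{1-\eps}}$ read backwards (excluding the tail) before returning to $\AAA'$, so that any run whose length is not a multiple of $\#\ol{\omega}_{1-\eps, z_{1-\eps}}$ (or, more precisely, not exactly that value per "abstract" move) leaves the last letter in $\AAA \setminus \AAA'$, violating (ii). This requires carefully combining the explicit Rauzy Move formula with the structure of $\Omega$ from Definition \ref{DefExtensions}, and handling the initial irreducibility hypotheses on both $\pp$ and $\pp\RED{\AAA'}$ so that the restriction $\pp\RED{\AAA'}$ is always defined along the way (its last letters stay in $\AAA'$ by hypothesis (ii) applied to the tail of the path). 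I expect the rest — the base case, the inductive bookkeeping, and the final assembly of $\gamma'$ from the per-run abstract moves — to be routine once this run-length lemma is in hand.
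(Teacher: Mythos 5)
Your proposal is correct and follows essentially the same route as the paper: the forward direction observes that the cycle vertices of $\gamma$ are extensions of the cycle vertices of $\gamma'$, whose last letters are the word-final letters and hence lie in $\AAA'$, while the reverse direction shows that each cycle between consecutive cycle vertices is itself an extension of a cycle in $\AAA'$ --- exactly the run-length bookkeeping you outline, which the paper asserts in one line and leaves implicit. The only slips are cosmetic and harmless: a maximal cycle of $\gamma$ generally corresponds to a block of consecutive same-type moves of $\gamma'$ rather than a single move, and the hypothesis in your key claim should refer to the last letters at the cycle vertices lying in $\AAA'$ (not ``all intermediate last letters''), as your ``main obstacle'' paragraph in fact makes clear.
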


  \begin{proof}
    Suppose $\gamma$ is an extension from $\gamma'$ in $\AAA'$. Then $\gamma'$ is a path with vertices $$\pp^{(0)}\RED{\AAA'},\dots, \pp^{(k)}\RED{\AAA'}$$ and $\BBB(\gamma')\subseteq\AAA'$. For $0\leq i \leq k$, $\pp^{(i)}$ and $\pp^{(i)}\RED{\AAA'}$ coincide on the last letter in each row. Therefore $\BBB(\gamma) = \BBB(\gamma') \subseteq\AAA'$.

    Now suppose $\BBB(\gamma)\subseteq\AAA'$. It follows that for each $0\leq i < k$, the cycle connecting $\pp^{(i)}$ to $\pp^{(i+1)}$ is an extension of the cycle connecting $\pp^{(i)}\RED{\AAA'}$ to $\pp^{(i+1)}\RED{\AAA'}$. Because this is true for each cycle, the entire path $\gamma$ is therefore an extension of a path in $\AAA'$.
  \end{proof}

  In the proof of Proposition \ref{PropSwitchesClass}, we will in the general case assume that a pair $\pp$ admits an irreducible restriction when removing at least two letters. This is not always so. The following class of pairs, $\STAR{\AAA}$, will be shown in Proposition \ref{LemFarthestFromStandard} to not admit any irreducible restriction. By identifying these as the exhaustive set of such pairs, we will be able to prove Proposition \ref{PropSwitchesClass} with only a few special cases. We will begin by defining a notion of distance in Rauzy Classes.

  Given a Rauzy Path $\gamma$ between two pairs $\pp$ and $\pp'$, we may define the \term{cycle length} of $\gamma$ to be number of cycles in the path. In other words
      \begin{equation}\label{EqCycleLength}
	  \Ncycle \gamma = n,\mbox{ for }\gamma = \eps_1^{k_1}\cdots\eps_n^{k_n}
      \end{equation}
    where $\eps_{i+1} = 1-\eps_i$ for $i<n$ and $k_i>0$. Note that if $\gamma$ is a path from $\pp$ to $\pp'$, there exists a path $\gamma'$ from $\pp'$ to $\pp$ such that $\Ncycle\gamma = \Ncycle\gamma'$. With this, we may naturally define a metric on each Rauzy Class as follows
      \begin{equation}\label{EqDCycle}
	  \dcycle(\pp,\pp') = \RHScase{\displaystyle \min_{\pp \overset{\gamma}\rightarrow \pp'}\vspace{.04in} \Ncycle\gamma, & \pp \neq \pp',\\
	  					0, & \pp = \pp',}
      \end{equation}
      where the minimum is taken over all paths $\gamma$ from $\pp$ to $\pp'\neq \pp'$.
      We will now use our proof of well-known result Proposition \ref{PropStandardInClass} to yield a distance bound between any irreducible pair and a standard pair.

  \begin{lem}\label{LemDistanceBoundFromStandard}
      For all $\pp\in\irr{\AAA}$, $\dcycle(\pp,\std{\AAA}) \leq \#\AAA-2$.
  \end{lem}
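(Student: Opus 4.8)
The plan is to re-run the construction in the proof of Proposition~\ref{PropStandardInClass} and bound the cycle length of the Rauzy path it produces. Retain that proof's notation: for $\pp=(p_0,p_1)$ set $z_\eps = p_\eps^{-1}(\#\AAA)$ and $n_\eps = p_{1-\eps}(z_\eps)$, $\eps\in\{0,1\}$. Two remarks dispose of the easy cases. First, $\pp$ is standard precisely when $n_0 = n_1 = 1$. Second, if $\#\AAA = 2$ every irreducible pair is standard, so $\dcycle(\pp,\std{\AAA}) = 0 = \#\AAA-2$. We may thus assume $\#\AAA \ge 3$ and $\pp\notin\std{\AAA}$, and recall from the proof of Proposition~\ref{PropStandardInClass} that in this situation $\min(n_0,n_1) \le \#\AAA - 2$.

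Now I would track the quantity $m(\pp) := \min(n_0,n_1)$ along the construction. If $n_\eps = 1$ for some $\eps$, the construction reaches a standard pair with a single block of type-$\eps$ Rauzy Moves, a path of cycle length $1 \le \#\AAA-2$. Otherwise $2 \le m(\pp) \le \#\AAA-2$, and the construction applies one block of type-$\eps$ moves (for the $\eps$ achieving the minimum) producing $\pp'$ with $n'_\eps = n_\eps$ and $n'_{1-\eps} = p_\eps(b) < n_\eps$, where $b$ is the letter chosen in that proof; hence $\pp'$ is again irreducible, is not standard (as $n'_\eps = n_\eps \ge 2$), and satisfies $m(\pp') = n'_{1-\eps} \le m(\pp) - 1$. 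Iterating this reduction step, $m$ strictly decreases with each block, so after at most $m(\pp)-1$ blocks its value is $1$; at that point the current pair is either already standard (and we stop) or needs one further block of Rauzy Moves to become standard. Concatenating, we obtain a Rauzy path $\gamma$ from $\pp$ to a standard pair assembled from at most $(m(\pp)-1)+1 = m(\pp)$ blocks of single-type Rauzy Moves. Since the cycle length of a concatenation of blocks is at most the number of blocks (adjacent blocks of equal type merge into a single cycle, and empty blocks contribute nothing), $\Ncycle\gamma \le m(\pp) \le \#\AAA-2$, whence $\dcycle(\pp,\std{\AAA}) \le \#\AAA-2$.

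The only point that genuinely needs checking --- and it is the entire content of the bound --- is that $m$ is at most $\#\AAA-2$ at the outset and strictly decreases at each reduction step. Both are immediate from the proof of Proposition~\ref{PropStandardInClass}: the first is the irreducibility estimate stated there for the smaller of $n_0,n_1$, and the second is the inequality $p_\eps(b) < n_\eps$ for the chosen letter $b$. Everything else is routine bookkeeping about how the single-type blocks assemble into one path.
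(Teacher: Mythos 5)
Your proof is correct and is essentially the paper's own argument: both bound the number of cycles in the path built in the proof of Proposition~\ref{PropStandardInClass} by observing that $\min(n_0,n_1)\le\#\AAA-2$ initially, decreases by at least one with each cycle, and that one final cycle yields a standard pair. Your bookkeeping with $m(\pp)$ and the merging of same-type blocks is just a more explicit rendering of the same count.
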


  \begin{proof}
      By examining the proof of Proposition \ref{PropStandardInClass}, we see that any pair may be connected to a standard pair by a cycle path of length at most $\#\AAA-2$. The maximum value of $n_\eps$ must be $\#\AAA-2$ initially. Each cycle reduces our value of $n_\eps$ by at least one. So after at most $\#\AAA-3$ cycles, $n_\eps=1$ for some $\eps\in\{0,1\}$. One more cycle will produce a standard pair.
  \end{proof}

  We now define a special set of pairs. These will be shown to be the set of pairs that achieve the maximum distance from standard pairs. 

  \begin{defn}\label{DefSpecialForm}
    For finite alphabet $\AAA$, $\#\AAA\geq 2$, denote by $\STAR{\AAA}\subseteq\irr{\AAA}$ the set of pairs $\pp = (p_0,p_1)$ such that:
	\begin{itemize}
	 \item There exist $a,b\in\AAA$ and $\eps,\eps'\in\{0,1\}$ such that
		$$p_{\eps'}(a) = 1,~p_{1-\eps'}(a) = 2,~p_{\eps}(b) = \#\AAA,~p_{1-\eps}(b)=\#\AAA-1.$$
	 \item For all $c\in\AAA\setminus\{a,b\}$, $ |p_0(c)-p_1(c)|=2.$
	\end{itemize}
    For $\pp\in\STAR{\AAA}$, let $\eps(\pp)$ and $\eps'(\pp)$ be the values $\eps$ and $\eps'$ respectively as stated above.
  \end{defn}

  \begin{remark}
    By inspection of this definition, we see that we may equivalently say that $\pp\in\STAR{\AAA}$ takes on the following forms. Let $n=\#\AAA$ and for each case below, let $a_1,\dots,a_n\in\AAA$ be defined to satisfy $p_0(a_j)=j$ for all $1\leq j \leq n$.

    If $n$ is even and $\eps(\pp)=0$,
	  $$p_1(a_j) = \RHScase{2, & j=1 \\ j+2, & 1<j<n\mbox{ is even}\\ j-2, & 1<j<n\mbox{ is odd}\\ n-1, &j=n.}$$

    If $n$ is even and $\eps(\pp)=1$,
	  $$p_1(a_j) = \RHScase{1, & j=2 \\ j+2, & 1\leq j<n-1\mbox{ is odd}\\ j-2, & 1<j\leq n \mbox{ is even}\\ n, &j=n-1.} $$

    If $n$ is odd and $\eps(\pp)=0$,
	  $$p_1(a_j) = \RHScase{1, & j=2 \\ j+2, & 1\leq j<n\mbox{ is odd}\\ j-2, & 2<j<n\mbox{ is even}\\ n-1, &j=n.}$$

    If $n$ is odd and $\eps(\pp)=1$,
	  $$p_1(a_j) = \RHScase{2, & j=1 \\ j+2, & 2\leq j<n-1 \mbox{ is even}\\ j-2, & 1<j\leq n \mbox{ is odd}\\ n, &j=n-1.} $$

    Note in particular that $\eps'(\pp)$ depends on $n$ and $\eps(\pp)$ in that $n+\eps(\pp)+\eps'(\pp)$ must be even. We will therefore only refer to $\eps(\pp)$ when discussing such a pair.
  \end{remark}

  The following result relates the pairs in $\STAR{\AAA}$ with the distance bound given in Lemma \ref{LemDistanceBoundFromStandard}. It also relates the notion of distance with how many letters may be removed while keeping a pair irreducible.

  \begin{lem}\label{LemFarthestFromStandard}
      For $\pp=(p_0,p_1)\in\irr{\AAA}$, the following are equivalent:
      \begin{itemize}
       \item[(i)] $\dcycle(\pp,\std{\AAA}) = \#\AAA-2.$
	\item[(ii)] for all $b\in\AAA\setminus\{p_0^{-1}(1),p_1^{-1}(1),p_0^{-1}(\#\AAA),p_1^{-1}(\#\AAA)\}$, $\pp\RED{\AAA\setminus\{b\}}$ is reducible.
	\item[(iii)] $\pp\in\STAR{\AAA}$.
      \end{itemize}
  \end{lem}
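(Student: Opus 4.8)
The plan is to establish the three conditions as pairwise equivalent, proving $(iii)\Rightarrow(ii)$ and $(ii)\Rightarrow(iii)$ directly and $(i)\Leftrightarrow(ii)$ through the distance bound of Lemma~\ref{LemDistanceBoundFromStandard} together with the extension machinery; the bulk of the work sits in $(ii)\Rightarrow(iii)$. Three auxiliary facts are used throughout. \textbf{(A)} Reducibility is preserved by Rauzy Moves: if $p_0^{-1}\{1,\dots,k\}=p_1^{-1}\{1,\dots,k\}$, then the last letter $p_\eps^{-1}(\#\AAA)$ of the fixed row sits above position $k$ in the other row, so a Type $\eps$ move leaves the bottom $k$ letters of both rows untouched and the same $k$ witnesses reducibility of $\eps\pp$. \textbf{(B)} If $\qq$ is standard and $b$ is the last letter of neither row of $\qq$, then $\qq\RED{\AAA\setminus\{b\}}$ is irreducible, since the letter $q_0^{-1}(1)=q_1^{-1}(\#\AAA)$, being both lowest in row $0$ and highest in row $1$ of the restriction, cannot lie in a common proper initial segment. \textbf{(C)} For a Rauzy Path $\gamma$ with cycle vertices $\pp^{(0)},\dots,\pp^{(k)}$, the set $\BBB(\gamma)$ of last letters of the $\pp^{(i)}$ satisfies $\#\BBB(\gamma)\le k+2$, because a Type $\eps$ cycle fixes the last letter of row $\eps$. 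I also use that Rauzy Moves fix $p_0^{-1}(1)$ and $p_1^{-1}(1)$.

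For $\neg(i)\Rightarrow\neg(ii)$ I would take a path $\gamma$ of cycle length $\le\#\AAA-3$ from $\pp$ to a standard $\qq$. By (C) some letter $b$ avoids $\BBB(\gamma)$; since $\BBB(\gamma)$ contains the last letters of $\pp^{(0)}$ and of $\qq$ — and the latter, being $q_0^{-1}(1)$ and $q_1^{-1}(1)$ by standardness and invariance of first letters, are exactly $p_0^{-1}(1)$ and $p_1^{-1}(1)$ — this $b$ is one of the letters allowed in $(ii)$. If $\pp\RED{\AAA\setminus\{b\}}$ were reducible, then a local analysis of the moves of $\gamma$ that involve $b$ (each either trivial on $\cdot\RED{\AAA\setminus\{b\}}$ or bracketing a momentary disappearance of the restriction) together with (A) shows $\qq\RED{\AAA\setminus\{b\}}$ is still reducible, contradicting (B); hence $\pp\RED{\AAA\setminus\{b\}}$ is irreducible and $(ii)$ fails. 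For $\neg(ii)\Rightarrow\neg(i)$: if $\pp\RED{\AAA\setminus\{b\}}\in\irr{\AAA\setminus\{b\}}$ for an admissible $b$, then Lemma~\ref{LemDistanceBoundFromStandard} gives a path of cycle length $\le\#\AAA-3$ from it to a standard $\qq'$; extending via the extension $\Omega$ with $\Omega(\pp\RED{\AAA\setminus\{b\}})=\pp$ (which exists because $b$ is not a last letter of $\pp$) yields a path of the same cycle length from $\pp$ to $\Omega(\qq')$, and $\Omega(\qq')$ is again standard because $\Omega$ inserts $b$ immediately before a letter at position $\ge3$ in $\pp$, hence never a first or last letter of $\qq'$. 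Thus $\dcycle(\pp,\std{\AAA})\le\#\AAA-3$.

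For $(iii)\Rightarrow(ii)$ I would substitute the explicit forms of $\pp\in\STAR{\AAA}$ recorded in the Remark following Definition~\ref{DefSpecialForm}. For an admissible letter $a_j$ the two rows differ by alternating $\pm2$ shifts, so a suitable initial segment of each row contains exactly one ``gap'', located at the position of $a_j$; deleting $a_j$ closes both gaps and exhibits a reducing set, so $\pp\RED{\AAA\setminus\{a_j\}}$ is reducible.

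The main obstacle is $(ii)\Rightarrow(iii)$, which I would carry out in three steps. First, no admissible letter $c$ has $p_0(c)=p_1(c)=:m$: such a $c$ makes $p_0^{-1}\{1,\dots,m-1\}$ and $p_1^{-1}\{1,\dots,m-1\}$ agree after deletion of $c$, and one checks that every candidate reducing set for $\pp\RED{\AAA\setminus\{c\}}$ then forces an actual cut of $\pp$, making the restriction irreducible and contradicting $(ii)$. Second, the defect $f(j):=\#\bigl(p_0^{-1}\{1,\dots,j\}\cap p_1^{-1}\{1,\dots,j\}\bigr)$ equals $j-1$ for all $1\le j\le\#\AAA-1$; irreducibility already gives $f(j)\le j-1$, and $f(j_0)\le j_0-2$ is ruled out by deleting a non-extreme letter of the (size $\ge4$) symmetric difference at level $j_0$ to obtain an irreducible restriction, with a separate argument for the degenerate case that this symmetric difference consists only of the extreme letters (handled by instead deleting a letter of $p_0^{-1}\{1,\dots,j_0\}\cap p_1^{-1}\{1,\dots,j_0\}$, or by passing to a neighboring level). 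Third, given these two facts, tracking the evolution of the now two-element symmetric difference $p_0^{-1}\{1,\dots,j\}\setminus p_1^{-1}\{1,\dots,j\}$ versus $p_1^{-1}\{1,\dots,j\}\setminus p_0^{-1}\{1,\dots,j\}$ as $j$ grows shows that at each level either the row-$1$ surplus letter enters the bottom of row $0$ or the row-$0$ surplus letter enters the bottom of row $1$ — the third possibility, a fresh letter entering both rows at the same position, being the fixed point excluded by Step~1 — and that these events alternate; this is precisely the recursion defining the forms of $\STAR{\AAA}$, so $\pp\in\STAR{\AAA}$. I expect Step~2, extracting the rigid global condition $f(j)=j-1$ from the purely local hypothesis $(ii)$, to be the hardest part.
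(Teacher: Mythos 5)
Your handling of (i)$\Leftrightarrow$(ii) (the distance bound of Lemma \ref{LemDistanceBoundFromStandard} plus the extension/restriction machinery, including the check that the extension of a standard pair stays standard) and of (iii)$\Rightarrow$(ii) (reading off a reducing cut from the explicit forms of $\STAR{\AAA}$) is sound and close to what the paper does. Where you diverge is on the hard implication: the paper never proves (ii)$\Rightarrow$(iii) directly. Instead it proves (i)$\Rightarrow$(iii) by induction on $\#\AAA$, using the proof of Proposition \ref{PropStandardInClass} to see that a pair at cycle distance $\#\AAA-2$ must end (up to switching rows) in a rigid local pattern with $1\pp=\pp$, then performing a single $0$-move, deleting one letter, and invoking the inductive hypothesis. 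Your attempt to replace this by a direct combinatorial derivation from (ii) is a legitimately different route, but as written it has a real gap.

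The gap is in Step 3: the alternation of the two event types does not follow from Steps 1 and 2. Take $\AAA=\{a_1,\dots,a_n\}$ with $n\ge 4$, $p_0(a_i)=i$, $p_1(a_i)=i-1$ for $i\ge 2$ and $p_1(a_1)=n$ (the cyclic shift). No letter occupies the same position in both rows, and your defect satisfies $f(j)=j-1$ for every $1\le j\le n-1$, yet this pair is not in $\STAR{\AAA}$: for instance $a_3$ sits at positions $3$ and $2$, displacement $1$, while it is neither of the two exceptional letters. In this pair the same type of event occurs at every level, so the level-by-level dichotomy you derive is correct but the alternation is not a consequence of it; alternation is exactly what separates $\STAR{\AAA}$ from such pairs, and it must be extracted from hypothesis (ii) itself (e.g., by showing that two consecutive events of the same type create an admissible letter, displaced by $1$, whose deletion leaves an irreducible restriction), which your outline never does. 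So the place you flag as hardest (Step 2) is not where the argument actually breaks; Step 2 is itself only asserted (that some non-extreme letter of a symmetric difference of size $\ge 4$ can be deleted leaving an irreducible restriction, plus an unspecified degenerate case), but Step 3 as stated would fail. Either supply the missing use of (ii) to force alternation, or follow the paper and prove (i)$\Rightarrow$(iii) inductively instead.
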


  \begin{proof}
	We first show that (i) implies (ii). Suppose $\pp\RED{\AAA'}$ is irreducible for $\AAA':=\AAA\setminus\{b\}$, $b\in\AAA$. Then there would exist a shortest path connecting $\pp$ to standard pair $\qq$ that would be an extension of a path in $\AAA'$. This path would necessarily have length at most $\#\AAA'-2 = \#\AAA-3$ by Lemma \ref{LemDistanceBoundFromStandard}.

	To show that (ii) implies (i), suppose that $\pp$ is connected to standard $\qq$ by a path of length $\ell < \#\AAA-2$. Then the connecting path is an extension on some alphabet $\AAA'\subsetneq\AAA$ where $\#\AAA' = \ell+2 < \#\AAA$. The pair $\pp\RED{\AAA'}$ is standard because $p^{-1}_0(1),p_1^{-1}(1),p^{-1}_0(\#\AAA),p_1^{-1}(\#\AAA)\in\AAA'$. Therefore $\pp\RED{\AAA'}$ must be irreducible.

	Now we will show that (i) implies (iii). We show this by induction on $\#\AAA$, noting that the case $\#\AAA=2$ is trivial and the case $\#\AAA=3$ is direct. By looking at the proof of Proposition \ref{PropStandardInClass}, if the cycle distance between $\pp$ and $\std{\AAA}$ is $\#\AAA-2$, then up to switching rows, $\pp$ must have the following form
	    $$ \pp = \mtrx{\LL{\dots}{\dots}~\LL{d_2~d_1~d_0}{d_0~d_i~d_1}}$$
	where $d_0,\dots,d_{\#\AAA-1}$ for the row $p_0$ in reverse order and $i>1$. Because $1\pp = \pp$, any path from $\pp$ to a standard pair must begin with a single $0$ move as its first cycle. Let $\pp' = 0\pp$ and $\AAA' := \AAA\setminus\{d_1\}$. We may verify that $\pp'\RED{\AAA'}$ is irreducible, so path must exist in $\AAA'$ that extends to a path in $\AAA$ which connects $\pi'$ to a standard pair. By our assumption of the distance on $\pp$, this extended path must have cycle length at least $\#\AAA-3$ in $\AAA'$. By induction, $\pp'$ must belong to $\STAR{\AAA'}$ with $\eps(\pp') = 0$ (see Definition \ref{DefSpecialForm}). It follows that $\pp\in\STAR{\AAA}$ with $\eps(\pp)=1$.

	We will now show that (iii) implies (ii). Choose $b\in\AAA$ that is not the first or last letter of either row of $\pp$. Let $\nu \in \{0,1\}$ satisfy $p_\nu(b) = p_{(1-\nu)}(b)+2$. Let $c$ be the letter such that $p_\nu(c) = p_\nu(b)-1$. Because $\pp\in\STAR{\AAA}$, $p_{(1-\nu)}(c) = p_\nu(c)+2 = p_\nu(b)+1$. Partition $\AAA$ by $\AAA = \AAA_{-} \sqcup \{b,c\} \sqcup \AAA_{+}$ where
	    $$ \AAA_{-} = \{d\in\AAA\setminus\{c\}: p_\nu(d) < p_\nu(b)\} \mbox{ and } \AAA_{+} = \{d\in\AAA: p_\nu(d) > p_\nu(b) \}.$$
	Note that there does exist $d\in\AAA_+$ such that $p_{(1-\nu)}(d) = p_\nu(b)$. It follows that if $\pp' = \pp\RED{\AAA\setminus\{b\}}$,
	    $ p_0'(\AAA_-) = \{1,\dots, \#\AAA_-\} = p'_1(\AAA_-)$
	or $\pp'$ is reducible.
	
  \end{proof}

  We conclude with one more result concerning $\STAR{\AAA}$. In particular we show a specific standard pair connected to such a pair.

  \begin{lem}\label{LemFarthestToHyp}
      Let $\pp=(p_0,p_1)\in\STAR{\AAA}$ and $a_1,a_2,\dots,a_n$ denote the letters of row $p_0$ in order, or equivalently $p_0(a_i) = i$ for $1\leq i \leq n = \#\AAA \geq 3$. Then there exists a path from $\pp$ to pair $\qq=(q_0,q_1)$ whose form is described by what follows:
	  \begin{itemize}
	   \item If $n + \eps(\pp)$ is even,
		  $$ q_0(a_j) = \RHScase{1, & j=1, \\ k+1, & j=2k, \\ n - k, & j=2k+3.}$$
	   \item If $n+ \eps(\pp)$ is odd,
		  $$ q_0(a_j) = \RHScase{k+1, & j=2k+1, \\ n +1 - k, & j= 2k.}$$
	   \item For each $1\leq j \leq n$, $$q_0(a_j) + q_1(a_j) = n+1.$$
	  \end{itemize}
  \end{lem}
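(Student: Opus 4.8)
The plan is to argue by induction on $n=\#\AAA$; the small cases (e.g.\ $n=3$, and $n=4$ for safety) are checked directly against the explicit forms of $\STAR{\AAA}$ recorded in the Remark after Definition \ref{DefSpecialForm}. Fix $\pp=(p_0,p_1)\in\STAR{\AAA}$ and set $\eps=\eps(\pp)$, $\delta=1-\eps$. First observe that $\eps\pp=\pp$: for a pair in $\STAR{\AAA}$ one has $p_{1-\eps}(p_\eps^{-1}(\#\AAA))=\#\AAA-1$, so the Rauzy Move of type $\eps$ acts as the identity. Consequently the reduction path of Proposition \ref{PropStandardInClass} (equivalently, the path used to prove that (i) implies (iii) in Lemma \ref{LemFarthestFromStandard}) leaves $\pp$ by a single Rauzy Move of type $\delta$; let $\pp^{(1)}=\delta\pp$.

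By the argument in that proof, there is exactly one letter $b\in\AAA$ --- the next-to-last letter of the relevant row of $\pp$ --- for which $\AAA':=\AAA\setminus\{b\}$ is an admissible sub-alphabet with $\pp^{(1)}\RED{\AAA'}\in\STAR{\AAA'}$ and $\eps\big(\pp^{(1)}\RED{\AAA'}\big)=\delta$; note $(n-1)+\delta\equiv n+\eps\pmod2$, so the parity branch of the formula is the same for $\pp^{(1)}\RED{\AAA'}$ as for $\pp$. Let $\Omega:\perm{\AAA'}\to\perm{\AAA}$ be the (prefix) extension characterized by $\Omega\big(\pp^{(1)}\RED{\AAA'}\big)=\pp^{(1)}$; concretely $\Omega$ inserts $b$ immediately before one prescribed letter $c_0$ of row $0$ and one prescribed letter $c_1$ of row $1$. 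Because the first letters of both rows are fixed along any Rauzy Path and are distinct from $c_0,c_1$, Lemma \ref{LemIrreducibleExtensions} guarantees that $\Omega$ carries irreducible pairs along the whole inductive path to irreducible pairs. By the inductive hypothesis there is a path in $\irr{\AAA'}$ from $\pp^{(1)}\RED{\AAA'}$ to the pair $\qq'$ of the asserted shape on $n-1$ letters; extending it through $\Omega$ (Definition \ref{DefPathExtensions}) and prepending the Move $\delta$ yields a path in $\irr{\AAA}$ from $\pp$ to $\Omega(\qq')$. The lemma thus reduces to the identity $\Omega(\qq')=\qq$.

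Establishing $\Omega(\qq')=\qq$ is the substantive step, and the one I expect to be hardest. Using Definition \ref{DefExtensions}, $\Omega(\qq')$ is obtained from $\qq'$ by inserting $b$ into row $0$ at position $m_0:=q'_0(c_0)$ and into row $1$ at position $m_1:=q'_1(c_1)$. One must (i) pin down $c_0,c_1$ and hence $m_0,m_1$ in each of the two parity branches, using the explicit location of $b$ inside $\pp^{(1)}$; (ii) feed the closed-form description of $q'_0(a_j)$ through this insertion and match it termwise against the piecewise formula for $q_0(a_j)$ in the statement --- this is where the ``$2k$ versus $2k+3$'' (respectively ``$2k+1$ versus $2k$'') indexing must be handled with care, and where the parity case split pays off; and (iii) verify $m_0+m_1=n+1$, which (since $q'_0(e)+q'_1(e)=n$ for every letter $e$) is exactly the statement that the insertion positions in the two rows are ``mirror images'', and which forces $q_0(a_j)+q_1(a_j)=n+1$ for $\Omega(\qq')$ from the corresponding relation for $\qq'$. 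Equivalently, part (iii) says that $\Omega(\qq')$ again represents the order-reversing permutation, hence is the unique standard pair of that type, namely $\qq$.

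As a cross-check, and as an alternative route for the base cases, one can bypass the induction and simply run the reduction path of Proposition \ref{PropStandardInClass} on the explicit form of $\pp$ from the Remark, computing the terminal pair directly; this trades the extension bookkeeping for one longer but completely mechanical computation per parity branch.
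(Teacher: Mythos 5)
Your proposal follows essentially the same route as the paper's proof: induction on $n$, the observation that $\eps(\pp)$-moves fix $\pp$ so the path leaves $\pp$ by a single move of type $1-\eps(\pp)$, restriction to an alphabet with one letter removed so as to land in $\STAR{\AAA'}$ with flipped $\eps$ and matching parity branch, and extension of the inductive path back to $\AAA$. The identity $\Omega(\qq')=\qq$ that you flag as the remaining substantive step is exactly what the paper dispatches in one sentence by recording the insertion positions of the removed letter (in the case $n$ odd, $\eps(\pp)=0$: $a_n$ sits immediately before $a_{n-1}$ in row $0$ and immediately before $a_{n-2}$ in row $1$), which determines your $m_0,m_1$ and reduces the rest to the routine termwise check you describe.
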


  \begin{proof}
      We shall prove this by induction on $n=\#\AAA$. The case for $n = 3$ is direct to verify. So consider $n>3$. We will show one case, as the other cases all follow a similar argument. Suppose that $n$ is odd and $\eps(\pp) = 0$. We note that $0\pp=\pp$, so let $\pp' = 1\pp$. In this case, $p_1'=p_1$. Note also that $p'_0(a_n) = n-1$ and $p_0'(a_{n-1}) = n$. If we let $\#\AAA' := \AAA\setminus\{a_n\}$, we see that $\pp'\RED{\AAA'}$ is an element of $\STAR{\AAA'}$ with $\eps(\pp') = 1$.
      By induction, there is a path connecting $\pp'$ to $\qq$ that is the extension of a path from $\pp'\RED{\AAA'}$ to $\qq' = (q_0',q_1')$ where
	  $$ q'_0(a_j) = \RHScase{k+1, & j=2k+1, \\ n +1 - k, & j= 2k,}$$
      and $q'_0(a_j) + q'_1(a_j) = n$ for all $1\leq j \leq n-1$. By noting that $a_n$ was before $a_{n-1}$ in the first row and before $a_{n-2}$ in the second in our prefix extension, $\qq$ is indeed the desired form.
  \end{proof}

\subsection{The Switch Move and Extended Switch Move}

	By Proposition \ref{PropStandardInClass}, we know that standard pairs exist in each Rauzy Class. We explore in this section \term{switch moves}, specific Rauzy Paths that connect standard pairs to each other. These moves will be fundamental tools used in this paper.
	
	Consider  $\pp=(p_0,p_1)\in\std{\AAA}$ with letters $z_0 ,z_1\in\AAA$ such that $p_0(z_0)= p_1(z_1)=\#\AAA$. We begin with a simple question, what is the shortest cycle-length path from $\pp$ to another standard pair? We know that the length can not be $1$, as the resulting pair would no longer be standard. Up to switching rows, let $\pp'=0^{k_1}\pp$ be the first cycle vertex from $\pp$. If $z'_0$ and $z_1'$ are defined analogously for $\pp'$, then $z_0'=z_0$ and $z_1'=b$ for some letter $b\neq z_1$. Because $z_1'\neq z_1$, the next cycle vertex $\pp'' = 1^{k_2}\pp'$ cannot be standard either. It follows that $z_1''=b$ and $z_0''=c$ for some new letter $c$. Note that $p_0(c)>p_0(b)$. The reader may verify that if $p_1(c)>p_1(b)$, we may now choose $k_3$ such that $\pp''' = 0^{k_3}\pp''$ satisfies $z_0''' = c$ and $z_1'''=z_1$. It is then possible to choose $k_4$ such that $\pp^{(4)} = 1^{k_4}\pp'''$ is standard. This paragraph may be phrased to in fact prove that two distinct standard pairs in the same Rauzy Class must be separated by a cycle distance of at least four, and $c$ appears after $b$ in each row of $\pp$ in when the minimum cycle distance is achieved.

	Conversely, suppose that $c$ appears after $b$ in each row of $\pp$. We may then express $\pp$ as
		$$ \pp = \mtrx{\LL{a}{z} ~ \LL{\ol{w}_1}{\ol{w}_4}~\LL{b}{b}~ \LL{\ol{w}_2}{\ol{w}_5}~\LL{c}{c}~\LL{\ol{w}_3}{\ol{w}_6}~\LL{z}{a}}.$$
	By following path $\gamma = 0^{k_1}1^{k_2}0^{k_3}1^{k_4}$ where $k_1 = 2 + \#\ol{w}_5+\#\ol{w}_6$, $k_2 = 1+\#\ol{w}_3$, $k_3 = 1 +\#\ol{w}_4$ and $k_4 = 1+\#\ol{w}_2$, we arrive at the standard pair
		$$ \mtrx{\LL{a}{z} ~ \LL{\ol{w}_2}{\ol{w}_5}~\LL{c}{c}~ \LL{\ol{w}_1}{\ol{w}_3}~\LL{b}{b}~\LL{\ol{w}_3}{\ol{w}_6}~\LL{z}{a}}.$$
	There is not a unique shortest path from $\pp$ to this pair. Indeed, we may use $\gamma' = 1^{k'_1}0^{k'_2}1^{k'_3}0^{k'_4}$ instead for $k'_1= 2+\#\ol{w}_2+\#\ol{w}_3$, $k_2' = 1+\#\ol{w}_6$, $k'_3 = 1 + \#\ol{w}_1$ and $k'_4 = 1+\#\ol{w}_5$. We may use either path in the following definition.

  \begin{defn}\label{DefSwitch}
      Let $\pp=(p_0,p_1) \in \std{\AAA}$ and $b,c\in\AAA$ be letters that satisfy the following,
	  $$ 1 < p_\eps(b) < p_\eps(c) < \#\AAA$$
      for $\eps\in\{0,1\}$. The \term{$\{b,c\}$-(inner) switch} (or $\{c,b\}$-(inner) switch) on $\pp$ is a sequence of Rauzy Moves that connects
	  $$ p = \mtrx{\LL{a}{z} ~ \LL{\ol{w}_1}{\ol{w}_4}~\LL{b}{b}~ \LL{\ol{w}_2}{\ol{w}_5}~\LL{c}{c}~\LL{\ol{w}_3}{\ol{w}_6}~\LL{z}{a}} \mbox{ and }
	      \mtrx{\LL{a}{z} ~ \LL{\ol{w}_2}{\ol{w}_5}~\LL{c}{c}~ \LL{\ol{w}_1}{\ol{w}_3}~\LL{b}{b}~\LL{\ol{w}_3}{\ol{w}_6}~\LL{z}{a}}$$
      as explained in the text preceding this definition.
  \end{defn}

	For our work in Extended Rauzy Classes, we instead see that we may move from distinct standard pairs in two cycles. In this case, we choose any $b$ that does not begin or end any row of our standard pair $\pp$. Express $\pp$ as
	$$ \pp = \mtrx{\LL{a}{z}~\LL{\ol{w}_1}{\ol{w}_3}~\LL{b}{b}~\LL{\ol{w}_2}{\ol{w}_4}~\LL{z}{a}}.$$
	By following either path $\gamma = 0^{1+\#\ol{w}_4}\tilde{1}^{1+\#\ol{w}_1}$ or $\gamma'=\tilde{1}^{1+\#\ol{w}_1}0^{1+\#\ol{w}_4}$, we arrive at standard pair
		$$ \mtrx{\LL{a}{b}~\LL{\ol{w}_2}{\ol{w}_4}~\LL{z}{z}~\LL{\ol{w}_1}{\ol{w}_3}~\LL{b}{a}}.$$
	If we instead follow either $\gamma'' = 1^{1+\#\ol{w}_2}\tilde{0}^{1+\#\ol{w}_3}$ or $\gamma''' = \tilde{0}^{1+\#\ol{w}_3}1^{1+\#\ol{w}_2}$, we arrive at pair
		$$ \mtrx{\LL{b}{z}~\LL{\ol{w}_2}{\ol{w}_4}~\LL{a}{a}~\LL{\ol{w}_1}{\ol{w}_3}~\LL{z}{b}}.$$
	For this work the choice of path does not matter.

  \begin{defn}
      Let $\pp = (p_0,p_1) \in \std{\AAA}$ and $b\in\AAA$. Then the \term{$\{a,b\}$-(outer) switch} is the sequence of Rauzy Moves that connects
	  $$ \pp = \mtrx{\LL{a}{z}~\LL{\ol{w}_1}{\ol{w}_3}~\LL{b}{b}~\LL{\ol{w}_2}{\ol{w}_4}~\LL{z}{a}} \mbox{ and }
		\mtrx{\LL{b}{z}~\LL{\ol{w}_2}{\ol{w}_4}~\LL{a}{a}~\LL{\ol{w}_1}{\ol{w}_3}~\LL{z}{b}},$$
      the \term{$\{b,z\}$-(outer) switch} on $\pp$ is the sequence of Rauzy Moves that connects $\pp$ with
	  $$ \mtrx{\LL{a}{b}~\LL{\ol{w}_2}{\ol{w}_4}~\LL{z}{z}~\LL{\ol{w}_1}{\ol{w}_3}~\LL{b}{a}}.$$
      Both moves are detailed in the paragraphs prior to this definition.
  \end{defn}

  \begin{remark}
      We will usually refer to both types of switch moves simply as switch moves when the letters are provided. This will not be ambiguous, as a switch is an outer switch if and only if one of the letters is either the first or last on the top/bottom row. An inner switch will have two letters that appear in the interior of both rows.
  \end{remark}

\subsection{Piece-wise Order Reversing Pairs, Blocks and Chains}

  We call a pair $\pp = (p_0,p_1)\in\perm{\AAA}$ \term{order reversing} if for all $b\in\AAA$,
    \begin{equation}\label{EqOrderReversing}
	p_0(b)+p_1(b) = \#\AAA+1.
    \end{equation}
  Note that all order reversing pairs are standard and therefore irreducible. Not every Rauzy Class contains an order reversing pair. We will be spending the majority of this paper considering such classes, so we will provide a relaxed notion of order reversing. Before we do so, we will adopt a convention of notation.

  When a collection of letters appear (as a set) in same positions in both rows of standard pair $\pp$, we may denote this by a bold letter, such as $\mathbf{B}$. When the letters in $\mathbf{B}$ are to be expressed, we will use square brackets rather than the curved ends for a pair. For example,
    $$ \pp = \mtrx{\LL{a}{z} ~ \mathbf{B} &\LL{d}{e}~\LL{e}{d}&\LL{z}{a}}\mbox{ for } \mathbf{B} = \bmtrx{b&c\\ c&b}$$
    describes the pair
    $$ \pp = \mtrx{\LL{a}{z}~\LL{b}{c}~\LL{c}{b}& \LL{d}{e}~\LL{e}{d}& \LL{z}{a}}.$$

  \begin{defn}\label{DefPWOR}
      A pair $\pp=(p_0,p_1)\in\std{\AAA}$ is \term{piece-wise order reversing} if there exists numbers $2=k_0<k_1<\dots<k_\ell=\#\AAA$ such that:
	\begin{itemize}
	 \item For each $1\leq i \leq \ell$, $p_0^{-1}\{k_{i-1},\dots,k_i-1\} = p_1^{-1}\{k_{i-1},\dots,k_i-1\}$. In other words, the set of letters in positions $k_{i-1}$ to $k_i-1$ for $p_0$ agrees with the same set in $p_1$.
	 \item For each $b$ such that $k_{i-1}\leq p_0(b) < k_i$ for some $1\leq i \leq \ell$,
	      $$ p_0(b)+p_1(b) = k_{i-1}+k_i-1.$$
	\end{itemize}
      We call each set of letters $p_0^{-1}\{k_{i-1},\dots,k_i-1\}$ an \term{order reversing block}. If the number of letters in such a block is $k$, we will refer to this as a \emph{$k$-block}.
  \end{defn}

  So a pair $\pp = (p_0,p_1)$ is piece-wise order reversing if it is standard and each set of letters appear in groups whose order within the group is reversed from $p_0$ to $p_1$. In proofs that deal with piece-wise order reversing pairs, we will adopt a particular convention useful to simplify arguments.

  \begin{defn}
      Let $\pp\in\std{\AAA}$ be piece-wise order reversing. A \term{chain} in $\pp$ will be a (possibly empty) collection of consecutive order-reversing blocks. If $\mathbf{S}_1,\dots,\mathbf{S}_{k-1}$ denote the $1$-blocks (order reversing blocks of length one), we call the \term{chain decomposition} of $\pp$ the ordered list of chains $\mathbf{C}_1,\dots,\mathbf{C}_k$ such that $\mathbf{C}_i$ is the chain preceding $\mathbf{S}_i$ and following $\mathbf{S}_{i-1}$, using the convention that $\mathbf{C}_1$ is simply the chain preceding $\mathbf{S}_1$ and $\mathbf{C}_k$ is the chain following $\mathbf{S}_{k-1}$.
  \end{defn}

  \begin{remark}\label{RemIgnoringChains}
      Once we provide a chain decomposition for $\pp$, the following lemmas allow us to perform switch moves on only a few chains while safely ignoring all others. In particular, Lemma \ref{LemReorderingChains} allows us to move any choice of chains (excluding the rightmost chain) to the left of all others. Lemma \ref{LemIgnoringChains} then says that, by possibly performing an extra switch move afterward, all sequences of switch moves that alter the rightmost chains may ignore the chains to their left. We will often abuse notation and refer to a pair by only the relevant chains, with the understanding that we always can replace the omitted chains without alteration.
  \end{remark}

  \begin{lem}\label{LemReorderingChains}
      Suppose $\pp$ is a piece-wise order reversing pair decomposed into chains
			$$ \textbf{C}_1,\dots , \textbf{C}_k.$$
      Then for any reordering $\{i_1,\dots,i_{k-1}\} = \{1,\dots, k-1\}$, the piece-wise order reversing pair $\pp'$ with chain decomposition
	$$ \textbf{C}_{i_1},\dots, \textbf{C}_{i_{k-1}}, \textbf{C}_k$$
      is connected to $\pp$ by inner switch moves.
  \end{lem}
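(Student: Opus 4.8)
The plan is to reduce the general reordering to the single case of swapping two adjacent chains, and to realize that adjacent swap by an inner switch move. Since the symmetric group on $\{1,\dots,k-1\}$ is generated by adjacent transpositions, and since composing inner switch moves yields a Rauzy Path built from inner switch moves, it suffices to show: if $\pp$ has chain decomposition $\mathbf{C}_1,\dots,\mathbf{C}_k$, then the pair $\pp'$ with chain decomposition obtained by swapping $\mathbf{C}_i$ and $\mathbf{C}_{i+1}$ (for $1 \le i \le k-2$, so that neither is the rightmost chain $\mathbf{C}_k$) is connected to $\pp$ by a single inner switch move. Here I should be slightly careful: the $1$-blocks $\mathbf{S}_1,\dots,\mathbf{S}_{k-1}$ sit between the chains, so ``swapping $\mathbf{C}_i$ and $\mathbf{C}_{i+1}$'' really means passing to the chain decomposition $\mathbf{C}_1,\dots,\mathbf{C}_{i-1},\mathbf{C}_{i+1},\mathbf{S}_i,\mathbf{C}_i,\mathbf{S}_{i+1},\dots$; one checks that this is again piece-wise order reversing with the same multiset of blocks, so $\pp'$ is a well-defined piece-wise order reversing pair.

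For the adjacent-swap step, I would write $\pp$ out explicitly in block form. Using the convention for standard pairs, write
$$\pp = \mtrx{\LL{a}{z}~\mathbf{X}~\mathbf{C}_i~\mathbf{S}_i~\mathbf{C}_{i+1}~\mathbf{Y}~\LL{z}{a}},$$
where $\mathbf{X}$ collects the chains $\mathbf{C}_1,\mathbf{S}_1,\dots,\mathbf{C}_{i-1},\mathbf{S}_{i-1}$ to the left and $\mathbf{Y}$ collects $\mathbf{S}_{i+1},\mathbf{C}_{i+2},\dots,\mathbf{C}_k$ to the right, and $a,z$ are the outer letters (so $p_0^{-1}(1)=a=p_1^{-1}(\#\AAA)$, $p_1^{-1}(1)=z=p_0^{-1}(\#\AAA)$). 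Now I want to apply an inner switch as in Definition \ref{DefSwitch} with $b$ a letter of $\mathbf{C}_i$ — say the first letter of $\mathbf{C}_i$ in row $p_0$ — and $c$ a letter of $\mathbf{C}_{i+1}$ — say the last letter of $\mathbf{C}_{i+1}$ in row $p_0$. Because $\pp$ is piece-wise order reversing and these blocks lie strictly in the interior, the hypothesis $1 < p_\eps(b) < p_\eps(c) < \#\AAA$ for $\eps \in \{0,1\}$ holds: within each row the blocks appear in the same left-to-right order, and the order-reversal is internal to each block. Actually, to move an entire chain rather than hit individual sub-blocks, I would iterate: perform a $\{b,c\}$-inner switch where $b$ ranges over the blocks of $\mathbf{C}_i$ (from the innermost outward) and $c$ is chosen appropriately in $\mathbf{C}_{i+1}$, each such switch sliding one block of $\mathbf{C}_i$ past the block structure of $\mathbf{C}_{i+1}$ while preserving the piece-wise-order-reversing property. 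Tracking the $\ol{w}_j$ words in Definition \ref{DefSwitch}, one verifies that after these inner switches the chains $\mathbf{C}_i$ and $\mathbf{C}_{i+1}$ have exchanged places, with $\mathbf{S}_i$ now sitting between $\mathbf{C}_{i+1}$ (on the left) and $\mathbf{C}_i$ (on the right), and nothing in $\mathbf{X}$ or $\mathbf{Y}$ disturbed — this is exactly the content of Remark \ref{RemIgnoringChains}, which I would invoke to suppress $\mathbf{X}$ and $\mathbf{Y}$ from the bookkeeping.

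I expect the main obstacle to be the bookkeeping in the adjacent-swap step: verifying that the specific inner switch (or short sequence of inner switches) genuinely realizes the block-level permutation I claim, and that at every intermediate stage the letters $b,c$ I feed into Definition \ref{DefSwitch} satisfy the interiority constraint $1 < p_\eps(b) < p_\eps(c) < \#\AAA$ in both rows. The cleanest route is probably to note that a single $\{b,c\}$-inner switch, applied with $b$ the leftmost block of $\mathbf{C}_i$ and $c$ the rightmost block of $\mathbf{C}_{i+1}$ in $p_0$, moves the maximal block segment between them, and that because $\mathbf{C}_i$ and $\mathbf{C}_{i+1}$ are consecutive the word $\ol{w}_2$ between $b$ and $c$ is precisely ($\mathbf{C}_i$ minus its first block) followed by $\mathbf{S}_i$ followed by ($\mathbf{C}_{i+1}$ minus its last block); comparing the pre- and post-switch block forms then shows the two chains have swapped. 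Once the adjacent case is in hand, the general reordering follows by composing $O(k^2)$ adjacent swaps, and since each is an inner switch move and concatenation of inner switch moves is again a path of inner switch moves, $\pp'$ is connected to $\pp$ as claimed. $\qed$
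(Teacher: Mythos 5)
There is a genuine gap in your adjacent-swap step, and it is exactly at the point you flagged as "bookkeeping": the letters you feed into Definition \ref{DefSwitch} lie inside order-reversing blocks of size at least two, and any such letter occupies \emph{different} positions in the two rows (inside a block the order is reversed from $p_0$ to $p_1$). Hence for your choice of $b$ (first letter of $\mathbf{C}_i$ in row $p_0$) and $c$ (last letter of $\mathbf{C}_{i+1}$ in row $p_0$), the words $\ol{w}_1,\ol{w}_2$ in row $0$ and $\ol{w}_4,\ol{w}_5$ in row $1$ do not consist of the same letters, so the $\{b,c\}$-switch rearranges the two rows inconsistently: it splits the block of $\mathbf{C}_i$ containing $b$ and the block of $\mathbf{C}_{i+1}$ containing $c$, and the output is in general not piece-wise order reversing, let alone the pair with the two chains exchanged. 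Concretely, if $\mathbf{C}_i$ and $\mathbf{C}_{i+1}$ are single $2$-blocks and locally the rows read $b_1\,b_2\,s\,c_1\,c_2$ (top) and $b_2\,b_1\,s\,c_2\,c_1$ (bottom), then after the $\{b_1,c_2\}$-switch these five letters appear in the order $b_2\,s\,c_1\,c_2\,b_1$ on top and $s\,c_2\,b_2\,b_1\,c_1$ on bottom, which is not the swapped-chain configuration and not piece-wise order reversing. Your description of $\ol{w}_2$ as "($\mathbf{C}_i$ minus its first block) followed by $\mathbf{S}_i$ followed by ($\mathbf{C}_{i+1}$ minus its last block)" is therefore incorrect (the switch acts on letters, not blocks, and the row-$1$ words differ from the row-$0$ words), and the iterated "slide one block at a time" variant inherits the same defect: any switch both of whose letters lie in blocks of size $\geq 2$ necessarily breaks those blocks, so no single such switch, nor an unspecified sequence of them chosen this way, yields the pair $\pp'$ with all original blocks intact.

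The missing idea is to perform the switches on the $1$-block letters $s_1,\dots,s_{k-1}$ separating the chains: these are precisely the letters occupying the same position in both rows, so an $\{s_j,s_{j'}\}$-switch ($j<j'$) moves whole chains intact and sends the chain order $\mathbf{C}_1,\dots,\mathbf{C}_k$ to $\mathbf{C}_{j+1},\dots,\mathbf{C}_{j'},\mathbf{C}_1,\dots,\mathbf{C}_j,\mathbf{C}_{j'+1},\dots,\mathbf{C}_k$, preserving the piece-wise order reversing structure. This is what the paper does: a first $\{s_{i_{k-1}},s_{k-1}\}$-switch puts $\mathbf{C}_{i_{k-1}}$ directly before $\mathbf{C}_k$, and then further separator switches insert $\mathbf{C}_{i_{k-2}},\mathbf{C}_{i_{k-3}},\dots$ into position from the right. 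Your reduction to adjacent transpositions is harmless in principle, but note that for $i\geq 2$ an adjacent swap of $\mathbf{C}_i$ and $\mathbf{C}_{i+1}$ is not realized by a single switch even with the correct (separator) letters, since separator switches produce prefix rotations; so the realization step has to change, either to the paper's insertion scheme or to a short composition of separator switches per transposition.
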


  \begin{proof}
    Let $s_j$ be the letter forming the $1$-block between $\textbf{C}_j$ and $\textbf{C}_{j+1}$, $1\leq j < k$. By performing a $\{s_{i_{k-1}},s_{k-1}\}$-switch, we arrive at a pair with chain decomposition
	$$ \textbf{C}_{i_{k-1}+1},\dots,\textbf{C}_{k-1},\textbf{C}_1,\dots, \textbf{C}_{i_{k-1}}, \textbf{C}_k.$$
    Note that the index of a letter $s_j$ still matches the index of the preceding chain $\textbf{C}_j$. We continue iteratively. Suppose $\textbf{C}_{i_\ell},\dots,\textbf{C}_{i_{k-1}},\textbf{C}_k$ are in correct position in our pair. If $\textbf{C}_j$ is the chain preceding $\textbf{C}_{i_\ell}$, $j\neq i_{\ell-1}$, we may perform an $\{s_j,s_{i_{\ell-1}}\}$-switch to move $\textbf{C}_{i_{\ell-1}}$ into the correct position.

    This process terminates when all chains are in the appropriate position.
  \end{proof}

  \begin{lem}\label{LemIgnoringChains}
    Suppose $\pp\in\std{\AAA}$ is piece-wise order reversing with chain decomposition
	$$ \textbf{C}_1,\dots , \textbf{C}_k.$$
    Suppose also that $\pp'=\pp\RED{\AAA'}$ for $\AAA'$ such that $\pp'$ is still piece-wise order reversing with decomposition
	$$ \textbf{C}_j,\dots , \textbf{C}_k$$
    for some $j>1$. If $\qq'$ is connected to $\pp'$ by a sequence of inner switch moves and has chain decomposition
	$$ \mathbf{D}_1,\dots,\mathbf{D}_m,$$
    then there is a piece-wise order reversing pair $\qq$ connected to $\pp$ by switch moves with chain decomposition
	$$ \mathbf{C}_1,\dots,\mathbf{C}_{j-1},\mathbf{D}_1,\dots,\mathbf{D}_m.$$
  \end{lem}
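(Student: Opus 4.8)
The plan is to reduce the statement for $\pp$ to the already-known behavior on the restriction $\pp' = \pp\RED{\AAA'}$ by first moving the "inert" chains $\mathbf{C}_1,\dots,\mathbf{C}_{j-1}$ out of the way, then carrying out the given sequence of switch moves verbatim (now as an extension of the path in $\AAA'$), and finally restoring the chains to their original positions. The key point throughout is that an inner switch move is a local operation on the block structure of a standard piece-wise order reversing pair: the specific pair of letters $\{b,c\}$ defining the switch, together with the letters strictly between them and strictly after $c$, determine the path, and letters in blocks entirely to the left of $b$ are passive. So a switch move performed "inside" the chains $\mathbf{D}_\bullet$ can be performed in the presence of extra leftmost chains, yielding an extension of the original path.

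First I would use Lemma~\ref{LemReorderingChains} to reorder the chain decomposition of $\pp$ from $\mathbf{C}_1,\dots,\mathbf{C}_k$ to $\mathbf{C}_1,\dots,\mathbf{C}_{j-1},\mathbf{C}_j,\dots,\mathbf{C}_k$ --- which is already in this order --- but more to the point, I observe that the chains $\mathbf{C}_j,\dots,\mathbf{C}_k$ together with the intervening and flanking $1$-blocks are exactly the pair $\pp'$, sitting as a prefix restriction inside $\pp$ after the block $\mathbf{C}_1\cdots\mathbf{C}_{j-1}\mathbf{S}_{j-1}$. (Here I am using that every switch move keeps a pair standard, so $\pp$ has well-defined first/last letters in both rows, and the sub-alphabet $\AAA'$ of Lemma~\ref{LemIgnoringChains} contains those four letters, so Lemma~\ref{LemReductionPaths} applies.) Then I would take the sequence of inner switch moves connecting $\pp'$ to $\qq'$ and, step by step, replace each constituent Rauzy path by its extension to $\AAA$ via Definition~\ref{DefPathExtensions}; by Equation~\eqref{EqExtendedMove} and Lemma~\ref{LemReductionPaths}, each such extended switch move is again a switch move on the ambient pair, because the letters in $\AAA\setminus\AAA'$ (the letters of $\mathbf{C}_1,\dots,\mathbf{C}_{j-1}$) never become the last letter of a row along this extended path, so they ride along passively inside the fixed prefix.

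Carrying this out for the whole sequence produces a piece-wise order reversing pair $\qq$ connected to $\pp$ by switch moves whose chain decomposition is $\mathbf{C}_1,\dots,\mathbf{C}_{j-1}$ followed by the chain decomposition of $\qq'$, namely $\mathbf{D}_1,\dots,\mathbf{D}_m$; that is, $\qq$ has decomposition $\mathbf{C}_1,\dots,\mathbf{C}_{j-1},\mathbf{D}_1,\dots,\mathbf{D}_m$, as required. (The "possibly performing an extra switch move afterward" clause referred to in Remark~\ref{RemIgnoringChains} comes from the fact that in the edge case where the extension would split or merge with the boundary $1$-block $\mathbf{S}_{j-1}$ between $\mathbf{C}_{j-1}$ and the rest, one may need a single further $\{s,s'\}$-switch of $1$-blocks, via Lemma~\ref{LemReorderingChains}, to separate the prefix chains cleanly from $\mathbf{D}_1$.)

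The main obstacle I anticipate is the bookkeeping at the interface between the fixed prefix $\mathbf{C}_1\cdots\mathbf{C}_{j-1}$ and the active part: I must check carefully that along the entire extended path the set $\BBB(\gamma)$ of "last letters encountered" stays inside $\AAA'$ (so that Lemma~\ref{LemReductionPaths}(ii) is satisfied and the path really is an extension), and that the extension of a switch move --- which inserts the passive letters at prescribed positions inside the two flanking words --- still matches the shape required in Definition~\ref{DefSwitch} for it to count as an inner switch. Both of these are routine consequences of the explicit forms of the paths given just before Definitions~\ref{DefSwitch}, but the case analysis (inner vs.\ the boundary $1$-block, and which of the two available shortest paths $\gamma,\gamma'$ to use so that the prefix stays frozen) is where the real work lies.
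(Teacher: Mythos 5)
Your central step --- extending the switch path from $\AAA'$ directly to $\AAA$ and claiming the chains $\mathbf{C}_1,\dots,\mathbf{C}_{j-1}$ ``ride along passively'' at the front --- does not work. A prefix extension attaches the letters of $\AAA\setminus\AAA'$ as prefix words to specific host letters of $\AAA'$; here they are forced onto the two letters $d_0,d_1\in\AAA'$ that immediately follow the $1$-block letter $s$ in rows $0$ and $1$ of $\pp$ (prefix words end with their host letter, so these extra letters cannot be attached to $a$ or $z$). The extension of the path ends at $\Omega(\qq')$ with the \emph{same} $\Omega$, i.e.\ with the whole segment formed by $\mathbf{C}_1,\dots,\mathbf{C}_{j-1}$ and their separating $1$-blocks inserted immediately before $d_0$ in row $0$ and immediately before $d_1$ in row $1$ --- wherever those letters now sit in $\qq'$. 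And inner switch moves do move them: even a $\{b,c\}$-switch with $b,c$ entirely to the right of the frozen segment sends the top row $a\,\ol{w}_1\,b\,\ol{w}_2\,c\,\ol{w}_3\,z$ to $a\,\ol{w}_2\,c\,\ol{w}_1\,b\,\ol{w}_3\,z$, relocating $\ol{w}_1$ (which contains your frozen prefix) into the middle. So in general $\Omega(\qq')$ is not piece-wise order reversing, and even when it is, the $\mathbf{C}$'s are spliced into the middle of some chain $\mathbf{D}_i$, splitting it; neither situation is repaired by the single extra switch you invoke, since Lemma \ref{LemReorderingChains} only permutes whole chains of a pair that is already piece-wise order reversing. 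Your observation that letters of $\AAA\setminus\AAA'$ never become last letters explains why the path \emph{extends} (Lemma \ref{LemReductionPaths}); it says nothing about where those letters end up, which is the actual difficulty.

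The paper supplies exactly the missing idea: it restricts only down to $\BBB=\AAA'\sqcup\{s\}$, so that the entire prefix will eventually be attached to the single letter $s$. The switch path extends from $\AAA'$ to $\BBB$, and since $\Sof{\pp\RED{\BBB}}(s)=s$ and $\Sof{}$ is a Rauzy-class invariant (Proposition \ref{PropSofPiIsInvariant}), in the resulting pair $\qq^*$ the letter $s$ is immediately preceded by one and the same letter $c$ in both rows; one extra $\{c,s\}$-switch then returns $s$ to position $2$ in both rows without changing the restriction to $\AAA'$. Only then does one extend from $\BBB$ to $\AAA$: the prefix is attached to $s$, which sits at position $2$, so it lands at the front and reconstitutes $\mathbf{C}_1,\dots,\mathbf{C}_{j-1}$, giving the stated decomposition. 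Without the intermediate alphabet $\BBB$ and this $\Sof{}$-argument there is no control over where the prefix lands, so your proposal has a genuine gap at its main step.
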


  \begin{proof}
      We will consider $\pp$, $\pp'$ and $\qq'$ as in the statement of the theorem. Let $s$ denote the letter that forms the $1$-block to the immediate left of chain $\mathbf{C}_j$ in $\pp$, and let $\BBB = \AAA'\sqcup\{s\}$. By Lemma \ref{LemIrreducibleExtensions}, $\pp^{*}=\pp\RED{\BBB}\in\irr{\BBB}$. It also follows that $p^*_0(s) = p_1^*(s) = 2$ where $\pp^* = (p^*_0,p^*_1)$. We see then that $\Sof{\pp^*}(s) = s$, from Definition \ref{DefSofPiAndMofPi}. By Lemma \ref{LemReductionPaths}, the switch moves connecting $\pp'$ to $\qq'$ extend to switch moves connecting $\pp^*$ to $\qq^*$ where $\qq^*= \qq\RED{\AAA'}$. If $q^*_0(s) = q^*_1(s) = 2$ where $\qq^* = (q^*_0,q^*_1)$, then these switch moves also extend to switch moves from $\pp$ to the appropriate $\qq$.

      If $s$ is not in the correct position, $\Sof{\qq^*}(s) = s$ by Proposition \ref{PropSofPiIsInvariant}. This means that there must exist a letter $c\in\BBB$ such that $q_\eps^*(c) +1 = q_\eps^*(s)$ for $\eps\in\{0,1\}$. By performing a $\{c,s\}$-switch on $\qq^*$, we arrive at $\rr^* = (r_0^*,r_1^*)$ such that $r_0^*(s) = r_1^*(s) = 2$. The sequence of switch moves from $\pp^*$ to $\qq^*$ to $\rr^*$ will extend to a sequence of switch moves that connect $\pp$ to the desired $\qq$.
  \end{proof}

\section{Finding Piece-wise Order Reversing Pairs in Each Class}\label{SecPWOR}

  In order to classify all Rauzy Classes, we will need to find a convenient pair form in each Rauzy Class that will allow for further manipulation. We have already noted that order reversing pairs do not exist in every (in fact most!) Rauzy Classes. However, we will show that every Rauzy Class does contain a piece-wise order reversing pair, as given in Definition \ref{DefPWOR}. Given the notion of inverse for pairs that appears before Equation \eqref{EqInverseMoves}, every piece-wise order reversing pair is its own inverse. In this way, Theorem \ref{ThmPWOrderReversing} below now provides a purely combinatorial proof of \cite[Theorem 2.1]{cFick2014}: Every Rauzy Class contains a self-inverse pair.

  The proof of this theorem follows very much in spirit to the simple proof provided for Proposition \ref{PropStandardInClass}.

  \begin{thm}\label{ThmPWOrderReversing}
    Every Rauzy Class contains a piece-wise order reversing pair.
  \end{thm}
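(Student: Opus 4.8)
The plan is to mimic the inductive structure of the proof of Proposition~\ref{PropStandardInClass}, but to track the finer piece-wise order reversing structure rather than just ``standard''. I would argue by induction on $\#\AAA$, the base cases $\#\AAA = 2,3$ being immediate since every standard pair on two letters is order reversing and the three-letter standard pairs are easily checked. For the inductive step, start with an arbitrary $\pp = (p_0,p_1)\in\irr{\AAA}$. By Proposition~\ref{PropStandardInClass} we may first pass to a standard pair, so assume $\pp$ is standard with $a = p_0^{-1}(1) = p_1^{-1}(\#\AAA)$ and $z = p_1^{-1}(1) = p_0^{-1}(\#\AAA)$.

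First I would try to split off an order reversing block from one end. The idea: look at the smallest $k$ such that $p_0^{-1}\{1,\dots,k\}$ is ``almost'' closed, or more precisely, use Rauzy moves to arrange that the letters $\{a,z\}$ together with whatever is forced sit in a genuine order reversing block at the left (positions $1$ through some $k_1$), after which the remaining letters in positions $k_1+1,\dots,\#\AAA$ form, as a set, a common set in both rows. Concretely, one can hope to reach a pair of the shape
$$ \pp = \mtrx{\LL{a}{z}~\mathbf{B}~\LL{z}{a}~\LL{\ol{w}}{\ol{w}'}} $$
where $\mathbf{B}$ together with the outer $\LL{a}{z}$, $\LL{z}{a}$ columns is order reversing — i.e.\ the first block is fully order reversing — and $\ol{w},\ol{w}'$ are the remaining letters occupying a common set of positions. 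Achieving this uses the same ``reduce $n_\eps$'' mechanism as Proposition~\ref{PropStandardInClass}: repeatedly apply cycles to push the first-block letters into order-reversed position. Once such a decomposition is obtained, the sub-pair on positions past the first block can be viewed (after a prefix restriction to the sub-alphabet $\AAA''$ consisting of those letters, which by construction contains the relevant first/last letters and is irreducible) as an element of $\irr{\AAA''}$; by the inductive hypothesis its Rauzy Class contains a piece-wise order reversing pair, and by Lemma~\ref{LemReductionPaths} / the discussion of path extensions (Definition~\ref{DefPathExtensions}), that path lifts to a path in $\AAA$ fixing the already-order-reversed first block. Concatenating the new piece-wise order reversing decomposition of the tail with the first block yields a piece-wise order reversing pair in $\ClassLab{\pp}$.

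The main obstacle, and the step I expect to require the most care, is the very first reduction: showing that from an arbitrary standard pair one can \emph{always} reach, by Rauzy moves, a pair whose leftmost columns genuinely form a complete order reversing block while the complementary letters occupy a common position set. In the proof of Proposition~\ref{PropStandardInClass} one only needed to make $n_\eps = 1$ to get one pair of matching outer letters; here one needs the whole first block to be internally order-reversed, which is a stronger closure condition, and a naive cycle might destroy order-reversal of letters already placed. The fix I would pursue is to be greedy about the block boundary $k_1$: take $k_1$ minimal such that, after an appropriate sequence of cycles, positions $1,\dots,k_1$ close up as a set; the minimality forces the internal structure on that block to be exactly order reversing (any failure of order-reversal inside would let one cut at a smaller $k_1$, or would contradict irreducibility the way $n_\eps \le \#\AAA - 2$ is forced in Proposition~\ref{PropStandardInClass}). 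One then needs a short lemma: for a standard pair, there is a cycle path making positions $1,\dots,k_1$ both a common set and order reversing, with $k_1$ as small as possible, and this $k_1 < \#\AAA$ unless the whole pair is already order reversing. Verifying that such a cut always exists and that the induction actually strictly decreases $\#\AAA''$ (i.e.\ that $k_1 \ge 2$, which is automatic since $a\neq z$) is where the real work lies; the rest is bookkeeping with restrictions and path extensions already set up in Section~\ref{SecBackground}.
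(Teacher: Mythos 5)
There is a genuine gap, and it appears already in your target configuration. In the displayed shape $\mtrx{\LL{a}{z}~\mathbf{B}~\LL{z}{a}~\LL{\ol{w}}{\ol{w}'}}$ the letters occupying positions $1,\dots,k_1$ (namely $a$, $z$ and the letters of $\mathbf{B}$) form the same set in both rows; by Definition \ref{DefIrreducible} such a pair is reducible, and since Rauzy moves preserve irreducibility, no pair of this shape can lie in $\ClassLab{\pp}$. Note that in Definition \ref{DefPWOR} the order reversing blocks occupy positions $2,\dots,\#\AAA-1$ only: the standard frame $\LL{a}{z}\cdots\LL{z}{a}$ must remain on the outside precisely so that the pair stays irreducible. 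So the normal form your ``first reduction'' aims for does not exist inside the class, and no sequence of Rauzy moves can reach it.

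Even under the repaired reading (keep the frame and aim for $\mtrx{\LL{a}{z}~\mathbf{B}_1~\LL{\ol{w}}{\ol{w}'}~\LL{z}{a}}$ with $\mathbf{B}_1$ order reversing and the tail letters occupying a common set of positions), the step you yourself flag as ``the real work'' is exactly the content of the theorem, and your proposed mechanism does not supply it. Minimality of the cut $k_1$ only controls where a block closes up as a set; it says nothing about the relative order of the letters inside that block, and the ``reduce $n_\eps$'' cycles from Proposition \ref{PropStandardInClass} only manipulate the last letters of the two rows, so they cannot reverse the interior of a block without disturbing letters already placed. The paper supplies precisely this missing rearrangement by a different mechanism: it works from the right, takes the largest position $n$ whose letters do not yet lie in order reversing blocks, and applies explicit inner switch moves (three cases, Figure \ref{FigProofOfPWOR}) after which either $n$ strictly decreases or the positions $n_0,n_1$ strictly increase, so the process terminates at a piece-wise order reversing pair. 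Your outer induction on $\#\AAA$ with restrictions and path extensions is compatible with the paper's machinery (compare Lemma \ref{LemReductionPaths} and Lemma \ref{LemIgnoringChains}), but without an argument replacing that switch-move case analysis the proof is incomplete.
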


  \begin{proof}
      We shall provide a procedure to connect any $\pp\in\irr{\AAA}$ to a piece-wise order reversing pair by switch moves. By Proposition \ref{PropStandardInClass}, we may first assume that $\pp$ is standard.

      Suppose that $\pp=(p_0,p_1)$ is not piece-wise order reversing. Let $n<\#\AAA$ be the unique value such that all letters of the form $p_\eps^{-1}(\ell)$, $\eps\in\{0,1\}$ and $n < \ell < \#\AAA$, belong to order reversing blocks while $p_0^{-1}(n)$ and $p_1^{-1}(n)$ do not. Note that $n>1$ if $\pp$ is not piece-wise order reversing. Let $b_\eps = p_\eps^{-1}(n)$ denote the distinct letters that appear in position $n$ in row $\eps$ for $\eps\in\{0,1\}$. Let $n_0 = p_0(b_1)$ and $n_1 = p_1(b_0)$. By definition of $n$, $n_\eps< n-1$ for some $\eps\in\{0,1\}$. We will proceed by cases.

      \textbf{Case 1:} If $n_0 \neq n_1$, assume without loss of generality that $n_0 < n_1$. Then there must exist a letter $c\in\AAA$ such that $n_0 < p_0(c) < n$ and $p_1(c) < n_1$. By performing a $\{b_0,c\}$-switch to arrive at pair $\pp'$ (see Figure \ref{FigProofOfPWOR}). Either $\pp'$ is now piece-wise order reversing, or the following hold: $n' < n$ or $n' = n$, $b_0' = c$, $b_1'=b_1$ and $n_0'>n_0$.

      \textbf{Case 2:} Suppose $n_0 = n_1$, but a letter $c$ still exists such that $n_0 < p_0(c) < n$ and $p_1(c) < n_1$. We may perform the same switch move as in Case 1 to arrive at $\pp'$ with the same properties, specifically $n_0' > n_0$. A similar argument follows if $n_1 < p_1(c) < n$ and $p_0(c) < n_0$ instead.

      \textbf{Case 3:} Suppose $n_0=n_1$ and no such letter (from Case 2) exists. Because $b_0$ and $b_1$ do not belong to an order-reversing block, there must exist letters $c$ and $d$ such that $n_\eps < p_\eps(c) < p_\eps(d) < n$ for $\eps\in\{0,1\}$. By performing a $\{c,d\}$-switch, we arrive at pair $\pp''$. Either $\pp''$ is piece-wise order reversing, $n'' < n$ or $n''= n$, $n_0''>n_0$ and $n_1''>n_1$.

      In any case, we perform a switch move to replace $\pp$ with new pair $\pp'$ or $\pp''$. This new pair will either be piece-wise order reversing, have a decreased value of $n$ or increased value of $n_0$ and/or $n_1$. Because each of these steps change our pair on finite values, this process will complete at a piece-wise order reversing pair.
    
      \begin{figure}[t]
		$$ \begin{array}{|lr|}
		      \hline
		      \mathrm{Cases~ 1\&2:} & \xymatrix{\pp = \mtrx{\LL{a}{z}\LL{\ol{u}_1}{\ol{u}_2}\LL{b_1}{c}\LL{\ol{v}_1}{\ol{v}_2}\LL{c}{b_0}\LL{\ol{w}_1}{\ol{w}_2}\LL{b_0}{b_1}\LL{\ol{x}_1}{\ol{x}_2}\LL{z}{a}} \ar@{.}[r]^{\{b_0,c\}} &
			\mtrx{\LL{a}{z}\LL{\ol{w}_1}{\ol{v}_2}\LL{b_0}{b_0}\LL{\ol{u}_1}{\ol{u}_2}\LL{b_1}{c}\LL{\ol{v}_1}{\ol{w}_2}\LL{c}{b_1}\LL{\ol{x}_1}{\ol{x}_2}\LL{z}{a}} = \pp'}  \\ & \\
		      \mathrm{Case~ 3:}	    & \xymatrix{\pp = \mtrx{\LL{a}{z}\LL{\ol{u}_1}{\ol{u}_2}\LL{b_1}{b_0}\LL{\ol{v}_1}{\ol{v}_2}\LL{c}{c}\LL{\ol{w}_1}{\ol{w}_2}\LL{d}{d}\LL{\ol{x}_1}{\ol{x}_2}\LL{b_0}{b_1}\LL{\ol{y}_1}{\ol{y}_2}\LL{z}{a}} \ar@{.}[r]^{\{c,d\}} &
			\mtrx{\LL{a}{z}\LL{\ol{w}_1}{\ol{w}_2}\LL{d}{d}\LL{\ol{u}_1}{\ol{u}_2}\LL{b_1}{b_0}\LL{\ol{v}_1}{\ol{v}_2}\LL{c}{c}\LL{\ol{x}_1}{\ol{x}_2}\LL{b_0}{b_1}\LL{\ol{y}_1}{\ol{y}_2}\LL{z}{a}} = \pp''} \\ \hline
		   \end{array}  $$
	  \caption{Proving Theorem \ref{ThmPWOrderReversing}: Performing switch moves in each case.}\label{FigProofOfPWOR}
      \end{figure}
  \end{proof}

\section{Proofs of the Kontsevich-Zorich-Boissy Classification}\label{SecKZB}

  The classification theorems provided in \cite{cKonZor2003} and \cite{cBoi2012} use two concepts. We have provided the combinatorial version of one concept by our definitions of $\Mof{\pp}$ and $\Pof{\pp}$ in Section \ref{SecBackground}. The remaining concept will be covered by the following definition.

  \begin{defn}\label{DefTypes}
     A piece-wise order reversing pair is:
	  \begin{itemize}
                \item[] \term{Type} $<1>$ if there exists at most one non-empty chain which, if it exists, is composed of one block.
		\item[] \term{Type} $<2>$ if all non-empty chains are of the composed of $2$-blocks.
		\item[] \term{Type} $<3>$ if each non-empty chain is
				\begin{itemize}
					\item[] (i) composed of $m$ consecutive $2$-blocks followed by a $3$-block and ending with $\ell$ consecutive $2$-blocks, $m,\ell\geq 0$.
					\item[] (ii) composed of $m$ consecutive $2$-blocks, $m>0$.
				\end{itemize}
			and at least one word must be of the first form.
		\item[] \term{Type} $<4>$ if exactly one chain is composed of a $4$-block followed by $m$ consecutive $2$-blocks ($m\geq 0$) and all other non-empty chains composed of $2$-blocks.
		\item[] \term{Type} $<5>$ if exactly one chain is composed of a $5$-block and all other non-empty chains are composed of exactly one $2$-block each.
          \end{itemize}
     Many piece-wise order reversing pairs do not have a Type. To ensure disjointness, Types $<2>$--$<5>$ all require that more than one block of size two or greater exists in the pair.
  \end{defn}

  Theorems \ref{ThmTypes} and \ref{ThmExclassUniqueType} will allow us to assign a unique Type to each (Extended) Rauzy Class. All results have purely combinatorial proofs.

  We then achieve an identical result to \cite{cKonZor2003} and \cite{cBoi2012}. By Theorem \ref{ThmExclassTypeSufficient} Type and $\Pof{\pp}$ fully classify all non-labeled Extended Rauzy Classes, which shows \cite[Theorems 1 \&\ 2]{cKonZor2003}. By Theorem \ref{ThmClassTypeSufficient}, $\Pof{\pp}$, $\Mof{\pp}$ and Type fully classify all non-labeled Rauzy Classes, proving the Main Theorem (for Translation Surfaces) in \cite{cBoi2012}.

\subsection{Every Class Contains a Pair of Type 1,2,3,4 or 5}

    Given a Rauzy Class, Theorem \ref{ThmPWOrderReversing} will give a piece-wise order reversing pair $\pp$ contained in the class. We will work in steps, represented by lemmas. If $\pp$ is Type $<1>$, then we are already satisfied. If $\pp$ is not Type $<1>$, we will apply Lemma \ref{LemSize5} to get a new pair with all blocks size five or less. We will then apply Lemma \ref{LemUniqueSize5} to get a piece-wise order reversing pair with at most block of size $4$ or $5$ with all other blocks size at most $3$. We then finish the proof of Theorem \ref{ThmTypes} with the final refinements to our pair, making it one of Types $<2>$--$<5>$.

    \begin{lem}\label{LemSize5}
	Every Rauzy Class contains a pair that is either piece-wise order reversing with all order reversing blocks of length at most five or of Type $<1>$.
    \end{lem}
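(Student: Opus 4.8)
The plan is to start, by Theorem~\ref{ThmPWOrderReversing}, with a piece-wise order reversing pair $\pp$ in the given Rauzy Class, and then to argue that as long as some order reversing block has length $\geq 6$, we can perform switch moves to replace $\pp$ with another piece-wise order reversing pair whose ``block profile'' is strictly smaller in an appropriate well-founded sense. Using Remark~\ref{RemIgnoringChains}, I would first reduce to working with only the relevant chains: if $\pp$ has a chain containing a block of length $\geq 6$, move that chain (together with a trailing $1$-block, and the rightmost chain if needed) to a convenient position, and ignore all the other chains during the surgery, reinstating them at the end. So the real content is local: given a single order reversing block $\mathbf{B}$ of length $m\geq 6$ sitting inside a piece-wise order reversing pair, exhibit a sequence of inner switch moves that breaks $\mathbf{B}$ into smaller blocks while keeping the pair piece-wise order reversing.

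The key step is the block-splitting move. An order reversing block of length $m$ is, by Definition~\ref{DefPWOR}, a sub-pair of the form
$$ \mathbf{B} = \bmtrx{c_1 & c_2 & \cdots & c_{m-1} & c_m \\ c_m & c_{m-1} & \cdots & c_2 & c_1}, $$
which up to renaming is itself an order reversing pair on $m$ letters. The idea is that one can find letters $b,c$ interior to both rows of the ambient standard pair with $1 < p_\eps(b) < p_\eps(c) < \#\AAA$, so that the $\{b,c\}$-inner switch of Definition~\ref{DefSwitch} cuts $\mathbf{B}$ into two (or more) shorter order reversing blocks and leaves everything else order reversing. Concretely, picking $b = c_i$ and $c = c_j$ with $i<j$ chosen symmetrically about the center of $\mathbf{B}$ (for instance $i$ and $m+1-j$ close together) should, after the switch, leave a prefix $c_1\dots c_{i-1}$ forming its own order reversing block, a middle order reversing block, and a suffix order reversing block, each of length strictly less than $m$ — and one can iterate. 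One must check the constraint $1 < p_\eps(b), p_\eps(c) < \#\AAA$ holds, which is where the hypotheses that $\pp$ is standard and that $\mathbf{B}$ is an \emph{interior} block (or can be arranged to be one after reordering chains via Lemma~\ref{LemReorderingChains}) are used; if $\mathbf{B}$ happens to be the only nontrivial block, one should first introduce or relocate a $1$-block so that $\mathbf{B}$'s letters are genuinely interior.

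For the termination/bookkeeping, I would attach to each piece-wise order reversing pair the multiset of its block lengths and order these multisets by, say, the lexicographically-ordered sorted sequence of lengths (or: the number of blocks of length $\geq 6$, then the size of the largest, etc.) — any well-founded order in which ``replace one block of length $m\geq 6$ by several blocks each of length $<m$'' is a strict decrease. Since each splitting move strictly decreases this statistic and the statistic takes values in a well-ordered set, the process halts, and it can only halt at a pair all of whose blocks have length $\leq 5$; if along the way we ever reach a pair with at most one nonempty chain consisting of a single block, that is Type~$<1>$ and we are also done. Finally, Remark~\ref{RemIgnoringChains} (specifically Lemma~\ref{LemIgnoringChains}) guarantees that each local surgery, possibly followed by one extra switch move to restore a displaced $1$-block, lifts to switch moves on the full pair, so the terminal pair lies in the same Rauzy Class as $\pp$.

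**The main obstacle** I anticipate is verifying cleanly that the chosen inner switch genuinely splits the long block into \emph{pieces that are each again order reversing blocks} and does not disturb the order reversing structure of the blocks on either side — in other words, matching the abstract ``$\{b,c\}$-switch'' picture of Definition~\ref{DefSwitch}, with its words $\ol{w}_1,\dots,\ol{w}_6$, to the concrete sub-block decomposition, and confirming that the position sums $p_0(\cdot)+p_1(\cdot)$ come out to the required constants $k_{i-1}+k_i-1$ for the new block boundaries. A secondary nuisance is the edge case where the long block is adjacent to (or contains letters adjacent to) the outer letters $a, z$ of the standard pair, so that a naive choice of $b$ or $c$ violates $1 < p_\eps(\cdot) < \#\AAA$; this is handled by first using Lemma~\ref{LemReorderingChains} to move the offending chain away from the rightmost position and ensuring a $1$-block sits to its left, which is exactly the kind of preparatory normalization Remark~\ref{RemIgnoringChains} licenses.
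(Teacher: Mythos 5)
There is a genuine gap at the heart of your key step. A $\{b,c\}$-inner switch (Definition \ref{DefSwitch}) requires $1<p_\eps(b)<p_\eps(c)<\#\AAA$ \emph{for both} $\eps\in\{0,1\}$, i.e.\ $b$ must precede $c$ in both rows. But if $b=c_i$ and $c=c_j$ are two letters of the same order reversing block $\mathbf{B}$, then by Definition \ref{DefPWOR} they occur in opposite relative orders in the two rows, so no inner switch on such a pair exists at all. Hence your central move --- splitting a long block by a switch on two letters interior to that block --- is not merely delicate to verify; it is undefined. This is not a repairable bookkeeping issue: a pair consisting of a single long order reversing block admits no inner switches whatsoever (this is exactly Proposition \ref{PropOrderReversingOnlyStandard}: an order reversing pair is the unique standard pair in its class), so its block can never be broken up, which is precisely why the Type $<1>$ alternative must appear in the statement. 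Your proposal never uses the ``not Type $<1>$'' hypothesis in the splitting step, and that omission is the symptom of the flaw.

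The paper's proof works differently: after assuming the pair is not Type $<1>$, it uses the resulting guarantee of a \emph{second} block of size $k\geq 2$, and all its explicit switch sequences (e.g.\ $\{b_3,c_k\},\{b_7,c_1\},\dots$ in Case 1A) begin with cross-block switches pairing letters of the long block with letters $c_1,c_k$ of the other block; only after such moves displace letters out of a common block do switches between two letters of the original long block become legal. The argument is organized by the size of the long block ($\geq 8$, $=7$, $=6$), with an additional subtlety in Case 2B where the intermediate pair temporarily fails to be piece-wise order reversing and one must restrict to a sub-alphabet and re-apply Theorem \ref{ThmPWOrderReversing} before extending back. Your termination-by-well-founded-order scheme and the use of Remark \ref{RemIgnoringChains} are fine in spirit, but without a legitimate splitting mechanism the induction has nothing to iterate.
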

    
    \begin{proof}
	By Theorem \ref{ThmPWOrderReversing}, we may find $\pp$ in a Rauzy Class that is piece-wise order reversing. Let us assume that $\pp$ is not Type $<1>$, otherwise the proof is complete. We will then perform switch moves to find a pair that has blocks of sufficient size. In each of the following cases, we will replace $\pp$ with a new pair $\pp'$ that will have replaced a large block with smaller ones. This proof will have to be used iteratively, so we will always assume that our block of size at least $6$ is the rightmost of such blocks.

	\textbf{Case 1:} Suppose that $\pp$ contains an order-reversing block of size $m\geq 8$. Because $\pp$ is not Type $<1>$, it must contain another order reversing block of size $k \geq 2$.
      
	\textbf{Case 1A:} \textit{Suppose this $k$-block appears to the left of our $m$-block.} We may then express $\pp$ as
	    $$ \pp = \mtrx{\LL{a}{z}~ \LL{\ol{u}_1}{\ol{u}_2} ~ \LL{b_1~b_2~b_3~b_4~ \ol{v}_1~b_5~b_6~b_7~b_8}{b_8~b_7~b_6~b_5~ \ol{v}_2~b_4~b_3~b_2~b_1}~ \LL{\ol{w}_1}{\ol{w}_2}~\LL{c_1\cdots c_k}{c_k\cdots c_1}~\LL{\ol{x}_1}{\ol{x}_2}~\LL{z}{a}}$$
	where $b_1,\dots,b_8,\ol{v}_1,\ol{v}_2$ form the size $m$ block and $c_1,c_2,\dots,c_k$ form our block of size $k\geq 2$. In other words, $\ol{v}_1$ and $\ol{v}_2$ are (possibly empty) words that are reverses of each other. By performing the following switches in order:
	    $$ \{b_3,c_k\},\{b_7,c_1\},\{b_2,b_5\},\{b_2,b_8\},\{b_4,b_6\},\{b_1,b_4\},\{b_7,c_1\},\{b_5,b_8\},\{b_3,c_k\},$$
	we arrive at pair $\pp'$ of the form
	    $$ \pp' = \mtrx{\LL{a}{z}~ \LL{\ol{u}_1}{\ol{u}_2} ~ \LL{b_1~b_8~b_3~b_6}{b_8~b_1~b_6~b_3}~ \LL{\ol{v}_1}{\ol{v}_2} ~\LL{b_5~b_4~b_7~b_2}{b_4~b_5~b_2~b_7}~ \LL{\ol{w}_1}{\ol{w}_2}~\LL{c_1\cdots c_k}{c_k\cdots c_1}~\LL{\ol{x}_1}{\ol{x}_2}~\LL{z}{a}}.$$
	We note that $\pp'$ is piece-wise order reversing and has replaced the $m$-block in $\pp$ with four $2$-blocks and an $(m-8)$-block.

	\textbf{Case 1B:} Let us now assume that our $k$-block is to the left of our $m$-block. We then express $\pp$ as
	    $$ \pp = \mtrx{\LL{a}{z}~ \LL{\ol{u}_1}{\ol{u}_2} ~ \LL{c_1\cdots c_k}{c_k\cdots c_1}~ \LL{\ol{v}_1}{\ol{v}_2}~\LL{b_1~b_2~b_3~b_4~ \ol{w}_1~b_5~b_6~b_7~b_8}{b_8~b_7~b_6~b_5~ \ol{w}_2~b_4~b_3~b_2~b_1}~\LL{\ol{x}_1}{\ol{x}_2}~\LL{z}{a}},$$
	with the $b_i$'s and words $\ol{w}_1,\ol{w}_2$ forming the $m$-block and the $c_i$'s forming the $k$-block.
	By performing the following switches in order:
	    $$ \{c_k,b_2\},\{c_1,b_5\},\{b_1,b_4\},\{b_1,b_6\},\{c_1,b_3\},\{c_k,b_5\},\{b_2,b_8\},\{b_4,b_7\},\{b_1,b_4\},$$
	we arrive at pair $\pp'$ of the form
	    $$\pp' = \mtrx{\LL{a}{z}~ \LL{\ol{u}_1}{\ol{u}_2} ~ \LL{c_1\cdots c_k}{c_k\cdots c_1}~ \LL{\ol{v}_1}{\ol{v}_2}~\LL{b_1~b_8~b_3~b_6}{b_8~b_1~b_6~b_3}~\LL{\ol{w}_1}{\ol{w}_2}~\LL{b_5~b_4~b_7~b_2}{b_4~b_5~b_2~b_7}~\LL{\ol{x}_1}{\ol{x}_2}~\LL{z}{a}},$$
	which has replaced the $m$-block of $\pp$ with four $2$-blocks and an $(m-8)$-block. In either case, we may replace $\pp$ with $\pp'$ and repeat this argument until we have a permutation with all blocks size less than $8$.

	\textbf{Case 2:} Suppose $\pp$ is piece-wise order-reversing with a $7$-block and another block of size $k\geq 2$. As with the previous case, we assume that $7$-block is the rightmost block of size $7$. We consider two cases: either our $7$-block is the leftmost block of size at least $2$, or there exists a block of size $k\geq2$ to its left.

	\textbf{Case 2A:} In our first case, we may express $\pp$ as
	    $$ \pp = \mtrx{\LL{a}{z}~\LL{\ol{u}~s}{\ol{u}~s}~\LL{b_1~b_2~b_3~b_4~b_5~b_6~b_7}{b_7~b_6~b_5~b_4~b_3~b_2~b_1}~ \LL{\ol{v}_1}{\ol{v}_2}~\LL{c_1\cdots c_k}{c_k \cdots c_1}~\LL{\ol{w}_1}{\ol{w}_2} ~\LL{z}{a}},$$
	where $b_1,\dots,b_7$ form our $7$-block, $c_1,\dots,c_k$ form our $k$-block, and $\ol{u}$,$s$ represent all $1$-blocks to the left of the $7$-block. We may without difficulty also consider the case when $\ol{u}$ and $s$ do not exist, meaning the $7$-block would be the leftmost block (of any size) of $\pp$. By performing the following switch moves:
	    $$ \{b_5,c_1\},\{b_1,c_k\},\{b_4,b_6\},\{b_3,c_1\},\{b_5,c_1\},\{b_1,c_k\},\{b_2,b_7\},\{s,b_3\},$$
	excluding the final move if $s$ does not exist, we arrive at pair
	    $$ \pp' = \mtrx{\LL{a}{z}~\LL{\ol{u}~s}{\ol{u}~s}~\LL{b_3~b_1}{b_1~b_3}~\LL{b_5~b_4~b_7}{b_7~b_4~b_5}~\LL{b_6~b_2}{b_2~b_6}~ \LL{\ol{v}_1}{\ol{v}_2}~\LL{c_1\cdots c_k}{c_k \cdots c_1}~\LL{\ol{w}_1}{\ol{w}_2}~ \LL{z}{a}},$$
	replacing the $7$-block of $\pp$ with two $2$-blocks and one $3$-block in $\pp'$. The second case shall be more involved.

	\textbf{Case 2B:} Suppose the $7$-block instead has a block of size at least 2 to its left. We choose the leftmost such block and express $\pp$ as
	    $$ \pp = \mtrx{\LL{a}{z} ~\LL{\ol{u}~s}{\ol{u}~s}~ \LL{c_1 \cdots c_k}{c_k\cdots c_1}~\LL{\ol{v}_1}{\ol{v}_2}~\LL{b_1~b_2~b_3~b_4~b_5~b_6~b_7}{b_7~b_6~b_5~b_4~b_3~b_2~b_1}~\LL{\ol{w}_1}{\ol{w}_2}~ \LL{z}{a}},$$
	where the letters $c_1,\dots,c_k$ form our $k$-block for $k\geq 2$ and $b_1,\dots,b_7$ form our $7$-block. The word $\ol{u}$ and letter $s$ are present if there are $1$-blocks to the left of our $k$-block. We may again assume that $\ol{u}$ is empty and $s$ does not exist if needed. By performing the following moves in order:
	    $$ \{c_1,b_5\},\{c_k,b_1\},\{b_2,b_7\},\{b_3,b_5\},\{b_6,c_k\},\{b_3,b_4\},\{s,b_4\} ,$$
	excluding the final move if $s$ does not exist, we arrive at pair
	    $$ \pp' = \mtrx{\LL{a}{z} ~\LL{\ol{u}~s}{\ol{u}~s}~ \LL{b_5 ~ b_3}{b_3~b_5}~\LL{b_7~\ol{v}_1~b_1}{b_1~\ol{v}_2~b_7}~\LL{c_2\cdots c_{k-1}}{c_{k-1}\cdots c_2}~\LL{c_k~b_4~c_1}{c_1~b_4~c_k}~\LL{b_6~b_2}{b_2~b_6}~\LL{\ol{w}_1}{\ol{w}_2}~ \LL{z}{a}}.$$
	Unless $\ol{v}_1$ and $\ol{v}_2$ represent exactly one order reversing block, this pair is no longer piece-wise order-reversing. However, we may consider the pair $\pp'$ restricted to the alphabet $\AAA' := \AAA\setminus\big(\{b_2, b_4,b_6,c_1,\dots,c_k,\}\cup\{\alpha \in \AAA: \alpha \in \ol{w}_1\}\big)$ and let $\qq = \pp'\RED{\AAA'}$. The removed letters form blocks of size at most $6$, as we always pick the rightmost as the beginning of our proof and have finished Case 1. So we may find $\qq'\in\ClassLab{\qq}$ that is piece-wise order reversing by Theorem \ref{ThmPWOrderReversing}. We may then apply the proof of this Theorem to $\qq'$ to find a pair $\qq''$ which is piece-wise order reversing and either is Type $<1>$ or has all blocks size at most $6$. This extends to $\pp''$ in the class of $\pp$ and $\pp'$ that is also piece-wise order-reversing. This pair either will have all blocks the correct size, or has exactly \textbf{one} block size at least $6$. This block will be the leftmost block of size greater than $1$, so our proof will work directly for either Case 1 or 2. We note that our restriction always removes at least 5 letters from our alphabet, so we will only have to reduce up to a finite number of times.

	\textbf{Case 3:} Assume now that $\pp$ is piece-wise order-reversing with a $6$-block and another block of size $k\geq 2$.

	\textbf{Case 3A:} If the $k$-block is to the right of the $6$ block, we may express $\pp$ as
	    $$ \pp = \mtrx{\LL{a}{z}~\LL{\ol{u}_1}{\ol{u}_2}~\LL{b_1~b_2~b_3~b_4~b_5~b_6}{b_6~b_5~b_4~b_3~b_2~b_1}~\LL{\ol{v}_1}{\ol{v}_2}~\LL{c_1\cdots c_k}{c_k\cdots c_1}~\LL{\ol{w}_1}{\ol{w}_2}~\LL{z}{a}}.$$
	By performing the ordered switch moves:
	    $$ \{b_1,c_k\},\{b_4,c_1\},\{b_1,b_3\},\{b_1,b_5\},\{b_2,b_6\},\{b_4,c_1\},\{b_3,c_k\},$$
	we arrive at pair $\pp'$ of the form
	    $$ \pp' = \mtrx{\LL{a}{z}~\LL{\ol{u}_1}{\ol{u}_2}~\LL{b_1~b_6}{b_6~b_1}~\LL{b_3~b_2~b_5~b_4}{b_4~b_5~b_2~b_3}~\LL{\ol{v}_1}{\ol{v}_2}~\LL{c_1\cdots c_k}{c_k\cdots c_1}~\LL{\ol{w}_1}{\ol{w}_2}~\LL{z}{a}},$$
	which has replaced the $6$-block of $\pp$ with a $2$-block and a $4$-block.

	\textbf{Case 3B:} If instead the $k$-block of $\pp$ is to the left of the $6$-block, then we write $\pp$ as
	    $$ \pp = \mtrx{\LL{a}{z}~\LL{\ol{u}_1}{\ol{u}_2}~\LL{c_1\cdots c_k}{c_k\cdots c_1}~\LL{\ol{v}_1}{\ol{v}_2}~\LL{b_1~b_2~b_3~b_4~b_5~b_6}{b_6~b_5~b_4~b_3~b_2~b_1}~\LL{\ol{w}_1}{\ol{w}_2}~\LL{z}{a}}.$$
	By performing the following switch moves:
	    $$\{b_3,c_1\},\{b_1,c_k\},\{b_1,b_5\}, \{b_2,b_4\},\{b_4,b_6\},\{b_5,c_1\},\{b_3,c_k\},$$
	we arrive a $\pp'$ of the form
	    $$ \mtrx{\LL{a}{z}~\LL{\ol{u}_1}{\ol{u}_2}~\LL{c_1\cdots c_k}{c_k\cdots c_1}~\LL{\ol{v}_1}{\ol{v}_2}~\LL{b_1~b_6}{b_6~b_1}~\LL{b_3~b_2~b_5~b_4}{b_4~b_5~b_2~b_3}~\LL{\ol{w}_1}{\ol{w}_2}~\LL{z}{a}},$$
	which has again replaced the $6$-block of $\pp$ with a $2$-block and a $4$-block.
    
	We have therefore shown steps to iteratively replace large blocks in $\pp$ with smaller blocks. This process is finite and will terminate when we achieve a pair $\pp$ as stated in the theorem.
    \end{proof}
    
    \begin{lem}\label{LemUniqueSize5}
		Every Rauzy Class contains a pair that is either Type $<1>$ or is piece-wise order reversing with at most one order-reversing block of length $4$ or $5$ and all other blocks of length at most $3$.
    \end{lem}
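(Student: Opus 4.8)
The plan is to run a finite descent on top of Lemma \ref{LemSize5}, in the same spirit as the proof of that lemma. By Lemma \ref{LemSize5} we may replace our class representative by a piece-wise order reversing pair $\pp$ in which every order-reversing block has length at most $5$ (and if $\pp$ is of Type $<1>$ we are already done). Let $N(\pp)$ denote the number of order-reversing blocks of $\pp$ of length $4$ or $5$. If $N(\pp)\leq 1$ the lemma holds, so assume $N(\pp)\geq 2$. I will produce a piece-wise order reversing $\pp'\in\ClassLab{\pp}$, again with all blocks of length at most $5$, but with $N(\pp')<N(\pp)$; since $N$ is a nonnegative integer, repeating this step terminates at a pair satisfying the conclusion.

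For the descent step I would fix two order-reversing blocks $\mathbf{B},\mathbf{B}'$ of length $4$ or $5$, and perform switch moves that dismantle $\mathbf{B}$ into blocks of length $2$ and $3$ (a $4$-block into two $2$-blocks, a $5$-block into a $2$-block and a $3$-block), using $\mathbf{B}'$ — and, if convenient, a neighboring $1$-block — as the ``fuel'' block, exactly as in Cases $1$--$3$ of the proof of Lemma \ref{LemSize5}. The key point is that the only blocks of length $\geq 4$ created or retained by these moves are the blocks of $\pp$ other than $\mathbf{B}$ (the fuel block either is returned unchanged, or is itself broken into blocks of length at most $3$), so $N$ drops by at least one. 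To keep the configuration small I would first invoke Remark \ref{RemIgnoringChains}: by Lemma \ref{LemReorderingChains} reorder the chains so that the chains containing $\mathbf{B}$ and $\mathbf{B}'$ are leftmost and adjacent, and by Lemma \ref{LemIgnoringChains} ignore all other chains. This reduces the step to a bounded list of local cases according to whether $\mathbf{B}$ and $\mathbf{B}'$ lie in a common chain or in adjacent chains separated by a single $1$-block, and to the length ($4$ or $5$) of $\mathbf{B}$; in each case one writes down an explicit short sequence of switch moves. As in Case 2B of Lemma \ref{LemSize5}, if some intermediate pair fails to be piece-wise order reversing, one recovers by restricting to the sub-alphabet on which the structure remains clean, applying Theorem \ref{ThmPWOrderReversing} and the proof of Lemma \ref{LemSize5} there, and extending the resulting path back.

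The main obstacle is precisely the bookkeeping of this step: writing out and verifying the explicit switch sequences in each positional/length case, and checking that the resulting pair really is piece-wise order reversing with the advertised block structure — in particular that $\mathbf{B}'$ and all the ignored chains are returned unaltered, so that $N$ genuinely decreases and no new block of length $\geq 4$ appears. These are finite computations, entirely parallel to (and strictly simpler than) those already carried out for $6$-, $7$- and $8$-blocks in Lemma \ref{LemSize5}, since the blocks being dismantled here have length only $4$ or $5$ and the fuel block never needs to be broken all the way down.
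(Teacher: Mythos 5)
Your descent skeleton (start from Lemma \ref{LemSize5} and iteratively reduce the number $N$ of blocks of length $4$ or $5$) is the same as the paper's, but the step you rely on to make $N$ drop contains a genuine error, and the explicit switch sequences that would constitute the actual proof are never exhibited. The mechanism you present as the main case --- dismantle a single $4$-block $\mathbf{B}$ into two $2$-blocks ``exactly as in Cases 1--3 of Lemma \ref{LemSize5}'', with the other large block $\mathbf{B}'$ used as fuel and \emph{returned unchanged} --- is impossible in general, not merely tedious bookkeeping. Take $\pp$ consisting of exactly two $4$-blocks between the frame letters (this is precisely the left-hand pair of Move \ref{Move-43218765-to-21436587}). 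By Lemma \ref{LemArfBlockCount}, $\ARFof{\pp}=2^{9}+2^{4}$, whereas any piece-wise order reversing pair with one $4$-block and two $2$-blocks on the same number of letters has $\ARFof{\cdot}=2^{9}-2^{4}$. Since switch moves are composed of Rauzy moves, Corollary \ref{CorArfIsInvariant} forbids connecting these two forms; this is the same obstruction that separates Type $<2>$ from Type $<4>$ in Corollary \ref{CorSpin}. So the analogy with the $6$-, $7$- and $8$-block cases of Lemma \ref{LemSize5} (where the fuel block genuinely comes back untouched) breaks down exactly at length $4$, and the ``strictly simpler'' finite computations you defer to do not exist in the form you describe: the two large blocks must be destroyed together, not one at a time against an inert fuel block.

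That correlated destruction is what the paper does, and it makes the proof a single application of an appendix move rather than a case analysis: writing the two blocks of length $4$ or $5$ with outer letters $b_1,b_2,b_3,b_4$ and $c_1,c_2,c_3,c_4$ (and central letters $b'$, $c'$ when the length is $5$), Move \ref{Move-43218765-to-21436587} applied to these eight letters converts both blocks into $2$-blocks simultaneously, the central letters becoming $1$-blocks; hence each iteration lowers the number of blocks of length at least $4$ by two, terminating with at most one such block. No reordering of chains, no Case-2B-style recovery via Theorem \ref{ThmPWOrderReversing}, and no intermediate loss of the piece-wise order reversing property occurs. Your hedge (``or is itself broken into blocks of length at most $3$'') is the only branch that can be realized, but as written your proposal neither supplies a move realizing it nor notices the invariant obstruction to the branch you actually argue from, so the descent step is not established.
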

    
    \begin{proof}
		Given a Rauzy Class, we may find a piece-wise order reversing pair $\pp$ satisfying the conditions of Lemma \ref{LemSize5}. We may assume that $\pp$ is not Type $<1>$, so it is piece-wise order-reversing with all blocks of size at most $5$. Much like in the previous proof, we will reduce the number of blocks of size at least $4$ by replacing $\pp$ with $\pp'$. We will perform this operation iteratively until we arrive at our desired permutation.
		
		Suppose $\pp$ has a $k$-block and an $m$-block, where $k,m\in\{4,5\}$. We then express $\pp$ as
			$$ \pp = \mtrx{\LL{a}{z}~\LL{\ol{u}_1}{\ol{u}_2}~\LL{b_1~b_2~b'~b_3~b_4}{b_4~b_3~b'~b_2~b_1}~\LL{\ol{v}_1}{\ol{v}_2}~\LL{c_1~c_2~c'~c_3~c_4}{c_4~c_3~c'~c_2~c_1}~\LL{\ol{w}_1}{\ol{w}_2}~\LL{z}{a}}$$
		where $b'$ exists if $k=5$ and $c'$ exists if $m=5$. By performing Move \ref{Move-43218765-to-21436587}, we arrive at pair
			$$ \pp' = \mtrx{\LL{a}{z}~\LL{\ol{u}_1}{\ol{u}_2}~\LL{b_1~b_4}{b_4~b_1}~\LL{b'}{b'}~\LL{b_3~b_2}{b_2~b_3}~\LL{\ol{v}_1}{\ol{v}_2}~\LL{c_1~c_4}{c_4~c_1}~\LL{c'}{c'}~\LL{c_3~c_2}{c_2~c_3}~\LL{\ol{w}_1}{\ol{w}_2}~\LL{z}{a}}$$
		which has replaced the two larger blocks with blocks of size at most $2$.
    \end{proof}

    \begin{thm}\label{ThmTypes}
      Every Rauzy Class contains a pair of one of the five Types in Definition \ref{DefTypes}.
    \end{thm}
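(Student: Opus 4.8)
The plan is to start from the pair produced by Lemma~\ref{LemUniqueSize5}: either it is already Type $<1>$ (and we are done), or it is piece-wise order reversing with at most one block of size $4$ or $5$ and all remaining blocks of size $\le 3$. Fix such a pair $\pp$ and argue according to which large blocks occur, in each case performing switch moves to normalize the chain decomposition into one of the five prescribed forms. By Remark~\ref{RemIgnoringChains} (i.e.\ Lemmas~\ref{LemReorderingChains} and~\ref{LemIgnoringChains}), we may freely reorder chains and work with only the ``relevant'' chains, reinstating the others at the end. The key observation is that Types $<2>$--$<5>$ are distinguished by the \emph{single} exceptional block size ($3$, $4$, or $5$, or none), so the whole argument reduces to: (a) if a $5$-block is present, show it forces all other non-$1$-blocks to become single $2$-blocks (Type $<5>$); (b) if a $4$-block is present (and no $5$-block), show it can be moved to the head of its chain with only $2$-blocks after it and all other chains reduced to $2$-blocks (Type $<4>$); (c) if the largest block has size $3$, show we can arrange each chain to be a string of $2$-blocks possibly containing exactly one $3$-block, with at least one chain of the first form (Type $<3>$); (d) if all blocks have size $\le 2$, we are in Type $<2>$ (or Type $<1>$ if there are too few large blocks).

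First I would handle the case where a $5$-block exists. Using switch moves (the utility moves in the appendices, analogous to Move~\ref{Move-43218765-to-21436587}), I would show that a $5$-block together with any adjacent $2$- or $3$-block can be rewritten so that the $5$-block persists but the companion block is replaced by single $2$-blocks forming their own $1$-block-separated chains; iterating, every chain other than the one housing the $5$-block becomes a single $2$-block, and the $5$-block's chain collapses to just the $5$-block. This is precisely Type $<5>$. Next, assuming no $5$-block but a $4$-block present: within the $4$-block's chain, switch moves let me commute the $4$-block to the left end and break any $3$-blocks in that chain (and in every other chain) into $2$-blocks; the result is exactly one chain of the form ``$4$-block followed by $2$-blocks'' with all other chains made of $2$-blocks, i.e.\ Type $<4>$. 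If the largest block has size $3$: here I would show that two $3$-blocks lying in the same chain, or a $3$-block with $2$-blocks on both sides in a pattern other than the allowed one, can be rearranged — but in fact the defining forms (i)/(ii) of Type $<3>$ already allow at most one $3$-block per chain with $2$-blocks flanking it, so the only work is to ensure the $3$-blocks can be distributed one-per-chain; a counting/reordering argument via Lemma~\ref{LemReorderingChains} together with a move that splits a chain between two $3$-blocks into two chains does this. Finally, if every block has size $\le 2$, then $\pp$ is already Type $<2>$, unless there is at most one such block, in which case it is Type $<1>$.

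Throughout, the bookkeeping device is that each reduction step strictly decreases a finite quantity — the number of $5$-blocks, then the number of $4$-blocks, then the number of chains containing more than one $3$-block — so the process terminates. The appendices supply the explicit switch-move sequences; in the body I would only cite them, exactly as Lemma~\ref{LemUniqueSize5} cites Move~\ref{Move-43218765-to-21436587}.

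The main obstacle I anticipate is the Type $<3>$ case: unlike the $4$- and $5$-block cases, where the exceptional block is forced to be unique, a pair may carry several $3$-blocks, and one must show they can always be separated into distinct chains (one $3$-block per chain) sandwiched only by $2$-blocks, with at least one chain retaining a $3$-block, while simultaneously not creating a new large block. Verifying that the requisite chain-splitting switch moves exist and interact correctly with Remark~\ref{RemIgnoringChains} — in particular that splitting a chain and reordering do not reintroduce forbidden configurations — is the delicate point, and is presumably where the bulk of the appendix moves (``those that do deal with $3$-blocks'') are used.
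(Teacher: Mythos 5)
Your overall scaffolding (start from Lemma \ref{LemUniqueSize5}, normalize by switch moves using Remark \ref{RemIgnoringChains}, terminate by a decreasing finite quantity) matches the paper, but the heart of your case analysis is wrong: you assign the target Type by the \emph{largest} exceptional block present ($5$ beats $4$ beats $3$), and the claimed normalizations (a) and (b) are unreachable in general. The quantities $\Pof{\pp}$ and $\Mof{\pp}$ are invariant under all Rauzy moves (Corollary \ref{CorPofPiAndMofPiClassInvariant}, Proposition \ref{PropPofPiExclassInvariant}), and distinct Types force incompatible values of these invariants (this is exactly how Theorem \ref{ThmExclassUniqueType} is proved: Type $<3>$ forces even entries in $\Pof{\pp}$, Type $<4>$ forces an odd entry $\geq 5$, Type $<5>$ forces only $1$'s and $3$'s). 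So a class containing a pair with both a $3$-block and a $4$- or $5$-block cannot contain any Type $<4>$ or $<5>$ pair; the large block must be \emph{eliminated}, not the $3$-blocks, and the class ends up Type $<3>$ (the paper's Case 2B). Likewise, a $5$-block whose chain also contains a $2$-block, or which coexists with a chain of two or more consecutive $2$-blocks, does not lead to Type $<5>$: the paper's Case 1C converts such configurations into a $4$-block situation and lands in Type $<4>$, and by Type uniqueness no switch moves could instead isolate the $5$-block as you propose. Your step (b) also asks to ``break any $3$-blocks into $2$-blocks,'' which is already parity-impossible blockwise and, more fundamentally, would change the invariants. So the moves you posit in (a) and (b) cannot exist; the correct priority is the reverse of the one you chose.

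A second, smaller discrepancy: in the size-$3$ case the paper does not avoid creating larger blocks. Splitting a chain containing two $3$-blocks (Move \ref{Move-321654-to-213654} together with Corollaries \ref{CorMove-abU-Uba} and \ref{CorMove-abUc-cbUa}) deliberately produces $4$-blocks; these are then cancelled in pairs by Move \ref{Move-43218765-to-21436587}, and a single leftover $4$-block is fed back into the (now safe) mixed case. Your proposal treats the appearance of new large blocks as something to be prevented, and offers only a vague ``counting/reordering argument'' for distributing $3$-blocks one per chain; without the create-then-cancel mechanism (and the parity bookkeeping on the number of created $4$-blocks) the Type $<3>$ case is not actually closed. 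In short: the termination idea and the use of the appendix moves are in the right spirit, but the case structure must be reorganized around the invariants before the argument can go through.
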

    
    \begin{proof}
		By Lemma \ref{LemUniqueSize5}, we may consider $\pp$ in our Rauzy Class that is either already Type $<1>$ or has at most one block of size at most $5$ and all other blocks of size at most $3$. Let $\mathbf{C}_1,\dots,\mathbf{C}_k$ be the chain decomposition of $\pp$. We will perform moves on each chain based on cases to arrive at a new pair $\pp'$. After each move, we replace $\pp$ with $\pp'$. As stated in Remark \ref{RemIgnoringChains}, we will express only the chains that will be altered.
		
		\textbf{Case 1A:} \textit{$\pp$ contains only $1$-blocks and $2$-blocks.} In this case, $\pp$ is Type $<2>$.

		\textbf{Case 1B:} \textit{$\pp$ does not contain $3$-blocks but does contain a $4$-block.} Let $\mathbf{C}_j$ be the chain in $\pp$ that contains the $4$-block, and note that each other non-empty chain is composed of $2$-blocks. If $\mathbf{C}_j$ is not of the form listed for Type $<4>$, there is at least one $2$-block to the left of the $4$-block in $\mathbf{C}_j$. We express $\pp$ as
			$$ \pp = \mtrx{\LL{a}{z}~\LL{\ol{u}_1}{\ol{u}_2}~\LL{b_1~b_2}{b_2~b_1}~\LL{c_1~c_2~c_3~c_4}{c_4~c_3~c_2~c_1}~\LL{\ol{v}_1}{\ol{v}_2}~\LL{z}{a}}.$$
		By performing Move \ref{Move-216543-to-432165}, we arrive at pair $\pp'$ of the form
			$$ \pp' = \mtrx{\LL{a}{z}~\LL{\ol{u}_1}{\ol{u}_2}~\LL{b_1~c_4~c_1~b_2}{b_2~c_1~c_4~b_1}~\LL{c_3~c_2}{c_2~c_3}~\LL{\ol{v}_1}{\ol{v}_2}~\LL{z}{a}}.$$
		We have replaced $\mathbf{C}_j$ with chain $\mathbf{C}'_j$ with the $4$-block moved to the left. By repeating this process as necessary, we will arrive at a pair $\pp$ of Type $<4>$.

		\textbf{Case 1C:} \textit{$\pp$ does not contain $3$-block but does contain a $5$-block.} Let $\mathbf{C}_j$ be the chain containing the $5$-block. If $\mathbf{C}_j$ is composed of only the $5$-block and all other non-empty chains are of composed of exactly one $2$-block each, then $\pp$ is Type $<5>$. If $\mathbf{C}_j$ contains at least one $2$-block, we consider the following two pairs:
			    $$ \qq = \mtrx{\LL{a~\ol{u}_1}{z~\ol{u}_2}~\LL{b_1~c_2~s_1~c_1~b_2}{b_2~c_1~s_1~c_2~b_1}~\LL{s_2}{s_2}~\LL{d_1~d_2}{d_2~d_1}~\LL{\ol{v}_1~z}{\ol{v}_2~a}}$$
		and
			    $$ \rr = \mtrx{\LL{a~\ol{u}_1}{z~\ol{u}_2}~\LL{b_1~b_2}{b_2~b_1}~\LL{s_1}{s_1}~\LL{c_1~d_2~s_2~d_1~c_2}{c_2~d_1~s_2~d_2~c_1}~\LL{\ol{v}_1~z}{\ol{v}_2~a}}$$
		which are related by Move \ref{Move-216543-to-432165} on letters $a,b_1,b_2,c_1,c_2,d_1,d_2,z$. If there is a $2$-block to the right of the $5$-block in $\mathbf{C}_j$, then we may express $\pp$ as the form $\qq$ by eliminating $s_2$. The resulting pair $\pp'$ will be of the form $\rr$ with $s_2$ removed. This has replaced $\mathbf{C}_j$ with two new chains, one with a $4$-block and the other with only $2$-blocks. We may then proceed with $\pp'$ as in Case 1B. If a $2$-block is to the left, we may instead express $\pp$ in the form of $\rr$ by removing $s_1$. By performing the switch moves in reverse order, the resulting pair $\pp'$ may be expressed in the form of $\qq$ with $s_1$ removed. This has replaced $\mathbf{C}_j$ with two chains, again one with a $4$-block and the other with only $2$-blocks.
	  
		The only other possibility is for $\mathbf{C}_j$ to be composed of only the $5$-block and another chain $\mathbf{C}_{j'}$ exists composed of at least two $2$-blocks. We may again consider two pairs
		    $$ \qq = \mtrx{\LL{a~\ol{u}_1}{a~\ol{u}_2}~\LL{b_1~c_2~s_1~c_1~b_2}{b_2~c_1~s_1~c_2~b_1}~\LL{\ol{v}_1}{\ol{v}_2}~\LL{d_1~d_2}{d_2~d_1}~\LL{s_2}{s_2}~\LL{e_1~e_2}{e_2~e_1}~\LL{\ol{w}_1~z}{\ol{w}_2~a}}$$
		and
		    $$ \rr = \mtrx{\LL{a~\ol{u}_1}{a~\ol{u}_2}~\LL{b_1~b_2}{b_2~b_1}~\LL{s_1}{s_1}~\LL{c_1~c_2}{c_2~c_1}~\LL{\ol{v}_1}{\ol{v}_2}~\LL{d_1~e_2~s_2~e_1~d_2}{d_2~e_1~s_2~e_2~d_1}~\LL{\ol{w}_1~z}{\ol{w}_2~a}}$$
		which are connected by Move \ref{Move-43216587-to-21438765} on letters $a,b_1,b_2,c_1,c_2,d_1,d_2,e_1,e_2,z$. If $\mathbf{C}_{j'}$ is to the right of $\mathbf{C}_j$, then we may express $\pp$ as $\qq$ by removing $s_2$. The resulting pair $\pp'$ may be expressed as $\rr$ with $s_2$ removed and has replaced $\mathbf{C}_j$ with two chains both composed of $2$-blocks (if non-empty) and $\mathbf{C}_{j'}$ with a chain composed of a $4$-block and possibly $2$-blocks. This may be handled by Case 1B. If instead $\mathbf{C}_{j'}$ is to the left, we may instead reverse this process by expressing $\pp$ as $\rr$ with $s_1$ removed. The resulting pair may be expressed as $\qq$ with $s_1$ removed and may also be handled by Case 1B.
		
		\textbf{Case 2A:} \textit{$\pp$ contains at least one $3$-block, all blocks are size at most $3$ and each chain contains at most one $3$-block.} This pair is Type $<3>$.	  

		\textbf{Case 2B:} \textit{$\pp$ contains at least one $3$-block and a $k$-block, $k=4$ or $k=5$.} Suppose first that the $k$-block is to the right of a $3$-block. This is in general not difficult to ensure. If $\pp$ does not already satisfy this condition but instead has a chain $\mathbf{C}_j$ that contains a $3$-block and another chain $\mathbf{C}_{j'}$ that contains the $k$-block, then we may switch the ordering of the chains by Lemma \ref{LemReorderingChains} to satisfy this assumption, as long as $\mathbf{C}_j$ is not the rightmost chain. If $\mathbf{C}_j$ is the rightmost chain but has two or more consecutive $2$-blocks to the right of the $3$-block, we may perform a move as in the final paragraph of Case 1C to move the $k$-block into $\mathbf{C}_j$ to the right of the $3$-block, and it will now be a $4$-block (regardless of $k$). So assume we have made any such moves if possible to satisfy our assumption.

		Choose our $3$-block so that it is the rightmost $3$-block in its chain. We may then express $\pp$ as
		      $$ \pp = \mtrx{\LL{a}{z}~\LL{\ol{v}_1~b_1~b_2~b_3}{\ol{v}_2~b_3~b_2~b_1}~\LL{\ol{w}_1~c_1~c_2~c'~c_3~c_4}{\ol{w}_2~c_4~c_3~c'~c_2~c_1}~\LL{z}{a}},$$
		where $b_1,b_2,b_3$ compose our $3$-block, $c_1,c_2,c_3,c_4,c'$ compose our $k$-block ($c'$ exists iff $k=5$) and $\ol{v}_1,\ol{v}_2$ form $m$ consecutive $2$-blocks, for some $m\geq 0$. By performing Move \ref{Move-3217654-to-3215476} on letters $a,b_1,b_2,b_3,c_1,c_2,c_3,c_4,z$, we arrive at pair $\pp'$ of the form
		      $$ \pp'= \mtrx{\LL{a}{z}~\LL{b_3~b_2~\ol{v}_1~b_1}{b_1~b_2~\ol{v}_2~b_3}~\LL{\ol{w}_1~c_1~c_4~c'~c_3~c_2}{\ol{w}_2~c_4~c_1~c'~c_2~c_3}~\LL{z}{a}}.$$
		If $\ol{v}_1,\ol{v}_2$ are empty, then this is now a permutation of the form in Case 2A (i.e. Type $<3>$) or Case 2C. If $\ol{v}_1,\ol{v}_2$ are not empty, let $\ol{v}_1 = \ol{v}_1'd_1d_2$ and $\ol{v}_2 = \ol{v}_2'd_2d_1$. We may then focus on altering $\pp'$ to the left of $\ol{w}_1$ \& $\ol{w}_2$ by expressing $\pp'$ as
		    $$ \pp' = \mtrx{\LL{a}{z}~\LL{b_3~b_2~\ol{v}_1'~d_1~d_2~b_1}{b_1~b_2~\ol{v}_2'~d_2~d_1~b_3}~ \LL{z}{a}}.$$
		By performing switch moves $\{b_2,d_1\},\{b_1,d_2\}$, we arrive at
		    $$ \pp'' = \mtrx{\LL{a}{z}~\LL{b_1~\ol{v}_1'~d_1}{d_1~b_1~\ol{v}_2'}~\LL{b_3~b_2~d_2}{d_2~b_2~b_3}~ \LL{z}{a}}.$$
		As with $\pp'$, if $\ol{v}_1'$ \& $\ol{v}_2'$ are empty, then $\pp''$ is of the form in Case 2A. If not, we will continue by Corollary \ref{CorMove-abU-bUa}. This pair is either Case 2A (i.e. Type $<3>$) or of the form in Case 2C.

		Now suppose that we cannot place our $k$-block to the right of a $3$-block. In this instance, all $3$-blocks must belong to the rightmost chain. Choose again the leftmost such $3$-block.

		Suppose first that the $k$-block appears to the immediate left of this $3$-block. We may then proceed according to the value of $k$. If $k=4$, we may express this chain in $\pp$ as
		    $$ \pp = \mtrx{\LL{a}{z}~\LL{\ol{u}_1~b_1~b_2~b_3~b_4}{\ol{u}_2~b_4~b_3~b_2~b_1}~\LL{c_1~c_2~c_3}{c_3~c_2~c_1}~\LL{z}{a}},$$
		where $\ol{u}_1,\ol{u}_2$ form a chain of consecutive $2$-blocks. By performing Move \ref{Move-4321765-to-2143765}, we arrive at pair
		    $$ \pp' = \mtrx{\LL{a}{z}~\LL{b_3~\ol{u}_1~b_1}{b_1~b_3~\ol{u}_2}~\LL{c_1~b_4}{b_4~c_1}~\LL{c_3~c_2~b_2}{b_2~c_2~c_3}~\LL{z}{a}}.$$
		By performing the moves described in Corollary \ref{CorMove-abU-bUa} to the chain formed by letters $b_1$, $b_3$ and those in $\ol{u}_i$, we arrive at a pair such that this chain is now consecutive $2$-blocks with a $3$-block at the end. If $k=5$, we may express this chain in $\pp$ as
		    $$ \pp = \mtrx{\LL{a}{z}~\LL{\ol{u}_1~b_1~b_2~b_3~b_4~b_5}{\ol{u}_2~b_5~b_4~b_3~b_2~b_1}~\LL{c_1~c_2~c_3}{c_3~c_2~c_1}~\LL{z}{a}},$$
		where $\ol{u}_1,\ol{u}_2$ form a chain of consecutive $2$-blocks (if non-empty). By performing moves
		    $$ \{b_1,c_3\},\{b_4,c_2\},\{b_1,b_2\},\{b_3,b_5\},\{c_1,c_3\},$$
		we arrive at pair
		    $$ \pp' = \mtrx{\LL{a}{z}~\LL{\ol{u}_1}{\ol{u}_2}~\LL{b_1~b_5}{b_5~b_1}~\LL{b_3}{b_3}~\LL{c_1~b_4}{b_4~c_1}~\LL{c_1~c_2~c_3}{c_3~c_2~c_1}~\LL{z}{a}},$$
		which has replaced the original chain with two: one composed of $2$-blocks and one composed of a $2$-block followed by a $3$-block. We have successfully removed the $k$-block and now either have a permutation of Case 2A (i.e. Type $<3>$) or may proceed to Case 2C.

		By using Corollary \ref{CorMove-4blocks-to-the-left} and Move \ref{Move-43216587-to-21438765} as needed, we may change $\pp$ to the one in the preceding paragraph if there are two or more consecutive $2$-blocks to the immediate left of our $3$-block. We may then perform the moves detailed in that case to get a permutation of either Case 2A (i.e. Type $<3>$) or of Case 2C. We are therefore left considering the final case, our $3$-block appears in the rightmost chain, there is one or fewer $2$-blocks to the immediate left of our $3$-block, and by Corollary \ref{CorMove-4blocks-to-the-left} there is a $k$-block that is the leftmost block in its chain, $k=4$ or $k=5$.

		If there or no other blocks to the left of our $3$-block in its chain, then we may perform Move \ref{Move-43215876-to-21435876} to change our pair into either one of Case 2A (i.e. Type $<3>$) or of Case 2C. If there is a $2$ block to the immediate left of our $3$-block, then we may express our chains in $\pp$ as
		    $$ \pp = \mtrx{\LL{a}{z}~\LL{b_1~b_2~b'~b_3~b_4}{b_4~b_3~b'~b_2~b_1}~\LL{s}{s}~ \LL{c_1~c_2}{c_2~c_1}~\LL{d_1~d_2~d_3}{d_3~d_2~d_1}~\LL{z}{a}},$$
		where $b'$ exists iff $k=5$. By performing moves
		    $$ \{b_3,d_3\},\{c_1,d_2\},\{b_2,b_4\},\{b_2,c_1\},\{b_1,d_2\},\{b_3,c_2\},\{d_1,s\},\{b_2,d_2\},$$
		we arrive at pair
		    $$ \pp' = \mtrx{\LL{a}{z}~\LL{b_1~b_4}{b_4~b_1}~\LL{b'}{b'}~\LL{b_3~b_2}{b_2~b_3}~\LL{s}{s}~\LL{d_1~c_1}{c_1~d_1}~\LL{d_3~d_2~c_2}{c_2~d_2~d_3}~\LL{z}{a}}$$
		which has replaced the $k$-block with either two $2$-blocks when $k=4$ or two chains each of a $2$-block if $k=5$. This pair is now either Type $<3>$ or of the form for Case 2C.
		
		\textbf{Case 2C:} \textit{$\pp$ contains at least one $3$-block and all blocks are size at most $3$.} Consider a chain with more than one $3$-block. This chain is composed of only $3$-blocks and $2$-blocks. Choose the leftmost two $3$-blocks and express this chain as
		   $$ \pp = \mtrx{\LL{a}{z}~\LL{\ol{u}_1~b_1~b_2~b_3}{\ol{u}_2~b_3~b_2~b_1}~\LL{\ol{v}_1~c_1~c_2~c_3}{\ol{v}_2~c_3~c_2~c_1}~\LL{z}{a}}$$
		where $\ol{u}_1,\ol{u}_2$ and $\ol{v}_1,\ol{v}_2$ each form chains composed of $2$-blocks. By performing Move \ref{Move-321654-to-213654}, we arrive at pair
		    $$ \pp' = \mtrx{\LL{a}{z}~\LL{b_2~c_3~\ol{v}_1}{\ol{v}_2~c_3~b_2}~\LL{c_1}{c_1}~\LL{b_3~c_2~\ol{u}_1~b_1}{b_1~c_2~\ol{u}_2~b_3}~\LL{z}{a}}.$$
		By applying Corollary \ref{CorMove-abU-Uba} to the left chain, we may change it into one composed of $2$-blocks and (possibly) a $4$-block. By applying Corollary \ref{CorMove-abUc-cbUa} to the right chain, we change it into one composed of consecutive $2$-blocks followed by a $3$-block. We repeat this process until every chain contains at most one $3$-block.
		
		This process has possibly added $4$-blocks to the pair. If there is an even number of $4$-blocks, we may use Move \ref{Move-43218765-to-21436587} repeatedly to remove them, leaving a pair of Type $<3>$. If there is an odd number of $4$-blocks, me may use Move \ref{Move-43218765-to-21436587} to remove all but one of them. This word can be handled by Case 2B. While this seems circular, note that there will be at most one $3$-block in each chain of our pair. Therefore, the methods in Case 2B are guaranteed to reduce our pair to one of Type $<3>$ and \textit{not} of Case 2C again.
    \end{proof}

	\begin{thm}\label{ThmClassTypeSufficient}
	    Let $\pp,\qq\in\irr{\AAA}$ be pairs with the same Type. If $\Pof{\pp} = \Pof{\qq}$ and $\Mof{\pp}=\Mof{\qq}$, then $\ClassNon{\pp} = \ClassNon{\qq}$
	\end{thm}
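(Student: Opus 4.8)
The plan is to establish the sharper statement that, among piece-wise order reversing pairs of a fixed Type, the pair $(\Pof{\pp},\Mof{\pp})$ is a complete invariant for connectedness by (inner) switch moves; since every switch move is a Rauzy path, this gives $\ClassNon{\pp}=\ClassNon{\qq}$. First I would reduce to this setting. By Theorem \ref{ThmTypes}, each of $\ClassNon{\pp}$ and $\ClassNon{\qq}$ contains a piece-wise order reversing pair of one of the five Types, and by Corollary \ref{CorPofPiAndMofPiClassInvariant} replacing $\pp$ and $\qq$ by such representatives preserves both invariants and both non-labeled classes; so we may assume $\pp$ and $\qq$ are piece-wise order reversing pairs of the common Type $<t>$, and it suffices to connect them by switch moves.

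The heart of the argument is to read $\Pof{\pp}$ and $\Mof{\pp}$ off of the block structure. Using Remark \ref{RemSofPiIsRL} (or Definition \ref{DefSofPiAndMofPi} directly) one computes the cycle decomposition of $\Sof{\pp}$ for a piece-wise order reversing pair: each $1$-block contributes a fixed point; the rightmost chain, the one adjacent to $p_0^{-1}(\#\AAA)$, interacts through the ``wraparound'' letters $p_0^{-1}(1)$ and $p_1^{-1}(1)$ and is precisely what controls the cycle of $\Sof{\pp}$ through $p_0^{-1}(1)$, hence $\Mof{\pp}$; and the other chains contribute cycles whose lengths depend only on the sizes of their blocks and the parities thereof. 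Since $\Pof{\pp}$ is gotten from the multiset of cycle lengths of $\Sof{\pp}$ by shortening the distinguished cycle by one, this produces, for each Type $<t>$, an explicit dictionary between $(\Pof{\pp},\Mof{\pp})$ and the combinatorial data free to vary within that Type: the multiset of block sizes (bounded by $5$ outside Type $<1>$), the grouping of blocks into chains, and which chain is rightmost.

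With the dictionary in hand I would fix, for each Type, a normal form indexed by that data and connect an arbitrary Type-$<t>$ pair to it by switch moves. Here Lemma \ref{LemReorderingChains} permutes all chains but the last into a canonical order, Lemma \ref{LemIgnoringChains} localizes the work to a bounded number of chains at a time, and the utility sequences collected in the Appendices --- for instance Moves \ref{Move-43218765-to-21436587}, \ref{Move-216543-to-432165} and \ref{Move-321654-to-213654} together with the corollaries invoked in the proof of Theorem \ref{ThmTypes} --- redistribute equally-sized blocks among chains and slide the distinguished block (present in Types $<3>$--$<5>$) into standard position. Then two Type-$<t>$ pairs with $\Pof{\pp}=\Pof{\qq}$ and $\Mof{\pp}=\Mof{\qq}$ reach the same normal form by the dictionary, hence are connected by a sequence of switch moves, so $\ClassNon{\pp}=\ClassNon{\qq}$.

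I expect the middle step to be the main obstacle. The five Types impose different, and in several cases parity-sensitive, restrictions on block sizes, and the rightmost chain plays an asymmetric role --- Lemma \ref{LemReorderingChains} cannot move it --- so exactly how $\Mof{\pp}$ records it must be determined carefully; the bookkeeping needed to verify that the dictionary is complete, that no admissible configuration of blocks and chains is overlooked, and that equal invariants genuinely force the same normal form, is where the real difficulty lies, the switch-move manipulations themselves being largely assembled from utility moves already prepared in the Appendices.
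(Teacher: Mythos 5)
Your overall strategy coincides with the paper's: fix, for each Type, a normal form indexed by $(\Pof{\pp},\Mof{\pp})$, and drive an arbitrary pair of that Type to it by inner switch moves using Lemma \ref{LemReorderingChains}, Lemma \ref{LemIgnoringChains} and the appendix moves, so that equal invariants force a common endpoint up to renaming. Two remarks on the framing: the opening reduction via Theorem \ref{ThmTypes} is unnecessary (the hypothesis already hands you piece-wise order reversing pairs of a common Type), and as stated it is delicate, since without the later Theorem \ref{ThmExclassUniqueType} you cannot assert that the representatives you substitute for $\pp$ and $\qq$ still share a Type.

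The genuine gap is that the proposal stops exactly where the proof has to do its work, and the deferred ``bookkeeping'' contains ideas the outline does not anticipate. In Type $<3>$ there is no single distinguished block to slide into position: each chain may carry a $3$-block, and the data to be normalized is the collection of counts $\ell_i,r_i$ of $2$-blocks to the left and right of each $3$-block, constrained by the immovability of the rightmost chain and by the parity of $\Mof{\pp}$. The paper handles this with two procedures: (a) interchanging $\ell_i$ and $r_i$ within a chain, which is not an appendix move but a restriction/extension argument (a $\{c,s\}$-switch, deletion of a letter, re-application of Theorem \ref{ThmTypes} to the irreducible restriction, then extension back), and (b) interchanging $\ell_i$ with $\ell_{i+1}$ across adjacent chains via Move \ref{Move-3214765-moving-leftblocks} and Corollary \ref{CorMove-abUc-cbUa}; the even and odd $\Mof{\pp}$ cases then require separate normalizations. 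Likewise Type $<5>$ with $\Mof{\pp}=4$ and the $5$-block not rightmost needs Move \ref{Move-216543-to-432165} run in reverse before Lemma \ref{LemReorderingChains} can finish, and Type $<1>$ splits according to $\Mof{\pp}=2$ versus $\Mof{\pp}>2$. None of these constructions, nor the per-Type verification that the dictionary between $(\Pof{\pp},\Mof{\pp})$ and the block data is complete, is supplied; as written this is a correct plan rather than a proof.
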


	\begin{proof}
	    We proceed by the common Type of both $\pp$ and $\qq$. In general, we will produce very specific pair forms that depend on the remaining data, $\Mof{\pp}$ and $\Pof{\pp}$. Because $\Pof{\pp}=\Pof{\qq}$ and $\Mof{\pp}=\Mof{\qq}$, this will show that $\pp$ and $\qq$ will be connected to the same pair by switch moves (up to renaming), so they belong to the same one-row Rauzy Class.

	    \textbf{Type $<1>$.} Consider first $\pp$. We may express $\pp$ as
		  $$ \pp = \mtrx{\LL{a}{z}~\LL{\ol{u}}{\ol{u}}~\LL{\ol{v}_1}{\ol{v}_2}~\LL{\ol{w}}{\ol{w}}~\LL{z}{a}}$$
	    where $\ol{u}$ and $\ol{w}$ are each possibly empty words, and either $\bmtrx{\ol{v}_1\\\ol{v}_2}$ forms a $k$-block, $k\geq 2$, or $\ol{v}_1$ and $\ol{v}_2$ are empty. We note the following:
	      \begin{itemize}
		  \item $\ol{v}_1$ and $\ol{v}_2$ are non-empty if and only if $k+1$ appears as a value in $\Pof{\pp}$ exactly once. All other values in $\Pof{\pp}$ are $1$.
		  \item $\ol{w}$ is empty if and only if $\Mof{\pp} > 2$.
	      \end{itemize}
	    Suppose first that $\Mof{\pp}>2$, then $\pp$ is actually of the form
		\begin{equation}\label{EqnType1-1}
		    \pp = \mtrx{\LL{a}{z}~\LL{\ol{u}}{\ol{u}}~\LL{\ol{v}_1}{\ol{v}_2}~\LL{z}{a}}
		\end{equation}
	    where $\#\ol{u}$ is the multiplicity of $1$ in $\Pof{\pp}$. Now suppose that $\Mof{\pp}=2$. If $\ol{u}$ is non-empty, let $b$ be its last letter and let $c$ be the last letter of $\ol{w}$. By performing a $\{b,c\}$-switch, we arrive at pair
	      \begin{equation}\label{EqnType1-2}
		\pp' = \mtrx{\LL{a}{z}~\LL{\ol{v}_1}{\ol{v}_2}~\LL{\ol{w}~\ol{u}}{\ol{w}~\ol{u}}~\LL{z}{a}}.
	      \end{equation}
	    Let $\ol{w}' = \ol{w}\ol{u}$ and note that $\#\ol{w}'$ is again the number of times $\Pof{\pp}$ contains $1$.
	
	    By the same arguments above, $\qq$ belongs to the same class as a pair of the form in either \eqref{EqnType1-1} or \eqref{EqnType1-2}. This will be the same form as for $\pp$, so they all belong to the same non-labeled Rauzy Class.

	    \textbf{Type $<2>$.} We again begin with $\pp$. We know that $\pp$ is composed of chains
		$$ \textbf{C}_1,\dots,\textbf{C}_k$$
	    that are each composed of $n_i$ consecutive $2$-blocks, $n_i\geq 0$, and separated each by exactly one $1$-block. The number of $2$-blocks in $\mathbf{C}_k$ is given by $2n_k+2 = \Mof{\pp}$. All chains have lengths determined by the values in $\Pof{\pp}$ by the following:
	    $$ \Pof{\pp} = \{2n_1+1, 2n_2+1,\dots 2n_{k-1}+1, 2n_k+1\}.$$
	    By Lemma \ref{LemReorderingChains}, we may choose to order our chains $\mathbf{C}_1,\dots,\mathbf{C}_{k-1}$ in decreasing order of $n_i$. If we do the same with $\qq$, we again will connect $\pp$ and $\qq$ to the same pair in their mutual non-labeled Rauzy Class.

	    \textbf{Type $<3>$.} This is the most involved case. Let $\textbf{C}_1,\dots,\textbf{C}_k$ be the chain decomposition for $\pp$.

	    \textbf{Let us first assume that $\Mof{\pp}=\Mof{\qq}$ is even}. By Lemma \ref{LemReorderingChains}, we may arrange our chains so that $\textbf{C}_1,\dots,\textbf{C}_p$ each contain a $3$-block, $p<k$, while chains $\textbf{C}_{p+1},\dots,\textbf{C}_k$ are all composed of $2$-blocks. As with the case for Type $<2>$ pairs, we may arrange $\textbf{C}_{p+1},\dots,\textbf{C}_{k-1}$ in decreasing order of length. Let $\ell_i$ and $r_i$ denote the number of consecutive $2$-blocks to the left and right respectively of the $3$-block in chain $\textbf{C}_i$, $i\in\{1,\dots,p\}$. We will show that, by applying switch moves, we may arrive at a pair such that $\ell_i \geq r_i$ for all $i$ and $r_i \geq \ell_{i+1}$ for all $i<p$. In light of Lemma \ref{LemReorderingChains}, we must only show that we may
	      \begin{enumerate}
		\item[(a)] change a chain $\textbf{C}_i$ to interchange the values $\ell_i$ and $r_i$, and
		\item[(b)] change two chains $\textbf{C}_i$ and $\textbf{C}_{i+1}$ so that $\ell_i$ and $\ell_{i+1}$ are interchanged.
	      \end{enumerate}
	    We shall first show how to accomplish (a). Noting that such a chain cannot be rightmost in $\pp$, consider a chain $\textbf{C}_i$ in $\pp$ expressed as follows
		$$ \pp = \mtrx{\LL{a}{z}~\LL{\ol{u}_1}{\ol{u}_2}~\LL{b~c~d}{d~c~b}~\LL{\ol{v}_1}{\ol{v}_2}~\LL{s}{s}~\LL{z}{a}}$$
	    where $\ol{u}_1,\ol{u}_2$ form our chain of $\ell_i$ consecutive $2$-blocks and $\ol{v}_1,\ol{v}_2$ form our chain of $r_i$ consecutive $2$-blocks. By applying a $\{c,s\}$ switch, we arrive at
		$$ \pp' = \mtrx{\LL{a}{z}~\LL{\ol{v}_1~s~\ol{u}_1~a~b}{b~a~\ol{v}_2~s~\ol{u}_1}~\LL{c}{c}~\LL{z}{a}}.$$
	    Note that the restriction of $\pp'$ on $\AAA\setminus\{c\}$, which we will call $\rr$, is irreducible with $\Pof{\rr} = \{2\ell_i+2,2r_i+2\}$ and $\Mof{\rr} = 2\ell_i+3$. We may apply Theorem \ref{ThmTypes} to arrive at pair $\rr'$ which is composed of only one chain composed of $r_i$ consecutive $2$-blocks followed by a $3$-block followed by $\ell_i$ consecutive $2$-blocks. This extends to a pair in the same class as $\pp$ with $\textbf{C}_i$ replaced by this new chain.

	    We shall now show how to achieve (b). Let us express chains $\textbf{C}_i$ and $\textbf{C}_{i+1}$ as follows
	      $$ \pp = \mtrx{\LL{a}{z}~\LL{\ol{u}_1}{\ol{u}_2}~\LL{b~c~d}{d~c~b}~\LL{\ol{v}_1}{\ol{v}_2}~\LL{s}{s}~\LL{\ol{w}_1}{\ol{w}_2}~\LL{e~f~g}{g~f~e}~\LL{\ol{x}_1}{\ol{x}_2}~\LL{z}{a}},$$
	    where each lettered pair of words form chains composed of the appropriate number of $2$-blocks. By performing Move \ref{Move-3214765-moving-leftblocks}, we arrive at pair
	      $$ \pp' = \mtrx{\LL{a}{z}~\LL{g~c~\ol{w}_1~e}{e~c~\ol{w}_2~g}~\LL{\ol{v}_1}{\ol{v}_2}~\LL{s}{s}~\LL{\ol{u}_1}{\ol{u}_2}~\LL{b~f~d}{d~f~b}~\LL{\ol{x}_1}{\ol{x}_2}~\LL{z}{a}}.$$
	    By applying Corollary \ref{CorMove-abUc-cbUa} to letters $c,e,g$ and words $\ol{w}_1,\ol{w}_2$, we arrive at pair $\pp''$ with the values of $\ell_i$ and $\ell_{i+1}$ interchanged as desired.

	    By using these two procedures, we may arrive at pair $\tT$, whose form is entirely determined by $\Pof{\pp}$ and $\Mof{\pp}$. By applying this to $\qq$ as well, we have connected all three pairs to each other in a non-labeled Rauzy Class.

	    \textbf{Let us now assume that $\Mof{\pp}=\Mof{\qq}$ is odd.} In this case, we apply Lemma \ref{LemReorderingChains} to assume that chains $\textbf{C}_1,\dots,\textbf{C}_p$ are all composed of only $2$-blocks, $p<k$, and each chain $\textbf{C}_{p+1},\dots,\textbf{C}_k$ contains a $3$-block. We may again order the first $p$ chains in decreasing order of length. If we again look at values $\ell_{p+1},r_{p+1},\dots,\ell_k,r_k$, we may use procedures (a) and (b) to ensure that $\ell_{p+1} \geq r_{p+1} \geq \dots \geq \ell_k$, noting that $\Mof{\pp} = 2r_k+3$ and therefore $r_k$ is fixed. This again will be a pair $\tT$ whose one-row form is entirely determined by $\Mof{\pp}$ and $\Pof{\pp}$. By applying this to $\qq$ as well, we arrive at a pair which is just a renaming of $\tT$.

	    \textbf{Type $<4>$.} Just as in the Type $<2>$ case, let $\textbf{C}_1,\dots,\textbf{C}_k$ be the chain decomposition for $\pp$. We may then reorder the first $k-1$ in decreasing order according to size by Lemma \ref{LemReorderingChains}. There are now only two possibilities, the leftmost chain either is composed of at least two consecutive $2$-blocks or is composed of at most one $2$-block. In the first case, we may apply Move \ref{Move-43216587-to-21438765} to ensure that the leftmost block in the pair is size $4$. In the second case, the rightmost block \textit{must} contain a $4$-block. By Corollary \ref{CorMove-4blocks-to-the-left}, we may ensure that the $4$-block is the leftmost block in this chain. In either case, we have a pair of a unique form that is connected to $\pp$ by switch moves. The same may be shown for $\qq$, so they are all connected by switch moves up to renaming.

	    \textbf{Type $<5>$.} In this case, $\pp$ has chain decomposition $\textbf{C}_1,\dots,\textbf{C}_k$, where $\textbf{C}_j$ is composed one $5$-block for a unique $j$, and all other $\textbf{C}_i$ are each either composed of one $2$-block each or are empty. Note that in this case $\Mof{\pp}=2$ or $\Mof{\pp}=4$ and $\Pof{\pp}$ is just a list of values $1$ and $3$. We will want to connect $\pp$ to $\rr$ that has chain decomposition $\textbf{D}_1,\dots,\textbf{D}_k$, where the chains are in increasing order if $\Mof{\pp}=4$ and deceasing order if $\Mof{\pp}=2$. If either $\Mof{\pp} = 2$ or $\Mof{\pp} = 4$ and $j=k$, we may just use Lemma \ref{LemReorderingChains} to achieve this. If $\Mof{\pp}=4$ and $j < k$, we may use Lemma \ref{LemReorderingChains} to ensure that $j=k-1$. we may then express the two rightmost chains of $\pp$ as
	      $$ \pp = \mtrx{\LL{a}{z}~\LL{b_1~c_4~s~c_1~b_2}{b_2~c_1~s~c_4~b_1}~\LL{c'}{c'}~\LL{c_3~c_2}{c_2~c_3}~\LL{z}{a}}.$$
	    By performing Move \ref{Move-216543-to-432165} in reverse to $\pp$, we arrive at
		$$ \pp' = \mtrx{\LL{a}{z}~\LL{b_1~b_2}{b_2~b_1}~\LL{s}{s}~\LL{c_1~c_2~c'~c_3~c_4}{c_4~c_3~c'~c_2~c_1}~\LL{z}{a}}.$$
	    We may now apply Lemma \ref{LemReorderingChains} to $\pp'$ to arrive at our desired $\rr$. The same procedure may be applied to $\qq$ to arrive at a pair that is a renaming of $\rr$.
	  \end{proof}

	  \begin{thm}\label{ThmExclassTypeSufficient}
	    Let $\pp,\qq\in\irr{\AAA}$ be pairs with the same Type. If $\Pof{\pp} = \Pof{\qq}$, then $\ExtClassNon{\pp} = \ExtClassNon{\qq}$.
	  \end{thm}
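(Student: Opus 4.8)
The plan is to deduce the result from Theorem~\ref{ThmClassTypeSufficient} by showing that inside an Extended Rauzy Class the invariant $\Mof{}$ may be adjusted at will while the Type and $\Pof{}$ stay fixed. Since the labeled Rauzy Class of $\pp$ is contained in its Extended Rauzy Class, it suffices to connect $\pp$ and $\qq$ using inner and outer switch moves. Let $T$ be the common Type and $P = \Pof{\pp} = \Pof{\qq}$. By the hypothesis on Types and by Theorems~\ref{ThmTypes} and~\ref{ThmClassTypeSufficient}, $\pp$ is connected by inner switch moves to the canonical piece-wise order reversing pair $\pp_0$ of its Rauzy Class produced in the proof of Theorem~\ref{ThmClassTypeSufficient}; this pair depends on $(T, P, \Mof{\pp})$, and inspection of the canonical forms in that proof shows that the chain decomposition $\mathbf{C}_1, \dots, \mathbf{C}_k$ of $\pp_0$ has a multiset of chains $\{\mathbf{C}_1, \dots, \mathbf{C}_k\}$ depending only on $(T, P)$, with $\Mof{\pp}$ recording merely which chain is placed in the rightmost slot $\mathbf{C}_k$ (the remaining chains then being placed in a forced order).

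The key observation is uniform across Types: writing $a$ for the first letter of $\pp_0$ and $s_1$ for the letter of the $1$-block $\mathbf{S}_1$ immediately following $\mathbf{C}_1$, the $\{a, s_1\}$-(outer) switch carries $\pp_0$ to the piece-wise order reversing pair whose chain decomposition is the cyclic shift $\mathbf{C}_2, \dots, \mathbf{C}_k, \mathbf{C}_1$ (with new separating $1$-blocks $\mathbf{S}_2, \dots, \mathbf{S}_{k-1}, \{a\}$). This is read off directly from the definition of the outer switch: the block containing $a$ moves intact to the far right, every other block is preserved, and the output is again standard and piece-wise order reversing; in particular its Type and $\Pof{}$ are unchanged while the rightmost chain becomes $\mathbf{C}_1$. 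Using Lemma~\ref{LemReorderingChains} first to place any chosen chain into position $\mathbf{C}_1$, we conclude that within $\ExtClassLab{\pp}$ any chain of $\pp_0$ can be made the rightmost chain. As $\Mof{}$ of a piece-wise order reversing pair is governed solely by its rightmost chain, and as $\pp$ and $\qq$ share the same chain multiset (depending only on $(T, P)$), we may, staying inside the respective Extended Rauzy Classes, arrange that both $\pp$ and $\qq$ are connected to piece-wise order reversing pairs of Type $T$ with $\Pof{} = P$ and with the \emph{same} chain occupying the rightmost slot, hence with a common value $m_0$ of $\Mof{}$.

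Finally, apply Theorem~\ref{ThmClassTypeSufficient} once more: both of the two pairs just reached have Type $T$, $\Pof{} = P$ and $\Mof{} = m_0$, so each is connected by inner switch moves to the canonical pair $\pp_0(T, P, m_0)$, which is the same pair up to renaming. Therefore $\ExtClassNon{\pp} = \ExtClassNon{\qq}$. I expect the main work to lie in the second paragraph: verifying in full that the displayed outer switch effects exactly the claimed cyclic rotation of the chain decomposition --- including the degenerate situations where $\mathbf{C}_1$ or $\mathbf{C}_k$ is empty (consecutive $1$-blocks) and the trivial case $k = 1$ where no rotation is available or needed --- and confirming that $\Mof{}$ of a piece-wise order reversing pair is determined by its rightmost chain alone, both of which follow from Definitions~\ref{DefSofPiAndMofPi} and~\ref{DefPWOR} together with the structure of the canonical forms of Theorem~\ref{ThmClassTypeSufficient}.
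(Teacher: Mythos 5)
Your top-level strategy is the same as the paper's: adjust $\Mof{\pp}$ by outer switch moves while the Type and $\Pof{\pp}$ are preserved, then invoke Theorem \ref{ThmClassTypeSufficient}. Your rotation observation is also correct: for a piece-wise order reversing pair with chains $\mathbf{C}_1,\dots,\mathbf{C}_k$ and $k\geq 2$, the $\{a,s_1\}$-outer switch produces the piece-wise order reversing pair with chain decomposition $\mathbf{C}_2,\dots,\mathbf{C}_k,\mathbf{C}_1$, and combined with Lemma \ref{LemReorderingChains} this does realize $\Mof{\rr}=n_j+1$ for every $n_j\in\Pof{\pp}$ in Types $<1>$, $<2>$, $<4>$, $<5>$, where each chain accounts for a single entry of $\Pof{\pp}$.

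The gap is in Type $<3>$. A chain of the form ($\ell$ $2$-blocks)($3$-block)($r$ $2$-blocks) accounts for \emph{two} entries of $\Pof{\pp}$, namely $2\ell+2$ and $2r+2$, and when that chain is rightmost one has $\Mof{\pp}=2r+3$: the value of $\Mof{\pp}$ is governed by the internal arrangement of the rightmost chain, not merely by which chain occupies the rightmost slot. Your rotation moves chains intact, so it can never interchange $\ell$ and $r$; and your premise that the canonical forms of Theorem \ref{ThmClassTypeSufficient} have a chain multiset depending only on the pair (Type, $\Pof{\pp}$) fails here, since in the odd-$\Mof{\pp}$ canonical form the rightmost chain is forced to satisfy $r_k=(\Mof{\pp}-3)/2$, so the pairing of the even entries of $\Pof{\pp}$ into chains varies with $\Mof{\pp}$. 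A concrete failure: the two seven-letter pairs whose single chain is ($3$-block)($2$-block), respectively ($2$-block)($3$-block), are both Type $<3>$ with $\Pof{\pp}=\{2,4\}$, but have $\Mof{\pp}=5$, respectively $3$. Here $k=1$: there are no $1$-blocks, so no rotation exists, and procedure (a) from the proof of Theorem \ref{ThmClassTypeSufficient} (which requires the chain not to be rightmost) is unavailable --- contrary to your remark that the case $k=1$ needs nothing. This is exactly where the paper's proof does extra work: it performs a $\{b,z\}$-outer switch anchored at the first letter $b$ of the $3$-block (a letter interior to the chain, not a $1$-block) and then applies Corollary \ref{CorMove-abUcV-UcVba} to exchange the numbers of $2$-blocks on the two sides of the $3$-block, after first using procedures (a) and (b) when other $3$-block chains are present. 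Your argument needs this (or an equivalent) ingredient to cover Type $<3>$; in addition, the fact you defer --- that $\Mof{\pp}$ of a piece-wise order reversing pair is read off from its rightmost chain --- is true but does require proof, e.g.\ via Remark \ref{RemSofPiIsRL}.
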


	  \begin{proof}
	    Let $\pp\in\irr{\AAA}$ with $\Pof{\pp} = \{n_1,\dots,n_k\}$. Fix $j \in \{1,\dots,k\}$. Given Theorem \ref{ThmClassTypeSufficient}, it suffices to show that there is a pair $\rr \in \irr{\AAA}$ of the same Type as $\pp$ connected to $\pp$ by (inner and/or outer) switch moves with $\Mof{\rr} = n_j+1$. By Proposition \ref{PropPofPiExclassInvariant}, we would automatically have that $\Pof{\pp}=\Pof{\rr}$. Assume that $n_j+1 \neq \Mof{\pp}$.

	    If $n_j= 2m+1$ is odd, then there is a chain $\textbf{C}$ that is composed of either $m$ consecutive $2$-blocks, one $4$-block followed by $m-2$ consecutive $2$-blocks. Let us denote by $s$ the letter that composes the $1$-block to the right of $\textbf{C}$. Then we may express $\pp$ as
	      $$ \pp = \mtrx{\LL{a}{z}~\LL{\ol{u}_1}{\ol{u}_2}~\LL{s}{s}~\LL{\ol{v}_1}{\ol{v}_2}~\LL{z}{a}}.$$
	    By performing an $\{s,z\}$-switch, we arrive at
	      $$ \rr = \mtrx{\LL{a}{s}~\LL{\ol{v}_1}{\ol{v}_2}~\LL{z}{z}~\LL{\ol{u}_1}{\ol{u}_2}~\LL{s}{a}}.$$
	    Indeed $\textbf{C}$ is the rightmost chain of $\rr$, so $\Mof{\rr} = n_j+1$. Moreover, $\rr$ matches $\pp$ in Type, and $\Pof{\rr} = \Pof{\pp}$.

	    If $n_j = 2m$ is even, then there is a chain $\textbf{C}$ in $\pp$ that is composed of a $3$-block either preceded by or followed by $m-1$ consecutive $2$-blocks. Note that if $n_j+1\neq \Mof{\pp}$, then either $\textbf{C}$ is not the rightmost chain or is the rightmost chain with the $m-1$ consecutive $2$-blocks preceding the $3$-block.

	    If $\textbf{C}$ is not the rightmost chain, then we may apply procedure (a) from the proof of Theorem \ref{ThmClassTypeSufficient} if necessary to ensure that the $m-1$ consecutive $2$-blocks follow the $3$-block. By calling $s$ the letter of the $1$-block, we may again perform an $\{s,z\}$-switch to arrive at $\rr$.

	    Now assume that $\textbf{C}$ is the rightmost chain and the $m-1$ consecutive $2$-blocks is to the left of its $3$-block. If there is another chain with a $3$-block, then we apply procedures (a) and (b) to $\pp$ to apply the procedure in the previous paragraph. If there are no other chains containing a $3$-block, then we may express $\textbf{C}$ in $\pp$ as
	      $$ \pp = \mtrx{\LL{a}{z}~\LL{\ol{u}_1}{\ol{u}_2}~\LL{b~c~d}{d~c~b}~\LL{\ol{v}_1}{\ol{v}_2}~\LL{z}{a}}.$$
	    By performing a $\{b,z\}$-switch, we arrive at
	      $$ \pp' = \mtrx{\LL{a}{b}~\LL{c~d~\ol{v}_1~z~\ol{u}_1}{\ol{v}_2~z~\ol{u}_2~d~c}~\LL{b}{a}}.$$
	    We may apply Corollary \ref{CorMove-abUcV-UcVba} to arrive at $\rr$ which has has preserved every chain of $\pp$ except $\textbf{C}$ which has switched the number of $2$-blocks on each side of the $3$-block. Therefore $\Mof{\rr} = n_j+1$ as desired, and $\rr$ will have the same Type as $\pp$.
	  \end{proof}

	\subsection{A Class May Contain Pairs of at Most One Type}

			    Theorem \ref{ThmExclassUniqueType} tells us that the Type associated to an Extended Rauzy Class, and therefore a Rauzy Class, is unique. Lemma \ref{LemHyp} shows that any Extended Rauzy Class that contains a Type $<1>$ pair can only contain pairs of Type $<1>$. The invariant $\Pof{\pp}$ separates Type $<3>$ from the remaining Types, and Corollary \ref{CorSpin} separates Type $<2>$ from Types $<4>$ and $<5>$.

		    \begin{thm}\label{ThmExclassUniqueType}
				Let $\pp\in\irr{\AAA}$ be a pair of type $<x>$. If $\qq \in \ExtClassLab{\pp}$ is a pair of Type $<y>$, then $x=y$.
		    \end{thm}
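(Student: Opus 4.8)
The plan is to prove that, among piece-wise order reversing pairs, the Type is completely determined by the two Extended-Rauzy-Class invariants $\Pof{\cdot}$ and $\ARFof{\cdot}$, together with Lemma \ref{LemHyp} for Type $<1>$; the theorem then follows at once because $\qq\in\ExtClassLab{\pp}$ forces $\Pof{\pp}=\Pof{\qq}$ (Proposition \ref{PropPofPiExclassInvariant}) and $\ARFof{\pp}=\ARFof{\qq}$ (Corollary \ref{CorArfIsInvariant}). \textbf{Step 1.} If $\pp$ or $\qq$ is Type $<1>$, then by Lemma \ref{LemHyp} every pair of $\ExtClassLab{\pp}$ is Type $<1>$, so $x=y=1$; hence we may assume $x,y\in\{2,3,4,5\}$.

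\textbf{Step 2: detecting Type $<3>$.} I would record how each chain of a piece-wise order reversing pair contributes to the cycle list $\Pof{\cdot}$: a chain of $n$ consecutive $2$-blocks contributes a single cycle of odd length $2n+1$; a chain built from a $4$-block together with $m$ further $2$-blocks contributes a single cycle of odd length $2m+5$; a lone $5$-block chain contributes two cycles of length $3$; an empty chain contributes a cycle of length $1$; and a chain containing a $3$-block, flanked by $r$ and $\ell$ consecutive $2$-blocks, contributes \emph{two} cycles, of \emph{even} lengths $2r+2$ and $2\ell+2$. These are exactly the cycle computations already performed on normal forms in the proof of Theorem \ref{ThmClassTypeSufficient}, and the fact that they may be read off one chain at a time is the same locality of $\Sof{\cdot}$ that underlies Remark \ref{RemIgnoringChains}. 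It follows that $\Pof{\pp}$ has an even entry if and only if $\pp$ is Type $<3>$, and likewise for $\qq$; since $\Pof{\pp}=\Pof{\qq}$ we get $x=3\iff y=3$, so we may further assume $x,y\in\{2,4,5\}$.

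\textbf{Step 3: the remaining cases.} Corollary \ref{CorSpin} gives that Type $<2>$ pairs and Type $<4>$ or $<5>$ pairs have distinct values of $\ARFof{\cdot}$, so $\ARFof{\pp}=\ARFof{\qq}$ forces either $x=y=2$ or $\{x,y\}\subseteq\{4,5\}$. In the latter case return once more to $\Pof{\cdot}$: by the list in Step 2 a Type $<4>$ pair has a cycle of length at least $5$ in $\Pof{\cdot}$ (namely $2m+5$), while a Type $<5>$ pair has $\Pof{\cdot}$ supported on $\{1,3\}$, so $\Pof{\pp}=\Pof{\qq}$ is possible only if $x=y$. This exhausts all cases.

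The hard part will be Step 2: one must verify that the parities, indeed the exact values, of the entries of $\Pof{\pp}$ are computable chain by chain for an \emph{arbitrary} piece-wise order reversing pair of Type $<2>$--$<5>$, not merely for the normal forms constructed in Theorem \ref{ThmClassTypeSufficient}. I would obtain this either by a direct orbit computation for the first-return map $\mathbf{R}_{p_0}\circ\mathbf{L}_{p_1}$ of Remark \ref{RemSofPiIsRL} on a standard pair, observing that the forward orbit of each letter stays inside its own chain and the neighbouring $1$-block separators, or, more economically, by connecting $\pp$ to its Type-$<x>$ normal form through switch moves and appealing to Proposition \ref{PropPofPiExclassInvariant}. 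One further point worth keeping in mind is that $\ARFof{\cdot}$ need not separate classes whose quadratic form $\QF{\cdot}$ is degenerate, but that is precisely the hyperelliptic (Type $<1>$) situation already removed in Step 1, so it causes no difficulty.
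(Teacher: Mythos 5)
Your proposal is correct and follows essentially the same route as the paper's proof: Lemma \ref{LemHyp} isolates Type $<1>$, the parity of the entries of $\Pof{\cdot}$ (invariant by Proposition \ref{PropPofPiExclassInvariant}) separates Type $<3>$ from the rest, Corollary \ref{CorSpin} separates Type $<2>$ from Types $<4>$ and $<5>$, and the presence of an entry $\geq 5$ versus entries in $\{1,3\}$ separates $<4>$ from $<5>$. The only difference is that you spell out the chain-by-chain computation of $\Pof{\cdot}$ (which is correct and consistent with the computations embedded in the proofs of Theorems \ref{ThmClassTypeSufficient} and \ref{ThmExclassTypeSufficient}), whereas the paper simply asserts these values.
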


		    \begin{proof}
				Lemma \ref{LemHyp} shows that an Extended Rauzy Class that contains a pair of Type $<1>$ can only contain pairs of Type $<1>$. We must therefore consider only the other Types.

				Consider $\pp$ with a Type. If $\pp$ is Type $<3>$, then $\Pof{\pp}$ from Definition \ref{DefTildeSofPiAndPofPi} contains even numbers. If $\pp$ is Type $<2>$, $<4>$ or $<5>$, then $\Pof{\pp}$ only contains odd numbers. So by Proposition \ref{PropPofPiExclassInvariant}, $\ExtClassLab{\pp}$ may not then contain a pair of Type $<3>$ and another pair of Type $<2>$, $<4>$ or $<5>$.
				
				If $\pp$ is Type $<5>$, then $\Pof{\pp}$ only has $3$'s and $1$'s with possible repetition. If $\pp$ is Type $<4>$, then $\Pof{\pp}$ contains an odd number that is greater than or equal to $5$. So $\ExtClassLab{\pp}$ may not contain a pair of Type $<4>$ and another of Type $<5>$.
				
				By Corollary \ref{CorSpin}, $\ExtClassLab{\pp}$ can not contain a pair of Type $<2>$ and another of either Type $<4>$ or $<5>$.

				We have concluded that the Type of any pair in $\ExtClassLab{\pp}$ must be the same as the Type of $\pp$.
		    \end{proof}

		    \begin{lem}\label{LemHyp}
				Let $\Sigma \subset \std{\AAA}$ be the set of all standard pairs $\pp$ of the form
				  $$ \mtrx{\LL{a}{z}~\LL{\ol{w}_0}{\ol{w}_0}~\LL{\ol{w}_1~\ol{w}_2 \cdots \ol{w}_{d-1}~\ol{w}_d}{\ol{w}_d~\ol{w}_{d-1} \cdots \ol{w}_2~\ol{w}_1}~\LL{\ol{w}_{d+1}}{\ol{w}_{d+1}}~\LL{z}{a}}$$
				where $\ol{w}_0$ and $\ol{w}_{d+1}$ are possibly empty words and all other words are non-empty. Then $\Sigma$ is closed under both switch and extended switch moves. If $\pp\in\Sigma$ has a Type, then it is Type $< 1 >$.
		    \end{lem}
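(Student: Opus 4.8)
For a standard pair $\pp=(p_0,p_1)$ write its top row as $a\,\ol{t}\,z$ and its bottom row as $z\,\ol{b}\,a$, where $a=p_0^{-1}(1)=p_1^{-1}(\#\AAA)$, $z=p_0^{-1}(\#\AAA)=p_1^{-1}(1)$, and $\ol{t},\ol{b}$ are the interior words (each a rearrangement of the other). Let $\sigma$ be the permutation carrying the $j$-th position of $\ol{t}$ to the position of the same letter in $\ol{b}$. Then $\pp\in\Sigma$ exactly when $\sigma$ is the identity on some (possibly empty) prefix of the interior and on some (possibly empty) suffix, and on the complementary middle segment acts as a \emph{block reversal}: the segment is cut into consecutive pieces whose order is reversed while the internal order of each piece is preserved. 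Call a permutation of this shape a \emph{restricted block reversal}. The fixed prefix is $\ol{w}_0$, the fixed suffix is $\ol{w}_{d+1}$, and the pieces of the block reversal are $\ol{w}_1,\dots,\ol{w}_d$; the maximal fixed prefix and suffix and the coarsest block decomposition of the middle are uniquely determined, so this description is unambiguous and membership $\pp\in\Sigma$ is a purely local condition on $p_0,p_1$.

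\textbf{The Type statement.}
Suppose $\pp\in\Sigma$ is piece-wise order reversing. Every letter $b$ of $\ol{w}_0$ or $\ol{w}_{d+1}$ has $p_0(b)=p_1(b)$, so those words contribute only $1$-blocks, and if $d\le 1$ the middle is order preserving and contributes only $1$-blocks as well. If $d\ge 2$, the pieces $\ol{w}_1,\dots,\ol{w}_d$ are pairwise disjoint and their order is completely reversed, so the only positions at which the corresponding prefixes of the top and bottom middles have equal letter-sets are its two ends; hence the middle is a single block, and for $\pp$ to be piece-wise order reversing that block must be order reversing, which forces every $\ol{w}_i$ to be a single letter. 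Thus the only block of $\pp$ of size $>1$, if any, is that single $d$-block, and a short computation with position sums shows it cannot merge with an adjacent $1$-block into a larger order-reversing block. Consequently every chain of $\pp$ is empty except possibly the one containing this $d$-block, which then consists of that one block --- i.e.\ $\pp$ is Type $<1>$.

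\textbf{Closure under switches.}
I would verify case by case that each switch move carries a pair of $\Sigma$-form to a pair of $\Sigma$-form. An inner switch $\{b,c\}$ (Definition \ref{DefSwitch}) requires $1<p_\eps(b)<p_\eps(c)<\#\AAA$ for $\eps\in\{0,1\}$, so $b$ and $c$ occur in the same relative order in both rows; for a pair of $\Sigma$-form this forces $b,c$ to lie both in $\ol{w}_0$, both in $\ol{w}_{d+1}$, both in one piece $\ol{w}_i$, or $b\in\ol{w}_0$ with $c$ arbitrary to its right, or $c\in\ol{w}_{d+1}$ with $b$ arbitrary to its left. In each configuration one writes $\pp$ in the bracketed form of Definition \ref{DefSwitch}, applies the move, and re-reads the image as a new fixed prefix, block-reversal middle, and fixed suffix. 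For instance, if $b,c\in\ol{w}_i$, writing $\ol{w}_i=\ol\alpha\,b\,\ol\beta\,c\,\ol\gamma$ the move prepends $\ol\beta\,c$ to $\ol{w}_0$ and replaces $\ol{w}_i$ by $\ol\alpha\,b\,\ol\gamma$, the other pieces and their order being untouched; the remaining inner cases either reorder $\ol{w}_0$ or $\ol{w}_{d+1}$ internally or absorb part of $\ol{w}_0$ (resp.\ $\ol{w}_{d+1}$) into a piece. The two outer switches are handled the same way: the former extreme letter ($a$ or $z$) is absorbed into $\ol{w}_{d+1}$ or $\ol{w}_0$ while the block-reversal middle survives with its pieces merely relabelled. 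In every case the image is standard and of $\Sigma$-form (possibly with a different $d$, or with the trivial $d\le 1$ structure), and the degenerate subcases --- $\ol{w}_0$ or $\ol{w}_{d+1}$ empty, $d\in\{0,1\}$, the switched letter adjacent to $a$ or $z$ --- are dispatched directly.

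\textbf{Expected difficulty.}
The whole content sits in the bookkeeping of the closure step: confirming that the interior permutation stays a restricted block reversal after every flavour of switch. The most delicate cases are those of an inner switch with both letters strictly inside a single piece $\ol{w}_i$ (which splits that piece and migrates part of it into $\ol{w}_0$) and of the two outer switches. A convenient way to keep the casework short is to first treat pairs of $\Sigma$-form in which every $\ol{w}_i$ has length one --- these are exactly the Type $<1>$ piece-wise order reversing pairs --- and then to note that a general pair of $\Sigma$-form is the prefix extension (Definition \ref{DefExtensions}, performed identically on both rows) of such a pair, so that any switch whose two letters are not both interior to a single $\ol{w}_i$ is the extension of a switch of the reduced pair; this isolates the genuinely new work in the ``internal to one $\ol{w}_i$'' case.
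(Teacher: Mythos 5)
Your overall strategy is the same as the paper's: a direct case analysis over the possible placements of the two switched letters, using the constraint that an inner switch needs the letters in the same relative order in both rows (so two letters lying in distinct middle pieces $\ol{w}_i,\ol{w}_j$, $1\le i\neq j\le d$, can never be switched), followed by re-reading the image as a fixed prefix, reversed middle, fixed suffix; your worked example (both letters in one piece $\ol{w}_i$, which prepends $\ol\beta\,c$ to $\ol{w}_0$ and shrinks $\ol{w}_i$ to $\ol\alpha\,b\,\ol\gamma$) agrees exactly with the paper's Case 1A, and your argument for the Type $<1>$ statement, which the paper leaves to the reader, is sound.

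Two caveats, one of which matters if you try to shorten the casework as you suggest. First, your summary of ``the remaining inner cases'' and of the outer switches is inaccurate for the wrap-around configurations: when $b$ lies in a middle piece $\ol{w}_i$ and $c\in\ol{w}_{d+1}$ (inner), or when the switched interior letter lies in a middle piece (outer), the image is still in $\Sigma$, but the pieces are cyclically re-indexed and the old prefix $\ol{w}_0$ is absorbed, together with $c$ or part of $\ol{w}_{d+1}$, into a \emph{new middle piece} (e.g.\ the paper obtains $\ol{w}'_{d+1-i}=\ol{v}_1 c\,\ol{w}_0$, new suffix $\ol{u}_1 b\,\ol{v}_2$), not merely a reshuffle of prefix and suffix; this does not hurt the conclusion but the cases must be computed, not summarized away. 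Second, the proposed shortcut via prefix extension of the singleton-piece pair does not cover what you want it to: for the extension to be ``performed identically on both rows'' you are forced to retain the \emph{last} letter of each word $\ol{w}_i$ (and of $\ol{w}_0$, $\ol{w}_{d+1}$), so a switch involving a letter that is interior to one of these words is not a move of that reduced pair at all, and retaining such a letter either destroys the singleton form or makes the extension differ between the rows, in which case unpacking the extension after the move is essentially the same bookkeeping you were trying to avoid. So the genuinely new work is not confined to the ``both letters inside one $\ol{w}_i$'' case, and the full case check (as in the paper's Cases 1A--3A and 1B--3B) is still needed.
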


		    \begin{proof}
				The final statement follows from the definitions of $\Sigma$ and Type. Consider $\pp\in\Sigma$ with letters $a,z$ and words $\ol{w}_0,\dots,\ol{w}_{d+1}$ as in the definition above. We will verify that the resulting pair $\pp'$ from either an inner or outer switch move also belongs to $\Sigma$. Suppose we performed a $\{b,c\}$-switch move to arrive at $\pp'$. There are three possible cases.
	
				\textit{Case 1A:} $b$ and $c$ belong the the same word $\ol{w}_i$. If $i=0$, then $\pp'$ is identical to $\pp$ except its first word $\ol{w}_0$ is a reordering of the letters in $\ol{w}_0$. If $i >0$, then let $\ol{w}_i = \ol{u}_1 b \ol{u}_2 c \ol{u}_3$, where the words $\ol{u}_1,\ol{u}_2, \ol{u}_3$ are possibly empty. Then $\pp'$ is of the form in $\Sigma$ with words $\ol{w}'_0,\dots,\ol{w}'_{d+1}$ where
					  $$ \ol{w}'_j = \RHScase{\ol{u}_2c\ol{w}_0, & j=0, \\ \ol{u}_1 b\ol{u}_3, & j=i \\ \ol{w}_j, & \mathrm{otherwise.}}$$

				\textit{Case 2A:} $b$ belongs to $\ol{w}_0$ and $c$ belongs to $\ol{w}_i$ for some $i\in\{1,\dots,d,d+1\}$. In this case, let $\ol{w}_0 = \ol{u}_1 b\ol{u}_2$ and $\ol{w}_i = \ol{v}_1 c \ol{v}_2$. Then $\pp'\in\Sigma$ with words $\ol{w}'_0,\dots,\ol{w}'_{d+1}$ where $\ol{w}'_0 = \ol{u}_2$, $\ol{w}'_i = \ol{v}_1c\ol{u}_1b\ol{v}_2$ and $\ol{w}'_j = \ol{w}_j$ for all other $j$.

				\textit{Case 3A:} $b$ belongs to $\ol{w}_i$ where $i\in\{1,\dots,d\}$ and $c$ belongs to $\ol{w}_{d+1}$. Let $\ol{w}_i = \ol{u}_1 b \ol{u}_2$ and $\ol{w}_{d+1} = \ol{v}_1 c \ol{v}_2$. Then $\pp'\in\Sigma$ with words $\ol{w}'_j$, $j\in\{0,\dots,d+1\}$, where
			      $$ \ol{w}'_j = \RHScase{\ol{u}_2, & j=0, \\ \ol{w}_{i+j}, & 1\leq j \leq d-i, \\ \ol{v}_1c\ol{w}_0, & j=d+1-i, \\
				      \ol{w}_{i+j-d-1}, & d+1-i<j\leq d, \\ \ol{u_1}b\ol{v}_2, & j=d+1.}$$

				These cases exhaust all possible inner switch moves. Note that if $b$ belongs to some word $\ol{w}_i$ and $c$ belongs to some word $\ol{w}_j$, $i,j\in\{1,\dots,d\}$, then a $\{b,c\}$-switch is not possible. Now we shall assume that $\pp'$ is the result of an outer switch move on $\pp$. We shall call this a $\{b,c\}$-switch where $c\in\{a,z\}$ (the following all hold for either choice of $c$). There are again three cases.

				\textit{Case 1B:} $b$ belongs to $\ol{w}_0$. Let $\ol{w}_0 = \ol{u} b \ol{v}$. Then $\pp'$ belongs to $\Sigma$ with words $\ol{w}_j$, $j\in\{0,\dots,d+1\}$, where $\ol{w}'_0 = \ol{v}$, $\ol{w}'_{d+1} = \ol{w}_{d+1} c \ol{u}$ and $\ol{w}'_j = \ol{w}_j$ for all other $j$.

				\textit{Case 2B:} $b$ belongs to $\ol{w}_{d+1}$. Let $\ol{w}_{d+1} = \ol{u} b \ol{v}$. Then $\pp'$ belongs to $\Sigma$ with words $\ol{w}'_j$, $j\in\{0,\dots,d+1\}$, where $\ol{w}'_0 = \ol{v} c \ol{w}_0$, $\ol{w}'_{d+1} = \ol{u}$ and $\ol{w}'_j = \ol{w}_j$ for all other $j$.

				\textit{Case 3B:} $b$ belongs to $\ol{w}_i$ for some $i\in\{1,\dots,d\}$. Let $\ol{w}_j = \ol{u} b \ol{v}$. Then $\pp' \in\Sigma$ with words $\ol{w}'_j$, $j\in\{0,\dots,d+1\}$, where
			      $$ \ol{w}'_j = \RHScase{\ol{v}, & j=0 \\ \ol{w}_{i+j}, & 1\leq j \leq d-i \\ \ol{w}_{d+1} c \ol{w}_0, & j = d+1-i\\
					    \ol{w}_{i+j-d-1}, & d+1-i < j \leq d\\ \ol{u}, & j=d+1.}$$

				We have now that for any $\pp\in\Sigma$ and $\pp'$ connected to $\pp$ by a switch move, $\pp'\in\Sigma$ as well.
		    \end{proof}

		    \begin{lem}\label{LemArfBlockCount}
			  If piece-wise order reversing $\pp$ is composed of $N_1$ $1$-blocks, $N_2$ $2$-blocks, $N_4$ $4$-blocks and $N_5$ $5$-blocks (and no other blocks), then
				  \begin{equation}\label{EqSpinCount}
				      \ARFof{\pp} = 2^{N_1 + 2N_2 + 4 N_4 + 5 N_5 + 1} + (-1)^{N_4 + N_5} 2^{N_1+N_2 + 2 N_4 + 3 N_5}.
				   \end{equation}
		    \end{lem}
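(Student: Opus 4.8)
The plan is to reduce everything to the shape of the interaction matrix $L_\pp$, which for a piece-wise order reversing pair is extremely rigid. Write $a := p_0^{-1}(1) = p_1^{-1}(\#\AAA)$ and $z := p_1^{-1}(1) = p_0^{-1}(\#\AAA)$ for the two ``outer'' letters, and let $B_1,\dots,B_m$ be the order reversing blocks, so $\AAA = \{a,z\} \sqcup B_1 \sqcup \cdots \sqcup B_m$ with $m = N_1+N_2+N_4+N_5$ and $|B_i| =: n_i$. A direct inspection of Definition \ref{DefCanonQuad} against Definition \ref{DefPWOR} and the definition of ``standard'' gives the structural fact: $L_\pp(x,y) = 1$ precisely when $x \ne y$ and the pair $\{x,y\}$ is \emph{not} split across two distinct blocks. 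Indeed, inside a single block $p_0(x)+p_1(x)$ is constant, so the two coordinate differences have opposite signs and $L_\pp = 1$; the pair $\{a,z\}$ and every pair $\{a,c\}$, $\{z,c\}$ with $c$ a block letter also have $L_\pp = 1$ since $a$ is first in $p_0$/last in $p_1$ and $z$ is last in $p_0$/first in $p_1$; and any pair taken from two different blocks has $L_\pp = 0$ because the blocks occur in the same order in both rows. Recording this structural lemma is the first step.

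Feeding this into $\QF{\pp}$ and using $v_x^2 = v_x$ over $\ZZ_2$, one obtains the decomposition
$$ \QF{\pp}(v) = \sum_{x\in\AAA} v_x + v_a v_z + (v_a + v_z)\,S + \sum_{i=1}^m Q_i\big(v|_{B_i}\big), \qquad S := \sum_{x \notin \{a,z\}} v_x, $$
where $Q_i(w) = \sum_{\{x,y\}\subseteq B_i} w_x w_y$ is the complete-graph quadratic form on $B_i$. I would then count solutions of $\QF{\pp}(v)=1$ by conditioning on $(v_a,v_z)$. If $v_a = v_z$, the cross term $(v_a+v_z)S$ vanishes and $\QF{\pp}(v) = v_a + S + \sum_i Q_i$; for every fixed choice of the block coordinates $v|_{B_i}$, exactly one of the two values $v_a=v_z\in\{0,1\}$ makes this $1$, so the two ``diagonal'' choices contribute exactly $2^{\#\AAA-2}$ solutions. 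If $v_a \ne v_z$, then $v_a+v_z = 1$, $v_a v_z = 0$, and the two copies of $S$ cancel, leaving $\QF{\pp}(v) = 1 + \sum_i Q_i(v|_{B_i})$; these two ``off-diagonal'' choices thus contribute $2\cdot\#\{(v|_{B_i})_i : \sum_i Q_i = 0\}$.

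For the remaining count, set $\delta_n := \#\{w \in \ZZ_2^n : Q(w)=0\} - \#\{w : Q(w)=1\}$ for the complete-graph form $Q$ on $n$ letters; the standard parity identity gives $\#\{\sum_i Q_i = 0\} = \frac{1}{2}\big(2^{\sum n_i} + \prod_i \delta_{n_i}\big)$, hence
$$ \ARFof{\pp} = 2^{\#\AAA-2} + 2^{\#\AAA-2} + \prod_i \delta_{n_i} = 2^{\#\AAA-1} + \prod_i \delta_{n_i}. $$
Since $Q(w)$ depends only on $|w| = \#\{x : w_x = 1\}$ — in fact $Q(w) = \binom{|w|}{2} \bmod 2$, which is $0$ iff $|w| \equiv 0,1 \pmod 4$ — one has $\delta_n = \sum_k (-1)^{\lfloor k/2\rfloor}\binom{n}{k}$, and a one-line evaluation yields $\delta_1 = \delta_2 = 2$, $\delta_4 = -4$, $\delta_5 = -8$ (and, consistently, $\delta_3 = 0$, which is exactly why the hypothesis excludes $3$-blocks). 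Therefore $\prod_i \delta_{n_i} = 2^{N_1}2^{N_2}(-4)^{N_4}(-8)^{N_5} = (-1)^{N_4+N_5}2^{N_1+N_2+2N_4+3N_5}$, and substituting $\#\AAA - 1 = 1 + N_1 + 2N_2 + 4N_4 + 5N_5$ gives exactly \eqref{EqSpinCount}.

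The arithmetic of exponents and the evaluations of $\delta_n$ are routine; the step requiring genuine care is the structural description of $L_\pp$, and especially the observation that the outer letters $a,z$ behave like a block ``linked to everything'', which is precisely what produces the clean split over $(v_a,v_z)$ together with the cancellation of $S$ in the off-diagonal case.
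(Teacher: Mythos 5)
Your proof is correct, and I checked the key points: the structural description of $L_\pp$ for a piece-wise order reversing pair (all pairs inside a block, and all pairs involving $a$ or $z$, give $L_\pp=1$; pairs split across distinct blocks give $0$, since Definition \ref{DefPWOR} forces distinct blocks to occupy the same, non-interleaved position ranges in both rows) is right, the resulting decomposition of $\QF{\pp}$ is right, the conditioning on $(v_a,v_z)$ gives $2^{\#\AAA-2}$ diagonal solutions and $2\cdot\#\{\sum_i Q_i=0\}$ off-diagonal ones, and the character-sum values $\delta_1=\delta_2=2$, $\delta_3=0$, $\delta_4=-4$, $\delta_5=-8$ together with $\#\AAA=2+N_1+2N_2+4N_4+5N_5$ reproduce \eqref{EqSpinCount} exactly (I also spot-checked the formula on small examples). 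Your route differs from the paper's in a meaningful way. The paper uses the same conditioning on whether $v_a=v_z$ (its $\ARFofE{\pp}$ and $\ARFofNE{\pp}$), but then argues by induction, peeling off one block at a time and deriving recurrences such as $\ARFofE{\pp}=3\cdot 2^{\#\AAA-3}-2\,\ARFofE{\pp'}$ for a removed $2$-block, with the closed-form solution of the recurrences left to the reader. You instead compute globally: the explicit shape of $L_\pp$ lets you write $\QF{\pp}$ as a sum of independent complete-graph forms on the blocks plus the outer-letter terms, and the multiplicative identity $\#\{\sum_i Q_i=0\}=\tfrac12\bigl(2^{\sum n_i}+\prod_i\delta_{n_i}\bigr)$ replaces the induction entirely. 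What your approach buys is a complete, self-contained derivation of the final formula (no exercise left over), a transparent reason each block type contributes a factor $\delta_{n_i}$, and an explanation of why $3$-blocks are excluded ($\delta_3=0$); what the paper's recurrence buys is that it only ever manipulates $\QF{\pp}$ locally, one block at a time, without needing the global description of $L_\pp$.
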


		    \begin{proof}
			    The proof may be performed by induction on each of the $N_j$'s. Given $\pp=(p_0,p_1)\in\std{\AAA}$ with $a = p^{-1}(0)$ and $z = p^{-1}(1)$ as the first and last letters of each row, let
				\begin{equation}
				      \ARFofE{\pp} = \#\{v\in \ZZ_2^\AAA: v_a = v_z\mbox{ and } \QF{\pp}(v) = 1 \},
				\end{equation}
			      and
				\begin{equation}
				      \ARFofNE{\pp} = \#\{v\in \ZZ_2^\AAA: v_a \neq v_z\mbox{ and } \QF{\pp}(v) = 1 \},
				\end{equation}
			      where $\QF{\pp}$ is the canonical quadratic form associated to $\pp$ from Definition \ref{DefCanonQuad}. Note in this case that $\ARFof{\pp} = \ARFofE{\pp} + \ARFofNE{\pp}$. By counting, we have that
				\begin{equation}\label{EqCountByBlocks}
				    \#\AAA = 2 + N_1 + 2 N_2 + 4 N_4 + 5 N_5.
				\end{equation}

			    We will outline the case for $N_2$, as the rest are very similar. Suppose $\pp$ is of the form
				$$ \pp = \mtrx{\LL{a}{z}~\mathbf{B}&\mathbf{C}&\LL{z}{a}},\mbox{ where }\mathbf{B} = \bmtrx{b & c \\ c & b},$$
			  and let $\pp' = \mtrx{\LL{a}{z}~\mathbf{C}&  \LL{z}{a}}$. Note that $\pp'$ has one fewer $2$-block than $\pp$ and the same number of $j$-blocks for all other $j$. From the definition,
				\begin{equation}\label{EqSpinProof}
				      \QF{\pp}(v) = v_b^2 + v_c^2 + (v_b+v_c)(v_a+v_z) + v_b v_c + \QF{\pp'}(v')\mod 2
				\end{equation}
			  where $v\in \ZZ_2^\AAA$ and $v'$ is the projection of $v$ onto $\ZZ_2^{\AAA\setminus\{b,c\}}$.

			    When $v_a = v_z$, \eqref{EqSpinProof} becomes $\QF{\pp}(v) = v_b^2 + v_c^2 + v_b v_c + \QF(\pp')(v')$ and so by counting
				  $$ \begin{array}{rcl}
				      \ARFofE{\pp} &= &\ARFofE{\pp'} + 3 \big(2^{\#\AAA-3} - \ARFofE{\pp'}\big)\\
					  &= &3\cdot 2^{\#\AAA-3} - 2 \cdot \ARFofE{\pp'}.
				     \end{array}$$
			    When $v_a \neq v_z$, \eqref{EqSpinProof} now becomes $\QF{\pp}(v) = v_b v_c + \QF{\pp'}(v)$ and so
				  $$ \begin{array}{rcl}
				      \ARFofNE{\pp} &= &3\cdot \ARFofNE{\pp'} + \big(2^{\#\AAA-3} - \ARFofNE{\pp'}\big)\\
					  &= & 2^{\#\AAA-3} + 2 \cdot \ARFofNE{\pp'}.
				     \end{array}.$$

			    We may obtain similar relationships between $\ARFofE{\pp}$ (resp. $\ARFofNE{\pp}$) and $\ARFofE{\pp'}$ (resp. $\ARFofNE{\pp'}$) where $\pp'$ is the result of removing a $j$-block for each other $j$. We are left with recurrence relations for $\ARFofE{\pp}$ and $\ARFofNE{\pp}$ that depend on $N_1$, $N_2$, $N_4$ and $N_5$. The final formulation (left as an exercise for the reader) is
				  $$ \ARFofE{\pp} = 2^{\#\AAA-2}\mbox{ and } \ARFofNE{\pp} = 2^{\#\AAA-2} + (-1)^{N_4+N_5} 2^{N_1 + N_2 + 2N_4 + 3N_5}.$$
			    The desired expression comes by $\ARFof{\pp} = \ARFofE{\pp} + \ARFofNE{\pp}$ and \eqref{EqCountByBlocks}.
		    \end{proof}
    
		    \begin{cor}\label{CorSpin}
				If $\pp,\qq\in\std{\AAA}$ are standard pairs, $\pp$ is Type $<2>$ and $\qq$ is either or Type $<4>$ or Type $<5>$, then $\ExtClassNon{\pp}\neq \ExtClassNon{\qq}$.
		    \end{cor}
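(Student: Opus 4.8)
The plan is to use the Extended Rauzy Class invariant $\ARFof{\cdot}$ from Corollary \ref{CorArfIsInvariant} together with the explicit formula in Lemma \ref{LemArfBlockCount}. If $\ExtClassNon{\pp} = \ExtClassNon{\qq}$, then in particular $\ARFof{\pp} = \ARFof{\qq}$ (the value of $\ARFof{\cdot}$ depends only on $L_{(\cdot)}$, which is renaming invariant, so it descends to non-labeled classes); hence it suffices to show that these two numbers always differ.

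First I would record the block content of each pair. Since $\pp$ has Type $<2>$, it is piece-wise order reversing and every one of its order reversing blocks has size $1$ or $2$; write $N_1$ for the number of $1$-blocks and $N_2$ for the number of $2$-blocks, so that $N_4 = N_5 = 0$ and $\#\AAA = 2 + N_1 + 2N_2$ by \eqref{EqCountByBlocks}. Likewise, if $\qq$ has Type $<4>$ it has exactly one $4$-block and no $3$- or $5$-blocks, while if $\qq$ has Type $<5>$ it has exactly one $5$-block and no $3$- or $4$-blocks; in either case write $N_1', N_2'$ for its numbers of $1$- and $2$-blocks and note that $N_4 + N_5 = 1$. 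In both cases the pair is composed only of the block sizes allowed in Lemma \ref{LemArfBlockCount}, so that lemma applies.

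Next I would apply Lemma \ref{LemArfBlockCount}. Using $\#\AAA = 2 + N_1 + 2N_2 + 4N_4 + 5N_5$, the first exponent in \eqref{EqSpinCount} always equals $\#\AAA - 1$, so \eqref{EqSpinCount} reads $\ARFof{\cdot} = 2^{\#\AAA - 1} + (-1)^{N_4 + N_5}\, 2^{N_1 + N_2 + 2N_4 + 3N_5}$. For $\pp$ the sign is $(-1)^0 = +1$, giving $\ARFof{\pp} = 2^{\#\AAA - 1} + 2^{N_1 + N_2} > 2^{\#\AAA - 1}$. For $\qq$ the sign is $(-1)^1 = -1$, giving $\ARFof{\qq} = 2^{\#\AAA - 1} - 2^{N_1' + N_2' + 2N_4 + 3N_5} < 2^{\#\AAA - 1}$. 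Therefore $\ARFof{\qq} < 2^{\#\AAA-1} < \ARFof{\pp}$, so $\ARFof{\pp} \neq \ARFof{\qq}$, and consequently $\ExtClassNon{\pp} \neq \ExtClassNon{\qq}$.

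There is no real obstacle here beyond bookkeeping: the only points to check carefully are that a pair carrying one of these Types is composed \emph{exclusively} of the block sizes appearing in the hypothesis of Lemma \ref{LemArfBlockCount} (in particular that it contains no $3$-block), which is immediate from Definition \ref{DefTypes}, and that the first exponent in \eqref{EqSpinCount} collapses to $\#\AAA - 1$ via the counting identity \eqref{EqCountByBlocks}. The mechanism driving the argument is simply that the parity correction term in \eqref{EqSpinCount} is positive when $N_4 + N_5$ is even (Type $<2>$) and negative when $N_4 + N_5$ is odd (Types $<4>$ and $<5>$), so the two values of $\ARFof{\cdot}$ fall on opposite sides of $2^{\#\AAA - 1}$.
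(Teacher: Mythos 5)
Your proof is correct and takes essentially the same route as the paper: both rest on the block-count formula of Lemma \ref{LemArfBlockCount} and the invariance of $\ARFof{\cdot}$ from Corollary \ref{CorArfIsInvariant}, with the sign $(-1)^{N_4+N_5}$ doing the work. Your bookkeeping is in fact slightly cleaner — the paper first reduces to the case $\Pof{\pp}=\Pof{\qq}$ (via Proposition \ref{PropPofPiExclassInvariant}) so as to compute an explicit nonzero difference such as $2^{N_1+N_2+3}$, whereas you avoid that case split by using only that $\pp$ and $\qq$ share the alphabet $\AAA$, so the two values of $\ARFof{\cdot}$ land on opposite sides of $2^{\#\AAA-1}$.
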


		    \begin{proof}
				If $\Pof{\pp}\neq \Pof{\qq}$, then the result follows by Proposition \ref{PropPofPiExclassInvariant}. We therefore assume that $\Pof{\pp}= \Pof{\qq}$.

				If $\qq$ is Type $<4>$, let $N_j$ be the number of $j$-blocks that occur in $\qq$, for $j=1,2,4$. By definition, $N_4=1$. In this case, $\pp$ has $N_1$ $1$-blocks as well, has $N_2+2$ $2$-blocks and no other blocks. By Lemma \ref{LemArfBlockCount},
					$$ \ARFof{\pp} - \ARFof{\qq} = 2^{N_1 + N_2 + 3}\neq 0.$$
				Therefore $\ExtClassNon{\pp} \neq \ExtClassNon{\qq}$ by Corollary \ref{CorArfIsInvariant}.

				Likewise, if $\qq$ is Type $<5>$ and has $N_1$ $1$-blocks, $N_2$ $2$-blocks and one $5$-block, then
					$$ \ARFof{\pp} - \ARFof{\qq} = 2^{N_1 + N_2 + 4}\neq 0,$$
				and we reach the same conclusion.
		    \end{proof}

	\section{The Completeness Of The Switch Moves}\label{SecSwitches}

		    We show in this section a particularly helpful result: every standard pair is connected to every other standard pair in the same Rauzy Class by switch moves.
		    By including outer switch moves as well, we may say the same result for standard pairs in the same Extended Rauzy Class. We used these results in the proof of Lemma \ref{LemHyp} and will use them in Section \ref{SecOther}.

		    \begin{prop}\label{PropSwitchesClass}
					Distinct $\pp,\rr\in\std{\AAA}$ are in the same labeled Rauzy Class if and only if there exists a sequence of inner switch moves connecting $\pp$ and $\rr$.
		    \end{prop}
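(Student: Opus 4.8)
The reverse implication is immediate: an inner switch is by definition a finite sequence of Rauzy Moves, so any two switch-connected pairs lie in a common Rauzy Class. I would prove the forward implication by induction on $\#\AAA$. Two useful normalizations at the outset: since Rauzy Moves fix $p_0^{-1}(1)$ and $p_1^{-1}(1)$, the pairs $\pp$ and $\rr$ share the same extreme letters $a=p_0^{-1}(1)=p_1^{-1}(\#\AAA)$ and $z=p_1^{-1}(1)=p_0^{-1}(\#\AAA)$; and for $\#\AAA\le 4$ the statement is checked directly (for $\#\AAA\le 3$ each class contains only one standard pair once the labeling of $a$ and $z$ is fixed).

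The main engine is dimension reduction by prefix restriction. Fix any Rauzy Path $\gamma$ from $\pp$ to $\rr$ and let $\BBB(\gamma)\subseteq\AAA$ be its set of ``last letters of cycle vertices'' from Lemma \ref{LemReductionPaths}; note $a,z\in\BBB(\gamma)$. If $\BBB(\gamma)\ne\AAA$, choose a (necessarily non-extreme) letter $b\notin\BBB(\gamma)$ and put $\AAA'=\AAA\setminus\{b\}$. Because $\pp$ is standard, $\pp\RED{\AAA'}$ is standard, hence lies in $\irr{\AAA'}$, and the hypotheses of Lemma \ref{LemReductionPaths} hold, so $\gamma$ is the extension of a path $\gamma'$ from $\pp\RED{\AAA'}$ to $\rr\RED{\AAA'}$ through a single prefix extension $\Omega$ with $\Omega(\pp\RED{\AAA'})=\pp$ and $\Omega(\rr\RED{\AAA'})=\rr$. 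These two restrictions are standard and Rauzy-equivalent over $\AAA'$, so by induction they are joined by a chain of inner switch moves; every pair in that chain is standard with the same extreme letters $a,z$. Applying $\Omega$ to each such pair gives a standard pair, because $\Omega(\pp\RED{\AAA'})$ being standard forces $\ol{\omega}_{0,a}$ and $\ol{\omega}_{1,z}$ to be single letters; and the $\Omega$-extension of a $\{b',c'\}$-switch is again a $\{b',c'\}$-switch, since prefix extension preserves the relative order of the interior letters $b',c'$ and, by Equation \eqref{EqExtendedMove}, preserves the path shape $0^{k_1}1^{k_2}0^{k_3}1^{k_4}$. Concatenating yields inner switch moves from $\pp$ to $\rr$. (If $\pp\RED{\AAA'}=\rr\RED{\AAA'}$ while $\pp\ne\rr$, then $\pp$ and $\rr$ differ only in the position of $b$, and this is handled directly.)

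The remaining case is that $\BBB(\gamma)=\AAA$ for \emph{every} Rauzy Path $\gamma$ from $\pp$ to $\rr$, so that no proper sub-alphabet is available for the reduction above. If $\dcycle(\pp,\rr)=4$ this is harmless: the analysis preceding Definition \ref{DefSwitch} shows that a cycle-length-$4$ path between two distinct standard pairs must, after possibly exchanging the roles of $0$ and $1$, realize an inner $\{b,c\}$-switch (with $c$ following $b$ in both rows of $\pp$), so $\rr$ is one switch move from $\pp$. The genuinely delicate situation is $\dcycle(\pp,\rr)>4$ together with $\BBB(\gamma)=\AAA$ for all $\gamma$; here one shows that $\pp$ and $\rr$ are forced into a short, explicitly listable family of standard pairs, namely those for which restriction fails to reduce the problem, and this family is governed by the pairs $\STAR{\AAA}$ of Definition \ref{DefSpecialForm} via Lemma \ref{LemFarthestFromStandard}. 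For each such $\pp$ I would use Lemma \ref{LemFarthestToHyp} to route to the distinguished standard pair it produces and then finish using the explicit ``utility'' switch-move sequences recorded in the Appendices.

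The main obstacle is exactly this last case: pinning down precisely which standard pairs force $\BBB(\gamma)=\AAA$ on every path, establishing that they form the claimed $\STAR{\AAA}$-governed list, and dispatching each of them by a concrete sequence of inner switch moves. Everything else reduces to careful bookkeeping with the restriction/extension correspondence of Definitions \ref{DefExtensions}--\ref{DefRestrictions} and Lemmas \ref{LemIrreducibleExtensions} and \ref{LemReductionPaths}.
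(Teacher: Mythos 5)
Your reverse implication and your first reduction are fine and coincide with the paper's: take a connecting path, and if some non-extreme letter $b$ never occurs as a last letter along it (i.e.\ $b\notin\BBB(\gamma)$), restrict to $\AAA\setminus\{b\}$, apply induction to the (still standard, hence irreducible) restrictions, and extend the resulting switch chain back through the fixed prefix extension $\Omega$, using Equation \eqref{EqExtendedMove} and Lemma \ref{LemReductionPaths}. (Your parenthetical worry about $\pp\RED{\AAA'}=\rr\RED{\AAA'}$ is vacuous: since the extension of the path through $\Omega$ ends at $\Omega(\rr\RED{\AAA'})=\rr$, equality of the restrictions would force $\pp=\rr$.)

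The gap is the remaining case, which you yourself flag as ``the main obstacle'' and only sketch, and the sketch points in the wrong direction. You propose to show that when every connecting path satisfies $\BBB(\gamma)=\AAA$, the endpoints $\pp,\rr$ are ``forced into a short, explicitly listable family governed by $\STAR{\AAA}$.'' No such characterization is established, and it cannot be the right frame: elements of $\STAR{\AAA}$ are never standard (for $\#\AAA\geq 3$ they have the extreme letters in positions $\{1,2\}$ and $\{\#\AAA-1,\#\AAA\}$), so the endpoints themselves never lie in $\STAR{\AAA}$, and nothing in Lemmas \ref{LemFarthestFromStandard}--\ref{LemFarthestToHyp} constrains which pairs of standard endpoints admit only letter-exhausting paths. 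The paper never needs such a classification. Instead it takes one cycle path $\pp^{(0)},\dots,\pp^{(m)}$ and cuts it into segments that avoid \emph{two} letters $\{b_i,c_i\}$ at a time: each segment endpoint is routed (inside the doubly-restricted alphabet) to an auxiliary standard pair $\qq^{(i)}$, and consecutive $\qq^{(i)},\qq^{(i+1)}$ are connected by paths extended from $\AAA$ minus a \emph{single} letter, so the induction hypothesis applies to them. The class $\STAR{\cdot}$ enters only at the point where an intermediate, non-standard vertex $\pp^{(m_i+1)}$ restricted to $\AAA\setminus\{b_i\}$ admits no second removable letter; by Lemma \ref{LemFarthestFromStandard} that restriction lies in $\STAR{\AAA\setminus\{b_i\}}$, and the paper then runs an explicit case analysis on the position of $b_i$ in the second row ($\ell_1=\#\AAA-1,\#\AAA-2,\#\AAA-3,<\#\AAA-3$), using Lemma \ref{LemFarthestToHyp} to produce explicit standard pairs $\qq^{(i+\frac12)},\qq^{(i+1)}$ and concrete switch moves (e.g.\ $\{b_i,d_1\},\{b_i,d_0\}$) to bridge them. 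Your $\dcycle(\pp,\rr)=4$ observation is correct but covers only a sliver of this case. Without the segmented two-letter reduction (or some replacement for it), the proposal does not yield a proof of the forward implication.
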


		    \begin{proof} We first note that if a series of switch moves exist, then $\pp$ and $\rr$ belong to the same Rauzy Class, as switch moves are themselves composed of Rauzy moves. Therefore, we will assume that $\ClassLab{\pp}=\ClassLab{\rr}$ and show that path of switch moves exist connecting the two.
	
				Let $a = p_0^{-1}(1)$ and $z = p_1^{-1}(1)$. We prove this by induction on $\#\AAA$.

				Consider a cycle path between $\pp$ and $\rr$ expressed as
				    $$ \pp = \pp^{(0)} \longrightarrow \pp^{(1)} \longrightarrow \dots \longrightarrow \pp^{(m-1)} \longrightarrow \pp^{(m)}=\rr,$$
				or in other words, $\pp^{(0)},\dots,\pp^{(m)}$ are the vertices of our cycle path. By possibly considering a shorter path, assume that each $\pp^{(i)}$ is distinct. If this path is an extension of a path on $\AAA'\subsetneq\AAA$, then by induction there exists a series of switch moves in $\AAA'$ that connect $\pp\RED{\AAA'}$ and $\rr\RED{\AAA'}$ which then extends to a series of switch moves in $\AAA$ connecting $\pp$ and $\rr$. Note that $\pp\RED{\AAA'}$ and $\rr\RED{\AAA'}$ are necessarily distinct.

				If such a letter does not exist, let us instead choose two letters $b_1,c_1 \in \AAA\setminus\{a,z\}$ such that for some $2\leq m_1 < m$, the path connecting $\pp^{(0)}$ to $\pp^{(m_1)}$ is an extension of a path in $\AAA^{(1)}:=\AAA\setminus\{b_1,c_1\}$, while the path from $\pp^{(0)}$ to $\pp^{(m_1+1)}$ is not. Call $\qq^{(1)} = \pp^{(0)}$ and note that $\qq^{(1)}\RED{\AAA^{(1)}}\in\std{\AAA^{(1)}}$. By switching letters, we may see that the path from $\pp^{(0)}$ to $\pp^{(m_1+1)}$ is still an extension of a switch path on $\AAA\setminus\{b_1\}$. Call $b_2=b_1$.

				Let us first assume there is another letter $c_2\neq a,z$ such that $\pp^{(m_1+1)}\RED{\AAA^{(2)}}$ is irreducible where $\AAA^{(2)}:= \AAA\setminus\{b_2,c_2\}$. In this case, we may find a path from $\pp^{(m_1+1)}$ to standard pair $\qq^{(2)}$ that is an extension of a path on $\AAA^{(2)}$. If $\qq^{(2)}\neq \qq^{(1)}$, there exists by induction a switch path from $\pp=\qq^{(1)}$ to $\qq^{(2)}$ that is an extension of a switch path on $\AAA\setminus\{b_1=b_2\}$. We define $m_2$ by the longest path starting at $\pp^{(m_1+1)}$ and ending at $\pp^{(m_2)}$ that is an extension of a path on $\AAA^{(2)}$. If $m_2=m$ and $\qq^{(2)}\neq \rr$, then there exists a switch path from $\rr =\pp^{(m)}$ to $\qq^{(2)}$ that is an extension of a switch path on $\AAA\setminus\{b_2,c_2\}$. We have therefore connected $\pp$ to $\rr$ by switch moves.

				Suppose $m_i<m$ is then defined for letters $\{b_i,c_i\}$ and $\qq^{(i)}$ is connected to $\pp^{(m_{i-1}+1)}$ by a path that is an extension of a path on $\AAA^{(i)}=\AAA\setminus\{b_i,c_i\}$. Let us continue to assume that, by naming $b_{i+1}=b_i$, we may find another letter $c_{i+1}$ such that $\pp^{(m_i+1)}\RED{\AAA^{(i+1)}} \in \irr{\AAA^{(i+1)}}$ where $\AAA^{(i+1)} = \AAA\setminus\{b_{i+1},c_{i+1}\}$. We then define $\qq^{(i+1)}$ similarly and note that $\qq^{(i)}$ and $\qq^{(i+1)}$ are connected by a switch move that is an extension from $\AAA\setminus\{b_{i+1}=b_i\}$ by induction. We iterate this process on $i$ until $m_i = m$.

				If for some $m_i$, $\pp^{(m_i+1)}$ does not admit a letter $c_{i+1}$, then by Lemma \ref{LemFarthestFromStandard}, $\pp^{(m_i+1)}\RED{\AAA\setminus\{b_i\}}$ belongs to $\STAR{\AAA\setminus\{b_i\}}$ from Definition \ref{DefSpecialForm}. Up to switching rows, we may assume that this reduction is of the form
			    $$ \pp^{(m_i+1)}\RED{\AAA\setminus\{b_i\}}= \mtrx{\LL{\dots}{\dots}~\LL{d_4~d_3~d_2~d_1~d_0}{d_2~d_5~d_0~d_3~d_1}},$$
			where the $d_i$'s appear in reverse order in the first row. Note that $d_1=c_i$, and define $\ell_0$ and $\ell_1$ to be the position of $b_i=b_{i+1}$ in the first and second row respectively. Because of our choice of $b_i$, it follows that $\ell_0,\ell_1<\#\AAA$. If $\ell_0 < \#\AAA-1$, then $0\pp^{(m_i+1)} = \pp^{(m_i+1)}$ which implies that $\pp^{(m_i+1)}$ can not be a vertex of a cycle path. So $\ell_0 = \#\AAA-1$. We now consider the value of $\ell_1$.

			If $\ell_1 = \#\AAA-1$, then we see the following:
			   $$ \pp^{(m_i+1)} = \mtrx{\LL{\dots}{\dots}~\LL{d_2~d_1~b_i~d_0}{d_0~d_3~b_i~d_1}}
		\overset{0}{\longleftrightarrow} \pp^{(m_i)} = \mtrx{\LL{\dots}{\dots}~\LL{d_2~d_1~b_i~d_0}{d_0~b_i~d_1~d_3}} $$
			and
			   $$ \pp^{(m_i+1)} = \mtrx{\LL{\dots}{\dots}~\LL{d_2~d_1~b_i~d_0}{d_0~d_3~b_i~d_1}}\overset{1}{\longleftrightarrow}
				  \pp^{(m_i+2)} = \mtrx{\LL{\dots}{\dots}~\LL{d_2~d_1~d_0~b_i}{d_0~d_3~b_i~d_1}}.$$
			However, $1\pp^{(m_i+2)} = \pp^{(m_i+2)}$, which again implies that neither $\pp^{(m_i+1)}$ nor $\pp^{(m_i+2)}$ can be vertices of our cycle path.

			Suppose $\ell_1 = \#\AAA-2$. Then
			  $$ \pp^{(m_i+1)} = \mtrx{\LL{\dots}{\dots}~\LL{d_2~d_1~b_i~d_0}{d_0~b_i~d_3~d_1}},~
			      \pp^{(m_i)} = \mtrx{\LL{\dots}{\dots}~\LL{d_2~d_1~b_i~d_0}{d_0~d_1~b_i~d_3}}$$
		  which tells us also that
			  $$ \pp^{(m_i+2)} = \mtrx{\LL{\dots}{\dots}~\LL{d_2~d_1~d_0~b_i}{d_0~b_i~d_3~d_1}} \mbox{ and }
	      \pp^{(m_i+3)} = \mtrx{\LL{\dots}{\dots}~\LL{d_2~d_1~d_0~b_i}{d_0~b_i~d_1~d_3}}.$$
		We state the following:
			  \begin{itemize}
				   \item $\pp^{(m_i)}$ is, as desired, irreducible on $\AAA^{(i)}\setminus\{b_i,c_i=d_1\}$. By Lemma \ref{LemFarthestToHyp}, we may find $\qq^{(i+\frac{1}{2})}$ that is standard and connected to $\pp^{(m_i)}$ of the form:
							$$ \qq^{(i+\frac{1}{2})} = \mtrx{\LL{d_{n-1}}{d_{n-2}}~\LL{d_{n-3}}{d_{n-4}}~\LL{\dots}{\dots} ~\LL{d_3~d_1~b_i~d_0}{d_0~d_1~b_i~d_3}~\LL{\dots}{\dots}~\LL{d_{n-4}}{d_{n-3}}~\LL{d_{n-2}}{d_{n-1}}}$$
					   if $\#\AAA$ is odd, or
							$$ \qq^{(i+\frac{1}{2})} = \mtrx{\LL{d_{n-1}}{d_{n-3}}~\LL{d_{n-2}\dots}{.\dots.~d_4} ~\LL{d_3~d_1~b_i~d_0~d_2}{d_2~d_0~d_1~b_i~d_3}~\LL{d_4~.\dots.}{\dots d_{n-2}}~\LL{d_{n-3}}{d_{n-1}}}$$
					   if $\#\AAA$ is even. In either case, if distinct, $\qq^{(i)}$ and $\qq^{(i+\frac{1}{2})}$ are connected by switch moves over $\AAA\setminus\{b_i,c_i\}$ by induction.
					   
				   \item $\pp^{(m_i+3)}\RED{\AAA^{(i+1)}}\in\irr{\AAA^{(i+1)}}$ for $\AAA^{(i+1)}:=\AAA\setminus\{d_0,d_1\}$ and is in fact of the form in Definition \ref{DefSpecialForm}. Let $\qq^{(i+1)}$ be the standard pair connected to $\pp^{(m_i+3)}$ by a path extended from $\AAA^{(i+1)}$. Again by Lemma \ref{LemFarthestToHyp}, we see that
					      $$ \qq^{(i+1)} = \mtrx{\LL{d_{n-1}~d_{n-3}}{d_{n-2}~d_{n-4}}~\LL{\dots}{\dots} ~\LL{d_3~d_1~d_0~b_i~d_2}{d_2~d_0~b_i~d_1~d_3}~\LL{\dots}{\dots}~\LL{d_{n-4}~d_{n-2}}{d_{n-3}~d_{n-1}}}$$
					    if $\#\AAA$ is odd, or 
					      $$ \qq^{(i+1)} = \mtrx{\LL{d_{n-1}~d_{n-2}}{d_{n-3}~d_{n-5}}~\LL{d_{n-4}\dots}{.\dots.~d_2} ~\LL{d_3~d_1~d_0~b_i}{d_0~b_i~d_1~d_3}~\LL{d_2~.\dots.}{\dots d_{n-4}}~\LL{d_{n-5}~d_{n-3}}{d_{n-2}~d_{n-1}}}$$
					    if $\#\AAA$ is even.
					    
				   \item In each case, $\qq^{(i+\frac{1}{2})}$ is connected to $\qq^{(i+1)}$ by switch moves $\{b_i,d_1\},\{b_i,d_0\}$.
			  \end{itemize}
			If $\ell_1=\#\AAA-3$, we repeat this argument, noting that the order of pairs will be in reverse and using the substitutions $b_i' = d_0$, $d_0' = b_i$ and $\pp^{(m_i+1)'} = \pp^{(m_i+2)}$.

			If $\ell_1 < \#\AAA-3$, then
				  $$ \pp^{(m_i)} = \mtrx{\LL{\dots}{\dots}~\LL{d_2~d_1~b_i~d_0}{d_5~d_0~d_1~d_3}}\mbox{ and }
				      \pp^{(m_i+2)} = \mtrx{\LL{\dots}{\dots}~\LL{d_2~d_1~d_0~b_i}{d_5~d_0~d_3~d_1}}.$$
			We may verify the following:
			  \begin{itemize}
				   \item $\pp^{(m_i)}\RED{\AAA^{(i)}}$ and $\pp^{(m_i)}\RED{\AAA\setminus\{b_i,d_2\}}$ are irreducible.
				   \item $\pp^{(m_i+2)}\RED{\AAA^{(i+1)}}$ is irreducible, where $\AAA^{(i+1)}:= \AAA\setminus\{d_0,d_2\}$.
			  \end{itemize}
			Let $\qq^{(i+1)}$ be a standard pair connected to $\pp^{(m_i+2)}$ by a path extended from $\AAA^{(i+1)}$. Also, let $\qq^{(i+\frac{1}{2})}$ be a standard pair connected to $\pp^{(m_i)}$ by a path extended from $\AAA\setminus\{b_i,d_2\}$. If distinct, $\qq^{(i)}$ and $\qq^{(i+\frac{1}{2})}$ are connected by switch moves by induction, as they are connected by a path extended from $\AAA\setminus\{b_i\}$. Likewise, $\qq^{(i+\frac{1}{2})}$ and $\qq^{(i+1)}$ are connected by a path extended from $\AAA\setminus\{d_2\}$ and are therefore also connected by a switch path.

			There will exist $k$ such that $m_1 < \dots < m_k=m$ and either $\qq^{(m)} = \rr$ or $\qq^{(m)}$ will be connected to $\rr$ by switch moves that are extended from $\AAA^{(m)}$. We have therefore shown a sequence of switch moves starting at $\pp = \qq^{(1)}$, traveling through each $\qq^{(i)}$ in order and ending at $\rr$ as desired.
	    \end{proof}

	    \begin{prop}\label{PropSwitchesExclass}
			Distinct standard pairs $\pp$ and $\rr$ are in the same labeled Extended Rauzy Class if and only if there exists a sequence of inner and/or outer switch moves connecting $\pp$ and $\rr$.
	    \end{prop}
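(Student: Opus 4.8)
The forward implication is immediate: every inner or outer switch move is, by definition, a sequence of Rauzy Moves and Left Rauzy Moves, so a sequence of switch moves connecting $\pp$ and $\rr$ already exhibits them in one Extended Rauzy Class.

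For the converse, the plan is to reduce to Proposition \ref{PropSwitchesClass} by showing that a single Left Rauzy Move can always be absorbed into an outer switch move. The key step is: \emph{if $\pp=(p_0,p_1)\in\std{\AAA}$ with $\#\AAA\geq 3$ and $\eps\in\{0,1\}$, then $\ClassLab{\tilde\eps\pp}$ contains a standard pair joined to $\pp$ by one outer switch move.} For $\eps=1$, take $b=p_0^{-1}(2)$; then, in the notation of Definition \ref{DefSwitch} and the paragraphs preceding it, the word between $a$ and $b$ in the top row is empty, so the $\{a,b\}$-outer switch has a realization of the form $\tilde 1^{1}0^{k}$, i.e.\ its very first move is a single $\tilde 1$ and all subsequent moves are (right) Rauzy Moves. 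Hence $\tilde 1\pp$ is a cycle vertex of this realization, reached immediately after the single Left Rauzy Move, and therefore lies in the Rauzy Class of the standard pair produced by that switch. The case $\eps=0$ is symmetric, using $b=p_1^{-1}(2)$ and the $\{b,z\}$-outer switch realized as $\tilde 0^{1}1^{k}$; and $\#\AAA=2$ is trivial since then $\tilde\eps\pp=\pp$.

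Granting this, I would decompose a path in $\ExtClassLab{\pp}$ from $\pp$ to $\rr$ into maximal runs of Rauzy Moves separated by single Left Rauzy Moves, so that it passes through a chain of ordinary Rauzy Classes $\mathcal{R}_0\ni\pp,\dots,\mathcal{R}_N\ni\rr$ with $\mathcal{R}_m=\ClassLab{\tilde{\eps_m}\qq_m}$ for some $\qq_m\in\mathcal{R}_{m-1}$. By Proposition \ref{PropStandardInClass} each $\mathcal{R}_m$ contains a standard pair, and by Proposition \ref{PropSwitchesClass} all standard pairs in a fixed Rauzy Class are connected by inner switches; so it suffices to produce, for each $m$, standard pairs of $\mathcal{R}_{m-1}$ and $\mathcal{R}_m$ joined by switch moves, and then chain these together (using Proposition \ref{PropSwitchesClass} again within each class) and finish with a last application of Proposition \ref{PropSwitchesClass} inside $\mathcal{R}_N$. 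When $\qq_m$ is standard this is exactly the key step above.

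The main obstacle is the general case where $\qq_m\in\mathcal{R}_{m-1}$ is not standard: a Left Rauzy Move does not commute with a Rauzy Move of the same type, so $\tilde{\eps_m}$ applied to two different pairs of $\mathcal{R}_{m-1}$ may land in different Rauzy Classes, and one cannot simply replace $\qq_m$ by a standard representative. I expect to handle this by connecting $\qq_m$ to a standard pair of $\mathcal{R}_{m-1}$ by a path of Rauzy Moves $\qq_m=w_0,\dots,w_k$ and checking stepwise that $\ClassLab{\tilde{\eps_m}w_i}$ and $\ClassLab{\tilde{\eps_m}w_{i+1}}$ contain switch-equivalent standard pairs: when the move $w_i\to w_{i+1}$ has type $1-\eps_m$ the two classes are literally equal, because a Left Rauzy Move of type $\eps$ commutes with a Rauzy Move of type $1-\eps$ (the former acts on row $1-\eps$ by moving its first letter inward and the latter acts on row $\eps$ by moving a letter toward the right, and neither disturbs the letter the other reads off), so only the same-type case remains and can be treated by a direct inspection. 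Alternatively, one could bypass this bookkeeping entirely by re-running the restriction-and-induction argument of Proposition \ref{PropSwitchesClass}, now permitting Left Rauzy Moves via the identities $\tilde\eps\pp=H(\eps H(\pp))$ and $\eps\pp=((1-\eps)\pp^{-1})^{-1}$ together with prefix restrictions taken from either end; the pairs of $\STAR{\AAA}$ would reappear there as the exceptional cases.
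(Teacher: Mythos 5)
Your outline matches the paper's strategy: reduce to a single Left Rauzy Move $\sS\mapsto\tilde1\sS$, absorb it into an outer switch when the pair it hits is standard, and commute it past Rauzy Moves of the opposite type. Your key step is correct in substance (though the names are swapped relative to the paper's definition: with $b=p_0^{-1}(2)$ the realization $\tilde1^{1}0^{k}$ is the $\{b,z\}$-outer switch, while the $\{a,b\}$-switch is realized with $1$ and $\tilde0$ moves), and the opposite-type commutation you invoke is genuine, since the two moves rewrite different rows and neither disturbs the letter the other reads.

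The gap is exactly the point you defer to ``direct inspection'': the same-type interaction is where the entire content of the paper's proof lies, and it is not routine. A Left Rauzy Move of type $1$ and a Rauzy Move of type $1$ both rewrite the top row, and they fail to commute precisely when $p_0^{-1}(1)=p_1^{-1}(\#\AAA)$ or $p_0^{-1}(\#\AAA)=p_1^{-1}(1)$; such configurations do occur along the $1$-cycles used to reach a standard pair (every standard pair itself is of this form), so in those steps you cannot conclude $\ClassLab{\tilde1 w_i}=\ClassLab{\tilde1 w_{i+1}}$, and when the classes differ you are facing again the original problem of joining standard pairs across a Left Rauzy Move --- a potential circularity unless the exceptional configurations are analyzed explicitly. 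The paper does this work: it follows the specific cycles from the proof of Proposition \ref{PropStandardInClass}, verifies by explicit computation that for the chosen exponent $k$ one has $\tilde1(1^{k}\sS)=1^{k}(\tilde1\sS)$ when $\sS$ is more than one cycle from a standard pair, and, when $\sS$ lies inside a $1$-cycle of a standard pair $\qq$, exhibits the explicit square $\qq=1^{m}\sS$, $\sS''=\tilde1^{n}\qq=1^{m}\sS'$, $\qq'=0^{p}\sS''$ (Figure \ref{FigProofExswitch1}) showing that $\qq$ and $\qq'$ are joined by an outer switch. Until you carry out these computations (or actually execute your alternative of redoing the induction of Proposition \ref{PropSwitchesClass} with Left Rauzy Moves admitted, where the $\STAR{\AAA}$-type exceptions would have to be retreated), the proof is incomplete.
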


	    \begin{proof}
			By Proposition \ref{PropSwitchesClass}, we must only show that standard pairs in an Extended Rauzy Class are connected by switch moves (of either type) if they belong to distinct Rauzy Classes. We will further simplify the proof by considering any pair $\sS$ and $\sS' = \tilde{1} \sS$. If we show that particular standard pairs $\qq \in \ClassLab{\sS}$ and $\qq' \in \ClassLab{\sS'}$ are connected by switch moves, the proof is complete. An analogous argument applies if instead $\sS' = \tilde{0} \sS$. We proceed by cases.

			If $\sS$ is itself standard, then let $\qq = \sS$. There must exist $k>0$ such that $\qq' = 0^k\sS'$ is standard. This move described between $\qq$ and $\qq'$ is an outer switch move.

			Now suppose $\sS$ is within an $\eps$-cycle of standard pair $\qq$, $\eps\in\{0,1\}$. If $\eps = 0$, then it follows immediately that $\sS''= \tilde{1} \qq$ belongs to $\RRR(\sS')$. As in the previous case, there exists $\qq'$ that is standard in the $0$-cycle of $\sS''$, and the path described connecting $\qq$ to $\sS''$ to $\qq'$ is a switch move. Now suppose that $\eps = 1$. Let us express $\sS$ and $\sS'$ in the following way:
			    $$ \xymatrix{ \sS = \mtrx{\LL{a}{z}~\LL{c~\ol{w}_1}{\ol{w}_3}~\LL{z}{c}~\LL{\ol{w}_2}{\ol{w}_4}~\LL{b}{a}} \ar[r]^{\tilde{1}} 
					  & \mtrx{\LL{c}{z}~\LL{\ol{w}_1~a}{\ol{w}_3}~\LL{z}{c}~\LL{\ol{w}_2}{\ol{w}_4}~\LL{b}{a}} = \sS',}$$
			where $\ol{w}_1,\ol{w}_2,\ol{w}_3,\ol{w}_4$ are all possibly empty words. We then see that $\qq = 1^m\sS$, where $m = 1+\#\ol{w}_2$. Let $\sS'' = \tilde{1}^n \qq$, where $n = 2 + \#\ol{w}_2$. We may also verify that $\sS'' = 1^m\sS'$, so $\sS''\in\ClassLab{\sS'}$. Let $\qq' = 0^p\sS''$, where $p = 1+\#\ol{w}_4$. All of these moves are expressed in Figure \ref{FigProofExswitch1}. It follows that $\qq$ and $\qq'$ are connected by switch moves.

			We now consider $\sS$ that is more than one cycle length away from a standard pair. When moving to a standard pair by the proof of Proposition \ref{PropStandardInClass}, we choose an $\eps$ and $k>0$ to arrive at $\tT = \eps^k \sS$ that is one cycle length closer to a standard pair. If $\eps = 0$, it follows immediately that $\tT' = \tilde{1} \tT = 0^k \sS'$. If we replace $\sS$ and $\sS'$ with $\tT$ and $\tT'$, we may assume that we are closer to a standard pair. If instead $\eps = 1$, we may express $\sS$ and $\sS'$ as follows:
		    	$$ \xymatrix{ \sS = \mtrx{\LL{a}{c}~\LL{b~\ol{w}_1~c~\ol{w}_2~d~\ol{w}_3~e~\ol{w}_4}{\ol{w}_5}~\LL{f}{d}} \ar[r]^{\tilde{1}}
					  & \mtrx{\LL{b}{c}~\LL{\ol{w}_1~a~c~\ol{w}_2~d~\ol{w}_3~e~\ol{w}_4}{\ol{w}_5}~\LL{f}{d}} = \sS',}$$
			where $e = t_0^{-1}(\#\AAA)$ is the last letter on the top row of $\tt$. It follows that $k = 1+ \#\ol{w}_4$. We may verify that $\tT'= \tilde{1}\tT$ is also equal to $1^k\sS'$. So again by replacing $\sS$ and $\sS'$ with $\tT$ and $\tT'$ respectively, we may assume that $\sS$ is always at most one cycle away from a standard pair. The argument then finishes by using the previous cases.
	
			\begin{figure}[t]
				  $$ \xymatrix{ \sS = \mtrx{\LL{a}{z}~\LL{c~\ol{w}_1}{\ol{w}_3}~\LL{z}{c}~\LL{\ol{w}_2}{\ol{w}_4}~\LL{b}{a}} \ar[r]^{\tilde{1}} \ar[d]_{1^m}
					  & \mtrx{\LL{c}{z}~\LL{\ol{w}_1~a}{\ol{w}_3}~\LL{z}{c}~\LL{\ol{w}_2}{\ol{w}_4}~\LL{b}{a}} = \sS'\ar[d]^{1^m} \\
				      \qq = \mtrx{\LL{a}{z}~\LL{\ol{w}_2}{\ol{w}_3}~\LL{b}{c}~\LL{c~\ol{w}_1}{\ol{w}_4}~\LL{z}{a}} \ar[r]^{\tilde{1}^n}
						  & \mtrx{\LL{c}{z}~\LL{\ol{w}_1}{\ol{w}_3}~\LL{a}{c}~\LL{\ol{w}_2~b}{\ol{w}_4}~\LL{z}{a}}=\sS'' \ar[d]^{0^p} \\
					  & \mtrx{\LL{c}{z}~\LL{\ol{w}_1}{\ol{w}_4}~\LL{a}{a}~\LL{\ol{w}_2~b}{\ol{w}_3}~\LL{z}{c}}=\qq'}$$
			  \caption{Showing the moves for the second case in the proof of Proposition \ref{PropSwitchesExclass}.}\label{FigProofExswitch1}
			\end{figure}

		    \end{proof}

	\section{More Applications of the Switch Move}\label{SecOther}

	\subsection{Hyperelliptic Rauzy Classes}

	  While already well-known since conception of Rauzy Classes (see \cite{cRau1979}), we provide a ``one line proof" of a structure question regarding \term{hyperelliptic classes}, which for the purpose of this paper will mean a Rauzy Class that contains an order reversing pair.

	  \begin{prop}\label{PropOrderReversingOnlyStandard}
	      If $\pp\in\irr{\AAA}$ is order reversing, or
			$$ p_0(b) + p_1(b) = \#\AAA+1$$
	      for all $b\in\AAA$, then $\pp$ is the only standard pair in $\ClassLab{\pp}$.
	  \end{prop}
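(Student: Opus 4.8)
The plan is to derive the statement directly from Proposition \ref{PropSwitchesClass}. First, recall that an order reversing pair is automatically standard, so $\pp$ itself is a standard pair in $\ClassLab{\pp}$; the content of the proposition is uniqueness. Suppose toward a contradiction that there is a standard pair $\qq\neq\pp$ with $\qq\in\ClassLab{\pp}$. By Proposition \ref{PropSwitchesClass}, $\pp$ and $\qq$ are joined by a nonempty sequence of inner switch moves, so in particular there is at least one inner switch move available at $\pp$ itself, namely the first move of that sequence.

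Next I would observe that no inner switch move can be performed on an order reversing pair. By Definition \ref{DefSwitch}, a $\{b,c\}$-inner switch on $\pp=(p_0,p_1)$ requires letters $b,c$ with $1<p_\eps(b)<p_\eps(c)<\#\AAA$ for both $\eps\in\{0,1\}$. But if $\pp$ is order reversing and $p_0(b)<p_0(c)$, then $p_1(b)=\#\AAA+1-p_0(b)>\#\AAA+1-p_0(c)=p_1(c)$, so the orders in the two rows disagree on $\{b,c\}$ and no such pair of letters exists. This contradicts the previous paragraph, and hence $\pp$ is the only standard pair in $\ClassLab{\pp}$.

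The argument requires essentially no computation; the only point to be careful about is invoking Proposition \ref{PropSwitchesClass} in the contrapositive direction — the existence of two distinct standard pairs in one Rauzy Class forces a genuine (length at least one) sequence of inner switches, hence one inner switch performable at $\pp$ — together with the elementary observation that order reversal is precisely the obstruction to the two-row monotonicity condition an inner switch needs. For $\#\AAA\leq 3$ the claim is immediate anyway, since no interior pair of positions even exists, but the above reasoning covers all cases uniformly.
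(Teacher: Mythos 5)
Your proposal is correct and follows the paper's own argument: the paper likewise notes that an order reversing pair admits no inner switch moves (you simply spell out the monotonicity check the paper leaves as "by definition") and then invokes Proposition \ref{PropSwitchesClass} to conclude no other standard pair can lie in $\ClassLab{\pp}$. No gaps; this is essentially the same one-line proof.
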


	  \begin{proof}
    	  By definition, there are no (regular) switch moves possible for $\pp$. By Proposition \ref{PropSwitchesClass}, this implies that no other standard pairs exist in $\ClassLab{\pp}$.
	  \end{proof}
  
	\subsection{Good Pairs}
		In \cite[Lemma 20]{cKonZor2003} and \cite[Lemma 3.7]{cAvViana2007}, the respective authors proved conditions for which a ``good'' pair exists within any Extended Rauzy Class, \cite{cKonZor2003}, or in any Rauzy Class, \cite{cAvViana2007}. We will prove a stronger version of either lemma by using switch moves. In order to do so, we must define some properties of pairs.
		  \begin{defn} \label{DefGoodDegenerate}
		    Let $\pp \in\std{\AAA}$. We call $\pp=(p_0,p_1)$ \term{degenerate$^*$} if for some $c\in \AAA$,
				$$ p_0(c) = p_1(c) = \#\AAA-1.$$
		    We call $\pp$ \term{good} if the restriction of $\pp$ to positions $2$,$3$,$\dots$,$\#\AAA-1$ is irreducible.
		  \end{defn}

		  \begin{prop}
		    Every Rauzy Class on at least $4$ letters either contains a good pair or a degenerate$^*$ pair.
		  \end{prop}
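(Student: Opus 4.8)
The plan is to start from a piece-wise order reversing pair in the Rauzy Class, which exists by Theorem \ref{ThmPWOrderReversing}, and then use switch moves to rearrange its chain decomposition into a form that is visibly good, handling the exceptional small cases separately. First I would reduce to the case where the class contains a non-degenerate$^*$ piece-wise order reversing pair $\pp$ with chain decomposition $\mathbf{C}_1,\dots,\mathbf{C}_k$; if every such pair is degenerate$^*$, we are already done. The key observation is that $\pp$ fails to be good exactly when its restriction to positions $2,\dots,\#\AAA-1$ is reducible, i.e. when there is a proper initial segment of interior positions that forms a set common to both rows. For a piece-wise order reversing pair, such an initial segment must be a union of whole order reversing blocks (equivalently, a union of whole chains), since the block structure already forces the common-set property on each block. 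Hence $\pp$ is good if and only if no proper nonempty union of leftmost chains fills up an initial segment of positions $\{2,\dots,j\}$ with $j<\#\AAA-1$—but every prefix of chains does fill such an initial segment, so the only obstruction is arithmetic: $\pp$ is good precisely when the leftmost chain $\mathbf{C}_1$ together with the $1$-block $\mathbf{S}_1$ following it does not already exhaust positions $2,\dots,\#\AAA-1$, i.e. when there are at least two blocks among positions $2,\dots,\#\AAA-1$, and the issue only really bites when $\#\AAA$ is very small.

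More carefully: $\pp$ is \emph{not} good iff positions $2,\dots,\#\AAA-1$ split as a nontrivial union of leftmost chains-plus-separating-$1$-blocks; since the whole interior is $\mathbf{C}_1,\mathbf{S}_1,\mathbf{C}_2,\dots,\mathbf{C}_k$ (with $\mathbf{C}_1,\mathbf{C}_k$ possibly empty), the only nontrivial such split occurs when $k\geq 2$, in which case cutting after $\mathbf{C}_1\mathbf{S}_1\cdots\mathbf{C}_i$ for $1\le i<k$ gives reducibility. So in fact a piece-wise order reversing pair with $k\ge 2$ chains is \emph{never} good in this naive sense—meaning the right move is to make $k=1$, i.e. to collapse the pair to a single chain (one order reversing block possibly flanked by trivial structure) using switch moves. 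I would therefore invoke Lemma \ref{LemReorderingChains} to consolidate, and the block-merging moves from Lemma \ref{LemSize5} and the appendix (e.g. Move \ref{Move-216543-to-432165} and its relatives) run in reverse to combine adjacent $2$-blocks and a $1$-block into a larger block, iterating until the pair consists of a single order reversing $m$-block for $m=\#\AAA-2$, flanked only by $a,z$. Such a pair is order reversing, hence its restriction to positions $2,\dots,\#\AAA-1$ is itself an order reversing pair on $m$ letters, which is standard and therefore irreducible—so it is good, provided $m\ge 2$, i.e. $\#\AAA\ge 4$.

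The remaining work is the bookkeeping of degenerate cases and the verification that the reduction terminates. The main obstacle I expect is the merging step: one must exhibit, for each possible local configuration of small blocks, an explicit switch-move sequence that fuses them, and check these never increase the number of chains while strictly decreasing the number of blocks—this is exactly the kind of appendix computation the paper sets aside, so I would cite the relevant Moves rather than redo them. A secondary subtlety is that the target "single $m$-block" pair is order reversing, so by Proposition \ref{PropOrderReversingOnlyStandard} it is the unique standard pair in its class; this is consistent because Proposition \ref{PropSwitchesClass} only guarantees switch-connectivity of standard pairs, and here there is only one, so the reduction must be read as: \emph{either} the class already has a good pair at some intermediate stage, \emph{or} it collapses all the way to an order reversing class, which on $\ge 4$ letters has its order reversing pair good. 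For $\#\AAA=4$ with a $2$-block interior the order reversing pair is good directly; for $\#\AAA=4$ with interior two $1$-blocks one checks by hand that a degenerate$^*$ pair is present. Assembling these pieces gives the statement.
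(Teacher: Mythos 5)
There is a genuine gap, and it is the central step of your plan. You propose to use switch moves to merge blocks until the pair becomes ``a single order reversing $m$-block for $m=\#\AAA-2$, flanked only by $a,z$,'' i.e.\ an order reversing pair. That is impossible in general: switch moves are composed of Rauzy moves, which preserve $\Sof{\pp}$ (Proposition \ref{PropSofPiIsInvariant}) and hence $\Mof{\pp}$ and $\Pof{\pp}$ (Corollary \ref{CorPofPiAndMofPiClassInvariant}), and an order reversing pair exists only in the hyperelliptic classes; most Rauzy Classes carry invariants incompatible with a single $(\#\AAA-2)$-block, so no sequence of switch moves can ``collapse'' the chain structure as you describe. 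The appendix moves you cite run in the useful direction (splitting large blocks into small ones), and ``running them in reverse'' is only available when the invariants permit it, which they typically do not. Your hedge --- ``either the class already has a good pair at some intermediate stage, or it collapses all the way'' --- is not an argument: by your own (correct) observation, every piece-wise order reversing pair with at least two blocks fails to be good, and all your intermediate stages are piece-wise order reversing, so no good pair can appear along the way unless you leave that family, which your procedure never does. (Your earlier remark that the obstruction ``only really bites when $\#\AAA$ is very small'' is also false for the same reason.)

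The missing idea is that one should perform a single switch that deliberately destroys the piece-wise order reversing structure. The paper takes a piece-wise order reversing pair $\pp$; if it is degenerate$^*$ we are done, and if it is order reversing its interior restriction is order reversing, hence standard and irreducible, so $\pp$ is good. Otherwise (non-degenerate$^*$ guarantees the rightmost block has size at least two) one performs one inner switch on the first letter of the leftmost block and the first letter of the rightmost block; the resulting standard pair interleaves the old blocks so that its restriction to positions $2,\dots,\#\AAA-1$ is irreducible, i.e.\ it is good. So the correct proof is a one-move argument inside the given class, not a reduction to the order reversing (hyperelliptic) model.
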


		  \begin{proof}
		    By Theorem \ref{ThmPWOrderReversing}, we may consider a pair $\pp$ in the class that is piece-wise order reversing. Suppose $\pp$ is not degenerate$^*$. If $\pp$ is actually order reversing, then so is $\pp$ restricted to $\AAA \setminus \{a,z\}$, for $a = p_0^{-1}(1)$, $z = p_1^{-1}(1)$. This restricted pair is irreducible, so $\pp$ is good. Otherwise, we may write $\pp$ as
			$$ \pp = \mtrx{\LL{a}{z} ~ \LL{c}{d} ~ \LL{\ol{u}_0}{\ol{u}_1} ~ \LL{d}{c} ~ \LL{\ol{v}_0}{\ol{v}_1} ~ \LL{e}{f} ~ \LL{\ol{w}_0}{\ol{w}_1} ~ \LL{f}{e} ~ \LL{z}{a}}$$
		    where $\ol{u}_\eps$, $\ol{v}_\eps$ and $\ol{w}_\eps$ are (possibly empty) words on disjoint sub-alphabets $\AAA_u$, $\AAA_v$ and $\AAA_w$ respectively. Let $\pp'$ be the result of a $\{c,e\}$-switch on $\pp$. By Lemma \ref{LemIrreducibleExtensions},
			  $$ \pp' = \mtrx{\LL{a}{z} ~ \LL{\ol{u}_0}{\ol{v}_1} ~ \LL{d}{f} ~ \LL{\ol{v}_0}{\ol{w}_1} ~ \LL{e}{e} ~ \LL{c}{d} ~ \LL{\ol{w}_0}{\ol{u}_1} ~ \LL{f}{c} ~ \LL{z}{a}}$$
		    is good.
		  \end{proof}

	\section{Further Work}

	While not included in this work, these methods extend to fully reprove the results in \cite{cBoi2013}. That paper provided a formula for the cardinality of a labeled Rauzy Class compared to its corresponding non-labeled Class.
	An open question presented in that paper has a solution in \cite{cFick2014b}, a conceptual sequel to our work here.

	The combinatorial structure of Generalized Rauzy Classes remain open (see \cite{cBoiLan2007} for work in this direction). For example, there is not currently a ``convenient" Generalized Pair that is known to exist in every Class, analogous to standard pairs in Rauzy Classes. In \cite{cFickThesis}, a form of self-inverse pairs were found to exist in \textit{some} classes. In \cite{cZor2008}, Zorich provided explicit Generalized Forms in each class, but these forms were designed with surfaces in mind rather than considering Rauzy Moves.

\appendix

\section{Switch Moves Concerning \texorpdfstring{$2$}{2}-blocks And \texorpdfstring{$4/5$}{4/5}-blocks}

\begin{move} \label{Move-2431-to-2143}
    The pairs
      $$ \pp = \mtrx{\LL{a}{z} ~ \LL{b_1~b_2~c_1~c_2}{b_2~c_2~c_1~b_1} ~ \LL{z}{a}} \mbox{, and }
	  \qq = \mtrx{\LL{a}{z}~\LL{c_2~b_1}{b_1~c_2}~\LL{c_1~b_2}{b_2~c_1}~\LL{z}{a}}$$
     are connected by switch moves $\{b_2,c_2\}, \{b_1,c_1\}$.
\end{move}

\begin{cor}\label{CorMove-abU-bUa}
  Suppose $\pp$ is a pair of the form
      $$ \pp = \mtrx{\LL{a}{z}~\LL{b_1~b_2~\ol{u}_1}{b_2~\ol{u}_2~b_1}~\LL{z}{a}}$$
  where $\ol{u}_1,\ol{u}_2$ form a chain composed of $m$ consecutive $2$-blocks, $m\geq 0$. Then there exists an piece-wise order reversing pair $\qq$ connected to $\pp$ by switch moves that is composed of $m+1$ consecutive $2$-blocks.
\end{cor}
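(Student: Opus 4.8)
The statement is an immediate consequence of Move~\ref{Move-2431-to-2143} by induction on $m$, together with Lemma~\ref{LemReorderingChains} (or just the observation that switch moves preserve the piece-wise order reversing property and act transparently on untouched chains, Remark~\ref{RemIgnoringChains}). First I would dispose of the base case: if $m=0$ then $\pp$ itself is $\mtrx{\LL{a}{z}~\LL{b_1~b_2}{b_2~b_1}~\LL{z}{a}}$, a single $2$-block, so there is nothing to prove. So assume $m\geq 1$.

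\textbf{Key steps.} Write the leading portion of $\pp$ as $\bmtrx{b_1~b_2~c_1~c_2\\ b_2~\ol{u}_2'~c_1~b_1}$ where $c_1,c_2$ form the first of the $m$ consecutive $2$-blocks in $\ol{u}_1$ — more precisely, $\ol{u}_1 = c_1c_2\,\ol{u}_1''$ and $\ol{u}_2 = \ol{u}_2''\,c_2c_1$, where $\ol{u}_1'',\ol{u}_2''$ form the remaining $m-1$ consecutive $2$-blocks. Then $\pp$ has the form
$$ \pp = \mtrx{\LL{a}{z}~\LL{b_1~b_2~c_1~c_2~\ol{u}_1''}{b_2~\ol{u}_2''~c_2~c_1~b_1}~\LL{z}{a}}. $$
Apply Move~\ref{Move-2431-to-2143} to the letters $a,b_1,b_2,c_1,c_2,z$ (ignoring $\ol{u}_1'',\ol{u}_2''$, which sit between $c_2$ and the ``$z$'' position and are untouched by the two switches $\{b_2,c_2\},\{b_1,c_1\}$). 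This produces
$$ \pp_1 = \mtrx{\LL{a}{z}~\LL{c_2~b_1}{b_1~c_2}~\LL{c_1~b_2~\ol{u}_1''}{b_2~\ol{u}_2''~c_1}~\LL{z}{a}}. $$
The first block $\bmtrx{c_2~b_1\\b_1~c_2}$ is now a bona fide $2$-block, and the remainder $\mtrx{\LL{a}{z}~\LL{c_1~b_2~\ol{u}_1''}{b_2~\ol{u}_2''~c_1}~\LL{z}{a}}$ is exactly a pair of the form in the hypothesis with $b_1\leftrightarrow c_1$, $b_2\leftrightarrow b_2$, and $\ol{u}_i\leftrightarrow\ol{u}_i''$, i.e.\ with parameter $m-1$. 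By Remark~\ref{RemIgnoringChains} (via Lemma~\ref{LemIgnoringChains}) the inductive hypothesis applied to this sub-pair — keeping the extra $2$-block $\bmtrx{c_2~b_1\\b_1~c_2}$ frozen at the left — yields a piece-wise order reversing pair $\qq$ connected to $\pp_1$, hence to $\pp$, by switch moves, composed of that one $2$-block followed by $m$ more, i.e.\ $m+1$ consecutive $2$-blocks total. That completes the induction.

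\textbf{Main obstacle.} There is no deep obstacle; the only care needed is bookkeeping. One must check that the two switches in Move~\ref{Move-2431-to-2143} genuinely commute with the presence of the interposed chain $\ol{u}_1'',\ol{u}_2''$ — this is exactly what Lemma~\ref{LemReductionPaths}/Remark~\ref{RemIgnoringChains} guarantee, since the last letters of the relevant cycle vertices all lie outside $\{$letters of $\ol{u}_1''\}$ — and that after the move the leftmost block really has become order reversing of size $2$ (immediate from the displayed form of $\qq$ in Move~\ref{Move-2431-to-2143}). The reindexing $b_i\mapsto$ new letters across the induction step is the kind of thing worth stating once and then suppressing.
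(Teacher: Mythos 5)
Your high-level plan (induct on $m$, peel off one $2$-block with Move~\ref{Move-2431-to-2143}, recurse) is the same as the paper's, but the inductive step as written is wrong. Under the paper's convention, ``$\ol{u}_1,\ol{u}_2$ form a chain of $m$ consecutive $2$-blocks'' means $\ol{u}_1=x_1y_1\cdots x_my_m$ and $\ol{u}_2=y_1x_1\cdots y_mx_m$: the blocks occupy the same slots of both words (compare the proof of Corollary~\ref{CorMove-aUb-Uba}, where the first block is split off as $\ol{u}_1=c_1c_2\,\ol{u}_1'$, $\ol{u}_2=c_2c_1\,\ol{u}_2'$). So if you peel off the \emph{first} block you must write $\ol{u}_2=c_2c_1\,\ol{u}_2''$, not $\ol{u}_2''\,c_2c_1$; for $m\geq 2$ your displayed $\pp$ is not the pair in the statement. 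The claimed $\pp_1$ also does not follow: performing $\{b_2,c_2\}$ and then $\{b_1,c_1\}$ on the correct pair, with top row $a~b_1~b_2~c_1~c_2~\ol{u}_1''~z$ and bottom row $z~b_2~c_2~c_1~\ol{u}_2''~b_1~a$, yields top row $a~c_2~b_1~c_1~b_2~\ol{u}_1''~z$ and bottom row $z~\ol{u}_2''~b_1~c_2~b_2~c_1~a$. The chain letters end up at opposite ends of the two rows, there is no $2$-block $\bmtrx{c_2~b_1\\ b_1~c_2}$ in positions $2,3$, and the remainder is not an instance of the corollary's hypothesis, so the induction cannot continue. (Even on your own, incorrectly displayed, $\pp$ the two switches produce bottom row $z~b_1~\ol{u}_2''~c_2~b_2~c_1~a$, not the row you wrote.) A secondary issue: your appeal to Lemma~\ref{LemIgnoringChains} to freeze a $2$-block on the left is not available as stated, since that lemma concerns piece-wise order reversing pairs whose ignored chains are separated off by a $1$-block, which is not the situation in your $\pp_1$.

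The repair is exactly the paper's proof: peel off the \emph{last} $2$-block, writing $\ol{u}_1=\ol{u}_1'c_1c_2$ and $\ol{u}_2=\ol{u}_2'c_2c_1$. The switches $\{b_2,c_2\},\{b_1,c_1\}$ of Move~\ref{Move-2431-to-2143}, with $\ol{u}_1',\ol{u}_2'$ carried along, send $\pp$ to
$$ \pp' = \mtrx{\LL{a}{z}~\LL{c_2~b_1~\ol{u}_1'}{b_1~\ol{u}_2'~c_2}~\LL{c_1~b_2}{b_2~c_1}~\LL{z}{a}}. $$
Here the finished $2$-block on $\{c_1,b_2\}$ sits at the right, and the letters $c_2,b_1$ together with $\ol{u}_1',\ol{u}_2'$ occupy the same positions $2,\dots,2m+1$ in both rows and form exactly the configuration of the corollary with parameter $m-1$. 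The inductive switch path on those letters extends to the full pair while leaving $c_1$ and $b_2$ untouched, because in both rows they lie to the right of every letter the switches involve; no separate freezing lemma is needed. With this last-block peeling the bookkeeping closes and the induction goes through.
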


\begin{proof} Induction on $m$. If $m=0$, then $\qq = \pp$.

    If $m>0$, then let $\ol{u}_1=\ol{u}_1'c_1c_2$ and $\ol{u}_2 = \ol{u}'_2 c_2c_1$. Here $\ol{u}_1',\ol{u}_2'$ form a chain of $m-1$ consecutive $2$-blocks. Pair $\pp$ may then be expressed as
	$$ \pp = \mtrx{\LL{a}{z} ~ \LL{b_1~b_2~\ol{u}_1'~c_1~c_2}{b_2~\ol{u}_2'~c_2~c_1~b_1} ~ \LL{z}{a}}.$$
    We may apply Move \ref{Move-2431-to-2143} to arrive at
	$$ \pp' = \mtrx{\LL{a}{z}~\LL{c_2~b_1~\ol{u}_1'}{b_1~\ol{u}_2'~c_2}~\LL{c_1~b_2}{b_2~c_1}~\LL{z}{a}}.$$
    By induction, we may act on the letters in $\ol{u}_i'$ as well as $c_2$ and $b_1$ to arrive at pair $\qq$. The letters $b_2$ and $c_1$ form a $2$-block, and the other letters form $m$ consecutive $2$-blocks.
\end{proof}

\begin{move} \label{Move-436521-to-214365}
   The pairs
      $$\pp = \mtrx{\LL{a}{z}~\LL{b_1~b_2~b_3~b_4~b_5~b_6}{b_4~b_3~b_6~b_5~b_2~b1}~\LL{z}{a}} \mbox{ and }
	  \qq = \mtrx{\LL{a}{z}~\LL{b_1~b_4}{b_4~b_1}~\LL{b_3~b_6}{b_6~b_3}~\LL{b_5~b_2}{b_5~b_2}~\LL{z}{a}}$$
   are connected by the following switch moves: $\{b_3,b_6\},\{b_1,b_5\},\{b_2,b_3\},\{b_4,b_6\}.$
\end{move}

\begin{cor} \label{CorMove-abU-Uba} Suppose $\pp$ is a pair of the form
    $$ \pp = \mtrx{\LL{a}{z}~\LL{b_1~b_2~\ol{u}_1}{\ol{u}_2~b_2~b_1}~\LL{z}{a}}$$
    where $\ol{u}_1,\ol{u}_2$ form a chain composed of $m$ consecutive $2$-blocks, $m\geq 0$.
    Then there exists $\qq$ connected by switch moves to $\pp$ such that $\qq$ is piecewise-order reversing and is composed of
	\begin{itemize}
		\item $m+1$ consecutive $2$-blocks if $m$ is even or
		\item one $4$-block followed by $m-1$ consecutive $2$-blocks if $m$ is odd.
	\end{itemize}
\end{cor}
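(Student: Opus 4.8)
The plan is to induct on $m$, removing two of the $2$-blocks of $\ol{u}$ at each step and using Move~\ref{Move-436521-to-214365} as the engine; the parity of $m$ is carried entirely by the two base cases. If $m=0$ then $\pp$ is already a single $2$-block, so take $\qq=\pp$. If $m=1$, a direct check that $p_0(c)+p_1(c)=\#\AAA+1$ for each of the four interior letters $c$ shows that $\pp$ is a single $4$-block, so again $\qq=\pp$. In these two cases $m$ even (resp.\ odd) gives exactly the first (resp.\ second) bulleted conclusion.

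For $m\ge 2$, split off the last two $2$-blocks of $\ol{u}$, writing $\ol{u}_1=\ol{u}_1'\,d_1 d_2\,e_1 e_2$ and $\ol{u}_2=\ol{u}_2'\,d_2 d_1\,e_2 e_1$, where $\ol{u}_1',\ol{u}_2'$ form the first $m-2$ consecutive $2$-blocks of $\ol{u}$, so that
$$ \pp=\mtrx{\LL{a}{z}~\LL{b_1~b_2~\ol{u}_1'~d_1~d_2~e_1~e_2}{\ol{u}_2'~d_2~d_1~e_2~e_1~b_2~b_1}~\LL{z}{a}}.$$
After the relabelling $B_1=b_1$, $B_2=b_2$, $B_3=d_1$, $B_4=d_2$, $B_5=e_1$, $B_6=e_2$, the pair $\pp$ is precisely the prefix extension, over the eight-letter alphabet $\{a,z,B_1,\dots,B_6\}$, of the source pair $\mtrx{\LL{a}{z}~\LL{B_1~B_2~B_3~B_4~B_5~B_6}{B_4~B_3~B_6~B_5~B_2~B_1}~\LL{z}{a}}$ of Move~\ref{Move-436521-to-214365}, obtained by growing $\ol{u}_1'$ off $B_3$ in the top row and $\ol{u}_2'$ off $B_4$ in the bottom row; this extension preserves irreducibility by Lemma~\ref{LemIrreducibleExtensions} since nothing is grown off the outer letters $a,z$. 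Hence (Definition~\ref{DefPathExtensions}, together with the fact that extending a switch move along a prefix extension again yields a switch move) the switch moves realising Move~\ref{Move-436521-to-214365} extend to switch moves taking $\pp$ to the prefix extension of the target of that move; because the target is a concatenation of three consecutive $2$-blocks, one checks that this extended target is a pair whose prefix restriction to $\{a,z,B_1,B_4\}\cup(\text{letters of }\ol{u}')$ is a parameter-$(m-2)$ copy of the $ab\ol{u}\,/\,\ol{u}ba$ configuration (with $B_1,B_4$ in the roles of $b_1,b_2$), while the complementary four letters $B_2,B_3,B_5,B_6$ form two consecutive $2$-blocks lying between that configuration and $z$. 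Applying the inductive hypothesis to the parameter-$(m-2)$ restriction and transporting its switch moves back up (by Definition~\ref{DefPathExtensions} and Lemma~\ref{LemReductionPaths}; the two $2$-blocks $B_2,B_3,B_5,B_6$ are left untouched), we connect $\pp$ to $(m-2)+1$ consecutive $2$-blocks followed by those two, i.e.\ $m+1$ consecutive $2$-blocks, when $m$ is even; and to a $4$-block followed by $(m-2)-1$ consecutive $2$-blocks followed by those two, i.e.\ a $4$-block and $m-1$ consecutive $2$-blocks, when $m$ is odd. (Lemma~\ref{LemReorderingChains} may be used to tidy the ordering of blocks afterward.)

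The step to verify with care is the shape of the target after the extended Move~\ref{Move-436521-to-214365}: one must confirm that $\ol{u}_1',\ol{u}_2'$ re-emerge still \emph{wrapped}, as a parameter-$(m-2)$ instance of the configuration, rather than as $m-2$ detached $2$-blocks --- the latter would destroy the $4$-block in the odd case, so the distinction is essential rather than cosmetic. With the prefix-extension set-up above this reduces to a single finite check, namely locating the letters $B_3$ and $B_4$ in the (three-$2$-block) target pair of Move~\ref{Move-436521-to-214365}; once that is done, everything else is routine.
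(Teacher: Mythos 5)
Your argument is correct and follows the paper's proof in essentials: the same induction on $m$ with the trivial base cases $m=0,1$, and the same inductive step that peels the last two $2$-blocks off $\ol{u}$ and applies Move \ref{Move-436521-to-214365}, after which $b_1$, one peeled letter, and $\ol{u}'$ re-form the wrapped configuration with parameter $m-2$ while two new $2$-blocks detach on the right --- your extended target coincides exactly with the paper's intermediate pair $\pp'$. The only difference is presentational: you make explicit, via the prefix-extension machinery (Definition \ref{DefExtensions}, Definition \ref{DefPathExtensions}, Lemma \ref{LemReductionPaths}), the bookkeeping that the paper leaves implicit when it applies the Move and the inductive hypothesis in the presence of the extra letters; the closing appeal to Lemma \ref{LemReorderingChains} is unnecessary (and not really applicable within a single chain), but it is also not needed since the blocks already emerge in the stated order.
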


\begin{proof}
  This is a proof by induction on $m$.

  If $m=0$ or $m=1$, $\pp=\qq$ and the result is trivial.

  Now consider the case for $m\geq 2$. We may then express $\ol{u}_1$ as $\ol{u}'_1 c_1 c_2 c_3 c_4$ and $\ol{u}_2$ as $\ol{u}'_2 c_2 c_1 c_4 c_3$ where $\ol{u}_1'$ and $\ol{u}_2'$ are still of the form in the statement of the corollary, but with $m'=m-2$. We may then apply Move \ref{Move-436521-to-214365} to
      $$ \pp = \mtrx{\LL{a}{z}~\LL{b_1~b_2~\ol{u}_1'~c_1~c_2~c_3~c_4}{\ol{u}_2'~c_2~c_1~c_4~c_3~b_2~b_1}~\LL{z}{a}}$$
  to arrive at
      $$ \pp' = \mtrx{\LL{a}{z}~\LL{b_1~c_2~\ol{u}_1'}{\ol{u}_2'~c_2~b_1}~\LL{c_1~c_4}{c_4~c_1}~\LL{c_3~b_2}{b_2~c_3}~\LL{z}{a}}.$$
  By induction, we may then apply switch moves to change the pattern of letters $b_1,c_2$ and those in $\ol{u}_i'$ into either form listed in the result of the corollary. This pair is $\qq$, and we are finished as $m'$ is even iff $m$ is even.
\end{proof}

\begin{move}\label{Move-216543-to-432165}
  The pairs
    $$ \pp = \mtrx{\LL{a}{z}~\LL{b_1~b_2}{b_2~b_1}~\LL{c_1~c_2~c_3~c_4}{c_4~c_3~c_2~c_1}~\LL{z}{a}} \mbox{ and }
	 \qq = \mtrx{\LL{a}{z}~\LL{b_1~c_4~c_1~b_2}{b_2~c_1~c_4~b_1}~\LL{c_3~c_2}{c_2~c_3}~\LL{z}{a}}$$
    are connected by switch moves: $\{b_2,c_2\},\{c_2,c_3\},\{c_1,c_3\},\{b_1,c_2\}.$
\end{move}

\begin{cor}\label{CorMove-4blocks-to-the-left}
    If $\pp$ is a pair that is piece-wise order reversing and composed of only one $4$-block and $2$-blocks (in any order), then there is a piece-wise order reversing pair $\qq$ that is composed of a $4$-block with consecutive $2$-blocks to its right.
\end{cor}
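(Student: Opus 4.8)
The plan is to argue by induction on the number of $2$-blocks lying to the left of the unique $4$-block in $\pp$. If there are no such $2$-blocks, then within the chain containing the $4$-block, that block is already leftmost, and by Lemma \ref{LemReorderingChains} we may reorder the chains so that the $4$-block lies in the leftmost non-empty chain; taking $\qq=\pp$ (after this reordering) finishes the base case. Note that the relevant data here is purely combinatorial: $\pp$ is piece-wise order reversing with exactly one $4$-block and all other order reversing blocks of size $2$ (or $1$), so Remark \ref{RemIgnoringChains} lets us ignore all chains except the one containing the $4$-block, reinstating the omitted chains at the end without alteration (possibly at the cost of one extra switch move, as in Lemma \ref{LemIgnoringChains}).

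For the inductive step, suppose there is at least one $2$-block immediately to the left of the $4$-block inside its chain. Using Remark \ref{RemIgnoringChains} we may assume the pair is exactly this chain, so that it has the form required by Move \ref{Move-216543-to-432165}, namely
$$ \pp = \mtrx{\LL{a}{z}~\LL{\ol{u}_1}{\ol{u}_2}~\LL{b_1~b_2}{b_2~b_1}~\LL{c_1~c_2~c_3~c_4}{c_4~c_3~c_2~c_1}~\LL{\ol{v}_1}{\ol{v}_2}~\LL{z}{a}},$$
where $b_1,b_2$ form the $2$-block immediately left of the $4$-block $c_1,c_2,c_3,c_4$, and $\ol{u}_i,\ol{v}_i$ are words forming the remaining $2$-blocks of the chain. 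Applying Move \ref{Move-216543-to-432165} (on the letters $a,b_1,b_2,c_1,c_2,c_3,c_4,z$) swaps the positions of the $2$-block and $4$-block, yielding a piece-wise order reversing pair $\pp'$ in which the $4$-block now occupies the earlier position and the former $b$-letters have become a $2$-block sitting to its right. Concretely $\pp'$ has the $4$-block $b_1,c_4,c_1,b_2$ followed by the $2$-block $c_3,c_2$. This strictly decreases the number of $2$-blocks left of the (unique) $4$-block, so by the inductive hypothesis $\pp'$ is connected by switch moves to a piece-wise order reversing pair $\qq$ whose $4$-block has only $2$-blocks to its right; composing the two sequences of switch moves connects $\pp$ to $\qq$.

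The only subtlety — and the step to be careful about rather than the hard part — is bookkeeping: verifying that Move \ref{Move-216543-to-432165} really does preserve piece-wise order reversibility when the $2$-block and $4$-block are embedded in a longer chain (this is exactly the content of the move as stated, once one invokes Remark \ref{RemIgnoringChains} to strip away the irrelevant chains), and that reintroducing the omitted chains at the end, per Lemma \ref{LemIgnoringChains}, costs at most one additional switch move and leaves the $4$-block-with-$2$-blocks-to-its-right structure intact. There is no genuine obstacle here; the argument is a clean induction driven entirely by the single utility move, with Remark \ref{RemIgnoringChains} handling the reduction to a single chain and Lemma \ref{LemReorderingChains} handling the final placement of the $4$-block's chain if one wants it leftmost among all chains.
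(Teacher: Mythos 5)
Your proof is correct and takes essentially the same route as the paper: induct (or iterate) on the number of $2$-blocks to the left of the $4$-block, at each step swapping the adjacent $2$-block past the $4$-block with the utility move and invoking Remark \ref{RemIgnoringChains} to ignore the untouched blocks. You even cite the appropriate utility move, Move \ref{Move-216543-to-432165}; the paper's own proof refers to Move \ref{Move-436521-to-214365}, which appears to be a misreference for this same $2$-block/$4$-block swap.
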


\begin{proof}
    Suppose the $4$-block is preceded by $k$ consecutive $2$-blocks, $k>0$. By performing Move \ref{Move-436521-to-214365} to the $4$-block and the $2$-block to its left. The new pair has a $4$-block with $k-1$ consecutive $2$-blocks to its left. By iterating this process, we achieve $k=0$, or the pair $\qq$ as desired.
\end{proof}

\begin{move}\label{Move-43216587-to-21438765}
    The pairs
	$$ \pp = \mtrx{\LL{a}{z}~\LL{b_1~b_2~b_3~b_4}{b_4~b_3~b_2~b_1}~\LL{c_1~c_2}{c_2~c_1}~\LL{c_3~c_4}{c_4~c_3}~\LL{z}{a}} \mbox{ and }
	    \qq = \mtrx{\LL{a}{z}~\LL{b_1~b_4}{b_4~b_1}~\LL{b_3~b_2}{b_2~b_3}~\LL{c_1~c_4~c_3~c_2}{c_2~c_3~c_4~c_1}~\LL{z}{a}}$$
    are connected by moves $\{b_2,c_1\},\{b_3,c_4\},\{b_1,c_3\},\{b_3,c_2\},\{b_4,c_4\},\{b_2,c_3\},\{b_1,c_1\}$.
\end{move}

\begin{move}\label{Move-43218765-to-21436587}
    The pairs
	$$ \pp = \mtrx{\LL{a}{z}~\LL{b_1~b_2~b_3~b_4}{b_4~b_3~b_2~b_1}~\LL{c_1~c_2~c_3~c_4}{c_4~c_3~c_2~c_1}~\LL{z}{a}} \mbox{ and }
	    \qq = \mtrx{\LL{a}{z}~\LL{b_1~b_4}{b_4~b_1}~\LL{b_3~b_2}{b_2~b_3}~\LL{c_1~c_4}{c_4~c_1}~\LL{c_3~c_2}{c_2~c_3}~\LL{z}{a}}$$
    are connected by the following switch moves: $\{b_3,c_3\}$, $\{b_4,c_1\}$, $\{b_2,c_2\}$, $\{b_1,c_4\}$, $\{b_3,c_1\}$, $\{b_1,c_2\}$, $\{b_4,c_3\}$.
\end{move}

\begin{move}\label{Move-3241-to-2143}
    The pairs
	$$ \pp = \mtrx{\LL{a}{z} ~ \LL{b_1~c_1~c_2~b_2}{c_2~c_1~b_2~b_1}~\LL{z}{a}} \mbox{ and }
	    \qq = \mtrx{\LL{a}{z}~ \LL{c_2~b_2}{b_2~c_2} ~\LL{b_1~c_1}{c_1~b_1}~\LL{z}{a}}$$
    are connected by a $\{b_2,c_1\}$-switch.
\end{move}

\begin{cor}\label{CorMove-aUb-Uba}
    Suppose
      $$ \pp = \mtrx{\LL{a}{z}~\LL{b_1~\ol{u}_1~b_2}{\ol{u}_2~b_2~b_1} ~\LL{z}{a}}$$
    where $\ol{u}_1,\ol{u}_2$ form a chain composed of $k$ consecutive $2$-blocks. There exists $\qq$ connected to $\pp$ by switch moves such that $\qq$ is piece-wise order reversing and is composed of $k+1$ consecutive $2$-blocks.
\end{cor}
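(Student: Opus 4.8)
The plan is to argue by induction on $k$, in the same style as the proofs of Corollaries \ref{CorMove-abU-bUa} and \ref{CorMove-abU-Uba}, using Move \ref{Move-3241-to-2143} to strip off one $2$-block at a time. For the base case $k=0$ the words $\ol{u}_1,\ol{u}_2$ are empty, so $\pp$ already consists of the single $2$-block on $\{b_1,b_2\}$ and one takes $\qq=\pp$; the case $k=1$ is precisely the statement of Move \ref{Move-3241-to-2143}.

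For the inductive step with $k\geq 1$ I would split off the leftmost $2$-block of the chain, writing $\ol{u}_1=c_1c_2\ol{u}_1'$ and $\ol{u}_2=c_2c_1\ol{u}_2'$ where $\ol{u}_1',\ol{u}_2'$ form a chain of $k-1$ consecutive $2$-blocks, so that
$$ \pp=\mtrx{\LL{a}{z}~\LL{b_1~c_1~c_2~\ol{u}_1'~b_2}{c_2~c_1~\ol{u}_2'~b_2~b_1}~\LL{z}{a}}. $$
Performing the $\{b_2,c_1\}$-switch of Move \ref{Move-3241-to-2143}, with the words $\ol{u}_1',\ol{u}_2'$ carried along in the appropriate gaps exactly as in the proof of Corollary \ref{CorMove-abU-bUa}, should produce
$$ \pp'=\mtrx{\LL{a}{z}~\LL{c_2~\ol{u}_1'~b_2}{\ol{u}_2'~b_2~c_2}~\LL{b_1~c_1}{c_1~b_1}~\LL{z}{a}}. $$
Here $\{b_1,c_1\}$ forms a $2$-block occupying the last two interior positions of each row, while the remaining letters fill a common initial segment of interior positions; discarding the trailing $2$-block — formally, restricting to the sub-alphabet consisting of $a$, $z$, $c_2$, $b_2$ and the letters of $\ol{u}_1'$ — leaves the pair $\mtrx{\LL{a}{z}~\LL{c_2~\ol{u}_1'~b_2}{\ol{u}_2'~b_2~c_2}~\LL{z}{a}}$, which is again of the hypothesized shape but with only $k-1$ consecutive $2$-blocks between $c_2$ and $b_2$. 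By the inductive hypothesis this pair is connected by inner switch moves to a pair composed of $k$ consecutive $2$-blocks; these moves extend to switch moves on the full alphabet of $\pp'$, acting only on the initial segment and fixing the trailing $\{b_1,c_1\}$-block (cf.\ Lemma \ref{LemReductionPaths}). The resulting pair $\qq$ is then piece-wise order reversing and composed of $k+1$ consecutive $2$-blocks.

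The only point requiring genuine care is verifying the displayed form of $\pp'$: since $\pp$ is standard but need not itself be piece-wise order reversing, one cannot invoke the chain decomposition and must instead track the positions of $b_1,c_1,c_2,b_2$ — and hence of the words $\ol{u}_1',\ol{u}_2'$ and the letter $b_1$ — directly from Definition \ref{DefSwitch}. Once this is established, identifying the trailing $2$-block and reducing to the $(k-1)$-case are routine, and extending the inner switch moves from the sub-alphabet back up to $\pp'$ is handled exactly as in the earlier corollaries.
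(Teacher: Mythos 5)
Your proposal is correct and follows essentially the same route as the paper: induct on $k$, peel off the leftmost $2$-block $\{c_1,c_2\}$, apply the $\{b_2,c_1\}$-switch of Move \ref{Move-3241-to-2143} (with $\ol{u}_1',\ol{u}_2'$ carried along) to reach exactly the pair $\pp'$ the paper exhibits, and then apply the inductive hypothesis to the letters $c_2,\ol{u}_1',b_2$ while the trailing $\{b_1,c_1\}$-block is left untouched. Your extra remarks on verifying the form of $\pp'$ from Definition \ref{DefSwitch} and extending the restricted switch moves via Lemma \ref{LemReductionPaths} only make explicit what the paper leaves implicit.
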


\begin{proof}
    By induction on $k$. If $k=0$, then $\pp=\qq$. If $k>0$, let $\ol{u}_1 = c_1~c_2~\ol{u}'_1$ and $\ol{u}_2 = c_2~c_1~\ol{u}'_2$. Then $\pi$ is of the form
	$$ \pp = \mtrx{\LL{a}{z}~\LL{b_1~c_1~c_2~\ol{u}'_1~b_2}{c_2~c_1~\ol{u}'_2~b_2~b_1} ~\LL{z}{a}}.$$
    By performing Move \ref{Move-3241-to-2143}, we arrive at
	$$ \pp' = \mtrx{\LL{a}{z}~\LL{c_2~\ol{u}'_1~b_2}{\ol{u}'_2~b_2~c_2}~\LL{b_1~c_1}{c_1~b_1}~\LL{z}{a}}.$$
    We may then apply our result letters in $\ol{u}'_1$ and $b_2,c_2$ to arrive at $\qq$.
\end{proof}

\section{Switch Moves Concerning \texorpdfstring{$3$}{3}-blocks As Well}

\begin{move} \label{Move-52431-to-21543}
 The pairs
     $$ \pp = \mtrx{\LL{a}{z}~\LL{b_1~b_2~c_1~c_2~b_3}{b_3~b_2~c_2~c_1~b_1}~\LL{z}{a}} \mbox{ and }
	\qq = \mtrx{\LL{a}{z} ~\LL{c_2~b_1}{b_1~c_2}~\LL{c_1~b_2~b_3}{b_3~b_2~c_1}~\LL{z}{a}}$$
 are connected by switch moves $\{b_2,c_2\},\{b_1,c_1\}$.
\end{move}

\begin{cor} \label{CorMove-abUc-cbUa}
    Suppose $\pp$ is a pair of the form
	$$ \pp = \mtrx{\LL{a}{z}~\LL{b_1~b_2~\ol{u}_1~b_3}{b_3~b_2~\ol{u}_2~b_1}~\LL{z}{a}}$$
    where $\ol{u}_1,\ol{u}_2$ form a chain composed of $m$ consecutive $2$-blocks, $m\geq 0$. Then there exists $\qq$ connected to $\pp$ by switch moves such that $\sigma$ is piece-wise order reversing and is composed of $m$ consecutive $2$-blocks followed by a $3$-block.
\end{cor}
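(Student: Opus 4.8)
The plan is to reduce the statement to Corollary \ref{CorMove-abU-bUa} by a single application of Move \ref{Move-52431-to-21543} that peels the last $2$-block off the chain $\ol{u}_1,\ol{u}_2$; the argument runs parallel to the proofs of Corollaries \ref{CorMove-abU-bUa}, \ref{CorMove-abU-Uba} and \ref{CorMove-aUb-Uba} in this appendix.

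If $m=0$ then $\ol{u}_1,\ol{u}_2$ are empty and $\pp=\mtrx{\LL{a}{z}~\LL{b_1~b_2~b_3}{b_3~b_2~b_1}~\LL{z}{a}}$ is already a single $3$-block, so one takes $\qq=\pp$. For $m\geq 1$, write $\ol{u}_1=\ol{u}_1'c_1c_2$ and $\ol{u}_2=\ol{u}_2'c_2c_1$, where $\ol{u}_1',\ol{u}_2'$ form a chain of $m-1$ consecutive $2$-blocks, so that
$$ \pp=\mtrx{\LL{a}{z}~\LL{b_1~b_2~\ol{u}_1'~c_1~c_2~b_3}{b_3~b_2~\ol{u}_2'~c_2~c_1~b_1}~\LL{z}{a}}. $$
Applying Move \ref{Move-52431-to-21543} to the letters $b_1,b_2,c_1,c_2,b_3$, carrying the interior words $\ol{u}_1',\ol{u}_2'$ along exactly as the words $\ol{u}_i'$ are carried in the proof of Corollary \ref{CorMove-abU-bUa}, yields
$$ \pp'=\mtrx{\LL{a}{z}~\LL{c_2~b_1~\ol{u}_1'}{b_1~\ol{u}_2'~c_2}~\LL{c_1~b_2~b_3}{b_3~b_2~c_1}~\LL{z}{a}}. $$
Here the rightmost region $\mtrx{\LL{c_1~b_2~b_3}{b_3~b_2~c_1}}$ is already a $3$-block, while the leftmost region $\mtrx{\LL{c_2~b_1~\ol{u}_1'}{b_1~\ol{u}_2'~c_2}}$ has precisely the shape treated by Corollary \ref{CorMove-abU-bUa} with a chain of length $m-1$; that corollary connects it by switch moves to $(m-1)+1=m$ consecutive $2$-blocks. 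The resulting pair $\qq$ is then piece-wise order reversing and consists of $m$ consecutive $2$-blocks followed by a $3$-block, as required. (Alternatively one may argue by induction on $m$, recursing on the leftmost region, which again matches the hypothesis of this corollary with parameter $m-1$.)

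The one step needing care — and essentially the only place where anything could go wrong — is verifying that Move \ref{Move-52431-to-21543} carries the interior words $\ol{u}_1',\ol{u}_2'$ into the left block of $\pp'$ in the stated way and that every intermediate pair along the switch path is standard. This is checked by the same explicit tracing of Rauzy moves used for the analogous corollaries earlier in the appendix and introduces no new difficulty.
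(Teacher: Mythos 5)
Your proof is correct and follows essentially the same route as the paper: peel the last $2$-block $c_1c_2$ off $\ol{u}_1,\ol{u}_2$, apply Move \ref{Move-52431-to-21543} to produce the $3$-block $c_1\,b_2\,b_3$ on the right, and then apply Corollary \ref{CorMove-abU-bUa} to the letters $c_2$, $b_1$ and those of $\ol{u}_i'$ to obtain the $m$ consecutive $2$-blocks. The paper's proof is word-for-word this argument (phrased as a single reduction rather than explicit induction), so nothing further is needed.
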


\begin{proof}
    If $m=0$, then $\qq=\pp$.

    If $m>1$, let $\ol{u}_1 = \ol{u}_1' c_1 c_2$ and $\ol{u}_2 = \ol{u}_2' c_2 c_1$. We then express $\pp$ as
	$$ \pp = \mtrx{\LL{a}{z}~\LL{b_1~b_2~\ol{u}_1'~c_1~c_2~b_3}{b_3~b_2~\ol{u}_2'~c_2~c_1~b_1}~\LL{z}{a}},$$
    where $\ol{u}_1'$ and $\ol{u}_2'$ form $m-1$ consecutive $2$-blocks. By performing Move \ref{Move-52431-to-21543} to $\pi$, we arrive at
	$$ \pp' = \mtrx{\LL{a}{z} ~\LL{c_2~b_1~\ol{u}_1'}{b_1~\ol{u}_2'~c_2}~\LL{c_1~b_2~b_3}{b_3~b_2~c_1}~\LL{z}{a}}.$$
    By considering only the letters in $\ol{u}'_i$ as well as $b_1$ \& $c_2$, we may now apply Corollary \ref{CorMove-abU-bUa} to achieve pair $\qq$. This pair has those letters form $m$ consecutive $2$-blocks while ending with the $3$-block from letters $b_2,b_3$ and $c_1$.
\end{proof}

\begin{move}\label{Move-43215876-to-21435876}
  The pairs
    $$ \pp = \mtrx{\LL{a}{z}~\LL{b_1~b_2~b_3~b_4}{b_4~b_3~b_2~b_1}~\LL{s}{s}~\LL{c_1~c_2~c_3}{c_3~c_2~c_1}~\LL{z}{a}}\mbox{ and }
	\qq = \mtrx{\LL{a}{z}~\LL{b_1~b_4}{b_4~b_1}~\LL{b_3~b_2}{b_2~b_3}~\LL{s}{s}~\LL{c_3~c_2~c_1}{c_1~c_2~c_3}~\LL{z}{a}}$$
  are connected by moves $\{b_3,c_3\},\{c_2,s\},\{b_2,b_4\},\{c_1,c_3\},\{b_1,c_2\},\{b_3,c_1\},\{b_2,c_2\}.$
\end{move}

\begin{move}\label{Move-3217654-to-3215476}
	The pairs
	$$ \pp = \mtrx{\LL{a}{z}~\LL{b_1~b_2~b_3}{b_3~b_2~b_1}~\LL{c_1~c_2~c_3~c_4}{c_4~c_3~c_2~c_1}~\LL{z}{a}}\mbox{ and }
		\qq = \mtrx{\LL{a}{z}~\LL{b_3~b_2~b_1}{b_1~b_2~b_3}~\LL{c_1~c_4}{c_4~c_1}~\LL{c_3~c_2}{c_2~c_3}~\LL{z}{a}}$$
	are connected by moves $\{b_1,c_2\},\{b_2,c_1\},\{c_1,c_3\},\{b_1,b_3\},\{c_2,c_4\},\{b_2,c_3\}.$
\end{move}

\begin{move}\label{Move-4321765-to-2143765}
	The pairs
	$$ \pp = \mtrx{\LL{a}{z}~\LL{b_1~b_2~b_3~b_4}{b_4~b_3~b_2~b_1}~\LL{c_1~c_2~c_3}{c_3~c_2~c_1}~\LL{z}{a}}\mbox{ and }
		\qq= \mtrx{\LL{a}{z}~\LL{b_3~b_1}{b_1~b_3}~\LL{c_1~b_4}{b_4~c_1}~\LL{c_3~c_2~b_2}{b_2~c_2~c_3}~\LL{z}{a}}$$
	are connected by moves $\{b_1,c_3\},\{b_4,c_2\},\{b_1,b_2\},\{c_1,c_3\},\{b_3,c_1\}.$
\end{move}

\begin{move}\label{Move-321654-to-213654}
 The pairs
    $$ \pp = \mtrx{\LL{a}{z}~\LL{b_1~b_2~b_3}{b_3~b_2~b_1}~\LL{c_1~c_2~c_3}{c_3~c_2~c_1}~\LL{z}{a}}\mbox{ and }
	\qq = \mtrx{\LL{a}{z}~\LL{b_2~c_3}{c_3~b_2}~\LL{c_1}{c_1}~\LL{b_3~c_2~b_1}{b_1~c_2~b_3}~\LL{z}{a}}$$
    are connected by switch moves $\{b_2,c_2\},\{b_3,c_1\},\{b_1,c_3\}$.
\end{move}

\begin{move}\label{Move-3214765-moving-leftblocks}
 The pairs
    $$ \pp = \mtrx{\LL{a}{z}~\LL{b_1~b_2~b_3}{b_3~b_2~b_1}~\LL{s}{s}~\LL{c_1~c_2~c_3}{c_3~c_2~c_1}~\LL{z}{a}}\mbox{ and }
	\qq = \mtrx{\LL{a}{z}~\LL{c_3~b_2~c_1}{c_1~b_2~c_3}~\LL{s}{s}~\LL{b_1~c_2~b_3}{b_3~c_2~b_1}~\LL{z}{a}}$$
  are connected by switch moves $\{b_2,s\},\{b_1,c_2\},\{b_3,c_3\},\{b_1,c_1\}$.
\end{move}

\begin{move}\label{Move-35421-to-32154}
  The pairs
    $$ \pp = \mtrx{\LL{a}{z}~\LL{b_1~b_2~b_3~c_1~c_2}{b_3~c_2~c_1~b_2~b_1}~\LL{z}{a}}\mbox{ and }
	\qq = \mtrx{\LL{a}{z}~\LL{b_3~c_2~b_1}{b_1~c_2~b_3}~\LL{c_1~b_2}{b_2~c_1}~\LL{z}{a}}$$
  are connected by moves $\{b_3,c_2\},\{b_1,c_1\},\{b_2,b_3\}$.
\end{move}

\begin{cor}\label{CorMove-abcU-cUba}
    Suppose
      $$\pp = \mtrx{\LL{a}{z}~\LL{b_1~b_2~b_3~\ol{u}_1}{b_3~\ol{u}_2~b_2~b_1}~\LL{z}{a}}$$
    where $\ol{u}_1,\ol{u}_2$ form a chain of $m$ consecutive $2$-blocks. Then $\pp$ is connected to a piece-wise order reversing pair $\qq$ composed of one $3$-block followed by $m$ consecutive $2$-blocks.
\end{cor}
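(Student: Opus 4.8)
The plan is to argue by induction on $m$, in exact analogy with the proofs of Corollaries \ref{CorMove-abU-bUa}, \ref{CorMove-abU-Uba} and \ref{CorMove-abUc-cbUa}. The base case $m=0$ is immediate: then
$$\pp = \mtrx{\LL{a}{z}~\LL{b_1~b_2~b_3}{b_3~b_2~b_1}~\LL{z}{a}}$$
is itself piece-wise order reversing, consisting of a single $3$-block, so one takes $\qq=\pp$.

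For $m\geq 1$, I would peel off the first $2$-block of the chain $\ol{u}_1,\ol{u}_2$: write $\ol{u}_1 = c_1 c_2\,\ol{u}_1'$ and $\ol{u}_2 = c_2 c_1\,\ol{u}_2'$, where $\ol{u}_1',\ol{u}_2'$ form a chain of $m-1$ consecutive $2$-blocks, so that
$$\pp = \mtrx{\LL{a}{z}~\LL{b_1~b_2~b_3~c_1~c_2~\ol{u}_1'}{b_3~c_2~c_1~\ol{u}_2'~b_2~b_1}~\LL{z}{a}}.$$
Then I would apply Move \ref{Move-35421-to-32154} to the distinguished letters $a,b_1,b_2,b_3,c_1,c_2,z$, carrying the words $\ol{u}_1',\ol{u}_2'$ along (the three switch moves $\{b_3,c_2\},\{b_1,c_1\},\{b_2,b_3\}$ comprising that Move do not involve any letter of $\ol{u}_1',\ol{u}_2'$). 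The output places $b_3,c_2,b_1$ as a $3$-block sitting in front of a configuration built from $c_1,b_2$ and the words $\ol{u}_1',\ol{u}_2'$; that residual configuration has the shape of the hypothesis of Corollary \ref{CorMove-abU-bUa} (with $c_1,b_2$ in the roles of $b_1,b_2$). Applying that corollary straightens it into $m$ consecutive $2$-blocks, leaving a piece-wise order reversing pair $\qq$ that is one $3$-block followed by $m$ consecutive $2$-blocks, as required. When $m=1$ the words $\ol{u}_1',\ol{u}_2'$ are empty and this step is literally Move \ref{Move-35421-to-32154}.

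The step I expect to require the most care is pinning down the exact destination of the passenger words $\ol{u}_1'$ and $\ol{u}_2'$ after Move \ref{Move-35421-to-32154} — namely, that they are dragged to positions in which $c_1,b_2,\ol{u}_1',\ol{u}_2'$ together form a configuration to which Corollary \ref{CorMove-abU-bUa} applies (in some accountings they will already be $m$ consecutive $2$-blocks). This is routine but fiddly bookkeeping: either track the three switch moves with the extra letters explicitly inserted, or invoke Lemma \ref{LemReductionPaths}, observing that the path underlying Move \ref{Move-35421-to-32154} restricts to the standard seven-letter pair on $\{a,b_1,b_2,b_3,c_1,c_2,z\}$ and therefore extends to the larger alphabet once the $2$-block letters are reinstated. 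Everything else is a direct consequence of the quoted Move and Corollary.
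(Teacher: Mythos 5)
Your strategy (peel off a $2$-block, run Move \ref{Move-35421-to-32154} with the rest of the chain carried as passengers, then straighten what is left by an appendix corollary) is the right kind of argument, but the bookkeeping you yourself flag as the delicate point is exactly where it fails. Peeling off the \emph{first} $2$-block and tracking the passengers through the three switches $\{b_3,c_2\},\{b_1,c_1\},\{b_2,b_3\}$ (equivalently, using the forward extension of restricted paths, Equation \eqref{EqExtendedMove} and Definition \ref{DefPathExtensions} --- not Lemma \ref{LemReductionPaths}, which goes the other way), one finds that $\ol{u}_1'$ stays glued in front of $z$ in the top row while $\ol{u}_2'$ stays glued in front of $b_2$ in the bottom row, so the move ends at
$$ \mtrx{\LL{a}{z}~\LL{b_3~c_2~b_1}{b_1~c_2~b_3}~\LL{c_1~b_2~\ol{u}_1'}{\ol{u}_2'~b_2~c_1}~\LL{z}{a}}.$$
The residual configuration $\LL{c_1~b_2~\ol{u}_1'}{\ol{u}_2'~b_2~c_1}$ is \emph{not} of the shape required by Corollary \ref{CorMove-abU-bUa} (that would need the bottom segment to read $b_2~\ol{u}_2'~c_1$); it is of the shape of Corollary \ref{CorMove-abU-Uba}, with $m-1$ consecutive $2$-blocks. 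That corollary straightens it into $m$ consecutive $2$-blocks only when $m-1$ is even; when $m-1$ is odd it produces a $4$-block followed by $m-2$ consecutive $2$-blocks. So for every even $m$ your construction terminates at a pair consisting of a $3$-block, then a $4$-block, then $m-2$ $2$-blocks, not the claimed $\qq$. Already for $m=2$ (with $\ol{u}_1'=xy$, $\ol{u}_2'=yx$) the residual $\LL{c_1~b_2~x~y}{y~x~b_2~c_1}$ is itself a $4$-block, and no application of Corollary \ref{CorMove-abU-bUa} is available.

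The paper avoids this by peeling off the \emph{last} $2$-block, $\ol{u}_1=\ol{u}_1'c_1c_2$, $\ol{u}_2=\ol{u}_2'c_2c_1$: after the same Move the fresh $2$-block $\{c_1,b_2\}$ is parked at the far right next to $\LL{z}{a}$, and the scrambled remainder $\LL{b_3~c_2~b_1~\ol{u}_1'}{b_1~\ol{u}_2'~c_2~b_3}$ is again an instance of the corollary being proved with $m-1$ in place of $m$, so the induction closes on the corollary itself and no parity issue arises. To repair your version you would either have to switch to that decomposition or supply additional moves to destroy the $4$-block produced in the even case; as written, the identification of the residual with Corollary \ref{CorMove-abU-bUa} is incorrect and the conclusion fails for even $m$.
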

\begin{proof}
    We will perform induction on $m$. If $m=0$, then $\qq=\pp$.

    If $m>0$, let $\ol{u}_1 = \ol{u}_1' c_1c_2$ and $\ol{u}_2 = \ol{u}_2'c_2c_1$. We may then express $\pp$ as
      $$\pp = \mtrx{\LL{a}{z}~\LL{b_1~b_2~b_3~\ol{u}_1'~c_1~c_2}{b_3~\ol{u}_2'~c_2~c_1~b_2~b_1}~\LL{z}{a}}.$$
    By performing Move \ref{Move-35421-to-32154} to $\pp$, we arrive at
      $$ \pp' = \mtrx{\LL{a}{z}~\LL{b_3~c_2~b_1~\ol{u}_1'}{b_3~\ol{u}_2'~c_2~b_1}~\LL{c_1~b_2}{b_2~c_1}~\LL{z}{a}}.$$
    By considering the letters in $\ol{u}_i$ and $b_1,c_2,b_3$, we my by induction arrive at $\qq$.
\end{proof}

\begin{cor}\label{CorMove-abUcV-UcVba}
  Suppose
    $$\pp = \mtrx{\LL{a}{z}~\LL{b_1~b_2~\ol{u}_1~b_3~\ol{v}_1}{\ol{u}_2~b_3~\ol{v}_2~b_2~b_1}~\LL{z}{a}}$$
   where $\ol{u}_1,\ol{u}_2$ form a chain of $m$ consecutive $2$-blocks and $\ol{v}_1,\ol{v}_2$ form a chain of $k$ consecutive $2$-blocks. Then $\pp$ is connected to a piece-wise order reversing pair $\qq$ composed of $m$ consecutive $2$-blocks followed by a $3$-block followed by $k$ consecutive $2$-blocks.
\end{cor}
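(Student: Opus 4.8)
The plan is to reduce to the already-established corollaries by peeling off the two chains of $2$-blocks one at a time, using the moves of the appendix as black boxes. The pair in question,
$$\pp = \mtrx{\LL{a}{z}~\LL{b_1~b_2~\ol{u}_1~b_3~\ol{v}_1}{\ol{u}_2~b_3~\ol{v}_2~b_2~b_1}~\LL{z}{a}},$$
has exactly the shape handled by Corollary \ref{CorMove-abcU-cUba} if we regard $b_1,b_2,b_3$ as the three letters of the leading $3$-block and everything between the two copies of $b_3$ as the "chain'' $\ol{v}_1,\ol{v}_2$ — except that here the prefix word is $b_1 b_2 \ol{u}_1$ rather than $b_1 b_2$ alone, and the suffix is $b_3 \ol{v}_1$ with $\ol u_1$ interposed. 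So the first step is to move the $\ol u_i$ block past the middle. I would apply switch moves inside the sub-pair on the letters $\{b_1,b_2,b_3\}\cup\{\text{letters of }\ol u_1,\ol u_2\}$ together with $z$ (using Remark \ref{RemIgnoringChains} to ignore $\ol v_1,\ol v_2$ and the outer frame), exactly as in the proof of Corollary \ref{CorMove-abcU-cUba}: after an application of Move \ref{Move-35421-to-32154} (taking the last $2$-block $c_1 c_2$ of $\ol u_1$) followed by induction on the length of $\ol u$, one brings the pair to the form in which the $\ol u$-material has become $m$ consecutive $2$-blocks sitting \emph{after} the $3$-block $b_1 b_2 b_3$ and the $\ol v$-material.

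Concretely I would proceed by induction on $m$ (the length of $\ol u$). When $m=0$ the statement is precisely Corollary \ref{CorMove-abcU-cUba} applied with the chain there equal to $\ol v_1,\ol v_2$: the resulting $\qq$ is one $3$-block followed by $k$ consecutive $2$-blocks, which is the claimed form with no leading $2$-blocks. For the inductive step write $\ol u_1 = \ol u_1' c_1 c_2$ and $\ol u_2 = \ol u_2' c_2 c_1$, so that
$$\pp = \mtrx{\LL{a}{z}~\LL{b_1~b_2~\ol{u}'_1~c_1~c_2~b_3~\ol{v}_1}{\ol{u}'_2~c_2~c_1~b_3~\ol{v}_2~b_2~b_1}~\LL{z}{a}}.$$
Apply Move \ref{Move-35421-to-32154} on the letters $b_1,b_2,b_3,c_1,c_2$ (treating the material of $\ol u'$ and $\ol v$ as carried along, which is legitimate since those words only lengthen the "$\ol u_1$''/"$\ol v_1$'' slots in that move's hypothesis — this is the same bookkeeping used in Corollary \ref{CorMove-abcU-cUba} and Corollary \ref{CorMove-abUc-cbUa}). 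This produces a pair in which $c_1,b_2$ form a trailing $2$-block, and the remaining letters $b_1,b_3,c_2$ together with $\ol u'_1,\ol u'_2$ and $\ol v_1,\ol v_2$ form a pair of exactly the shape of $\pp$ but with $\ol u$ shortened by one $2$-block. The inductive hypothesis converts that sub-pair into $(m-1)$ consecutive $2$-blocks, then the $3$-block, then $k$ consecutive $2$-blocks; reattaching the peeled-off $c_1 b_2$ block and using Lemma \ref{LemReorderingChains} to slide it among the leading $2$-blocks gives $m$ consecutive $2$-blocks, the $3$-block, and $k$ consecutive $2$-blocks, as desired.

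The routine part is verifying that the combinatorial words track correctly through Move \ref{Move-35421-to-32154}; the main obstacle — and the only place that needs care — is making sure the interposed word $\ol v_1,\ol v_2$ between $b_3$ and $b_2$ is not disturbed and that, after the move, the block structure on the $\ol v$-side is still $k$ consecutive $2$-blocks followed correctly by the relocated $3$-block. This is exactly analogous to the argument in Corollary \ref{CorMove-abcU-cUba} (which handles the $k=0$ case of the present statement), so once the $m=0$ base case is identified with that corollary, the induction carries the extra chain $\ol v$ along as passive baggage. Thus no new move is required beyond those in the appendix together with Lemma \ref{LemReorderingChains} and Remark \ref{RemIgnoringChains}.
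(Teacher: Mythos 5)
Your base case is fine: with $m=0$ the pair is literally the hypothesis of Corollary \ref{CorMove-abcU-cUba}. The inductive step, however, has a genuine gap. After writing $\ol{u}_1=\ol{u}_1'c_1c_2$, $\ol{u}_2=\ol{u}_2'c_2c_1$, the restriction of your pair to the letters $a,z,b_1,b_2,b_3,c_1,c_2$ reads
$$ \mtrx{\LL{a}{z}~\LL{b_1~b_2~c_1~c_2~b_3}{c_2~c_1~b_3~b_2~b_1}~\LL{z}{a}}, $$
which is \emph{not} the input pattern of Move \ref{Move-35421-to-32154} (that move requires the pattern $b_1\,b_2\,b_3\,c_1\,c_2$ over $b_3\,c_2\,c_1\,b_2\,b_1$; here the chain $\ol{u}$ sits between $b_2$ and $b_3$ rather than after $b_3$, so the five letters realize a different permutation). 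The claim that the interposed words ``only lengthen the slots'' is exactly where this fails: $\ol{u}'$ lies between $b_2$ and $c_1$ on top but between $z$ and $c_2$ on the bottom, and $\ol{v}$ lies after $b_3$ on top but between $b_3$ and $b_2$ on the bottom, so the relative order of the five active letters is changed. Concretely, if you try to run the switch sequence of Move \ref{Move-35421-to-32154} anyway, the first switch $\{b_3,c_2\}$ is legal, but it leaves $b_1$ before $c_1$ in the top row and $c_1$ before $b_1$ in the bottom row, so the second switch $\{b_1,c_1\}$ is not even a permissible inner switch (Definition \ref{DefSwitch} requires the two letters to appear in the same order in both rows). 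Hence the asserted output (a trailing $2$-block $c_1\,b_2$ plus a smaller pair of the same shape) is unsubstantiated, and the subsequent appeal to Lemma \ref{LemReorderingChains} to ``slide'' that $2$-block is also on shaky ground, since there are no $1$-blocks separating it from the rest, so it is not a chain that the lemma can reorder.

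For comparison, the paper avoids this by working from the other side: it first ignores $\ol{u}_1,\ol{u}_2$ (restriction/extension) and applies Corollary \ref{CorMove-abcU-cUba} to the letters $b_1,b_2,b_3$ and the $\ol{v}$-chain, producing the $3$-block followed by the $k$ trailing $2$-blocks; then, ignoring those trailing $2$-blocks, the remaining pair has the form $\LL{b_1~b_2~\ol{u}_1~b_3}{\ol{u}_2~b_3~b_2~b_1}$, on which a single $\{b_3,c_1\}$-switch (peeling the \emph{first} $2$-block of $\ol{u}$) splits off a $3$-block and leaves a configuration handled by Corollary \ref{CorMove-aUb-Uba}. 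Your peeling strategy could likely be repaired along these lines, but as written the key move citation does not apply and the induction does not go through.
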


\begin{proof}
    By first ignoring $\ol{u}_1,\ol{u}_2$, we may apply Corollary \ref{CorMove-abcU-cUba}. By now ignoring the last $k$ consecutive $2$-blocks, we have (up to switching rows) a pair of the form in this Corollary, but with $k=0$. We express this as
      $$ \pp' = \mtrx{\LL{a}{z}~\LL{b_1~b_2~\ol{u}_1~b_3}{\ol{u}_2~b_3~b_2~b_1}~\LL{z}{a}}.$$
    If $m=0$, then $\pp'=\qq$. If $m>0$, let $\ol{u}_1 = c_1c_2\ol{u}_1'$ and $\ol{u}_2=c_2c_1\ol{u}_2'$. We then express $\pp'$ as
      $$ \pp' = \mtrx{\LL{a}{z}~\LL{b_1~b_2~c_1~c_2~\ol{u}_1'~b_3}{c_2~c_1~\ol{u}_2'~b_3~b_2~b_1}~\LL{z}{a}}.$$
    By performing a $\{b_3,c_1\}$-switch, we arrive at
      $$ \pp'' = \mtrx{\LL{a}{z}~\LL{c_2~\ol{u}_1'~b_3}{\ol{u}_2'~c_2~b_3}~\LL{b_1~b_2~c_1}{c_1~b_2~b_1}~\LL{z}{a}}.$$
    This will either be $\qq$, or we may apply Corollary \ref{CorMove-aUb-Uba} to arrive at $\qq$.
\end{proof}

	\bibliographystyle{abbrv}
	\bibliography{../../bibfile2}

\end{document}